\numberwithin{equation}{subsection}
\newcommand{\Fg}{\mathfrak{g}}
\newcommand{\Fh}{\mathfrak{h}}
\newcommand{\Fsl}{\mathfrak{sl}}
\newcommand{\BZ}{\mathbb{Z}}
\newcommand{\BQ}{\mathbb{Q}}
\newcommand{\BR}{\mathbb{R}}
\newcommand{\BC}{\mathbb{C}}
\newcommand{\BB}{\mathbb{B}}
\newcommand{\CL}{\mathcal{L}}
\newcommand{\CB}{\mathcal{B}}
\newcommand{\CT}{\mathcal{T}}
\newcommand{\ve}{\varepsilon}
\newcommand{\vp}{\varphi}
\newcommand{\vpi}{\varpi}
\newcommand{\bzero}{{\bf 0}}
\newcommand{\bc}{\mathbf{c}_{0}}
\newcommand{\GL}{\mathrm{GL}}
\newcommand{\Hom}{\mathrm{Hom}}
\newcommand{\End}{\mathrm{End}}
\newcommand{\Span}{\mathrm{Span}}
\newcommand{\wt}{\mathop{\rm wt}\nolimits}
\newcommand{\norm}{\mathrm{norm}}
\newcommand{\af}{\mathrm{af}}
\newcommand{\Par}{\mathrm{Par}}
\newcommand{\Conn}{\mathrm{Conn}}
\newcommand{\ch}{\mathop{\rm ch}\nolimits}
\newcommand{\gch}{\mathop{\rm gch}\nolimits}
\newcommand{\cl}{\mathop{\rm cl}\nolimits}
\newcommand{\QLS}{\mathrm{QLS}}
\newcommand{\Deg}{\mathop{\rm Deg}\nolimits}
\newcommand{\Degt}{\mathop{\rm Deg}\nolimits^{\mathrm{tail}}}
\newcommand{\Img}{\mathop{\rm Image}\nolimits}
\newcommand{\PJ}{\Pi^{J}}
\newcommand{\jad}{Q^{\vee,\,\text{\rm $J$-ad}}}
\newcommand{\SB}{\mathrm{SiB}^{J}}
\newcommand{\SBa}{\mathrm{SiB}(\lambda\,;\,a)}
\newcommand{\SBb}[1]{\mathrm{SiB}(\lambda\,;\,#1)}
\newcommand{\SBc}[2]{\mathrm{SiB}(#1\,;\,#2)}
\newcommand{\B}{\mathcal{B}(\lambda)}
\newcommand{\Bo}{\mathcal{B}_{0}(\lambda)}
\newcommand{\sLS}{\mathbb{B}^{\si}(\lambda)}
\newcommand{\sLSo}{\mathbb{B}^{\si}_{0}(\lambda)}
\newcommand{\si}{\frac{\infty}{2}}
\newcommand{\sell}{\ell^{\si}}
\newcommand{\sil}{\prec}
\newcommand{\sile}{\preceq}
\newcommand{\sig}{\succ}
\newcommand{\sige}{\succeq}
\newcommand{\rr}{\Delta_{\af}}
\newcommand{\prr}{\Delta_{\af}^{+}}
\newcommand{\mcr}[1]{\lfloor #1 \rfloor}
\newcommand{\edge}[1]{\xrightarrow{\,#1\,}}
\newcommand{\QJp}[1]{Q_{#1}^{\vee+}}
\newcommand{\tw}[1]{\tau_{#1}}
\newcommand{\pair}[2]{\langle #1,\,#2 \rangle}
\newcommand{\ol}[1]{\overline{#1}}
\newcommand{\ti}[1]{\widetilde{#1}}
\newcommand{\ts}[1]{{\textstyle #1}}
\newcommand{\bqed}{\quad \hbox{\rule[-0.5pt]{3pt}{8pt}}}
\theoremstyle{plain}
\newtheorem{thm}{Theorem}[subsection]
\newtheorem{lem}[thm]{Lemma}
\newtheorem{prop}[thm]{Proposition}
\newtheorem{cor}[thm]{Corollary}
\newtheorem{claim}{Claim}[thm]
\newtheorem{ithm}{Theorem}
\theoremstyle{definition}
\newtheorem{dfn}[thm]{Definition}
\theoremstyle{remark}
\newtheorem{rem}[thm]{Remark}
\newenvironment{enu}{%
 \begin{enumerate}%
}{\end{enumerate}}
\newcommand{\vsp}{\vspace{3mm}}
\begin{document}

\baselineskip=17.5pt

\title{\Large\bf 
Demazure submodules \\ of level-zero extremal weight modules \\
and specializations of Macdonald polynomials%
\footnote{2010 Mathematics Subject Classification. 
 Primary: 17B37; Secondary: 17B67, 81R50, 81R10.}%
}
\author{
 Satoshi Naito \\ 
 \small Department of Mathematics, Tokyo Institute of Technology, \\
 \small 2-12-1 Oh-okayama, Meguro-ku, Tokyo 152-8551, Japan \ 
 (e-mail: {\tt naito@math.titech.ac.jp}) \\[3mm]
and \\[3mm]
Daisuke Sagaki \\ 
 \small Institute of Mathematics, University of Tsukuba, \\
 \small Tsukuba, Ibaraki 305-8571, Japan \ 
 (e-mail: {\tt sagaki@math.tsukuba.ac.jp})
}
\date{}
\maketitle

%
\begin{abstract} \setlength{\baselineskip}{16pt}
In this paper, we give a characterization of the crystal bases 
$\CB_{x}^{+}(\lambda)$, $x \in W_{\af}$, 
of Demazure submodules $V_{x}^{+}(\lambda)$, $x \in W_{\af}$, 
of a level-zero extremal weight module $V(\lambda)$ over a quantum affine algebra $U_{q}$,
where $\lambda$ is an arbitrary level-zero dominant integral weight,
and $W_{\af}$ denotes the affine Weyl group.
This characterization is given in terms of the initial direction of
a semi-infinite Lakshmibai-Seshadri path,
and is established under a suitably normalized isomorphism between the
crystal basis $\CB(\lambda)$ of the level-zero extremal weight module
$V(\lambda)$ and the crystal $\sLS$ of 
semi-infinite Lakshmibai-Seshadri paths of shape $\lambda$,
which is obtained in our previous work.
As an application, we obtain a formula 
expressing the graded character of the Demazure submodule
$V_{w_0}^{+}(\lambda)$ in terms of the specialization at $t=0$
of the symmetric Macdonald polynomial $P_{\lambda}(x\,;\,q,\,t)$.
\end{abstract}
%
%
\section{Introduction.} 
\label{sec:intro}
Extremal weight modules over the quantized universal enveloping algebra of 
a symmetrizable Kac-Moody algebra were introduced by \cite{K-mod}.
Since then, the study of level-zero extremal weight modules 
over a quantum affine algebra $U_{q}$ 
has especially been the subject of a number of papers.
Among them, we would like to mention \cite{K-lv0} and \cite{BN}, 
in which many of the crucial results on the structure of 
level-zero extremal weight modules and their crystal bases are obtained.

In our previous paper \cite{INS}, 
for an arbitrary level-zero dominant integral weight $\lambda$, 
we gave an explicit realization of the crystal basis $\CB(\lambda)$ 
of the extremal weight module $V(\lambda)$ of extremal weight $\lambda$ over $U_{q}$,
in terms of semi-infinite Lakshmibai-Seshadri paths (SiLS paths for short)
of shape $\lambda$; here, SiLS paths are analogs of Littelmann's LS paths, 
which are defined by using the semi-infinite Bruhat order 
in place of the ordinary Bruhat order on the affine Weyl group $W_{\af}$, 
and Peterson's coset representatives in place of the usual minimal(-length)
coset representatives. 
Namely, we proved that the crystal basis $\CB(\lambda)$ is 
isomorphic as a crystal to the crystal $\BB^{\si}(\lambda)$ of 
SiLS paths of shape $\lambda$. Note that both of (the crystal graphs of) 
these crystals have infinitely many connected components in general, and 
hence an isomorphism between these crystals is not uniquely determined.

The purpose of this paper is to give a characterization of 
the crystal bases $\CB_{x}^{+}(\lambda)$ of Demazure(-type) submodules 
$V_{x}^{+}(\lambda) := U_{q}^{+} S_{x}^{\norm} v_{\lambda}$ of 
the extremal weight module $V(\lambda)$ of extremal weight $\lambda$, 
where $x$ runs over the affine Weyl group $W_{\af}$. 
Here, $v_{\lambda}$ denotes the generating extremal weight vector of 
$V(\lambda)$ of weight $\lambda$, and $S_{x}^{\norm} v_{\lambda} \in V(\lambda)$ 
is an extremal weight vector of weight $x \lambda$ in the $W_{\af}$-orbit 
of $v_{\lambda}$; also, $U_{q}^{+}$ denotes the positive part of 
the quantum affine algebra $U_{q}$. This characterization is given 
in terms of the initial direction of a SiLS path, and is established 
by normalizing suitably the isomorphism 
$\CB(\lambda) \cong \BB^{\si}(\lambda)$ of crystals.

To be more precise, let $\lambda = \sum_{i \in I} m_{i} \vpi_{i}$, 
with $m_{i} \in \BZ_{\ge 0}$ for $i \in I$, be 
an arbitrary level-zero dominant integral weight, 
where the $\vpi_{i}$, $i \in I:=I_{\af} \setminus \{0\}$, are 
the level-zero fundamental weights.
We set $J := \bigl\{ i \in I \mid m_{i} = 0 \bigr\}$, and
$\BB_{x \sige}^{\si}(\lambda) := 
 \bigl\{\eta \in \BB^{\si}(\lambda) \mid x \sige \iota(\eta) \bigr\}$ 
for $x \in (W^{J})_{\af}$, where $\iota(\eta) \in (W^{J})_{\af}$ 
denotes the initial direction of a SiLS path $\eta \in \BB^{\si}(\lambda)$; 
here, $(W^{J})_{\af}$ denotes the set of Peterson's coset representatives 
for the cosets in $W_{\af}/(W_{J})_{\af}$, 
with $(W_{J})_{\af}:=W_{J} \ltimes Q_{J}^{\vee}$ 
a subgroup of $W_{\af}=W \ltimes Q^{\vee}$, 
where $W$ is th finite Weyl group, and $Q^{\vee}$ is the coroot lattice 
corresponding to the subset $I$ of $I_{\af}$. 
The following is the main result of this paper.

\begin{ithm} \label{ithm}
Let $x \in (W^{J})_{\af}$. Then, under a suitably normalized isomorphism
\begin{equation*}
\Psi_{\lambda}^{\vee} : \CB(\lambda) \stackrel{\sim}{\rightarrow} \BB^{\si}(\lambda)
\end{equation*}
of crystals, there holds the equality
\begin{equation*}
\Psi_{\lambda}^{\vee}(\CB_{x}^{+}(\lambda)) = \BB_{x \sige}^{\si}(\lambda).
\end{equation*}
\end{ithm}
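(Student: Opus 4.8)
The plan is to prove the equality $\Psi_{\lambda}^{\vee}(\CB_{x}^{+}(\lambda)) = \BB_{x\sige}^{\si}(\lambda)$ by matching two structures that are both indexed by $W_{\af}$: on the module side, the filtration of $\CB(\lambda)$ by Demazure crystals $\CB_{x}^{+}(\lambda)$, and on the path side, the filtration of $\BB^{\si}(\lambda)$ by the initial-direction condition $x \sige \iota(\eta)$. The natural strategy is an induction on the length $\ell(x)$ in $(W^{J})_{\af}$, using the recursive (Demazure-operator) descriptions of both filtrations and the compatibility of $\Psi_{\lambda}^{\vee}$ with the Kashiwara root operators.

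First I would establish the \emph{base case}. Let me think about what $x$ of minimal length looks like. The minimal element of $(W^{J})_{\af}$ relative to $\sige$ should index the "top" Demazure crystal, namely the one generated by the extremal vector $v_{\lambda}$ itself; I would check that under the normalization of $\Psi_{\lambda}^{\vee}$ (the one fixed in \cite{INS} and recalled above), the image of $\CB_{e}^{+}(\lambda)$ is exactly the set of SiLS paths $\eta$ with $\iota(\eta) \sile e$, i.e.\ $\BB_{e\sige}^{\si}(\lambda)$. This is where the \emph{normalization} of the isomorphism does the essential work: it is precisely chosen so that the generator $v_{\lambda}$ is sent to the straight-line SiLS path whose initial direction is the identity coset. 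I would pin this down by comparing weights and by verifying that both sides are the one-element sets (or the appropriate "smallest" Demazure piece) containing that distinguished path.

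Next, for the \emph{inductive step}, I would use the characterization of Demazure crystals by root operators: if $x = s_i x'$ with $\ell(x) = \ell(x') + 1$ in $(W^{J})_{\af}$, then $\CB_{x}^{+}(\lambda)$ is obtained from $\CB_{x'}^{+}(\lambda)$ by applying the Demazure-type operator $\mathfrak{D}_i$, i.e.\ $\CB_{x}^{+}(\lambda) = \{ \tilde{f}_i^{\,k} b \mid b \in \CB_{x'}^{+}(\lambda),\ k \ge 0 \} \setminus \{0\}$, provided $x' \sil x$ in the semi-infinite Bruhat order. The parallel statement on the path side—that $\BB_{x\sige}^{\si}(\lambda)$ is generated from $\BB_{x'\sige}^{\si}(\lambda)$ by the same operator $\tilde{f}_i$—must be extracted from the combinatorics of how $\tilde{f}_i$ acts on the initial direction $\iota(\eta)$, together with the covering relations of the semi-infinite Bruhat order on $(W^{J})_{\af}$. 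Since $\Psi_{\lambda}^{\vee}$ is a crystal isomorphism it commutes with $\tilde{f}_i$, so once both filtrations are shown to satisfy the same one-step recursion with matching base cases, the equality propagates to all $x$.

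The hard part will be the path-side recursion: proving that $\tilde{f}_i$ (and the full $\tilde{f}_i$-string) maps $\BB_{x'\sige}^{\si}(\lambda)$ onto $\BB_{x\sige}^{\si}(\lambda)$, equivalently controlling exactly how $\iota(\tilde{f}_i\eta)$ relates to $\iota(\eta)$ under the semi-infinite Bruhat order. This requires a careful analysis of the local structure of SiLS paths near the initial direction—how the root operator modifies the first step of the path and hence shifts $\iota(\eta)$ by (at most) a simple reflection $s_i$—and matching this against the cover relations $x' \sil s_i x'$ in $(W^{J})_{\af}$, including the delicate case distinctions governed by Peterson's coset representatives (whether $s_i x'$ stays in $(W^{J})_{\af}$ or folds back). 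I expect the argument to rely essentially on the explicit formulas for the initial direction and for the action of $\tilde{e}_i, \tilde{f}_i$ on SiLS paths, together with the lifting property of the semi-infinite Bruhat order with respect to $(W_J)_{\af}$-cosets; verifying the two inclusions (that $\tilde{f}_i$ does not leave $\BB_{x\sige}^{\si}(\lambda)$ and that it reaches every element thereof) will be the most technical point.
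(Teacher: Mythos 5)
Your one-step recursions are indeed true, and they are exactly what the paper proves (Proposition~\ref{prop:key} on the module side, Proposition~\ref{prop:key-p} on the path side, both governed by the sign of $\pair{\alpha_{j}^{\vee}}{x\lambda}$, i.e.\ by the semi-infinite covering relation of Lemma~\ref{lem:si2}, not by ordinary length); also, working directly with the $+$ statement rather than deducing it from the $-$ statement via $\vee$ is only a cosmetic difference. The gap lies in the scaffolding that is supposed to turn these recursions into a proof. Your induction on $\ell(x)$ has no valid base case: the relevant order on $(W^{J})_{\af}$ is the semi-infinite order, which has \emph{no minimal element} (the semi-infinite length $\sell$ takes arbitrarily negative values on translations), and no Demazure piece is small. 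Indeed, by Lemma~\ref{lem:zt01} (with $y=e$) the set $\CB_{e}^{-}(\lambda)$ contains $u^{\bc}$ for \emph{every} $\bc \in \Par(\lambda)$, hence meets every one of the infinitely many connected components of $\CB(\lambda)$; dually the same holds for every $\CB_{x}^{+}(\lambda)$. So the proposed verification of the base case (``one-element sets'', ``comparing weights'') cannot be carried out. For the same reason, normalizing $\Psi_{\lambda}^{\vee}$ by where it sends $v_{\lambda}$ only pins it down on the single component $\Bo \cong \sLSo$; the normalization the theorem actually requires prescribes the image of the extremal seed of every component, namely $u^{\bc} \mapsto \eta^{\Theta^{-1}(\bc)}$ (see \eqref{eq:Psi} and \eqref{eq:Psivee}).

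The deeper missing idea is this: Kashiwara operators never leave a connected component, so any recursion built from $e_{j}/f_{j}$-strings can propagate membership information only \emph{within} the component of a seed; it can never decide which elements of the other components lie in $\CB_{x}^{\pm}(\lambda)$. What is needed as the true base case is a statement pinning down, for every component and every translation-like extremal element, its Demazure membership; in the paper this is Proposition~\ref{prop:zt}: $S_{y}(u^{\bc}) \in \CB_{x}^{-}(\lambda)$ if and only if $y \sige x$. Its proof is not crystal combinatorics at all: it uses the Beck--Nakajima embedding $\Phi_{\lambda} : V(\lambda) \hookrightarrow \bigotimes_{i \in I} V(\vpi_{i})^{\otimes m_{i}}$, the Schur-polynomial operators $s_{\bc}(z^{-1})$, and the compatibility of the submodules $V_{x}^{-}(\lambda)$ with global bases --- module-theoretic input entirely absent from your plan. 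Finally, because the semi-infinite order is not well-founded, the paper does not induct on $x$ at all: for a fixed crystal element and fixed $x$, Lemma~\ref{lem:seq} produces a finite string $j_{1},\dots,j_{p}$ along which the operators $f_{j}^{\max}$ drive the element to the translation form $S_{z_{\xi}t_{\xi}}\eta^{C}$ while moving $x$ accordingly, and Lemmas~\ref{lem:subset} and \ref{lem:supset} induct on $p$, with Proposition~\ref{prop:zt} as the case $p=0$. Your argument would need both this device and the seed proposition in order to close.
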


\noindent
Here we should mention that although the statement of the theorem above is 
of the form similar to that of Kashiwara's result in \cite{K-Lit}
(see also \cite[Chapitre~9]{K-Fr}) or Littelmann's result in \cite[\S5]{Lit94} 
in the case of integrable highest weight modules, 
its proof is much more difficult because both of the crystals 
$\CB(\lambda)$ and $\BB^{\si}(\lambda)$ may have infinitely many
connected components, and because the $\lambda$-weight space of $V(\lambda)$ 
may be infinite-dimensional even if these crystals are connected,
in contrast to the case of ordinary highest weight crystals.

As an application of Theorem~\ref{ithm} above, 
we compute the graded character $\gch V_{w_{0}}^{+}(\lambda)$ 
of the Demazure submodule $V_{w_{0}}^{+}(\lambda)$ for 
the longest element $w_{0} \in W$, and obtain a formula expressing 
$\gch V_{w_{0}}^{+}(\lambda)$ in terms of the specialization 
$P_{\lambda}(x\,;\,q,\,0)$ of the symmetric Macdonald polynomial 
$P_{\lambda}(x\,;\,q,\,t)$. Namely, we prove (see Theorem~\ref{thm:grch2}) that
\begin{equation*}
\gch V_{w_{0}}^{+}(\lambda) =
\biggl(\prod_{i \in I} \prod_{r=1}^{m_{i}}(1-q^{r})\biggr)^{-1} 
P_{\lambda}(x\,;\,q,\,0), 
\end{equation*}
where $\lambda = \sum_{i \in I} m_{i} \vpi_{i}$ is as above. 
Here the right-hand side of the equality above 
is called a $q$-Whittaker function in \cite{BF}, where 
the simply-laced cases are mainly treated; 
hence our result gives a new representation-theoretic interpretation of 
$q$-Whittaker functions for all untwisted cases.

Also, for each $w \in W^{J}$, we introduce a certain quotient 
$U_{w}^{+}(\lambda)$ of $V_{w}^{+}(\lambda)$, and give 
a characterization (Theorem~\ref{thm:quotient2}) 
of its crystal basis as a subset of 
$\BB^{\si}(\lambda)$ (or, more precisely, 
its connected component $\BB^{\si}_{0}(\lambda)$)
in terms of an initial direction. 
In our forthcoming paper \cite{LNSSS3}, 
we will prove that the graded character of $U_{w}^{+}(\lambda)$ is identical 
to the specialization at $t = 0$ of 
the nonsymmetric Macdonald polynomial $E_{w\lambda}(x;\,q,\,t)$; 
this result generalizes \cite[Corollary~7.10]{LNSSS2}, 
since $E_{w_{0}\lambda}(x;\,q,\,0)=P_{\lambda}(x;\,q,\,0)$, 
where $w_{0}$ is the longest element of the finite Weyl group $W$. 

This paper is organized as follows.
In \S\ref{sec:SiLS}, we fix our notation 
for untwisted affine root data, and recall the definition of SiLS paths.
Also, we briefly review our results in \cite{INS} that we use in this paper.
In \S\ref{sec:extremal}, we first recall basic properties of 
extremal weight modules and their crystal bases.
Then, we review some results in \cite{BN} that we need in this paper.
In \S\ref{sec:main}, we introduce Demazure submodules 
$V_{x}^{+}(\lambda)$, $x \in (W^{J})_{\af}$, of 
the extremal weight module $V(\lambda)$, and 
their crystal bases $\CB_{x}^{+}(\lambda) \subset \CB(\lambda)$, $x \in (W^{J})_{\af}$. 
Also, we state our main result, i.e., Theorem~\ref{ithm} above.
In \S\ref{sec:proof}, we prove some fundamental properties of 
the crystal bases $\CB_{x}^{+}(\lambda)$, $x \in (W^{J})_{\af}$, of 
Demazure submodules $V_{x}^{+}(\lambda)$, $x \in (W^{J})_{\af}$, 
and similar properties for the crystals 
$\BB^{\si}_{x \sige}(\lambda) \subset \sLS$, $x \in (W^{J})_{\af}$.
Then, after a suitable normalization of the isomorphism 
$\CB(\lambda) \cong \sLS$ of crystals, 
we finally establish our main result ($=$ Theorem~\ref{ithm}) 
stated in \S\ref{sec:main}.
In \S\ref{sec:gch}, we obtain the graded character formula 
above for the Demazure submodule $V_{w_{0}}^{+}(\lambda)$; 
in \S\ref{sec:quotient}, we introduce the quotient 
$U_{w}^{+}(\lambda)$ of $V_{w}^{+}(\lambda)$ 
for each $w \in W^{J}$, and give a characterization of 
its crystal basis. 
%
%
\section{Semi-infinite Lakshmibai-Seshadri paths.}
\label{sec:SiLS}
%
%
\subsection{Untwisted affine root data.}
\label{subsec:affalg}
Let $\Fg_{\af}$ be an untwisted affine Lie algebra 
over $\BC$ with Cartan matrix $A=(a_{ij})_{i,\,j \in I_{\af}}$. 
Let 
$\Fh_{\af}=
\bigl(\bigoplus_{j \in I_{\af}} \BC \alpha_{j}^{\vee}\bigr) \oplus \BC D$
denote the Cartan subalgebra of $\Fg_{\af}$, where 
$\bigl\{\alpha_{j}^{\vee}\bigr\}_{j \in I_{\af}} \subset \Fh_{\af}$ is 
the set of simple coroots, and 
$D \in \Fh_{\af}$ is the scaling element (or the degree operator). 
We denote by 
$\bigl\{\alpha_{j}\bigr\}_{j \in I_{\af}} \subset \Fh_{\af}^{\ast}$ 
the set of simple roots, and by 
$\Lambda_{j} \in \Fh_{\af}^{\ast}$, $j \in I_{\af}$, 
the fundamental weights; 
note that $\pair{D}{\alpha_{j}}=\delta_{j,0}$ and 
$\pair{D}{\Lambda_{j}}=0$ for $j \in I_{\af}$, 
where $\pair{\cdot\,}{\cdot}:
\Fh_{\af} \times \Fh_{\af}^{\ast} \rightarrow \BC$ denotes 
the canonical pairing of $\Fh_{\af}$ and 
$\Fh_{\af}^{\ast}:=\Hom_{\BC}(\Fh_{\af},\,\BC)$. 
Let $\delta=\sum_{j \in I} a_{j}\alpha_{j} \in \Fh_{\af}^{\ast}$ and 
$c=\sum_{j \in I} a^{\vee}_{j} \alpha_{j}^{\vee} \in \Fh_{\af}$ denote 
the null root and the canonical central element of 
$\Fg_{\af}$, respectively. 
The Weyl group $W_{\af}$ of $\Fg_{\af}$ is defined to be the subgroup
$\langle r_{j} \mid j \in I_{\af} \rangle \subset \GL(\Fh_{\af}^{\ast})$ 
generated by the simple reflections $r_{j}$
associated with $\alpha_{j}$ for $j \in I_{\af}$, 
with length function $\ell:W_{\af} \rightarrow \BZ_{\ge 0}$ and 
unit element $e \in W_{\af}$. 
Denote by $\rr$ the set of real roots, i.e., 
$\rr:=\bigl\{x\alpha_{j} \mid x \in W_{\af},\,j \in I_{\af}\bigr\}$, 
and by $\prr \subset \rr$ the set of positive real roots; 
for $\beta \in \rr$, we denote by $\beta^{\vee} \in \Fh_{\af}$ 
the dual root of $\beta$, and by $r_{\beta} \in W_{\af}$ 
the reflection with respect to $\beta$. 
We take a dual weight lattice $P_{\af}^{\vee}$ 
and a weight lattice $P_{\af}$ for $\Fg_{\af}$ as follows:
%
%
\begin{equation} \label{eq:lattices}
P_{\af}^{\vee}=
\left(\bigoplus_{j \in I_{\af}} \BZ \alpha_{j}^{\vee}\right) \oplus \BZ D \, 
\subset \Fh
\quad \text{and} \quad 
P_{\af} = 
\left(\bigoplus_{j \in I_{\af}} \BZ \Lambda_{j}\right) \oplus 
   \BZ \delta \subset \Fh^{\ast}; 
\end{equation}
it is clear that $P_{\af}$ contains $\alpha_{j}$ for all $j \in I_{\af}$, 
and that $P_{\af} \cong \Hom_{\BZ}(P_{\af}^{\vee},\,\BZ)$. Also, we set
\begin{equation*}
Q_{\af}:=\bigoplus_{j \in I_{\af}} \BZ \alpha_{j} \quad \text{and} \quad
Q_{\af}^{\pm}:=\pm \sum_{j \in I_{\af}} \BZ_{\ge 0} \alpha_{j}.
\end{equation*}

We take and fix a distinguished vertex $0 \in I_{\af}$ such that 
$a_0 = a_0^{\vee} =1$, and set $I := I_{\af} \setminus \{ 0 \}$; 
note that the subset $I$ of $I_{\af}$ is 
the index set for the canonical finite-dimensional 
simple Lie subalgebra $\Fg$ of $\Fg_{\af}$. 
For each $i \in I$, we define $\varpi_i := 
\Lambda_i - a_{i}^{\vee} \Lambda_0$; 
note that $\pair{c}{\varpi_i}=0$ for all $i \in I$. 
Set 
\begin{equation*}
Q := \bigoplus_{j \in I} \BZ \alpha_j, \qquad
Q^{+} := \sum_{j \in I} \BZ_{\ge 0} \alpha_j, \qquad
Q^{\vee} := \bigoplus_{j \in I} \BZ \alpha_j^{\vee}, \qquad
P^{+} := \sum_{i \in I} \BZ_{\ge 0} \varpi_i; 
\end{equation*} 
we call an element of $P^{+}$ a level-zero dominant integral weight, 
which can be thought of as a dominant integral weight for $\Fg$. 
Also, we set $W := \langle r_{j} \mid j \in I \rangle \subset W_{\af}$, 
which can be thought of as the (finite) Weyl group of $\Fg$. 
For $\xi \in Q^{\vee}$, let $t_{\xi} \in W_{\af}$ denote 
the translation in $\Fh^{\ast}_{\af}$ by $\xi$ (see \cite[\S6.5]{Kac}). 
Then we know from \cite[Proposition 6.5]{Kac} that 
$\bigl\{ t_{\xi} \mid \xi \in Q^{\vee} \bigr\}$ forms 
an abelian normal subgroup of $W_{\af}$, 
for which $t_{\xi} t_{\zeta} = t_{\xi + \zeta}$, 
$\xi,\,\zeta \in Q^{\vee}$, and 
$W_{\af} = W \ltimes \bigl\{ t_{\xi} \mid \xi \in Q^{\vee} \bigr\}$ hold; 
remark that for $w \in W$ and $\xi \in Q^{\vee}$, we have 
%
%
\begin{align}\label{eq:lv0action}
w t_{\xi} \mu = w \mu - \pair{\xi}{\mu}\delta \quad 
\text{if $\mu \in \Fh_{\af}^{*}$ satisfies $\pair{c}{\mu}=0$}.
\end{align}
We know from \cite[Proposition 6.3]{Kac} that
\begin{align*}
\begin{split}
\rr & = 
\bigl\{ \alpha + n \delta \mid \alpha \in \Delta,\, n \in \BZ \bigr\}, \\
\prr & = 
\Delta^{+} \sqcup 
\bigl\{ \alpha + n \delta \mid \alpha \in \Delta,\, n \in \BZ_{> 0}\bigr\},
\end{split}
\end{align*}
where $\Delta := \Delta_{\af} \cap Q$ is 
the (finite) root system for $\Fg$, and 
$\Delta^{+} := \Delta \cap \sum_{i \in I} \BZ_{\ge 0} \alpha_i$. 
Note that if $\beta \in \Delta_{\af}$ is 
of the form $\beta = \alpha + n \delta$ 
with $\alpha \in \Delta $ and $n \in \BZ$, then 
$r_{\beta} =r_{\alpha} t_{n\alpha^{\vee}}$.

For a subset $J$ of $I$, we set 
\begin{align*}
Q_J &:= \bigoplus_{j \in J} \BZ \alpha_j, &
Q_J^{\vee} &:= \bigoplus_{j \in J} \BZ \alpha_j^{\vee}, &
\QJp{J} &:= \sum_{j \in J} \BZ_{\ge 0} \alpha_j^{\vee}, \\[3mm]
\Delta_J &:= \Delta \cap Q_J, & 
\Delta_J^+ &:= 
 \Delta^{+} \cap \sum_{i \in I} \BZ_{\ge 0}  \alpha_i , &
W_J &:= \langle r_j \mid j \in J \rangle . &
\end{align*}
Also, denote by 
%
%
\begin{equation} \label{eq:prj}
[\,\cdot\,]=[\,\cdot\,]_{I \setminus J} : 
Q^{\vee} \twoheadrightarrow Q_{I \setminus J}^{\vee}
\end{equation}
the projection from $Q^{\vee}=Q_{I \setminus J}^{\vee} \oplus Q_{J}^{\vee}$
onto $Q_{I \setminus J}^{\vee}$ with kernel $Q_{J}^{\vee}$. 
Let $W^J$ denote the set of minimal(-length) coset representatives 
for the quotient $W / W_J$; we know from \cite[\S2.4]{BB} that 
%
%
\begin{equation} \label{eq:mcr}
W^J = \bigl\{ w \in W \mid 
\text{$w \alpha \in \Delta^+$ for all $\alpha \in \Delta_J^+$}\bigr\}.
\end{equation}
For $w \in W$, we denote by $\mcr{w}=\mcr{w}^{J} \in W^J$ 
the minimal coset representative for the coset $w W_J$ in $W/W_J$.
%
%
\subsection{Peterson's coset representatives.}
\label{subsec:W^J_af}

Let $J$ be a subset of $I$. Following \cite{Pet97} 
(see also \cite[\S10]{LS10}), we define
\begin{align}
(\Delta_J)_{\af} 
  & := \bigl\{ \alpha + n \delta \mid 
  \alpha \in \Delta_J , n \in \BZ \bigr\} \subset \Delta_{\af}, \\
(\Delta_J)_{\af}^{+}
  &:= (\Delta_J)_{\af} \cap \prr = 
  \Delta_J^+ \sqcup \bigl\{ \alpha + n \delta \mid 
  \alpha \in \Delta_J,\,n \in \BZ_{> 0} \bigr\}, \\
\label{eq:stabilizer}
(W_J)_{\af} 
 & := W_J \ltimes \bigl\{ t_{\xi} \mid \xi \in Q_J^{\vee} \bigr\}
   = \langle r_{\beta} \mid \beta \in (\Delta_J)_{\af}^{+} \rangle, \\
\label{eq:Pet}
(W^J)_{\af}
 &:= \bigl\{ x \in W_{\af} \mid 
 \text{$x\beta \in \prr$ for all $\beta \in (\Delta_J)_{\af}^+$} \bigr\}. 
\end{align}
Then we know the following from \cite{Pet97} 
(see also \cite[Lemma~10.6]{LS10}).
%
%
\begin{prop} \label{prop:P}
For each $x \in W_{\af}$, there exist a unique 
$x_1 \in (W^J)_{\af}$ and a unique $x_2 \in (W_J)_{\af}$ 
such that $x = x_1 x_2$.
\end{prop}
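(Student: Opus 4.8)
**

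The plan is to establish existence and uniqueness of the decomposition $x = x_1 x_2$ with $x_1 \in (W^J)_{\af}$ and $x_2 \in (W_J)_{\af}$ by exhibiting $(W^J)_{\af}$ as a genuine set of coset representatives for $W_{\af}/(W_J)_{\af}$. The key structural fact I would rely on is the characterization of $(W_J)_{\af}$ as the reflection subgroup $\langle r_{\beta} \mid \beta \in (\Delta_J)_{\af}^+ \rangle$ given in~\eqref{eq:stabilizer}, together with the root-theoretic description~\eqref{eq:Pet} of $(W^J)_{\af}$. Since $(\Delta_J)_{\af}$ is itself the affine root system attached to the finite root system $\Delta_J$ (with null root the restriction of $\delta$), the subgroup $(W_J)_{\af}$ is an affine Weyl group in its own right, and the inclusion $(\Delta_J)_{\af}^+ \subset \prr$ makes it a \emph{reflection subgroup} of $W_{\af}$ whose positive system is inherited from that of $W_{\af}$.

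\emph{Existence.} First I would produce, for each coset $x(W_J)_{\af}$, a representative lying in $(W^J)_{\af}$ by a minimal-length / descent argument. Concretely, among all elements of the coset $x(W_J)_{\af}$, I would choose one, call it $x_1$, and show that if $x_1 \beta \in -\prr$ for some $\beta \in (\Delta_J)_{\af}^+$, then replacing $x_1$ by $x_1 r_{\beta}$ stays in the same coset (because $r_\beta \in (W_J)_{\af}$) while strictly decreasing $\ell(x_1)$. This uses the standard fact that for $\beta \in \prr$, $\ell(x_1 r_\beta) < \ell(x_1)$ precisely when $x_1 \beta \in -\prr$. Iterating and using that $\ell$ takes values in $\BZ_{\ge 0}$, the process terminates at an element $x_1$ with $x_1 \beta \in \prr$ for \emph{all} $\beta \in (\Delta_J)_{\af}^+$, i.e.\ $x_1 \in (W^J)_{\af}$ by~\eqref{eq:Pet}. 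Setting $x_2 := x_1^{-1} x \in (W_J)_{\af}$ gives the decomposition.

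\emph{Uniqueness.} Here I would argue that if $x_1 \in (W^J)_{\af}$ and $w \in (W_J)_{\af}$ with $x_1 w \in (W^J)_{\af}$ as well, then $w = e$. The natural route is to induct on $\ell(w)$ within the Coxeter group $(W_J)_{\af}$: if $w \neq e$, pick a simple reflection $r_\beta$ (relative to the simple system of $(\Delta_J)_{\af}^+$) with $\ell(w r_\beta) < \ell(w)$, so that $w$ sends the corresponding $\beta \in (\Delta_J)_{\af}^+$ into $-(\Delta_J)_{\af}^+ \subset -\prr$; then $x_1 \in (W^J)_{\af}$ forces $x_1$ to map $-\prr$ roots coming from $(\Delta_J)_{\af}$ back to $-\prr$, contradicting $x_1 w \in (W^J)_{\af}$. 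Thus $w = e$, giving uniqueness; equivalently, one checks $(W^J)_{\af} \cap (W_J)_{\af} = \{e\}$ and combines it with the existence of a representative in each coset.

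The main obstacle I anticipate is \emph{termination and well-definedness of the minimal-length argument in the affine (infinite) setting}: unlike the finite case, $(\Delta_J)_{\af}^+$ is infinite, so I must be careful that the descent procedure halts. The correct formulation is that $x_1 \beta \in -\prr$ for some $\beta \in (\Delta_J)_{\af}^+$ is equivalent to $x_1 \gamma \in -\prr$ for some \emph{simple} root $\gamma$ of the sub-root-system $(\Delta_J)_{\af}$, reducing each step to finitely many checks; since each step lowers $\ell(x_1) \in \BZ_{\ge 0}$, termination is guaranteed. Making this reduction precise—that the relevant "simple" reflections generate $(W_J)_{\af}$ and that~\eqref{eq:stabilizer} indeed presents $(W_J)_{\af}$ as a Coxeter group with the inherited positive system—is the delicate point, and it is exactly where I would invoke the cited results of \cite{Pet97} and \cite[Lemma~10.6]{LS10} to underpin the reflection-subgroup structure.
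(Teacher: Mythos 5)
The paper does not actually prove Proposition~\ref{prop:P}: it quotes the result from \cite{Pet97} (see also \cite[Lemma~10.6]{LS10}), so there is no internal argument to compare yours against. Your proof is correct and is the standard coset-representative argument for reflection subgroups; it has the merit of making the statement self-contained where the paper relies on unpublished lecture notes. Concretely: for existence, a minimal-length element $x_1$ of the coset $x(W_J)_{\af}$ (equivalently, the terminal element of your descent) satisfies $x_1\beta \in \prr$ for all $\beta \in (\Delta_J)_{\af}^{+}$, by the standard equivalence $\ell(x_1 r_\beta)<\ell(x_1) \iff x_1\beta \in -\prr$ together with $r_\beta \in (W_J)_{\af}$ from \eqref{eq:stabilizer}; for uniqueness, a non-identity $w \in (W_J)_{\af}$ inverts some $\beta \in (\Delta_J)_{\af}^{+}$ into $-(\Delta_J)_{\af}^{+}$, which is incompatible with having both $x_1 \in (W^J)_{\af}$ and $x_1 w \in (W^J)_{\af}$. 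Two caveats. First, the obstacle you anticipate about termination is not a real one: each descent step strictly decreases $\ell(x_1) \in \BZ_{\ge 0}$, so the process halts even though $(\Delta_J)_{\af}^{+}$ is infinite (or simply take a minimal-length representative at the outset); the reduction to simple roots of the subsystem is a refinement you do not need. Second, be careful not to cite \cite[Lemma~10.6]{LS10} for your ``delicate point,'' since that lemma \emph{is} the statement being proved and the citation would be circular. The inputs you genuinely need are (a) $(W_J)_{\af}=\langle r_\beta \mid \beta \in (\Delta_J)_{\af}^{+}\rangle$, which the paper records in \eqref{eq:stabilizer}, and (b) every $w \ne e$ in $(W_J)_{\af}$ has an inversion in $(\Delta_J)_{\af}^{+}$; and (b) is immediate once one observes that $(W_J)_{\af}=W_J \ltimes \bigl\{t_\xi \mid \xi \in Q_J^{\vee}\bigr\}$ is itself the affine Weyl group (a product over the irreducible components of $\Delta_J$) whose set of positive real roots is exactly $(\Delta_J)_{\af}^{+}$ — alternatively it follows from the Deodhar--Dyer theory of reflection subgroups, either of which is a legitimate, non-circular reference.
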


We define a (surjective) map $\PJ : W_{\af} \rightarrow (W^J)_{\af}$ 
by $\PJ (x) := x_1$ if $x= x_1 x_2$ with $x_1 \in (W^J)_{\af}$ and 
$x_2 \in (W_J)_{\af}$.

An element $\xi \in Q^{\vee}$ is said to be $J$-adjusted 
if $\pair{\xi}{\gamma} \in \bigl\{ -1,\,0 \bigr\}$ 
for all $\gamma \in \Delta_{J}^{+}$ (see \cite[Lemma~3.8]{LNSSS}). 
Let $\jad$ denote the set of $J$-adjusted elements.
%
%
\begin{lem}[{\cite[Lemma~2.3.5]{INS}}] \label{lem:J-adj}
\mbox{}
\begin{enu}

\item For each $\xi \in Q^{\vee}$, there exists 
a unique $\phi_J (\xi) \in Q_J^{\vee}$ such that 
$\xi + \phi_J (\xi) \in \jad$. In particular, 
$\xi \in \jad$ if and only if $\phi_{J}(\xi)=0$. 

\item For each $\xi \in Q^{\vee}$, 
the element $\PJ (t_{\xi}) \in (W^{J})_{\af}$ is of the form 
$\PJ (t_{\xi}) = z_{\xi} t_{\xi + \phi_J (\xi)}$ 
for a specific element $z_{\xi} \in W_J$. 
Also, $\PJ (w t_{\xi}) = 
\mcr{w} z_{\xi} t_{\xi + \phi_J (\xi)}$ 
for every $w \in W$ and $\xi \in Q^{\vee}$.

\item 
We have 
%
%
\begin{equation}\label{eq:W^J_af}
(W^J)_{\af} = 
\bigl\{ w z_{\xi} t_{\xi} \mid w \in W^J,\,\xi \in \jad \bigr\}.
\end{equation}

\end{enu}
\end{lem}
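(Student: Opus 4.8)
The three assertions build on one another, so I would prove them in order. For part (1), I would first observe that the condition $\pair{\xi+\phi}{\gamma} \in \{-1,0\}$ for $\gamma \in \Delta_{J}^{+}$ depends only on the restriction of $\pair{\xi}{\cdot}$ to $Q_{J}$, which reduces everything to a statement internal to the finite root system $\Delta_{J}$: writing $v \in \Hom_{\BZ}(Q_{J},\BZ)$ for this restriction and noting that adding $\phi \in Q_{J}^{\vee}$ shifts $v$ by the image of the coroot lattice, the claim becomes that each coset of $Q_{J}^{\vee}$ in the integral coweight lattice of $\Delta_{J}$ contains exactly one $u$ with $\pair{u}{\gamma} \in \{-1,0\}$ for all $\gamma \in \Delta_{J}^{+}$. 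Since the pairings are automatically integral, the relevant region is the closed alcove $-\ol{A}_{0}$ opposite to the fundamental alcove $\ol{A}_{0}=\{u \mid 0 \le \pair{u}{\gamma} \le 1\ \forall \gamma \in \Delta_{J}^{+}\}$. For uniqueness I would argue that if $u,u'$ both lie in this region with $u'-u=\phi \in Q_{J}^{\vee}$, then $\pair{\phi}{\gamma} \in \{-1,0,1\}$ for every $\gamma$; choosing $w \in W_{J}$ with $w\phi$ dominant, $w\phi$ becomes a dominant coweight in $Q_{J}^{\vee}$ with $\pair{w\phi}{\theta_{J}} \le 1$ for the highest root $\theta_{J}$, hence is either $0$ or a minuscule coweight, and since minuscule coweights never lie in the coroot lattice we get $\phi=0$. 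For existence I would use the standard fact that $\ol{A}_{0}$ meets every coset in $P^{\vee}/Q^{\vee}$ (its special vertices represent $\Omega=P^{\vee}/Q^{\vee}$) and transport it through the alcove-preserving map $u \mapsto -u$; the assertion $\xi \in \jad \iff \phi_{J}(\xi)=0$ is then immediate.

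For part (2) the essential step is, for a $J$-adjusted $\eta$, to produce $z_{\eta} \in W_{J}$ with $z_{\eta}t_{\eta} \in (W^{J})_{\af}$. Using the level-zero action $t_{\eta}(\gamma + n\delta) = \gamma + (n-\pair{\eta}{\gamma})\delta$ together with $z_{\eta}\delta=\delta$, I would compute $z_{\eta}t_{\eta}\beta$ for each $\beta \in (\Delta_{J})_{\af}^{+}$ and test the positivity condition of \eqref{eq:Pet}. A direct case analysis shows that the only binding constraints are $z_{\eta}\gamma \in \Delta^{+}$ whenever $\gamma \in \Delta_{J}^{+}$ has $\pair{\eta}{\gamma}=0$, and $z_{\eta}\gamma \in \Delta^{-}$ whenever $\pair{\eta}{\gamma}=-1$, every case with a strictly positive coefficient of $\delta$ being automatic. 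Thus I need $z_{\eta}$ whose inversion set inside $\Delta_{J}^{+}$ is exactly $N := \{\gamma \in \Delta_{J}^{+} \mid \pair{\eta}{\gamma}=-1\}$. The key point is that $N$ is biconvex: its complement $\{\pair{\eta}{\gamma}=0\}$ is closed under root addition, while $N$ is closed vacuously, since $\gamma_{1},\gamma_{2}\in N$ would force $\pair{\eta}{\gamma_{1}+\gamma_{2}}=-2$, so $\gamma_{1}+\gamma_{2}$ cannot be a root precisely because $\eta$ is $J$-adjusted. Hence $N$ is the inversion set of a unique $z_{\eta}\in W_{J}$, giving $\PJ(t_{\eta})=z_{\eta}t_{\eta}$; for general $\xi$ one has $\PJ(t_{\xi})=\PJ(t_{\xi+\phi_{J}(\xi)})$ because $t_{-\phi_{J}(\xi)}\in(W_{J})_{\af}$, whence $z_{\xi}:=z_{\xi+\phi_{J}(\xi)}$.

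To obtain the formula for $\PJ(wt_{\xi})$ I would reduce to $\eta=\xi+\phi_{J}(\xi)\in\jad$ and factor $w=\mcr{w}u$ with $u\in W_{J}$. I would then verify the two defining properties of Proposition~\ref{prop:P} for $x_{1}=\mcr{w}z_{\eta}t_{\eta}$: that $x_{1}\in(W^{J})_{\af}$ follows because $x_1\beta=\mcr{w}(z_{\eta}t_{\eta}\beta)$, and each $z_{\eta}t_{\eta}\beta$ is either of the form $\alpha+n\delta$ with $n>0$, or a root of $\Delta_{J}^{+}$ on which $\mcr{w}$ is positive by \eqref{eq:mcr}; and that $x_{1}^{-1}wt_{\eta}\in(W_{J})_{\af}$ follows from $t_{-\eta}(z_{\eta}^{-1}u)t_{\eta}=t_{(z_{\eta}^{-1}u)\eta-\eta}\,(z_{\eta}^{-1}u)$ together with the fact that $v\eta-\eta\in Q_{J}^{\vee}$ for all $v\in W_{J}$ (since $r_{j}\eta-\eta=-\pair{\eta}{\alpha_{j}}\alpha_{j}^{\vee}$). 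Finally, part (3) follows by applying $\PJ$ to all of $W_{\af}=W\ltimes\{t_{\xi}\}$: as $\PJ$ restricts to the identity on $(W^{J})_{\af}$, we get $(W^{J})_{\af}=\PJ(W_{\af})=\{\mcr{w}z_{\xi}t_{\xi+\phi_{J}(\xi)}\}$, and reparametrizing by $\eta=\xi+\phi_{J}(\xi)$ ranging over $\jad$ (part (1)) and $\mcr{w}$ over $W^{J}$ yields \eqref{eq:W^J_af}.

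The main obstacle I anticipate is the construction of $z_{\eta}$ in part (2): one must first recognize that the correct element is governed by the inversion set $N$, and then the proof that $N$ is biconvex is exactly where $J$-adjustedness is indispensable, as it is the lower bound $\pair{\eta}{\gamma}\ge -1$ that prevents $\gamma_{1}+\gamma_{2}$ from being a root. The remaining bookkeeping — translating the positivity condition of \eqref{eq:Pet} into the conditions on $z_{\eta}$, and propagating the finite part $\mcr{w}$ through Proposition~\ref{prop:P} — is routine, but I would carry it out with care for the roots $\gamma+n\delta$ with $n>0$, which I expect to be sent into $\prr$ automatically.
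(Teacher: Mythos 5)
The paper does not actually prove this lemma---it is quoted verbatim from \cite[Lemma~2.3.5]{INS}---so the comparison is between your self-contained argument and a bare citation. Your proof is correct, and it is a genuinely independent, purely root-combinatorial route. In part (1), reducing to the finite system $\Delta_{J}$ inside $\Hom_{\BZ}(Q_{J},\BZ)$ is legitimate (nothing of $Q_{J}^{\vee}$ is lost under restriction because the Cartan matrix of $J$ is invertible); existence follows, as you say, because $0$ and the minuscule coweights represent all classes modulo the coroot lattice, and your uniqueness argument (a $W_{J}$-dominant coweight pairing at most $1$ with the highest root is $0$ or minuscule, and nonzero minuscule coweights never lie in the coroot lattice) is sound---just note that both steps should be run component by component when $\Delta_{J}$ is reducible, since you phrase them with a single highest root $\theta_{J}$. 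Part (2) carries the real content, and your two key observations are exactly right: the positivity test \eqref{eq:Pet} for $z t_{\eta}$ with $\eta \in \jad$ collapses to the two conditions $z\gamma \in \Delta^{+}$ when $\pair{\eta}{\gamma}=0$ and $z\gamma \in \Delta^{-}$ when $\pair{\eta}{\gamma}=-1$ (with $J$-adjustedness also guaranteeing that no $\delta$-coefficient ever becomes negative, i.e.\ that some $z$ can work at all), and the set $N=\bigl\{\gamma \in \Delta_{J}^{+} \mid \pair{\eta}{\gamma}=-1\bigr\}$ is biconvex precisely because $\pair{\eta}{\gamma_{1}+\gamma_{2}}=-2$ is impossible for a root, so $N$ is the inversion set of a unique $z_{\eta}\in W_{J}$. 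The remaining verifications---that $\mcr{w}z_{\eta}t_{\eta}$ satisfies both requirements of Proposition~\ref{prop:P} (using $v\eta-\eta\in Q_{J}^{\vee}$ for $v\in W_{J}$ and \eqref{eq:mcr} for the finite part), and the derivation of (3) by applying $\PJ$ to all of $W_{\af}$---are routine and correct. What the citation buys the authors is brevity; what your route buys is an elementary proof that makes visible exactly where the $J$-adjusted condition is indispensable.
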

%
%
\begin{lem}[{\cite[Lemma~2.3.6]{INS}}] \label{lem:r_i}
Let $x \in (W^J)_{\af}$ and $j \in I_{\af}$. 
Then, $x^{-1} \alpha_j \notin (\Delta_J)_{\af}$ if and only if 
$r_j x \in (W^J)_{\af}$.
\end{lem}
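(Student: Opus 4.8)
The plan is to reduce everything to the standard behaviour of a simple reflection on the positive real roots, namely that $r_j$ permutes the set $\prr \setminus \{\alpha_j\}$ and sends $\alpha_j$ to $-\alpha_j$; in particular, for a root $\gamma \in \prr$ one has $r_j \gamma \in \prr$ if and only if $\gamma \neq \alpha_j$. I would record this permutation property first (citing the standard affine root system facts, e.g. from \cite{Kac}), since it is the only external input needed.

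First I would unwind the defining condition for $r_j x$. Since $x \in (W^J)_{\af}$, every $\beta \in (\Delta_J)_{\af}^{+}$ satisfies $x\beta \in \prr$, and $(r_j x)\beta = r_j(x\beta)$. By the permutation property, $r_j(x\beta) \in \prr$ precisely when $x\beta \neq \alpha_j$. Hence $r_j x \in (W^J)_{\af}$ --- which by definition means $(r_j x)\beta \in \prr$ for all $\beta \in (\Delta_J)_{\af}^{+}$ --- holds if and only if $x\beta \neq \alpha_j$ for every $\beta \in (\Delta_J)_{\af}^{+}$, that is, if and only if $x^{-1}\alpha_j \notin (\Delta_J)_{\af}^{+}$.

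It then remains to replace $(\Delta_J)_{\af}^{+}$ by the full symmetric set $(\Delta_J)_{\af}$ in the last condition, and this is the one place where the hypothesis $x \in (W^J)_{\af}$ must be used a second time. Since $(\Delta_J)_{\af} = (\Delta_J)_{\af}^{+} \sqcup \bigl(-(\Delta_J)_{\af}^{+}\bigr)$, I would argue that, for $x \in (W^J)_{\af}$, the element $x^{-1}\alpha_j$ can never lie in the negative part $-(\Delta_J)_{\af}^{+}$: if it did, then $-x^{-1}\alpha_j \in (\Delta_J)_{\af}^{+}$, and applying $x$ would give $-\alpha_j = x(-x^{-1}\alpha_j) \in \prr$, contradicting $-\alpha_j \notin \prr$. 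Therefore $x^{-1}\alpha_j \in (\Delta_J)_{\af}$ is equivalent to $x^{-1}\alpha_j \in (\Delta_J)_{\af}^{+}$ under our hypothesis, and combining this with the previous paragraph yields the asserted equivalence.

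I expect the main (and only) subtlety to be this passage from $(\Delta_J)_{\af}^{+}$ to the full symmetric set $(\Delta_J)_{\af}$: a naive application of the permutation property alone produces the statement only for $(\Delta_J)_{\af}^{+}$, and one must invoke the definition of $(W^J)_{\af}$ once more --- together with the fact that $-\alpha_j$ is not a positive real root --- to discard the negative half. Everything else is a direct unwinding of the definitions in \eqref{eq:Pet}.
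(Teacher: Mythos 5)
Your proof is correct and complete. Note that the paper itself gives no argument for this lemma --- it is quoted verbatim from \cite[Lemma~2.3.6]{INS} --- so there is nothing internal to compare against; your argument is the natural one, resting only on the standard fact that $r_j$ permutes $\prr \setminus \{\alpha_j\}$, and you correctly identify and handle the one subtle point, namely that the hypothesis $x \in (W^J)_{\af}$ is needed a second time to rule out $x^{-1}\alpha_j \in -(\Delta_J)_{\af}^{+}$ and thereby upgrade the equivalence from $(\Delta_J)_{\af}^{+}$ to the full set $(\Delta_J)_{\af}$.
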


%
\subsection{Parabolic semi-infinite Bruhat graph.} 
\label{subsec:SBG}
%
%
\begin{dfn}[{\cite{Pet97}}] \label{dfn:sell}
Let $x \in W_{\af}$, and 
write it as $x = w t_{\xi}$ for $w \in W$ and $\xi \in Q^{\vee}$. 
Then we define the semi-infinite length $\sell(x)$ of $x$ by
$\sell (x) := \ell (w) + 2 \pair{\xi}{\rho}$,
where $\rho := (1/2)\sum_{\alpha \in \Delta^+} \alpha$. 
\end{dfn}
%
%
\begin{dfn}\label{def:SiB}
Let $J$ be a subset of $I$. 
\begin{enu}

\item
Define the (parabolic) semi-infinite Bruhat graph $\SB$ 
to be the $\prr$-labeled, directed graph with vertex set $(W^J)_{\af}$ 
and $\prr$-labeled, directed edges of the following form:
$x \edge{\beta} r_{\beta} x$ for $x \in (W^J)_{\af}$ and $\beta \in \prr$, 
where $r_{\beta } x \in (W^J)_{\af}$ and 
$\sell (r_{\beta} x) = \sell (x) + 1$.

\item
The semi-infinite Bruhat order is a partial order 
$\sile$ on $(W^J)_{\af}$ defined as follows: 
for $x,\,y \in (W^J)_{\af}$, we write $x \sile y$ 
if there exists a directed path from $x$ to $y$ in $\SB$; 
also, we write $x \sil y$ if $x \sile y$ and $x \ne y$. 
(In \cite{INS}, we used the symbol $\le_{\si}$ 
for the semi-infinite Bruhat order, but in the present paper, 
we use the symbol $\sile$ instead of $\le_{\si}$.)
\end{enu}
\end{dfn}
%
%
\begin{rem}[{\cite[Corollary~4.2.2]{INS}}] \label{rem:SiB}
Let $J$ be a subset of $I$. 
Let $x \in (W^{J})_{\af}$ and $\beta \in \prr$ be such that 
$x \edge{\beta} r_{\beta}x$ in $\SB$. 
Then, $\beta$ is either of 
the following forms: $\beta=\alpha$ with $\alpha \in \Delta^{+}$, 
or $\beta=\alpha+\delta$ with $-\alpha \in \Delta^{+}$.
Moreover, if $x=wz_{\xi}t_{\xi}$ for $w \in W^{J}$ and 
$\xi \in \jad$ (see \eqref{eq:W^J_af}), then $w^{-1}\alpha \in 
\Delta^{+} \setminus \Delta_{J}^{+}$ in both cases above. 
Hence if we write $r_{\beta}x \in (W^{J})_{\af}$ as 
$r_{\beta}x=vz_{\zeta}t_{\zeta}$ 
with some $v \in W^{J}$ and $\zeta \in \jad$, then 
$[\zeta-\xi] \in Q^{\vee+}_{I \setminus J}$ in both cases above, 
where $[\,\cdot\,]:Q^{\vee} = Q^{\vee}_{I \setminus J} \oplus Q_{J}^{\vee} 
\twoheadrightarrow Q^{\vee}_{I \setminus J}$ is the projection 
(see \eqref{eq:prj}). 
\end{rem}
%
%
\begin{lem} \label{lem:SiB2}
Let $J$ be a subset of $I$. 
Let $w_{1},\,w_{2} \in W^{J}$, and fix $\xi \in \jad$. 
Then, $w_{1}z_{\xi}t_{\xi} \sige w_{2}z_{\xi}t_{\xi}$ 
if and only if $w_{1} \ge w_{2}$ 
in the (ordinary) Bruhat order $\ge$ on $W^{J}$. 
\end{lem}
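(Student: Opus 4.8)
The plan is to show that the map $w \mapsto w z_{\xi} t_{\xi}$ identifies $W^{J}$, equipped with the ordinary Bruhat order, with the fiber $F_{\xi} := \bigl\{\, w z_{\xi} t_{\xi} \mid w \in W^{J}\,\bigr\} \subset (W^{J})_{\af}$, equipped with the order induced from $\sile$. First I would record the behavior of the semi-infinite length along this fiber: combining Definition~\ref{dfn:sell} with the parabolic length additivity $\ell(w z_{\xi}) = \ell(w) + \ell(z_{\xi})$ for $w \in W^{J}$ and $z_{\xi} \in W_{J}$ (see \cite{BB}), one gets $\sell(w z_{\xi} t_{\xi}) = \ell(w) + \ell(z_{\xi}) + 2\pair{\xi}{\rho}$, so that $\sell(w_{1} z_{\xi} t_{\xi}) - \sell(w_{2} z_{\xi} t_{\xi}) = \ell(w_{1}) - \ell(w_{2})$; thus $\sell$ restricted to $F_{\xi}$ is, up to an additive constant, the length function pulled back from $W^{J}$.

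Next I would prove that any directed path in $\SB$ joining two elements of $F_{\xi}$ stays inside $F_{\xi}$. By Remark~\ref{rem:SiB}, each edge $y \edge{\beta} r_{\beta} y$ increases the projected translation part by an element of $Q^{\vee+}_{I \setminus J}$; since a path from $w_{2} z_{\xi} t_{\xi}$ to $w_{1} z_{\xi} t_{\xi}$ has total projected increment $[\xi] - [\xi] = 0$ and all increments lie in the pointed cone $Q^{\vee+}_{I \setminus J}$, every increment must vanish, so every vertex of the path has projected translation part $[\xi]$. By the uniqueness in Lemma~\ref{lem:J-adj}(1) a $J$-adjusted element is determined by its projection, whence every vertex has translation part exactly $\xi$ and therefore lies in $F_{\xi}$.

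I would then analyze the edges internal to $F_{\xi}$. Writing $\beta = \alpha + n\delta$ and using $r_{\beta} = r_{\alpha} t_{n \alpha^{\vee}}$ together with \eqref{eq:lv0action}, a direct computation shows that $r_{\beta}(w z_{\xi} t_{\xi})$ has translation part $\xi + n (w z_{\xi})^{-1}\alpha^{\vee}$; comparing with $\xi$ forces $n = 0$, so every internal edge has the form $w z_{\xi} t_{\xi} \edge{\alpha} w' z_{\xi} t_{\xi}$ with $\alpha \in \Delta^{+}$ and $w' z_{\xi} = r_{\alpha} w z_{\xi}$, i.e.\ $w' = r_{\alpha} w \in W^{J}$. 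By the first step, the edge condition that $\sell$ rise by one becomes $\ell(w') = \ell(w) + 1$, so such an edge is exactly a Bruhat cover $w \lessdot w' = r_{\alpha} w$ in $W^{J}$. Conversely, given a Bruhat cover $w \lessdot w'$ in $W^{J}$, it is a cover in $W$, hence $w' = r_{\alpha} w$ for the positive root $\alpha$ with $r_{\alpha} = w' w^{-1}$ and $\ell(w') = \ell(w)+1$; since $w' \in W^{J}$ and $\xi \in \jad$, the element $r_{\alpha}(w z_{\xi} t_{\xi}) = w' z_{\xi} t_{\xi}$ lies in $(W^{J})_{\af}$ and has $\sell$ larger by one, so it furnishes a genuine internal edge of $\SB$ realizing the cover.

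Combining these, for $w_{1}, w_{2} \in W^{J}$ I obtain: $w_{1} z_{\xi} t_{\xi} \sige w_{2} z_{\xi} t_{\xi}$ iff there is a directed path from $w_{2} z_{\xi} t_{\xi}$ to $w_{1} z_{\xi} t_{\xi}$, which, staying in $F_{\xi}$, decomposes into internal edges, i.e.\ into a saturated chain of Bruhat covers from $w_{2}$ up to $w_{1}$ in $W^{J}$; the existence of such a chain is equivalent to $w_{1} \ge w_{2}$, and conversely any such chain lifts edge by edge. I expect the main obstacle to be this edge correspondence, and specifically its converse (lifting) direction: one must verify that a Bruhat cover in $W^{J}$, realized by a left reflection $r_{\alpha}$, lands back in $(W^{J})_{\af}$ on the nose (not merely after applying $\PJ$) and raises $\sell$ by exactly one, using crucially that $w' = r_{\alpha} w$ is itself in $W^{J}$ and that $\xi$ is $J$-adjusted; the step that paths remain in $F_{\xi}$, while essential, is comparatively routine once Remark~\ref{rem:SiB} is in hand.
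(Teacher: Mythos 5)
Your proof is correct and takes essentially the same route as the paper's: both reduce the statement to matching edges of $\SB$ inside the fiber $\bigl\{w z_{\xi}t_{\xi} \mid w \in W^{J}\bigr\}$ with Bruhat covers in $W^{J}$, using Remark~\ref{rem:SiB}, the length computation $\sell(wz_{\xi}t_{\xi})=\ell(w)+\ell(z_{\xi})+2\pair{\xi}{\rho}$, and the chain property \cite[Theorem~2.5.5]{BB}. The only real difference is that your fiber-stability step (the pointed-cone argument combined with the uniqueness of $J$-adjusted representatives from Lemma~\ref{lem:J-adj}\,(1)) spells out precisely the ``general case'' that the paper's proof of the ``only if'' part compresses into the phrase ``the proof for the general case is similar.''
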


\begin{proof}
We first show the ``if'' part; 
by the chain property (see, for example, \cite[Theorem~2.5.5]{BB}),
we may assume that $w_{1}=r_{\alpha}w_{2}$ 
for some $\alpha \in \Delta^{+} \setminus \Delta_{J}^{+}$, and 
$\ell(w_{1})=\ell(w_{2})+1$. Then we see easily that
$w_{2}z_{\xi}t_{\xi} \edge{\alpha} w_{1}z_{\xi}t_{\xi}$ 
in the semi-infinite Bruhat graph $\SB$, and hence 
$w_{1}z_{\xi}t_{\xi} \sige w_{2}z_{\xi}t_{\xi}$. 

We next show the ``only if'' part; 
for simplicity, we give a proof only in the case where 
$w_{1}z_{\xi}t_{\xi}$ covers $w_{2}z_{\xi}t_{\xi}$, 
that is, $w_{2}z_{\xi}t_{\xi} \edge{\beta} w_{1}z_{\xi}t_{\xi}$ 
for some $\beta \in \Delta_{\af}^{+}$ 
(the proof for the general case is similar). 
By Remark~\ref{rem:SiB}, 
$\beta$ is either of 
the following forms: $\beta=\alpha$ with $\alpha \in \Delta^{+}$, 
or $\beta=\alpha+\delta$ with $-\alpha \in \Delta^{+}$.
Suppose that 
$\beta=\alpha+\delta$ with $-\alpha \in \Delta^{+}$. 
Then, 
\begin{equation*}
r_{\beta}w_{2}z_{\xi}t_{\xi} = r_{-\alpha}t_{\alpha}w_{2}z_{\xi}t_{\xi} = 
r_{-\alpha}w_{2}z_{\xi}t_{\xi+z_{\xi}^{-1}w_{2}^{-1}\alpha}. 
\end{equation*}
Since $r_{\beta}w_{2}z_{\xi}t_{\xi} = w_{1}z_{\xi}t_{\xi}$ with 
$\xi+z_{\xi}^{-1}w_{2}^{-1}\alpha \ne \xi$, this is a contradiction. 
Thus, $\beta = \alpha$ with $\alpha \in \Delta^{+}$. 
Then we can easily check that 
$w_{1}=r_{\alpha}w_{2}$, and $\ell(w_{1})=\ell(w_{2})+1$, 
which implies that $w_{1} \ge w_{2}$.
This proves the lemma. 
\end{proof}
%
%
\begin{lem}[{\cite[Remark~4.1.3]{INS}}] \label{lem:si2}
Let $\lambda \in P^{+}$, 
and set $J=J_{\lambda}:=\bigl\{i \in I \mid 
\pair{\alpha_{i}^{\vee}}{\lambda}=0\bigr\}$.
For $x \in (W^{J})_{\af}$ and $j \in I_{\af}$, 
the element $r_{j}x$ is contained in $(W^{J})_{\af}$ if and only if 
$\pair{\alpha_{j}^{\vee}}{x\lambda} \ne 0$ (see also Lemma~\ref{lem:r_i}).
Moreover, in this case, 
%
%
\begin{equation} \label{eq:simple2}
\begin{cases}
x \sil r_{j}x
\iff \pair{\alpha_{j}^{\vee}}{x\lambda} > 0, & \\[1.5mm]
%
r_{j}x \sil x
\iff \pair{\alpha_{j}^{\vee}}{x\lambda} < 0. & 
\end{cases}
\end{equation}
\end{lem}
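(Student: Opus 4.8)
The plan is to prove the two assertions separately by computing with the decomposition $x = w t_{\xi}$ ($w \in W$, $\xi \in Q^{\vee}$) and the level-zero action formula \eqref{eq:lv0action}. Throughout set $\gamma := w^{-1}\alpha_{j}$ when $j \in I$, and $\gamma := w^{-1}\theta$ when $j = 0$, where $\theta$ is the highest root, so that $\alpha_{0} = \delta - \theta$ and $\alpha_{0}^{\vee} = c - \theta^{\vee}$. For the membership criterion, I would start from Lemma~\ref{lem:r_i}, by which $r_{j}x \in (W^{J})_{\af}$ is equivalent to $x^{-1}\alpha_{j} \notin (\Delta_{J})_{\af}$. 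Since $x^{-1} = t_{-\xi}w^{-1}$ and $\delta$ is $W_{\af}$-invariant, a direct computation using \eqref{eq:lv0action} shows that $x^{-1}\alpha_{j}$ has finite part $\gamma$ when $j \in I$ and $-\gamma$ when $j = 0$; hence $x^{-1}\alpha_{j} \in (\Delta_{J})_{\af}$ if and only if $\gamma \in \Delta_{J}$. On the other hand, since $\pair{\alpha_{j}^{\vee}}{\delta} = 0$, one gets $\pair{\alpha_{j}^{\vee}}{x\lambda} = \pair{\alpha_{j}^{\vee}}{w\lambda} = \pair{\gamma^{\vee}}{\lambda}$ for $j \in I$, and $\pair{\alpha_{0}^{\vee}}{x\lambda} = -\pair{\gamma^{\vee}}{\lambda}$ for $j = 0$. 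The decisive dictionary is the standard fact that, for the dominant weight $\lambda$ with $W$-stabilizer $W_{J}$, a finite root $\gamma \in \Delta$ satisfies $\pair{\gamma^{\vee}}{\lambda} = 0$ if and only if $\gamma \in \Delta_{J}$ (because $r_{\gamma}\lambda = \lambda - \pair{\gamma^{\vee}}{\lambda}\gamma$ and $\mathrm{Stab}_{W}(\lambda) = W_{J}$). Combining these, $r_{j}x \in (W^{J})_{\af} \Leftrightarrow \gamma \notin \Delta_{J} \Leftrightarrow \pair{\alpha_{j}^{\vee}}{x\lambda} \neq 0$.

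For the ``moreover'' part, assume $\pair{\alpha_{j}^{\vee}}{x\lambda} \neq 0$, so $r_{j}x \in (W^{J})_{\af}$ and $\gamma \notin \Delta_{J}$. Because $r_{j}x = r_{\alpha_{j}}x$ with $\alpha_{j} \in \prr$, and a real reflection determines its root up to sign, the only candidate edges between $x$ and $r_{j}x$ in $\SB$ are $x \edge{\alpha_{j}} r_{j}x$ (present iff $\sell(r_{j}x) = \sell(x)+1$) and $r_{j}x \edge{\alpha_{j}} x$ (present iff $\sell(x) = \sell(r_{j}x)+1$). Hence it suffices to compute $d := \sell(r_{j}x) - \sell(x)$, to show $d \in \{+1,-1\}$, and to match its sign: if $d = +1$ then $x \sil r_{j}x$, while if $d = -1$ then $r_{j}x \sil x$, and these two relations are mutually exclusive, so the desired biconditionals follow. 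For $j \in I$ this is immediate: $r_{j}x = (r_{\alpha_{j}}w)t_{\xi}$, so $d = \ell(r_{\alpha_{j}}w) - \ell(w) = +1$ if $\gamma \in \Delta^{+}$ and $-1$ if $\gamma \in \Delta^{-}$; since $\gamma \notin \Delta_{J}$ and $\lambda$ is dominant, $\pair{\alpha_{j}^{\vee}}{x\lambda} = \pair{\gamma^{\vee}}{\lambda}$ is $>0$ in the first case and $<0$ in the second, giving exactly the claimed correspondence.

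The crux is the case $j = 0$, which I expect to be the main obstacle. Using $r_{0} = r_{\theta}t_{-\theta^{\vee}}$ and $t_{-\theta^{\vee}}w = w\,t_{-\gamma^{\vee}}$, one finds $r_{0}x = (r_{\theta}w)\,t_{\xi - \gamma^{\vee}}$, so that $d = \bigl(\ell(r_{\theta}w) - \ell(w)\bigr) - 2\pair{\gamma^{\vee}}{\rho}$. The difference $\ell(r_{\theta}w)-\ell(w)$ is odd and $2\pair{\gamma^{\vee}}{\rho}$ is even, so $d$ is odd; the real work is to prove that $d = -1$ when $\gamma \in \Delta^{+}$ and $d = +1$ when $\gamma \in \Delta^{-}$. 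Writing $r_{\theta}w = w r_{\gamma}$, this amounts to the length-additivity $\ell(w r_{\gamma}) = \ell(w) + \ell(r_{\gamma})$ for $\gamma \in \Delta^{+}$ (recall $\ell(r_{\gamma}) = 2\pair{\gamma^{\vee}}{\rho} - 1$), together with the reflected statement for $\gamma \in \Delta^{-}$. This is precisely where the hypothesis that $\theta$ is the \emph{highest} root enters: the inversion set of $r_{\gamma}$ is carried into $\Delta^{+}$ by $w$ because $w\gamma = \theta$ is maximal, so no length cancellation occurs. Granting this, $d = \mp 1$ with sign opposite to that of $\gamma$, matching $\pair{\alpha_{0}^{\vee}}{x\lambda} = -\pair{\gamma^{\vee}}{\lambda}$, which is $<0$ for $\gamma \in \Delta^{+}$ and $>0$ for $\gamma \in \Delta^{-}$; this produces the required edge in $\SB$ and finishes the proof. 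The verification of this length-additivity, and the uniform sign bookkeeping relating $\mathrm{sgn}(d)$ to $\mathrm{sgn}\pair{\alpha_{j}^{\vee}}{x\lambda}$ in both cases, are the only non-formal steps.
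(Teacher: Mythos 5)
First, a point of calibration: the paper itself gives no proof of this lemma; it is quoted verbatim from \cite[Remark~4.1.3]{INS}. So your argument has to stand on its own. Most of it does: the membership criterion is complete and correct (the finite part of $x^{-1}\alpha_{j}$ is $\gamma$ for $j \in I$ and $-\gamma$ for $j=0$, and the dictionary $\pair{\gamma^{\vee}}{\lambda}=0 \iff \gamma \in \Delta_{J}$ is standard); the reduction of the ``moreover'' part to computing $d = \sell(r_{j}x)-\sell(x)$ is airtight, because every edge of $\SB$ raises $\sell$ by exactly $1$, so once $|d|=1$ no longer path can connect $x$ and $r_{j}x$ in the wrong direction; and the case $j \in I$ is correct as written.

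The genuine gap is in the case $j=0$, and it is \emph{not} the step you flagged as the crux. Your parenthetical ``recall $\ell(r_{\gamma}) = 2\pair{\gamma^{\vee}}{\rho}-1$'' is false as a general statement about positive roots: in type $B_{2}$ with $\alpha_{1}$ long, the short root $\gamma = \alpha_{1}+\alpha_{2}$ has $\gamma^{\vee} = 2\alpha_{1}^{\vee}+\alpha_{2}^{\vee}$, so $2\pair{\gamma^{\vee}}{\rho}-1 = 5$, while $\ell(r_{\gamma}) = \ell(r_{1}r_{2}r_{1}) = 3$; the short roots of $G_{2}$ give similar failures. The equality $\ell(r_{\gamma}) = 2\pair{\gamma^{\vee}}{\rho}-1$ characterizes the \emph{quantum} roots (in the sense of Brenti--Fomin--Postnikov, cf.\ \cite{LNSSS}), not all positive roots. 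What rescues your computation is a fact you never invoke: $\gamma = w^{-1}\theta$ lies in the $W$-orbit of the highest root, hence is a \emph{long} root, and every long root is a quantum root. This last assertion itself needs proof (or a citation): for $\gamma$ long one has $\pair{\gamma^{\vee}}{\beta} \in \{0,\pm 1\}$ for all roots $\beta \neq \pm\gamma$, whence $\ell(r_{\gamma}) = 1 + \#\{\beta \in \Delta^{+} : \pair{\gamma^{\vee}}{\beta}=1,\ \gamma-\beta \in \Delta^{+}\}$, and pairing $\beta \leftrightarrow r_{\gamma}\beta$ in the sum $2\pair{\gamma^{\vee}}{\rho} = \sum_{\beta \in \Delta^{+}}\pair{\gamma^{\vee}}{\beta}$ yields the claimed equality; this argument breaks down precisely when $\pair{\gamma^{\vee}}{\beta}=\pm 2$ can occur for $\beta \neq \pm\gamma$, i.e.\ when $\gamma$ is short. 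By contrast, the length-additivity $\ell(wr_{\gamma}) = \ell(w)+\ell(r_{\gamma})$ that you singled out as the main obstacle is fine, and your stated reason is the correct one: if $\beta \in \Delta^{+}$ with $r_{\gamma}\beta \in \Delta^{-}$, then $\pair{\gamma^{\vee}}{\beta}>0$, so $\pair{\theta^{\vee}}{w\beta} = \pair{\gamma^{\vee}}{\beta}>0$, and since no root pairs strictly negatively with $\theta^{\vee}$ (otherwise its sum with $\theta$ would be a root exceeding $\theta$), $w\beta \in \Delta^{+}$; hence the inversion sets of $w$ and $r_{\gamma}$ are disjoint and no cancellation occurs. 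So the proof is repairable, but as written the sign computation for $j=0$ rests on an incorrect ``recalled'' fact, and the missing ingredient --- longness of $\gamma$ and the quantum-root property of long roots --- is exactly the nontrivial content of this case.
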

%
%
\begin{lem} \label{lem:si-L41}
Let $\lambda \in P^{+}$, and set
$J=J_{\lambda}:=\bigl\{i \in I \mid 
\pair{\alpha_{i}^{\vee}}{\lambda}=0\bigr\}$.
Assume that $x,\,y \in W_{\af}$ satisfy $x \sile y$. 
Let $j \in I_{\af}$. 

\begin{enu}

\item 
If $\pair{\alpha_{j}^{\vee}}{x\lambda} > 0$ and $\pair{\alpha_{j}^{\vee}}{y\lambda} \le 0$, 
then $r_{j}x \sile y$.

\item 
If $\pair{\alpha_{j}^{\vee}}{x\lambda} \ge 0$ and $\pair{\alpha_{j}^{\vee}}{y\lambda} < 0$, 
then $x \sile r_{j}y$. 

\item 
If $\pair{\alpha_{j}^{\vee}}{x\lambda} > 0$ and $\pair{\alpha_{j}^{\vee}}{y\lambda} > 0$, or 
if $\pair{\alpha_{j}^{\vee}}{x\lambda} < 0$ and $\pair{\alpha_{j}^{\vee}}{y\lambda} < 0$, 
then $r_{j}x \sile r_{j}y$.
\end{enu}
\end{lem}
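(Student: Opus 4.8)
The plan is to first reduce the three assertions to statements about the semi-infinite Bruhat order on $(W^{J})_{\af}$ alone, and then to prove them as the semi-infinite analogue of the classical lifting property for Bruhat order. Since $J=J_{\lambda}$, the subgroup $W_{J}$ fixes $\lambda$, and for $\xi \in Q_{J}^{\vee}$ we have $\pair{\xi}{\lambda}=0$, so $t_{\xi}\lambda=\lambda$ by \eqref{eq:lv0action}; hence $(W_{J})_{\af}$ fixes $\lambda$, and writing $x=\PJ(x)x_{2}$ with $x_{2}\in (W_{J})_{\af}$ (Proposition~\ref{prop:P}) gives $x\lambda=\PJ(x)\lambda$. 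Thus every hypothesis depends only on $\PJ(x),\PJ(y)\in (W^{J})_{\af}$, and I may assume $x,y\in (W^{J})_{\af}$. Lemma~\ref{lem:si2} then turns the sign data into order data: for $z\in (W^{J})_{\af}$, the value $\pair{\alpha_{j}^{\vee}}{z\lambda}>0$ means $z\sil r_{j}z$ with $r_{j}z\in (W^{J})_{\af}$, the value $<0$ means $r_{j}z\sil z$, and the value $=0$ means $r_{j}z\notin (W^{J})_{\af}$, in which case $z^{-1}\alpha_{j}\in (\Delta_{J})_{\af}$ by Lemma~\ref{lem:r_i}, so $r_{j}z=z\,r_{z^{-1}\alpha_{j}}\in z(W_{J})_{\af}$ and $\PJ(r_{j}z)=z$. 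Note that in each of the three conclusions the reflection $r_{j}$ is applied on the side whose pairing is nonzero, so the reflected element automatically lies in $(W^{J})_{\af}$ and the stated $\sile$-comparison is meaningful. Finally, since a simple reflection changes $\sell$ by exactly $\pm 1$, the relation $z\sil r_{j}z$ (resp.\ $r_{j}z\sil z$) is the covering relation realised by the edge $z\edge{\alpha_{j}}r_{j}z$ of $\SB$.

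The engine will be a diamond lemma for a single covering relation $u\lessdot v$ in $\SB$, say $v=r_{\beta}u$ with $\beta\in\prr$ and $\sell(v)=\sell(u)+1$, describing how the four elements $u,v,r_{j}u,r_{j}v$ are related in each configuration of the signs of $\pair{\alpha_{j}^{\vee}}{u\lambda}$ and $\pair{\alpha_{j}^{\vee}}{v\lambda}$. The key point is the conjugation identity $r_{j}v=r_{r_{j}\beta}(r_{j}u)$, which together with the $\sell$-bookkeeping above and Remark~\ref{rem:SiB} (which restricts $\beta$ to $\alpha$ with $\alpha\in\Delta^{+}$, or $\alpha+\delta$ with $-\alpha\in\Delta^{+}$) decides, in each sign pattern, whether $r_{j}u\to r_{j}v$ or $r_{j}v\to r_{j}u$ is a genuine edge of $\SB$, or whether one endpoint leaves $(W^{J})_{\af}$. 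For example, when both pairings are positive one gets $\sell(r_{j}v)=\sell(r_{j}u)+1$ with both endpoints in $(W^{J})_{\af}$, so $r_{j}u\lessdot r_{j}v$, which is exactly the covering instance of assertion (3).

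With this diamond lemma available, I would prove the three assertions \emph{simultaneously} by induction on the length $n$ of a saturated chain $x=x_{0}\lessdot x_{1}\lessdot\cdots\lessdot x_{n}=y$ in $\SB$ realising $x\sile y$; the three must be handled together, because already in the classical lifting property the comparisons $r_{j}x\sile y$ and $x\sile r_{j}y$ are interdependent (for a single cover with strict opposite signs they force $r_{j}x=y$). In the inductive step I branch on the sign of $\pair{\alpha_{j}^{\vee}}{x_{1}\lambda}$: when it agrees with the sign at the endpoint governing the relevant case, I apply the induction hypothesis to the shorter chain $x_{1}\sile y$ and glue on one application of the diamond lemma at $x\lessdot x_{1}$, using transitivity of $\sile$; when the sign flips, the diamond lemma at $x\lessdot x_{1}$, combined with the induction hypothesis for a possibly different one of the three cases applied to $r_{j}$ of an intermediate vertex, yields the required comparison. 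This is structurally parallel to the exchange/subword proof of the classical lifting property, with Lemma~\ref{lem:si2} and the diamond lemma playing the roles of the descent characterisation and the exchange condition.

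The main obstacle I anticipate lies in the ``neutral'' configurations, where $\pair{\alpha_{j}^{\vee}}{z\lambda}=0$ and $r_{j}z$ leaves $(W^{J})_{\af}$: here one cannot reflect within the quotient, and must instead exploit $\PJ(r_{j}z)=z$ to reroute the chain back inside $(W^{J})_{\af}$. Establishing the diamond lemma uniformly across all sign patterns --- in particular checking that the $\sell$-values and $(W^{J})_{\af}$-membership always align so that the candidate arrows among $u,v,r_{j}u,r_{j}v$ are honest edges of $\SB$ with the correct orientation --- is the delicate part, and the inductive assembly afterwards, though case-heavy, follows it. Should the direct diamond analysis become unwieldy, a viable alternative is to transfer all three statements from the ordinary Bruhat order on $W_{\af}$, where the lifting property is classical, by translating $x$ and $y$ by a sufficiently anti-dominant $t_{\zeta}$ so that the relevant segment of the semi-infinite order coincides with the ordinary one, and matching left multiplication by $r_{j}$ on both sides.
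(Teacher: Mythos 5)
Your plan hinges on a ``diamond lemma'' that you claim is decided by the conjugation identity $r_{j}v=r_{r_{j}\beta}(r_{j}u)$, the $\sell$-bookkeeping, and Remark~\ref{rem:SiB}. That is exactly where the argument breaks: these tools do decide the all-positive configuration (the one instance you verify), but they do \emph{not} decide the mixed-sign configurations, and those are precisely parts (1) and (2). Concretely, take a single cover $u \edge{\beta} v=r_{\beta}u$ in $\SB$ with $\pair{\alpha_{j}^{\vee}}{u\lambda}>0$ and $\pair{\alpha_{j}^{\vee}}{v\lambda}<0$. Part (1) asserts $r_{j}u \sile v$, and since $\sell(r_{j}u)=\sell(u)+1=\sell(v)$ this forces $r_{j}u=v$, i.e.\ $\beta=\alpha_{j}$. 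But your bookkeeping is perfectly consistent with $\beta\ne\alpha_{j}$: in that case $r_{j}\beta\in\prr$, both $r_{j}u$ and $r_{j}v$ lie in $(W^{J})_{\af}$ by Lemma~\ref{lem:si2}, $\sell(r_{j}v)=\sell(v)-1=\sell(r_{j}u)-1$, and the candidate edge $r_{j}v \edge{r_{j}\beta} r_{j}u$ has the correct orientation and length increment --- a ``spurious diamond'' $u\sil r_{j}u$, $r_{j}v\sil v$, $r_{j}v\sil r_{j}u$ that nothing in your toolkit rules out. Excluding it requires an exchange-type structural property of the semi-infinite Bruhat graph, not length accounting; this is the actual content of what you are trying to prove, so the proposed engine is circular at the crucial point. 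Moreover, your induction for part (3) cannot avoid this: intermediate vertices of a chain from $x$ to $y$ may have $\pair{\alpha_{j}^{\vee}}{x_{i}\lambda}$ of either sign, so the all-positive diamond alone does not suffice.

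For comparison, the paper does not reprove the lifting property at all: parts (1) and (2) are quoted from \cite[Lemma~4.1.6]{INS}, and part (3) is deduced from part (1) in a few lines --- from $\pair{\alpha_{j}^{\vee}}{y\lambda}>0$ and \eqref{eq:simple2} one gets $x \sile y \sil r_{j}y$, and then part (1) applied to the pair $(x,\,r_{j}y)$ (whose pairings have strictly opposite signs) gives $r_{j}x \sile r_{j}y$. Your fallback --- transferring the statements to the ordinary affine Bruhat order by right translation by a sufficiently antidominant coweight --- is the right kind of idea (it is essentially how such properties are established in \cite{INS}), but as written it is only a gesture: it presupposes the nontrivial stability theorem identifying semi-infinite intervals with ordinary Bruhat intervals after translation, so it amounts to reproving the cited lemma rather than supplying the missing step of your diamond argument.
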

\begin{proof}
Parts (1) and (2) follow immediately from \cite[Lemma~4.1.6]{INS}.
Let us prove part (3). We give a proof only for the case that 
$\pair{\alpha_{j}^{\vee}}{x\lambda} > 0$ and $\pair{\alpha_{j}^{\vee}}{y\lambda} > 0$; 
the proof for the case that 
$\pair{\alpha_{j}^{\vee}}{x\lambda} < 0$ and $\pair{\alpha_{j}^{\vee}}{y\lambda} < 0$ is similar. 
By \eqref{eq:simple2} and the assumption that $\pair{\alpha_{j}^{\vee}}{y\lambda} > 0$, 
we have $y \sil r_{j}y$, and hence $x \sil r_{j}y$. 
Since $\pair{\alpha_{j}^{\vee}}{x\lambda} > 0$ and 
$\pair{\alpha_{j}^{\vee}}{r_{j}y\lambda} < 0$ by the assumption, 
we deduce by part (1) that $r_{j}x \sile r_{j}y$. 
This proves the lemma. 
\end{proof}
%
%
\subsection{Semi-infinite Lakshmibai-Seshadri paths.}
\label{subsec:SLS}

In this subsection, we fix 
$\lambda \in P^+$, and set 
$J=J_{\lambda}:= \bigl\{ i \in I \mid 
\pair{\alpha_i^{\vee}}{\lambda}=0 \bigr\} \subset I$.

\begin{dfn}\label{def:QBaf(a)}
For a rational number $0 < a < 1$, 
define $\SBa$ to be the subgraph of $\SB$ 
with the same vertex set but having only the edges of the form:
$x \edge{\beta} y$ with 
$a\pair{\beta^{\vee}}{x\lambda} \in \BZ$.
\end{dfn}
%
%
\begin{dfn}\label{dfn:SiLS}
A semi-infinite Lakshmibai-Seshadri path (SiLS path for short) of 
shape $\lambda $ is, by definition, a pair $(\bm{x}\,;\,\bm{a})$ of 
a (strictly) decreasing sequence $\bm{x} : x_1 \sig \cdots \sig x_s$ 
of elements in $(W^J)_{\af}$ and an increasing sequence 
$\bm{a} : 0 = a_0 < a_1 < \cdots  < a_s =1$ of rational numbers 
satisfying the condition that there exists a directed path 
from $x_{u+1}$ to  $x_u$ in $\SBb{a_u}$ 
for each $u = 1,2,\ldots , s-1$. 
We denote by $\sLS$ the set of all SiLS paths of shape $\lambda$.
\end{dfn}

Following \cite[\S3.1]{INS}, 
we equip the set $\sLS$ with a crystal structure 
(with weights in $P_{\af}$) in the following way; 
for the definition of crystals, 
see \cite[\S7.2]{K-OnCB} and \cite[Definition~4.5.1]{HK} for example. 
For $\eta=(x_{1},\,\dots,\,x_{s}\,;\,a_{0},\,a_{1},\,\dots,\,a_{s}) \in \sLS$, 
we define $\ol{\eta}:[0,1] \rightarrow \BR \otimes_{\BZ} P_{\af}$ 
to be the piecewise-linear, continuous map 
whose ``direction vector'' for the interval 
$[a_{u-1},\,a_{u}]$ is equal to $x_{u}\lambda \in P_{\af}$ 
for each $1 \le u \le s$, that is, 
\begin{equation} \label{eq:LSpath}
\ol{\eta} (t) := 
\sum_{p = 1}^{u-1}(a_p - a_{p-1}) x_{p}\lambda + (t - a_{u-1}) x_{u}\lambda
\quad
\text{for $t \in [a_{u-1},\,a_u]$, $1 \le u \le s$};
\end{equation}
note that $\ol{\eta}$ is a Lakshmibai-Seshadri path (LS path for short) 
of shape $\lambda$ by \cite[Proposition~3.1.3 together with Eq.\,(2.2.2)]{INS}. 
Then we define $\wt : \sLS \rightarrow P_{\af}$ by 
$\wt(\eta) := \ol{\eta} (1) \in P_{\af}$ 
(see \cite[Lemma~4.5\,a)]{Lit95}). 

Now we define operators $e_{j}$, $f_{j}$, $j \in I_{\af}$, 
which we call root operators for $\sLS$. Set 
\begin{equation} \label{eq:H}
\begin{cases}
H_j^{\eta}(t) := \pair{\alpha_{j}^{\vee}}{\ol{\eta}(t)} \quad 
\text{for $t \in [0,1]$}, \\[1.5mm]
m_{j}^{\eta} := 
 \min \bigl\{ H_{j}^{\eta} (t) \mid t \in [0,1] \bigr\}. 
\end{cases}
\end{equation}
%
%
\begin{rem} \label{rem:local}
Since $\ol{\eta}$ is an LS path of shape $\lambda$, 
we see from \cite[Lemma 4.5\,d)]{Lit95} that 
all local minima of the function $H^{\eta}_{j}(t)$, 
$t \in [0,1]$, are integers. 
In particular, the minimum $m^{\eta}_{j}$ 
is a nonpositive integer (recall that $\ol{\eta}(0)=0$, 
and hence $H^{\eta}_{j}(0)=0$).
\end{rem}

We define $e_{j}\eta$ as follows. 
If $m^{\eta}_{j}=0$, then we set $e_j \eta := \bzero$, 
where $\bzero$ is an additional element not contained in any crystal. 
If $m_{j}^{\eta} \le -1$, then set
%
%
\begin{equation} \label{eq:t-e}
\begin{cases}
t_1 := 
  \min \bigl\{ t \in [0,\,1] \mid 
    H_{j}^{\eta}(t) = m_{j}^{\eta} \bigr\}, \\[1.5mm]
t_0 := 
  \max \bigl\{ t \in [0,\,t_{1}] \mid 
    H_{j}^{\eta}(t) = m_{j}^{\eta} + 1 \bigr\} ;
\end{cases}
\end{equation}
from Remark~\ref{rem:local}, it follows that $H_{j}^{\eta}(t)$ is 
strictly decreasing on $[t_0,\,t_1]$. 
Let $1 \le p \le q \le s$ be such that 
$a_{p-1} \le t_0 < a_p$ and $t_1 = a_{q}$. 
Then we define $e_{j}\eta$ by
\begin{align*}
e_{j} \eta := ( 
& x_{1},\,\ldots,\,x_{p},\,r_{j}x_{p},\,r_{j}x_{p+1},\,\ldots,\,
  r_{j}x_{q},\,x_{q+1},\,\ldots,\,x_{s} ; \\
& a_{0},\,\ldots,\,a_{p-1},\,t_{0},\,a_{p},\,\ldots,\,a_{q}=t_{1},\,
\ldots,\,a_{s});
\end{align*}
if $t_{0} = a_{p-1}$, then we drop $x_{p}$ and $a_{p-1}$, and 
if $r_{j} x_{q} = x_{q+1}$, then we drop $x_{q+1}$ and $a_{q}=t_{1}$.

Similarly, we define $f_{j}\eta$ as follows. 
If $H_{j}^{\eta}(1) - m_{j}^{\eta} = 0$, 
then we set $f_{j} \eta := \bzero$. 
If $H_{j}^{\eta}(1) - m_{j}^{\eta}  \ge 1$, 
then set
%
%
\begin{equation} \label{eq:t-f}
\begin{cases}
t_0 := 
 \max \bigl\{ t \in [0,1] \mid H_{j}^{\eta}(t) = m_{j}^{\eta} \bigr\}, \\[1.5mm]
t_1 := 
 \min \bigl\{ t \in [t_0,1] \mid H_{j}^{\eta}(t) = m_{j}^{\eta} + 1 \bigr\};
\end{cases}
\end{equation}
from Remark \ref{rem:local}, it follows that 
$H_{j}^{\eta}(t)$ is strictly increasing on $[t_0,\,t_1]$. 
Let $0 \le p \le q \le s-1$ be such that $t_{0} = a_{p}$, and 
$a_{q} < t_{1} \le a_{q+1}$. Then we define $f_{j}\eta$ by
\begin{align*}
f_{j} \eta := ( 
& x_{1},\,\ldots,\,x_{p},\,r_{j}x_{p+1},\,\dots,\,
  r_{j} x_{q},\,r_{j} x_{q+1},\,x_{q+1},\,\ldots,\,x_{s} ; \\
& a_{0},\,\ldots,\,a_{p}=t_{0},\,\ldots,\,a_{q},\,t_{1},\,
  a_{q+1},\,\ldots,\,a_{s});
\end{align*}
if $t_{1} = a_{q+1}$, then we drop $x_{q+1}$ and $a_{q+1}$, and 
if $x_{p} = r_j x_{p+1}$, then we drop $x_{p}$ and $a_{p}=t_{0}$.
Set $e_{j} \bzero = f_{j} \bzero := \bzero$ for all $j \in I_{\af}$.
%
%
\begin{thm}[{see \cite[Theorem~3.1.5]{INS}}]
\label{thm:stability}
\mbox{}
\begin{enu}
\item
The set $\sLS \sqcup \bigl\{ \bzero \bigr\}$ is 
stable under the action of the root operators 
$e_j$ and $f_j$, $j \in I_{\af}$.

\item
For each $\eta \in \sLS$ and $j \in I_{\af}$, we set 
\begin{equation*}
\begin{cases}
\ve_{j} (\eta) := 
 \max \bigl\{ k \ge 0 \mid e_{j}^{k} \eta \neq \bzero \bigr\}, \\[1.5mm]
\vp_{j} (\eta) := 
 \max \bigl\{ k \ge 0 \mid f_{j}^{k} \eta \neq \bzero \bigr\}.
\end{cases}
\end{equation*}
Then, the set $\sLS$, equipped with the maps $\wt$, $e_{j}$, $f_{j}$, 
$j \in I_{\af}$, and $\ve_{j}$, $\vp_{j}$, $j \in I_{\af}$, 
defined above, is a crystal with weights in $P_{\af}$.
\end{enu}
\end{thm}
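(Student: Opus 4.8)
The plan is to prove the two assertions in tandem, exploiting the fact (\cite[Proposition~3.1.3]{INS}) that the piecewise-linear path $\ol{\eta}$ attached to $\eta \in \sLS$ is an LS path of shape $\lambda$, and that the operators $e_j,f_j$ defined above act on $\ol{\eta}$ precisely by Littelmann's reflection recipe: on the subinterval $[t_0,t_1]$ on which $H^{\eta}_j$ is strictly monotone, the direction vectors $x_u\lambda$ are replaced by $r_jx_u\lambda$, while $\ol{\eta}$ is otherwise preserved. Consequently the purely \emph{geometric} content --- that $\ol{e_j\eta}$ and $\ol{f_j\eta}$ are again LS paths of shape $\lambda$ (or the zero path), that the new turning points $t_0,t_1$ occur at integer values $m^{\eta}_j+1$ and $m^{\eta}_j$ of $H^{\eta}_j$ (Remark~\ref{rem:local}), and the ensuing weight identities --- can be read off from the path-operator formalism of \cite{Lit95}. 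What must be established separately, and what constitutes the real work, is that the combinatorial data $(\bm{x}\,;\,\bm{a})$ attached to $e_j\eta$ and $f_j\eta$ again define genuine elements of $\sLS$: the reflected directions must lie in $(W^J)_{\af}$, the new sequence must be strictly decreasing for $\sile$, and the directed-path condition in the subgraphs $\SBb{a_u}$ must survive.

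First I would settle membership and strict decrease. For $e_j\eta$ with $p,q$ as in \eqref{eq:t-e}, the function $H^{\eta}_j$ is strictly decreasing on $[t_0,t_1]$, so $\pair{\alpha_j^{\vee}}{x_u\lambda}<0$ for $p\le u\le q$; by Lemma~\ref{lem:si2} this gives $r_jx_u\in(W^J)_{\af}$ together with $r_jx_u\sil x_u$. For an interior transition, $x_{u+1}\sil x_u$ and the two negativity conditions feed into Lemma~\ref{lem:si-L41}(3) to yield $r_jx_{u+1}\sile r_jx_u$, which is strict because $r_j$ is injective. At the lower end one has $\pair{\alpha_j^{\vee}}{x_{q+1}\lambda}\ge 0$ (since $t_1$ is where $H^{\eta}_j$ first attains its global minimum), and Lemma~\ref{lem:si-L41}(2) gives $x_{q+1}\sile r_jx_q$; the degenerate equality $r_jx_q=x_{q+1}$ is exactly the case removed by the dropping convention. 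The upper transitions $x_p\sig r_jx_p$ and, when $t_0=a_{p-1}$, $x_{p-1}\sig r_jx_p$ follow from Lemma~\ref{lem:si2} and transitivity of $\sile$. The argument for $f_j$ is parallel with all inequalities reversed, invoking Lemma~\ref{lem:si-L41}(1) in place of (2).

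The main obstacle I anticipate is the directed-path condition. The inserted covering edges $r_jx_u\edge{\alpha_j}x_u$ at the new breakpoints are unproblematic once one knows $t_0,t_1$ are rational with $H^{\eta}_j$ integral there. The delicate point is that, for the interior reflected pairs, the break values $a_p,\dots,a_{q-1}$ are \emph{unchanged}, so one must transport the directed path from $x_{u+1}$ to $x_u$ in $\SBb{a_u}$ guaranteed for $\eta$ into a directed path from $r_jx_{u+1}$ to $r_jx_u$ in the \emph{same} subgraph $\SBb{a_u}$. The integrality label behaves well: by $W$-invariance of the pairing, $a_u\pair{(r_j\beta)^{\vee}}{(r_jx)\lambda}=a_u\pair{\beta^{\vee}}{x\lambda}$, so an edge $x\edge{\beta}r_\beta x$ of $\SBb{a_u}$ reflects to an edge with the same integrality. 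What is genuinely subtle is that the intermediate vertices of the original directed path need not satisfy the sign condition $\pair{\alpha_j^{\vee}}{\,\cdot\,\lambda}<0$ enjoyed by its endpoints, so one cannot simply reflect the path edge by edge; a more careful lifting argument, propagating the monotonicity of $H^{\eta}_j$ along the path via Lemma~\ref{lem:si-L41} and maintaining membership in $(W^J)_{\af}$ at every vertex, is required. Making this lifting precise is where the combinatorics of Peterson's coset representatives and the semi-infinite Bruhat order genuinely enter, and I expect it to absorb the bulk of the proof.

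Finally, part (2) reduces to the standard path-model identities. From \eqref{eq:LSpath} one reads off $\wt(e_j\eta)=\wt(\eta)+\alpha_j$ and $\wt(f_j\eta)=\wt(\eta)-\alpha_j$, while the definitions of $\ve_j,\vp_j$ together with the shape of $H^{\eta}_j$ give $\ve_j(\eta)=-m^{\eta}_j$ and $\vp_j(\eta)=H^{\eta}_j(1)-m^{\eta}_j$; subtracting yields $\vp_j(\eta)-\ve_j(\eta)=H^{\eta}_j(1)=\pair{\alpha_j^{\vee}}{\wt(\eta)}$, which is the required compatibility. Combined with the stability established above, this shows that $\sLS$, with the maps $\wt,e_j,f_j,\ve_j,\vp_j$, is a crystal with weights in $P_{\af}$.
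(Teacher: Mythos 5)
Your proposal has the right architecture, and it is worth noting that the paper itself contains no proof of this statement at all: it is imported wholesale from \cite[Theorem~3.1.5]{INS}, and the proof given there proceeds exactly along the lines you sketch (Littelmann's path formalism for the analytic and weight-theoretic content, plus combinatorics of the semi-infinite Bruhat graph for the rest). The problem is that your write-up stops precisely where that proof begins. The step you flag as ``genuinely subtle'' --- transporting a directed path from $x_{u+1}$ to $x_u$ in $\SBb{a_u}$ to a directed path from $r_jx_{u+1}$ to $r_jx_u$ in the \emph{same} subgraph, when the intermediate vertices satisfy no sign condition --- is not an anticipated difficulty to be absorbed later; it \emph{is} the theorem. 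Moreover, the tool you propose to propagate along the path, Lemma~\ref{lem:si-L41}, is strictly weaker than what is needed: it concerns only the order $\sile$, i.e.\ the existence of \emph{some} directed path in $\SB$, and says nothing about preserving the integrality labels $a_u\pair{\beta^{\vee}}{\,\cdot\,\lambda} \in \BZ$ that cut out $\SBb{a_u}$. What \cite{INS} actually proves and uses are diamond-type lemmas for \emph{labeled} edges of the semi-infinite Bruhat graph (roughly: given an edge $x \edge{\beta} r_{\beta}x$ of $\SBb{a}$ and $j \in I_{\af}$, either the reflected pair is again joined by an edge of $\SBb{a}$ with label $r_j\beta$, or $\beta = \alpha_j$ and one is in a degenerate configuration), established via the semi-infinite length function and the explicit description \eqref{eq:W^J_af} of $(W^{J})_{\af}$; the lifting of whole directed paths is then an induction on path length using these lemmas. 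None of this machinery appears in your proposal.

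There is a second, smaller gap: you assert that the new edges at the breakpoints are ``unproblematic once one knows $t_0,t_1$ are rational with $H^{\eta}_j$ integral there.'' That is not sufficient. For the edge $r_jx_p \edge{\alpha_j} x_p$ to lie in $\SBb{t_0}$ one needs $t_0\pair{\alpha_j^{\vee}}{(r_jx_p)\lambda} \in \BZ$, and the integrality of $H^{\eta}_j(t_0) = m^{\eta}_j + 1$ does not give this directly, since $H^{\eta}_j(t_0)$ differs from $t_0\pair{\alpha_j^{\vee}}{x_p\lambda}$ by $\sum_{u<p}(a_u-a_{u-1})\pair{\alpha_j^{\vee}}{x_u\lambda} - a_{p-1}\pair{\alpha_j^{\vee}}{x_p\lambda}$, whose integrality itself depends on the chain conditions of the original path. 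One can extract the \emph{classical} integrality statement from Littelmann's result that $\ol{e_j\eta}$ is again an LS path, as you suggest, but Littelmann's chains live in $W/W_J$ with the ordinary Bruhat order; lifting them to directed paths in $\SBb{t_0}$ on $(W^{J})_{\af}$ is again the same unresolved problem. So both halves of the directed-path verification remain open: as it stands, your text is a correct plan for the proof, with the decisive combinatorial lemmas still to be supplied. (Part (2), by contrast, is fine: granted part (1), the identities $\ve_j(\eta)=-m_j^{\eta}$, $\vp_j(\eta)=H_j^{\eta}(1)-m_j^{\eta}$ and the weight formulas are standard path-model computations.)
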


For $\eta=(x_{1},\,\dots,\,x_{s}\,;\,a_{0},\,a_{1},\,\dots,\,a_{s}) \in 
\sLS$, we set 
%
%
\begin{equation} \label{eq:ik}
\iota(\eta):=x_{1} \quad \text{and} \quad \kappa(\eta):=x_{s}.
\end{equation}
The following lemma will be used in the proof of Lemma~\ref{lem:seq} below. 
%
%
\begin{lem} \label{lem:vevp}
Let $\eta \in \sLS$, and $j \in I_{\af}$. 
If $\pair{\alpha_{j}^{\vee}}{\kappa(\eta)\lambda} > 0$, then 
$f_{j}\eta \ne \bzero$. Moreover, 
$\kappa(f_{j}^{\max}\eta) = r_{j}\kappa(\eta)$ 
if and only if $\pair{\alpha_{j}^{\vee}}{\kappa(\eta)\lambda} > 0$, 
where $f_{j}^{\max}\eta:=f_{j}^{\vp_{j}(\eta)}\eta$. 
\end{lem}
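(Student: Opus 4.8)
The plan is to read everything off from the behaviour of the height function $H_j^\eta(t)=\pair{\alpha_j^\vee}{\ol{\eta}(t)}$ on the last interval $[a_{s-1},\,1]$, on which its slope is exactly $\pair{\alpha_j^\vee}{\kappa(\eta)\lambda}$ (since the direction vector there is $x_s\lambda=\kappa(\eta)\lambda$).

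First I would dispose of the existence claim. Assume $\pair{\alpha_j^\vee}{\kappa(\eta)\lambda}>0$. Then $H_j^\eta$ is strictly increasing on $[a_{s-1},\,1]$, so $H_j^\eta(1)>H_j^\eta(a_{s-1})\ge m_j^\eta$. Because $\wt(\eta)=\ol{\eta}(1)\in P_{\af}$ we have $H_j^\eta(1)\in\BZ$, and $m_j^\eta\in\BZ$ by Remark~\ref{rem:local}; hence $H_j^\eta(1)-m_j^\eta\ge 1$, which is precisely the condition in the definition of $f_j$ guaranteeing $f_j\eta\ne\bzero$.

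For the equivalence, set $\eta':=f_j^{\max}\eta$ and establish two facts. Fact (i): $\kappa(\eta')\in\{\kappa(\eta),\,r_j\kappa(\eta)\}$. I would prove by induction on $k$ that $\kappa(f_j^{k}\eta)\in\{\kappa(\eta),\,r_j\kappa(\eta)\}$ for $0\le k\le \vp_j(\eta)$. Inspecting the explicit formula for $f_j$ following \eqref{eq:t-f}, the final direction of $f_j\zeta$ is $\kappa(\zeta)$ when $t_1<1$ (the reflected block, with new directions $r_jx_\bullet$, does not reach the right endpoint) and $r_j\kappa(\zeta)$ when $t_1=1$ (the terminal direction $x_s$ is dropped and replaced by $r_jx_s$); thus a single application changes the last direction by at most one reflection $r_j$, and $r_j^2=e$ keeps us inside the two-element set. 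Fact (ii): the slope of $H_j^{\eta'}$ on its last interval is $\le 0$, i.e. $\pair{\alpha_j^\vee}{\kappa(\eta')\lambda}\le 0$. Indeed $\vp_j(\eta')=0$, so $f_j\eta'=\bzero$, which by definition means $H_j^{\eta'}(1)=m_j^{\eta'}$; were the slope strictly positive, $H_j^{\eta'}$ would be strictly increasing on the non-degenerate last interval and $H_j^{\eta'}(1)$ would strictly exceed $m_j^{\eta'}$, a contradiction.

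Finally I would combine (i) and (ii), noting that $r_j\kappa(\eta)\ne\kappa(\eta)$ always (as $r_j\ne e$), so the two cases in (i) are genuinely exclusive. If $\pair{\alpha_j^\vee}{\kappa(\eta)\lambda}>0$, then $\kappa(\eta')=\kappa(\eta)$ is impossible by (ii), whence $\kappa(\eta')=r_j\kappa(\eta)$. Conversely, if $\kappa(\eta')=r_j\kappa(\eta)$, then $r_j\kappa(\eta)\in(W^J)_{\af}$, so Lemma~\ref{lem:si2} gives $\pair{\alpha_j^\vee}{\kappa(\eta)\lambda}\ne 0$; and (ii) yields $-\pair{\alpha_j^\vee}{\kappa(\eta)\lambda}=\pair{\alpha_j^\vee}{r_j\kappa(\eta)\lambda}\le 0$, whence $\pair{\alpha_j^\vee}{\kappa(\eta)\lambda}>0$. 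I expect the main obstacle to be Fact~(i): one must track carefully, through the iteration of $f_j$, that the terminal direction never escapes $\{\kappa(\eta),\,r_j\kappa(\eta)\}$; once that bookkeeping is in place, the slope dichotomy in (ii) makes the equivalence immediate.
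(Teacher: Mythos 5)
Your proof is correct, and its skeleton --- reading everything off the slope of $H_j^{\eta}(t)$ on the final interval together with the explicit formula for $f_j$ --- is the same as the paper's. The existence claim is proved identically, and your forward implication is essentially the paper's argument in disguise: the paper supposes $\kappa(f_j^{\max}\eta)=\kappa(\eta)$, applies the existence assertion to get $f_jf_j^{\max}\eta\ne\bzero$, and contradicts maximality, which is exactly your Fact~(ii) used in contrapositive form. Where you genuinely diverge is the converse implication. The paper proves it by a second induction: assuming $\pair{\alpha_j^{\vee}}{\kappa(\eta)\lambda}\le 0$, it shows $\kappa(f_j^{k}\eta)=\kappa(\eta)$ for every $0\le k\le\vp_j(\eta)$, because the weak decrease of $H_j^{f_j^{k-1}\eta}$ near $t=1$ forces $t_1<1$ at each step; this argument is self-contained within the path model. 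You instead keep only the weaker dichotomy $\kappa(f_j^{k}\eta)\in\{\kappa(\eta),\,r_j\kappa(\eta)\}$ (your Fact~(i), which the paper's contradiction argument tacitly assumes but never states), deduce $\pair{\alpha_j^{\vee}}{\kappa(\eta)\lambda}\ge 0$ from Fact~(ii), and then must exclude the value $0$ by invoking Lemma~\ref{lem:si2}: since $\kappa(f_j^{\max}\eta)=r_j\kappa(\eta)$ is a direction of a path in $\sLS$, it lies in $(W^{J})_{\af}$, which forces $\pair{\alpha_j^{\vee}}{\kappa(\eta)\lambda}\ne 0$. Both routes are sound. The paper's induction buys independence from the parabolic combinatorics (no appeal to Lemma~\ref{lem:si2}), whereas yours trades that finer induction for a one-line appeal to the $(W^{J})_{\af}$-membership criterion and has the side benefit of making the terminal-direction dichotomy explicit.
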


\begin{proof}
If $\pair{\alpha_{j}^{\vee}}{\kappa(\eta)\lambda} > 0$, then 
we see that $H^{\eta}_{j}(1) - m^{\eta}_{j} > 0$, and hence 
$H^{\eta}_{j}(1) - m^{\eta}_{j} \ge 1$ by Remark~\ref{rem:local}. 
Therefore, $f_{j}\eta \ne \bzero$ by the definition of the root operator $f_{j}$. 

Assume first that $\pair{\alpha_{j}^{\vee}}{\kappa(\eta)\lambda} > 0$, 
and suppose, for a contradiction, 
that $\kappa(f_{j}^{\max}\eta) = \kappa(\eta)$. 
Then we have 
$\pair{\alpha_{j}^{\vee}}{\kappa(f_{j}^{\max}\eta)\lambda} = 
 \pair{\alpha_{j}^{\vee}}{\kappa(\eta)\lambda} > 0$, 
and hence $f_{j}f_{j}^{\max}\eta \ne \bzero$ by the assertion just shown. 
However, this contradicts the definition of $f_{j}^{\max}\eta$. 
Thus we obtain $\kappa(f_{j}^{\max}\eta) = r_{j}\kappa(\eta)$. 

Assume next that $\pair{\alpha_{j}^{\vee}}{\kappa(\eta)\lambda} \le 0$; 
we will show by induction on $k$ that 
$\kappa(f_{j}^{k}\eta) = \kappa(\eta)$ for all $0 \le k \le \vp_{j}(\eta)$. 
If $k = 0$, then the assertion is obvious. Assume that $k > 0$, 
and set $\eta':=f_{j}^{k-1}\eta$; by our induction hypothesis, 
we have $\kappa(\eta')=\kappa(\eta)$. 
Take $t_{0},\,t_{1} \in [0,1]$ as in \eqref{eq:t-f}, 
with $\eta'$ in place of $\eta$. 
Then the function $H^{\eta'}_{j}(t)$ is strictly increasing 
on $[t_{0},\,t_{1}]$ (see the comment following \eqref{eq:t-f}). 
Since $\kappa(\eta')=\kappa(\eta)$, we have 
$\pair{\alpha_{j}^{\vee}}{\kappa(\eta')\lambda} = 
\pair{\alpha_{j}^{\vee}}{\kappa(\eta)\lambda} \le 0$, 
which implies that the function $H^{\eta'}_{j}(t)$
is weakly decreasing on $[1-\epsilon,\,1]$ for a sufficiently small 
$\epsilon > 0$. Therefore, we deduce that $t_{1} < 1$, and hence 
$\kappa(f_{j}\eta')=\kappa(\eta')$ by the definition of the root operator $f_{j}$. 
Combining the above, we obtain 
$\kappa(f_{j}^{k}\eta)=\kappa(f_{j}\eta')=\kappa(\eta') = \kappa(\eta)$. 
This proves the lemma. 
\end{proof}
%
%
\subsection{SiLS paths associated with multi-partitions.}
\label{subsec:Parp}

As in the previous subsection, we fix 
$\lambda \in P^{+}$, and set 
$J=J_{\lambda}:= \bigl\{ i \in I \mid 
\pair{\alpha_i^{\vee}}{\lambda}=0 \bigr\} \subset I$.
We write $\lambda \in P^{+}$ as 
$\lambda = \sum_{i \in I} m_{i} \varpi_{i}$ 
with $m_{i} \in \BZ_{\ge 0}$ for $i \in I$, and define
%
%
\begin{equation} \label{eq:olpar}
\ol{\Par(\lambda)} := 
 \bigl\{ \bc = (\rho^{(i)})_{i \in I} \mid 
 \text{$\rho^{(i)}$ is a partition of length $\le m_{i}$ 
 for each $i \in I$} \bigr\},
\end{equation}
%
%
\begin{equation} \label{eq:par}
\Par(\lambda) := 
 \bigl\{ \bc = (\rho^{(i)})_{i \in I} \mid 
 \text{$\rho^{(i)}$ is a partition of length $< m_{i}$ 
 for each $i \in I$} \bigr\};
\end{equation}
we understand that a partition of length less than $0$ 
is the empty partition $\emptyset$ 
(the sets $\ol{\Par(\lambda)}$ and $\Par(\lambda)$ are 
identical to $\mathbf{N}^{\mathcal{R}_{0}}(\lambda)$ and 
$\mathbf{N}^{\mathcal{R}_{0}}(\lambda)'$ in the notation of 
\cite[Definition~4.2 and page 371]{BN}, respectively).
Note that $\Par(\lambda) \subset \ol{\Par(\lambda)}$.
For $\bc = (\rho^{(i)})_{i \in I} \in \ol{\Par(\lambda)}$, we set 
$|\bc|:=\sum_{i \in I} |\rho^{(i)}|$, where for a partition 
$\chi = (\chi_1 \ge \chi_2 \ge \cdots \ge \chi_{m})$, 
we set $|\chi| := \chi_{1}+\cdots+\chi_{m}$. 
We equip the set $\Par(\lambda)$ with a crystal structure as follows: 
for each $\bc = (\rho^{(i)})_{i \in I} \in \Par(\lambda)$, we set
\begin{equation}
\begin{cases}
e_{j} \bc = f_{j} \bc := \bzero, \quad 
\ve_{j} (\bc) = \vp_{j} (\bc) := -\infty 
  & \text{for $j \in I_{\af}$}, \\[1.5mm]
\wt(\bc) := - |\bc| \delta. &
\end{cases}
\end{equation}

Let $\Conn(\sLS)$ denote the set of all connected components of $\sLS$, 
and let $\sLSo \in \Conn(\sLS)$ denote the connected component of $\sLS$
containing $\eta_{e}:=(e\,;\,0,\,1) \in \sLS$. 
%
%
\begin{prop} \label{prop:SLS}
Keep the notation above. 
\begin{enu}

\item Each connected component $C \in \Conn(\sLS)$ 
of $\sLS$ contains a unique element of the form:
%
%
\begin{equation} \label{eq:etaC}
\eta^{C} = 
 (z_{\xi_1}t_{\xi_1},\, 
  z_{\xi_2}t_{\xi_2},\,\dots,\,z_{\xi_{s-1}}t_{\xi_{s-1}},\,e \,;\, 
  a_{0},\,a_{1},\,\dots,\,a_{s-1},\,a_{s})
\end{equation}
for some $s \ge 1$ and $\xi_{1},\,\xi_{2},\,\ldots,\,\xi_{s-1} \in \jad$ 
(see \cite[Proposition~7.1.2]{INS}); 
recall that $e$ denotes the unit element of $W_{\af}$. 

\item There exists a bijection 
$\Theta:\Conn(\sLS) \rightarrow \Par(\lambda)$ such that 
$\wt(\eta^{C})=\lambda-|\Theta(C)|\delta = \lambda+\wt(\Theta(C))$ 
(see \cite[Proposition~7.2.1 and its proof]{INS}). 

\item Let $C \in \Conn(\sLS)$. 
Then, there exists an isomorphism $C \stackrel{\sim}{\rightarrow}
\bigl\{\Theta(C)\bigr\} \otimes \sLSo$ of crystals that maps 
$\eta^{C}$ to $\Theta(C) \otimes \eta_{e}$. 
Consequently, $\sLS$ is isomorphic as a crystal to 
$\Par(\lambda) \otimes \sLSo$ 
(see \cite[Proposition~3.2.4 and its proof]{INS}). 
\end{enu}
\end{prop}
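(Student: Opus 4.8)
All three assertions are established in our previous work \cite{INS} (Propositions~7.1.2, 7.2.1, and 3.2.4, respectively), so the plan is to recall the line of argument rather than to reprove everything from scratch. The organizing idea is that each connected component $C$ carries a distinguished element $\eta^{C}$ singled out by the requirement that its final direction be $e$, that the combinatorial data of $\eta^{C}$ decodes into a multi-partition $\Theta(C) \in \Par(\lambda)$, and that, once the $\delta$-shift recorded by $\Theta(C)$ is stripped off, every component looks identical to the reference component $\sLSo$.

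For part (1), I would characterize $\eta^{C}$ through the finite weight of its endpoint. For a general $\eta = (x_{1},\dots,x_{s}\,;\,a_{0},\dots,a_{s}) \in \sLS$, writing $x_{u} = w_{u}z_{\xi_{u}}t_{\xi_{u}}$ with $w_{u} \in W^{J}$ and $\xi_{u} \in \jad$ (see \eqref{eq:W^J_af}), the level-zero action \eqref{eq:lv0action} gives $x_{u}\lambda = w_{u}\lambda - \pair{\xi_{u}}{\lambda}\delta$, so by \eqref{eq:LSpath} the finite part of $\wt(\eta)=\ol{\eta}(1)$ is the convex combination $\sum_{u}(a_{u}-a_{u-1})\,w_{u}\lambda$ of points of $W\lambda$. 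Since $\lambda$ is the unique dominant, hence extremal, point of $\mathrm{conv}(W\lambda)$, this finite part equals $\lambda$ precisely when every $w_{u}=e$, i.e.\ when all directions have the form $z_{\xi_{u}}t_{\xi_{u}}$; the additional normalization pinning the final direction to $e$ then selects one element per component. Existence of such an element I would obtain by applying root operators to raise the finite weight up to $\lambda$ (a process that terminates because finite weights lie in the bounded set $\mathrm{conv}(W\lambda)$), and uniqueness together with the precise shape \eqref{eq:etaC} with $\xi_{u} \in \jad$ follows from the covering analysis of Remark~\ref{rem:SiB} and \cite[Proposition~7.1.2]{INS}.

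For parts (2) and (3), I would first read off $\Theta(C)$ from $\eta^{C}$: the strictly decreasing chain of $J$-adjusted coweights $\xi_{1},\dots,\xi_{s-1}$ and the break-points $a_{0}<\cdots<a_{s}$ assemble, coordinate by coordinate over $i \in I$, into partitions $\rho^{(i)}$ of length $<m_{i}$, yielding $\Theta(C)=(\rho^{(i)})_{i\in I} \in \Par(\lambda)$; the inverse reconstructs $(\bm{\xi},\bm{a})$ from the partitions, so $\Theta$ is a bijection. The weight formula is then a direct computation: with all $w_{u}=e$ and $\xi_{s}=0$, the above gives $\ol{\eta^{C}}(1)=\lambda - \bigl(\sum_{u=1}^{s}(a_{u}-a_{u-1})\pair{\xi_{u}}{\lambda}\bigr)\delta$, and one identifies the scalar with $|\Theta(C)|$, so that $\wt(\eta^{C})=\lambda-|\Theta(C)|\delta=\lambda+\wt(\Theta(C))$. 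For the isomorphism $C \stackrel{\sim}{\to} \{\Theta(C)\}\otimes\sLSo$, note that $\{\Theta(C)\}$ is a one-element crystal all of whose root operators return $\bzero$ and whose only weight is $\wt(\Theta(C))=-|\Theta(C)|\delta$; by the tensor product rule this means $\{\Theta(C)\}\otimes\sLSo$ is simply $\sLSo$ with every weight translated by $-|\Theta(C)|\delta$. Since $C$ and $\sLSo$ are both connected, a crystal isomorphism $C \stackrel{\sim}{\to}\sLSo$ is determined once it is known to send $\eta^{C}$ to $\eta_{e}$; its existence amounts to matching the root-operator combinatorics emanating from the two distinguished elements, and assembling these component-wise isomorphisms over $\Conn(\sLS)$ through $\Theta$ yields $\sLS \cong \Par(\lambda)\otimes\sLSo$.

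The step I expect to be the main obstacle is precisely the decoding in part (2) and its compatibility with part (3): one must verify that the pair $(\bm{\xi},\bm{a})$ attached to $\eta^{C}$ satisfies exactly the constraints that define $\Par(\lambda)$ (strict decrease of the $x_{u}$ in the semi-infinite order, $J$-adjustedness of the $\xi_{u}$, and the requirement that each $\rho^{(i)}$ have length $<m_{i}$), and, conversely, that every such multi-partition arises. Establishing that the resulting correspondence is simultaneously a bijection of index sets and the bookkeeping for a \emph{uniform} degree-shifted isomorphism of each $C$ onto $\sLSo$ is where the shape \eqref{eq:etaC} and the covering relations of Remark~\ref{rem:SiB} do the essential work.
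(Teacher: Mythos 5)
Your overall strategy matches the paper exactly: the paper offers no proof of this proposition at all, but records it as a summary of \cite[Propositions~7.1.2, 7.2.1, and 3.2.4]{INS}, which are precisely the results you invoke. So as a matter of logical structure the proposal is fine, and your characterization of $\eta^{C}$ via the classical part of the weight (a convex combination of points of $W\lambda$ equals the vertex $\lambda$ only if every contributing point is $\lambda$, forcing $w_{u}=e$ for $w_{u}\in W^{J}$) is a correct reading of Remark~\ref{rem:extp}\,(2).

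However, the two places where you go beyond citation and offer independent arguments both have genuine gaps. For existence in part (1), you propose to ``apply root operators to raise the finite weight up to $\lambda$,'' terminating by boundedness of the convex hull of $W\lambda$. This termination argument only applies to $e_{j}$ with $j \in I$ (for $j=0$ the classical weight is not raised, so no monotonicity holds), and what it produces is an element $\eta$ with $e_{j}\eta=\bzero$ for all $j \in I$, i.e.\ a classically highest element. In a level-zero crystal such an element need \emph{not} have classical weight $\lambda$: the crystal $\sLS$ projects onto $\QLS(\lambda)$, whose decomposition as an $I$-crystal has, in general, several components with distinct highest weights, so classically highest elements of classical weight $\ne\lambda$ exist. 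Reaching an element all of whose directions are of translation type genuinely requires the affine node; this is exactly why the paper's own Lemma~\ref{lem:seq} (and the corresponding argument in \cite[\S7]{INS}) goes through \cite[Lemma~1.4]{AK} and \cite[Lemma~4.3.2]{NSdeg} rather than a naive weight-raising argument. Similarly, in part (3), ``a crystal isomorphism is determined once it sends $\eta^{C}$ to $\eta_{e}$'' gives only uniqueness; existence is the entire content, since two connected crystals with marked elements need not be isomorphic. What must be shown is that a monomial $X$ in root operators annihilates $\eta^{C}$ if and only if it annihilates $\eta_{e}$, with weights matching up to the shift $-|\Theta(C)|\delta$; this requires the control of directions of the type provided by \cite[Lemma~7.1.4]{INS} and is not a formal consequence of connectedness. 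Since you do ultimately rest parts (1)--(3) on the citations to \cite{INS}, the proposal stands as the paper's treatment does, but the sketched arguments could not replace those citations.
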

%
%
\subsection{Dual crystal of $\sLS$.}
\label{subsec:dual-sLS}

As in the previous subsection, we fix 
$\lambda \in P^{+}$, and set 
$J=J_{\lambda}:= \bigl\{ i \in I \mid 
\pair{\alpha_i^{\vee}}{\lambda}=0 \bigr\} \subset I$.
Let $w_{0} \in W$ denote the longest element of the (finite) Weyl group $W$, 
and define an involution $\sigma:I \rightarrow I$ by 
$w_{0}\alpha_{j} = -\alpha_{\sigma(j)}$ for $j \in I$; 
recall that $w_{0}^{2}=e$, and hence $\sigma^{2}$ is 
the identity map on $I$. 
Note that $-w_{0}\lambda \in P^{+}$, and $J_{-w_{0}\lambda}=
\bigl\{ i \in I \mid \pair{\alpha_i^{\vee}}{-w_0\lambda}=0 \bigr\} = \sigma(J)$. 
Also, let $w_{\sigma(J),0} \in W_{\sigma(J)}$ denote 
the longest element of the (finite) Weyl group $W_{\sigma(J)}$; 
we see from \eqref{eq:mcr} that 
the minimal coset representative 
$\mcr{w_{0}}^{\sigma(J)} \in W^{\sigma(J)}$ 
is identical to $w_{0}w_{\sigma(J),0}$. 

Now, for $x \in (W^{J})_{\af}$, 
we set $x^{\vee}:=x\mcr{w_{0}}^{\sigma(J)}=xw_{0}w_{\sigma(J),0}$. 
Then it follows easily from definition \eqref{eq:Pet} that 
$x^{\vee} \in (W^{\sigma(J)})_{\af}$; notice that 
$\mcr{w_{0}}^{\sigma(J)}\beta = w_{0}w_{\sigma(J),0}\beta 
\in (\Delta_{J})_{\af}^{+}$ for all 
$\beta \in (\Delta_{\sigma(J)})_{\af}^{+}$. 
Moreover, we have (cf. \cite[Proposition~4.3\,(2)]{LNSSS})
%
%
\begin{equation} \label{eq:sil-vee}
\sell(x^{\vee})=
\underbrace{\ell(w_{0})-\ell(w_{\sigma(J),0})}_{
 = \ell(\mcr{w_0}^{\sigma(J)})
}-\sell(x) \quad 
\text{for every $x \in (W^{J})_{\af}$}.
\end{equation}
Indeed, if $x=wz_{\xi}t_{\xi}$ for $w \in W^{J}$ and $\xi \in \jad$ 
(see \eqref{eq:W^J_af}), then 
\begin{align}
\sell(x^{\vee}) & 
  = \sell(wz_{\xi}t_{\xi}w_{0}w_{\sigma(J),0}) 
  = \sell(wz_{\xi}w_{0}w_{\sigma(J),0}t_{(w_{0}w_{\sigma(J),0})^{-1}\xi}) \nonumber \\
& = \ell(\mcr{wz_{\xi}w_{0}w_{\sigma(J),0}}^{\sigma(J)})+
  2\pair{(w_{0}w_{\sigma(J),0})^{-1}\xi}{\rho-\rho_{\sigma(J)}} \label{eq:sv1}
\end{align}
by \cite[Lemma~A.2.1]{INS}, 
where for a subset $K$ of $I$, we set $\rho_{K}:=\sum_{\alpha \in \Delta_{K}^{+}}\alpha$. 
Since $-w_{0}(\Delta_{K}^{+})=\Delta_{\sigma(K)}^{+}$ 
for a subset $K$ of $I$, it follows that 
$w_{0}\rho_{\sigma(J)}= w_{0}^{-1}\rho_{\sigma(J)}=-\rho_{J}$
and $w_{0}\rho=-\rho$. Also, notice that 
$\pair{\alpha_{j}^{\vee}}{\rho-\rho_{\sigma(J)}}= 0$ for all $j \in \sigma(J)$,
which implies that $z(\rho-\rho_{\sigma(J)})=\rho-\rho_{\sigma(J)}$ 
for all $z \in W_{\sigma(J)}$. Combining these, we deduce that 
%
%
\begin{equation} \label{eq:sv2}
\pair{(w_{0}w_{\sigma(J),0})^{-1}\xi}{\rho-\rho_{\sigma(J)}} = 
\pair{\xi}{w_{0}(\rho-\rho_{\sigma(J)})} = 
-\pair{\xi}{\rho-\rho_{J}}.
\end{equation}
Next, we compute 
\begin{equation*}
\ell(\mcr{wz_{\xi}w_{0}w_{\sigma(J),0}}^{\sigma(J)}) 
   = \ell(\mcr{wz_{\xi}w_{0}}^{\sigma(J)}) 
   = \ell(\mcr{w_{0}(w_{0}ww_{0})(w_{0}z_{\xi}w_{0})}^{\sigma(J)}).
\end{equation*}
Since $z_{\xi} \in W_{J}$, it follows immediately that 
$w_{0}z_{\xi}w_{0} \in W_{\sigma(J)}$. Also, since $w \in W^{J}$, 
we see that $w_{0}ww_{0} \in W^{\sigma(J)}$. 
Therefore, we have 
%
%
\begin{align}
& \ell(\mcr{ wz_{\xi}w_{0}w_{\sigma(J),0} }^{\sigma(J)}) =
  \ell(\mcr{ w_{0} 
     (\underbrace{w_{0}ww_{0}}_{\in W^{\sigma(J)}})
     (\underbrace{w_{0}z_{\xi}w_{0}}_{\in W_{\sigma(J)}})}^{\sigma(J)}) = 
  \ell(\mcr{ w_{0}\underbrace{(w_{0}ww_{0})}_{\in W^{\sigma(J)}} }^{\sigma(J)}) \nonumber \\[3mm]
& \hspace*{15mm} 
  = \ell(w_{0})-\ell(w_{\sigma(J),0})-\ell(w_{0}ww_{0}) \quad 
   \text{by \cite[Proposition~4.3\,(2)]{LNSSS}} \nonumber \\
& \hspace*{15mm} 
  = \ell(w_{0})-\ell(w_{\sigma(J),0})-\ell(w). \label{eq:sv3}
\end{align}
Substituting \eqref{eq:sv2} and \eqref{eq:sv3} into \eqref{eq:sv1}, 
we obtain \eqref{eq:sil-vee}. 
From the definition of the (parabolic) semi-infinite Bruhat graph, 
by using \eqref{eq:sil-vee}, we easily obtain the next lemma. 
%
%
\begin{lem} \label{lem:dual}
Let $0 < a \le 1$ be a rational number. 
Let $x,\,y \in (W^{J})_{\af}$, and $\beta \in \prr$. Then, 
$x \edge{\beta} y$ in $\SBa$ if and only if 
$y^{\vee} \edge{\beta} x^{\vee}$ in $\SBc{-w_{0}\lambda}{a}$. 
\end{lem}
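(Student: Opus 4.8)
The claim is an "if and only if," so I would prove one direction and obtain the other by symmetry (noting that $(x^\vee)^\vee = x$ after accounting for the involution $\sigma$, so applying the statement with $-w_0\lambda$ in place of $\lambda$ recovers the reverse implication). The proof rests on translating the defining conditions of an edge in $\SBa$—namely $r_\beta x \in (W^J)_{\af}$, $\sell(r_\beta x) = \sell(x) + 1$, and the divisibility condition $a\pair{\beta^\vee}{x\lambda} \in \BZ$—into the corresponding conditions for the edge $y^\vee \edge{\beta} x^\vee$ in $\SBc{-w_0\lambda}{a}$.

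**Key steps.** First I would unwind what $x \edge{\beta} y$ in $\SBa$ means: $y = r_\beta x$, $\sell(y) = \sell(x) + 1$, and $a\pair{\beta^\vee}{x\lambda} \in \BZ$. The goal edge $y^\vee \edge{\beta} x^\vee$ in $\SBc{-w_0\lambda}{a}$ requires $x^\vee = r_\beta y^\vee$, that $\sell(x^\vee) = \sell(y^\vee) + 1$, and that $a\pair{\beta^\vee}{y^\vee(-w_0\lambda)} \in \BZ$. The second (length) condition is the cleanest: from $y = r_\beta x$ we get $y^\vee = r_\beta x^\vee$ directly, since $\,\cdot^\vee$ is right multiplication by $\mcr{w_0}^{\sigma(J)}$, which commutes with the left multiplication by $r_\beta$; equivalently $x^\vee = r_\beta y^\vee$. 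For the length, I would apply \eqref{eq:sil-vee}: since $\sell(x^\vee) = c - \sell(x)$ and $\sell(y^\vee) = c - \sell(y)$ for the common constant $c = \ell(\mcr{w_0}^{\sigma(J)})$, the relation $\sell(y) = \sell(x)+1$ becomes exactly $\sell(x^\vee) = \sell(y^\vee) + 1$, reversing the covering direction as required. This shows $y^\vee \edge{\beta} x^\vee$ is an edge of $\SB$ (for $-w_0\lambda$) with the correct orientation.

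**The divisibility condition.** The remaining point is matching the arithmetic labels. I need $a\pair{\beta^\vee}{x\lambda} \in \BZ$ to be equivalent to $a\pair{\beta^\vee}{y^\vee(-w_0\lambda)} \in \BZ$. Here I would compute $y^\vee(-w_0\lambda) = y\,\mcr{w_0}^{\sigma(J)}(-w_0\lambda)$ and use $\mcr{w_0}^{\sigma(J)} = w_0 w_{\sigma(J),0}$ together with $w_{\sigma(J),0}(-w_0\lambda) = -w_0\lambda$ (since $-w_0\lambda$ is fixed by $W_{\sigma(J)}$, because $J_{-w_0\lambda} = \sigma(J)$), so that $\mcr{w_0}^{\sigma(J)}(-w_0\lambda) = w_0(-w_0\lambda) = -\lambda$; thus $y^\vee(-w_0\lambda) = -y\lambda = -r_\beta x\lambda$. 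Then $\pair{\beta^\vee}{y^\vee(-w_0\lambda)} = -\pair{\beta^\vee}{r_\beta x\lambda} = \pair{\beta^\vee}{x\lambda}$, using $r_\beta$-invariance of the pairing paired against $\beta^\vee$ (i.e. $\pair{\beta^\vee}{r_\beta\mu} = -\pair{\beta^\vee}{\mu}$). Hence the two labels coincide up to sign, and the divisibility conditions are literally the same.

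**Main obstacle.** The genuinely delicate step is \emph{verifying $x^\vee, y^\vee \in (W^{\sigma(J)})_{\af}$} and that the edge lives in the correct parabolic graph—this is where the choice of $\mcr{w_0}^{\sigma(J)}$ and the identity $\mcr{w_0}^{\sigma(J)}\beta \in (\Delta_J)_{\af}^+$ for $\beta \in (\Delta_{\sigma(J)})_{\af}^+$ (both recorded in the text preceding the lemma) must be invoked carefully; these guarantee that $\,\cdot^\vee$ is a genuine bijection $(W^J)_{\af} \to (W^{\sigma(J)})_{\af}$ compatible with edges. Everything else is a bookkeeping chase through \eqref{eq:sil-vee} and the weight computation, so the crux is confirming the coset-representative geometry rather than the arithmetic.
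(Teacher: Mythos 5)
Your proof is correct and takes the same route as the paper: the paper's entire proof of this lemma is the one-line remark that it follows ``from the definition of the (parabolic) semi-infinite Bruhat graph, by using \eqref{eq:sil-vee},'' and your write-up is precisely that verification spelled out. Your three ingredients---the equivalence $y = r_{\beta}x \iff x^{\vee} = r_{\beta}y^{\vee}$, the reversal of the covering relation via \eqref{eq:sil-vee}, and the computation $y^{\vee}(-w_{0}\lambda) = -y\lambda$ showing the rationality conditions agree up to sign---together with the membership $x^{\vee} \in (W^{\sigma(J)})_{\af}$ recorded in the text just before the lemma, are exactly what the paper's proof implicitly appeals to.
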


For $\eta=(x_{1},\,\dots,\,x_{s}\,;\,a_{0},\,a_{1},\,\dots,\,a_{s}) \in 
\sLS$, we set
%
%
\begin{equation} \label{eq:etavee}
\eta^{\vee}:= 
(x_{s}^{\vee},\,\dots,\,x_{1}^{\vee}\,;\,1-a_{s},\,\dots,\,1-a_{1},\,1-a_{0}).
\end{equation}
By Lemma~\ref{lem:dual}, we see that $\eta^{\vee} \in \BB^{\si}(-w_{0}\lambda)$. 
Also, in the same way as \cite[Lemma~2.1\,e)]{Lit95} (cf. \cite[\S7.4]{K-OnCB}), 
it is easily shown that for $\eta \in \BB^{\si}(\lambda)$, 
%
%
\begin{equation} \label{eq:veep}
\begin{cases}
\wt(\eta^{\vee})=-\wt(\eta), \quad \text{and} \\[1.5mm]
(e_{j}\eta)^{\vee}=f_{j}\eta^{\vee}, 
(f_{j}\eta)^{\vee}=e_{j}\eta^{\vee} \quad 
\text{for all $j \in I_{\af}$}, 
\end{cases}
\end{equation}
where we set $\bzero^{\vee}:=\bzero$. 
%
%
\section{Extremal weight modules and their crystal bases.}
\label{sec:extremal}
%
%
\subsection{Quantized universal enveloping algebras.}
\label{subsec:quantum}

Let $(\cdot\,,\,\cdot)$ denote
the nondegenerate, symmetric, $\BC$-bilinear form on $\Fh_{\af}^{\ast}$, 
normalized as in \cite[\S6]{Kac}, and fix a positive integer $d \in \BZ_{> 0}$
such that $(\alpha_{j},\,\alpha_{j})/2 \in \BZ d^{-1}$ 
for all $j \in I_{\af}$. Let $q$ be an indeterminate, 
and set $q_{s}:=q^{1/d}$. 
Denote by $U_{q}=U_{q}(\Fg_{\af})= \langle E_{j},\,F_{j},\,q^{h} \mid 
j \in I_{\af},\,h \in d^{-1}P^{\vee} \rangle$ the quantized universal 
enveloping algebra  over $\BQ(q_{s})$ associated with $\Fg_{\af}$, 
where $E_{j}$ and $F_{j}$ denote the Chevalley generators corresponding to 
the simple root $\alpha_{j}$ for $j \in I$, and denote by 
$U_{q}^{+}=\langle E_{j} \mid j \in I_{\af} \rangle$ 
(resp., $U_{q}^{-}=\langle F_{j} \mid j \in I_{\af} \rangle$) 
the $\BQ(q_{s})$-subalgebra of $U_{q}$ generated by 
$E_{j}$ (resp., $F_{j}$), $j \in I_{\af}$. 
We define a $\BQ(q_{s})$-algebra involutive automorphism 
$\vee:U_{q} \rightarrow U_{q}$ and a $\BQ(q_{s})$-algebra involutive 
anti-automorphism $\ast:U_{q} \rightarrow U_{q}$ by:
\begin{align}
& E_{j}^{\vee}=F_{j}, \quad F_{j}^{\vee}=E_{j}, \quad (q^{h})^{\vee}=q^{-h}, \quad \text{and} \label{eq:vee} \\
& E_{j}^{\ast}=E_{j}, \quad F_{j}^{\ast}=F_{j}, \quad (q^{h})^{\ast}=q^{-h} \label{eq:ast}
\end{align}
for $j \in I_{\af}$ and $h \in d^{-1}P^{\vee}$. 
Also, we define a $\BQ$-algebra involutive automorphism 
$\ol{\phantom{x}}:U_{q} \rightarrow U_{q}$ by:
\begin{equation}
\ol{E_{j}}=E_{j}, \quad 
\ol{F_{j}}=F_{j}, \quad 
\ol{q^{h}}=q^{-h}, \quad \ol{q_{s}}=q_{s}^{-1} \label{eq:bar}
\end{equation}
for $j \in I_{\af}$ and $h \in d^{-1}P^{\vee}$.

Let $(\CL(\pm\infty),\,\CB(\pm\infty))$ denote 
the crystal basis of $U_{q}^{\mp}$, with 
$u_{\pm\infty} \in \CB(\pm\infty)$ 
the element corresponding to $1 \in U_{q}^{\mp}$. 
Recall from \cite[Theorem 2.1.1]{K-Lit} and \cite[\S8.3]{K-OnCB}
that $\ast:U_{q} \rightarrow U_{q}$ 
induces an involution on $\CB(\pm \infty)$, 
which is also denoted by $\ast$; we call this involution 
the $\ast$-operation on $\CB(\pm \infty)$. 
%
%
\subsection{Extremal weight vectors and extremal elements.}
\label{subsec:extremal}

Let $M$ be an integrable $U_{q}$-module. 
A nonzero weight vector $v \in M$ of weight $\lambda \in P_{\af}$ 
is said to be extremal (see \cite[\S3.1]{K-lv0} and \cite[\S2.6]{K-rims}) 
if there exists a family $\bigl\{ v_{x} \bigr\}_{x \in W_{\af}}$ 
of weight vectors in $M$ such that $v_{e}=v$, 
and such that for every $j \in I_{\af}$ and $x \in W_{\af}$ with 
$n:=\pair{\alpha_{j}^{\vee}}{x\lambda} \ge 0$ (resp., $\le 0$),
the equalities $E_{j}v_{x}=0$ and $F_{j}^{(n)}v_{x}=v_{r_{j}x}$ 
(resp., $F_{j}v_{x}=0$ and $E_{j}^{(-n)}v_{x}=v_{r_{j}x}$) hold, 
where $E_{j}^{(k)}$ and $F_{j}^{(k)}$ are the divided powers of 
$E_{j}$ and $F_{j}$, respectively, for $k \in \BZ_{\ge 0}$; 
observe that the weight of $v_{x}$ is equal to $x\lambda$. 
Then the Weyl group $W_{\af}$ acts on 
the set of extremal weight vectors in $M$ by
%
%
\begin{equation} \label{eq:Wact1}
S_{r_{j}}^{\norm}v:=
\begin{cases}
F_{j}^{(n)}v & \text{if $n:=\pair{\alpha_{j}^{\vee}}{\mu} \ge 0$}, \\[1.5mm]
E_{j}^{(-n)}v & \text{if $n:=\pair{\alpha_{j}^{\vee}}{\mu} \le 0$}
\end{cases}
\end{equation}
for an extremal weight vector $v \in M$ of weight $\mu \in P_{\af}$ and $j \in I_{\af}$ 
(see \cite[(2.23)]{K-rims}); if $\bigl\{v_{x}\bigr\}_{x \in W_{\af}}$ is 
the family of weight vectors associated with an extremal weight vector $v$, 
then $v_{x}=S_{x}^{\norm}v$ for all $x \in W_{\af}$. 

Now, let $\CB$ be a regular (or normal) crystal in the sense of 
\cite[\S2.2]{K-lv0} (or \cite[p.\,389]{K-mod}); 
in particular, as a crystal for $U_{q}(\Fg) \subset U_{q}(\Fg_{\af})$, 
it decomposes into a disjoint union of ordinary highest weight crystals.
By \cite[\S7]{K-mod}, the Weyl group $W_{\af}$ acts on $\CB$ by
%
%
\begin{equation} \label{eq:Wact2}
S_{r_{j}}b:=
\begin{cases}
f_{j}^{n}b & \text{if $n:=\pair{\alpha_{j}^{\vee}}{\wt b} \ge 0$}, \\[1.5mm]
e_{j}^{-n}b & \text{if $n:=\pair{\alpha_{j}^{\vee}}{\wt b} \le 0$}
\end{cases}
\end{equation}
for $b \in \CB$ and $j \in I_{\af}$, where $e_{j}$ and $f_{j}$, $j \in I_{\af}$, 
are the Kashiwara operators on $\CB$. 
An element $b \in \CB$ of weight $\lambda \in P_{\af}$ 
is said to be extremal (see \cite[\S2.6]{K-rims}; cf. \cite[\S3.1]{K-lv0})
if $e_{j}S_{x}b = \bzero$ (resp., $f_{j}S_{x}b = \bzero$) 
for all $x \in W_{\af}$ and $j \in I_{\af}$ such that 
$\pair{\alpha_{j}^{\vee}}{x\lambda} \ge 0$ (resp., $\le 0$). 
%
%
\begin{lem} \label{lem:extg}
Let $M$ be an integrable $U_{q}$-module, and assume that 
$M$ has a crystal basis $(\CL,\,\CB)$ 
with global basis $\bigl\{G(b) \mid b \in \CB\bigr\}$ 
(see \cite[Definitions~2.2.2 and 2.2.3]{K-lv0} for example); 
note that $\CB$ is a regular crystal. If $b \in \CB$ is an extremal 
element of weight $\lambda$, then the global basis element $G(b)$ is 
an extremal weight vector of weight $\lambda$. Moreover, we have 
$G(S_{x}b)=S_{x}^{\norm}G(b)$ for all $x \in W_{\af}$.
\end{lem}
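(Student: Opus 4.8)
The plan is to produce explicitly the family $\{v_{x}\}_{x \in W_{\af}}$ of weight vectors witnessing the extremality of $G(b)$, by setting $v_{x} := G(S_{x}b)$ for $x \in W_{\af}$; here $S_{x}b \in \CB$ makes sense because $W_{\af}$ acts on the regular crystal $\CB$ through \eqref{eq:Wact2}, and $v_{e}=G(S_{e}b)=G(b)$. The weight of $v_{x}$ equals $\wt(S_{x}b)=x\lambda$, as required. The whole proof then reduces to checking that this family satisfies the defining relations of an extremal weight vector recalled in \S\ref{subsec:extremal}, and to identifying $v_{x}$ with $S_{x}^{\norm}G(b)$.

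The key input is the compatibility of the (lower) global basis with the copy of $U_{q}(\Fsl_{2})$ attached to each $j \in I_{\af}$: if $b' \in \CB$ satisfies $e_{j}b'=\bzero$, then $E_{j}G(b')=0$ and $F_{j}^{(k)}G(b')=G(f_{j}^{k}b')$ for all $0 \le k \le \vp_{j}(b')$, together with the dual statement obtained by exchanging the roles of $e_{j},E_{j}$ and $f_{j},F_{j}$ (this is the Lusztig--Kashiwara theory of global/canonical bases; see \cite{K-mod} and \cite[\S2.2]{K-lv0}). I will use this as a black box.

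Now fix $j \in I_{\af}$ and $x \in W_{\af}$, and put $n:=\pair{\alpha_{j}^{\vee}}{x\lambda}$. Suppose first that $n \ge 0$. Since $b$ is extremal, $e_{j}S_{x}b=\bzero$; hence $\ve_{j}(S_{x}b)=0$ and $\vp_{j}(S_{x}b)=\pair{\alpha_{j}^{\vee}}{\wt(S_{x}b)}=n$, so the $j$-string through $S_{x}b$ has length $n$ with $S_{x}b$ at its top. By the global-basis property above, $E_{j}v_{x}=E_{j}G(S_{x}b)=0$ and $F_{j}^{(n)}v_{x}=G(f_{j}^{n}S_{x}b)$. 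Because $x \mapsto S_{x}$ is the $W_{\af}$-action of \cite{K-mod} and $f_{j}^{n}S_{x}b=S_{r_{j}}(S_{x}b)$ by \eqref{eq:Wact2} when $n \ge 0$, we get $f_{j}^{n}S_{x}b=S_{r_{j}x}b$, whence $F_{j}^{(n)}v_{x}=v_{r_{j}x}$. These are exactly the relations required when $n \ge 0$; the case $n \le 0$ is handled symmetrically, using the extremality relation $f_{j}S_{x}b=\bzero$ and the dual global-basis property. This shows that $G(b)$ is an extremal weight vector of weight $\lambda$ with associated family $\{v_{x}\}_{x \in W_{\af}}$.

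Finally, the relations just verified read $S_{r_{j}}^{\norm}v_{x}=v_{r_{j}x}$ in the notation of \eqref{eq:Wact1}. Iterating along a reduced expression for an arbitrary $x \in W_{\af}$, starting from $v_{e}=G(b)$, then yields $S_{x}^{\norm}G(b)=v_{x}=G(S_{x}b)$, which is the asserted equality; well-definedness poses no problem since the $S_{r_{j}}^{\norm}$ assemble into a genuine $W_{\af}$-action on extremal weight vectors. The only non-formal ingredient is the global-basis compatibility of the second paragraph; once that is granted, everything else is a matching of the two descriptions \eqref{eq:Wact1} and \eqref{eq:Wact2} of the Weyl-group action, so I expect this compatibility---and, relatedly, the \emph{exactness} of $F_{j}^{(k)}G(b')=G(f_{j}^{k}b')$ rather than a mere congruence modulo $q_{s}\CL$---to be the main point to pin down carefully.
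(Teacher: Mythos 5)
Your overall architecture coincides with the paper's proof: both set $v_{x}:=G(S_{x}b)$, reduce to verifying the string relations $E_{j}v_{x}=0$, $F_{j}^{(n)}v_{x}=v_{r_{j}x}$ (and their duals) one direction $j$ at a time, and then read off $S_{x}^{\norm}G(b)=G(S_{x}b)$ from the definition of the associated family. The difference lies entirely in the key input, and there your proposal has a genuine gap: the ``black box'' you invoke is false in the stated generality. It is \emph{not} true that $e_{j}b'=\bzero$ forces $E_{j}G(b')=0$ and $F_{j}^{(k)}G(b')=G(f_{j}^{k}b')$; global bases are compatible with $\Fsl_{2}$-strings only up to triangular correction terms, not exactly. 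Concretely, let $V(\theta)$ be the simple integrable $U_{q}(\Fsl_{3})$-module of highest weight $\theta=\alpha_{1}+\alpha_{2}$, with highest weight vector $v$ and corresponding crystal element $u$. The crystal element $b':=f_{2}f_{1}u$ of weight $0$ satisfies $e_{1}b'=\bzero$ and $f_{1}b'=\bzero$, so that $\ve_{1}(b')=\vp_{1}(b')=0=\pair{\alpha_{1}^{\vee}}{\wt(b')}$; nevertheless $G(b')=F_{2}F_{1}v$ (this vector is fixed by the bar involution, lies in the integral form and in $\CL \cap \ol{\CL}$, and is congruent to $b'$ modulo $q\CL$, so the balanced-triple uniqueness identifies it with $G(b')$), and hence
\begin{equation*}
E_{1}G(b') \;=\; F_{2}\,\frac{K_{1}-K_{1}^{-1}}{q-q^{-1}}\,v \;=\; F_{2}v \;\ne\; 0 .
\end{equation*}
The same phenomenon occurs in integrable modules over the affine $U_{q}$.

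Note that this $b'$ satisfies not only your hypothesis but even the pair of crystal conditions recorded in the paper's proof ($e_{j}b'=\bzero$ and $f_{j}^{n+1}b'=\bzero$ with $n=\pair{\alpha_{j}^{\vee}}{\wt(b')}\ge 0$); what it fails to be is an \emph{extremal} element, since $e_{2}b' \ne \bzero$. This shows that the desired equalities for $S_{x}b$ cannot be deduced from the local $j$-string data at $S_{x}b$ alone: one must use the extremality of $b$ globally, which is what the argument of \cite[Lemma~5.1.1]{K-GB} cited by the paper supplies. The essential missing ingredient is module-level information --- one needs to know that $G(S_{x}b)$ is actually annihilated by $E_{j}$ (a genuine $U_{q}^{(j)}$-highest weight vector) before the uniqueness characterization of global basis elements can give the exact equality $F_{j}^{(n)}G(S_{x}b)=G(f_{j}^{n}S_{x}b)$ rather than a congruence modulo $q_{s}\CL$; and in the extremal setting this module-level fact is obtained by an induction that exploits the whole $W_{\af}$-orbit of $b$, not just the single string through $S_{x}b$. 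Since your entire verification runs through the false general principle, the crux of the lemma remains unproved in your write-up --- even though you correctly flagged this compatibility as the main point to pin down, it is not merely delicate but wrong as you stated it, and repairing it is exactly the nontrivial content of the paper's citation.
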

\begin{proof}
We set $v:=G(b)$, and $v_{x}:=G(S_{x}b)$ for $x \in W_{\af}$; it suffices to
show that the family $\bigl\{v_{x}\bigr\}_{x \in W_{\af}}$ 
satisfies the condition for $v$ to be an extremal weight vector. 
It is obvious that $v_{e}=v$. Let $x \in W_{\af}$, and $j \in I_{\af}$. 
Assume that $n:=\pair{\alpha_{j}^{\vee}}{x\lambda} \ge 0$.
Then, $e_{j}S_{x}b = \bzero$ and $f_{j}^{n+1}S_{x}b = f_{j}S_{r_{j}x}b = \bzero$ 
by the definition of an extremal element. 
Hence, by exactly the same argument as for \cite[Lemma~5.1.1]{K-GB}, we obtain
$E_{j}v_{x}=E_{j}G(S_{x}b)=0$ and $F_{j}^{(n)}v_{x}=F_{j}^{(n)}G(S_{x}b)=
G(f_{j}^{n}S_{x}b)=G(S_{r_{j}x}b)=v_{r_{j}x}$. 
Similarly, it is shown that 
if $n:=\pair{\alpha_{j}^{\vee}}{x\lambda} \le 0$, then 
$F_{j}v_{x}=0$ and $E_{j}^{(-n)}v_{x}=v_{r_{j}x}$. 
This proves the lemma. 
\end{proof}
%
%
\subsection{Extremal weight modules.}
\label{subsec:extmodl}

Let $\lambda \in P_{\af}$ be an arbitrary integral weight. 
Let $V(\lambda)$ denote the extremal weight module of 
extremal weight $\lambda$ over $U_q$, which is 
an integrable $U_q$-module generated 
by a single element $v_{\lambda}$ with 
the defining relation that $v_{\lambda}$ is 
an ``extremal weight vector'' of weight $\lambda$
(for details, see \cite[\S8]{K-mod} and \cite[\S3]{K-lv0}).
We know from \cite[Proposition~8.2.2]{K-mod} that $V(\lambda)$ has 
a crystal basis $(\CL(\lambda),\,\CB(\lambda))$ 
with global basis 
$\bigl\{G(b) \mid b \in \CB(\lambda)\bigr\}$. 
Denote by $u_{\lambda}$ the element of $\CB(\lambda)$ 
such that $G(u_{\lambda})=v_{\lambda} \in V(\lambda)$. 

%
\begin{rem} \label{rem:Gext}
Let $\lambda \in P_{\af}$. 
The crystal basis $\CB(\lambda)$ is a regular crystal, and 
$u_{\lambda} \in \CB(\lambda)$ is an extremal element of weight $\lambda$. 
Also, by Lemma~\ref{lem:extg}, we have
%
%
\begin{equation} \label{eq:Gext}
G(S_{x}u_{\lambda})=
S^{\norm}_{x}G(u_{\lambda})=
S^{\norm}_{x}v_{\lambda} \qquad
\text{for all $x \in W_{\af}$}. 
\end{equation}
%
\end{rem}

For $\mu \in P_{\af}$, let 
$\CT_{\mu}=\bigl\{ \tw{\mu} \bigr\}$ denote
the crystal consisting of a single element of weight $\mu$ 
(see \cite[Example~7.3]{K-OnCB}). 
We see from \cite[Theorems~2.1.1\,(v) and 3.1.1]{K-mod} that 
$\CB(\infty) \otimes \CT_{\mu} \otimes \CB(-\infty)$ is 
a regular crystal for each $\mu \in P_{\af}$. 
The $\ast$-operation on the crystal 
$\ti{\CB}:=
 \bigsqcup_{\mu \in P_{\af}} \CB(\infty) \otimes \CT_{\mu} \otimes \CB(-\infty)$ 
is given as follows (see \cite[Corollary~4.3.3]{K-mod}): 
for $b_{1} \otimes \tw{\mu} \otimes b_{2} 
\in \CB(\infty) \otimes \CT_{\mu} \otimes \CB(-\infty)$ with $\mu \in P_{\af}$, 
%
%
\begin{equation} \label{eq:astm}
(b_{1} \otimes \tw{\mu} \otimes b_{2})^{\ast}:=
b_{1}^{\ast} \otimes \tw{-\mu-\wt(b_1) - \wt(b_2)} \otimes b_{2}^{\ast}.
\end{equation}
For each $j \in I_{\af}$, we define maps $e_{j}^{\ast}$ and $f_{j}^{\ast}$ 
from $\ti{\CB} \sqcup \bigl\{\bzero\bigr\}$ to itself by 
$e_{j}^{\ast}:=\ast \circ e_{j} \circ \ast$ and 
$f_{j}^{\ast}:=\ast \circ e_{j} \circ \ast$, 
where we understand that $\bzero^{\ast}=\bzero$; 
we know from \cite[Theorem~5.1.1]{K-mod} that 
for every $j \in I_{\af}$, the maps $e_{j}^{\ast}$ and $f_{j}^{\ast}$ 
are strict morphisms of crystals. 
Also, for each $x \in W_{\af}$, 
the map $S_{x}^{\ast}:=\ast \circ S_{x} \circ \ast$ is 
a strict automorphism of the crystal $\ti{\CB}$ that 
maps $\CB(\infty) \otimes \CT_{\mu} \otimes \CB(-\infty)$ 
onto $\CB(\infty) \otimes \CT_{x\mu} \otimes \CB(-\infty)$. 
We know the following from \cite[Proposition~8.2.2]{K-mod}, 
\cite[\S3.1]{K-lv0}, and \cite[\S2.6]{K-rims}. 
%
%
\begin{prop} \label{prop:822}
\mbox{}
\begin{enu}

\item For each $\lambda \in P_{\af}$, the subset 
$\bigl\{b \in \CB(\infty) \otimes \CT_{\lambda} \otimes \CB(-\infty)
\mid \text{\rm $b^{\ast}$ is extremal}\bigr\}$
is a subcrystal of $\CB(\infty) \otimes \CT_{\lambda} \otimes \CB(-\infty)
\subset \ti{\CB}$. Moreover, it is isomorphic as a crystal to 
the crystal basis $\CB(\lambda)$; hence we regard $\CB(\lambda)$ as a 
subcrystal of $\CB(\infty) \otimes \CT_{\lambda} \otimes \CB(-\infty) 
\subset \ti{\CB}$. 

\item Let $\lambda \in P_{\af}$, and $x \in W_{\af}$. 
If $b \in \CB(\lambda) \subset 
\CB(\infty) \otimes \CT_{\lambda} \otimes \CB(-\infty)$, then 
$S_{x}^{\ast}(b) \in \CB(x\lambda) \subset 
\CB(\infty) \otimes \CT_{x\lambda} \otimes \CB(-\infty)$. 
Thus, $S_{x}^{\ast}$ gives an isomorphism of crystals from $\CB(\lambda)$ 
onto $\CB(x\lambda)$, that is, 
\begin{equation*}
S_{x}^{\ast} : \CB(\lambda) \stackrel{\sim}{\rightarrow} \CB(x\lambda).
\end{equation*}

\item Let $\lambda \in P_{\af}$, and $x \in W_{\af}$. 
There exists a $U_{q}$-module isomorphism 
%
%
\begin{equation} \label{eq:isom-ext}
V(\lambda) \stackrel{\sim}{\rightarrow} V(x\lambda)
\end{equation}
that maps $v_{\lambda} \in V(\lambda)$ to 
$S_{x^{-1}}^{\norm}v_{x\lambda} \in V(x\lambda)$. 
Moreover, this isomorphism is compatible with the global bases; 
namely, for $b \in \CB(\lambda)$, the global basis element $G(b) \in V(\lambda)$ 
is sent to $G(S_{x}^{\ast}(b)) \in V(x\lambda)$ under this isomorphism. 
\end{enu}
\end{prop}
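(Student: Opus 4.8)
The plan is to dispatch the three parts in sequence, taking the realization of $\CB(\lambda)$ inside $\CB(\infty) \otimes \CT_{\lambda} \otimes \CB(-\infty)$ as the foundational input and then transporting the Weyl-group action through the $\ast$-operation. For part (1), I would invoke Kashiwara's \cite[Proposition~8.2.2]{K-mod} essentially as stated: it provides the crystal basis $(\CL(\lambda),\,\CB(\lambda))$ of $V(\lambda)$ and identifies $\CB(\lambda)$, under the canonical embedding, with the subset $\{b \mid b^{\ast} \text{ is extremal}\}$ of $\CB(\infty) \otimes \CT_{\lambda} \otimes \CB(-\infty)$. Granting this identification, the assertion that this subset is a subcrystal needs no separate proof, since it is then literally the crystal basis of the integrable module $V(\lambda)$ and hence is stable under the Kashiwara operators $e_{j},\,f_{j}$. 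Thus the entire content of part (1) is the cited identification, which I would not attempt to reprove.

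For part (2), the crux is the elementary identity $(S_{x}^{\ast}b)^{\ast} = S_{x}(b^{\ast})$, immediate from $S_{x}^{\ast} = \ast \circ S_{x} \circ \ast$ and $\ast^{2} = \mathrm{id}$ on $\ti{\CB}$. I would then argue as follows: if $b \in \CB(\lambda)$, so that $b^{\ast}$ is extremal of weight $\lambda$, then $S_{x}(b^{\ast})$ is again extremal, of weight $x\lambda$, because passing from $b^{\ast}$ to $S_{x}(b^{\ast})$ merely reindexes the defining family $\{S_{y}(b^{\ast})\}_{y \in W_{\af}}$ via $y \mapsto yx$, so that the extremality conditions for $S_{x}(b^{\ast})$ coincide with those already known to hold for $b^{\ast}$. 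Together with the fact (recorded just above) that $S_{x}^{\ast}$ carries $\CB(\infty) \otimes \CT_{\mu} \otimes \CB(-\infty)$ onto $\CB(\infty) \otimes \CT_{x\mu} \otimes \CB(-\infty)$, this shows $S_{x}^{\ast}b$ lies in the $\ast$-extremal subset attached to $x\lambda$, i.e.\ in $\CB(x\lambda)$. Since $S_{x}^{\ast}$ is a strict crystal automorphism of $\ti{\CB}$ with inverse $S_{x^{-1}}^{\ast}$, its restriction is the desired isomorphism $\CB(\lambda) \stackrel{\sim}{\rightarrow} \CB(x\lambda)$.

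For part (3), I would first construct the module isomorphism by the universal property of extremal weight modules: the vector $S_{x^{-1}}^{\norm}v_{x\lambda} \in V(x\lambda)$ is an extremal weight vector of weight $x^{-1}(x\lambda)=\lambda$, so there is a unique $U_{q}$-homomorphism $V(\lambda) \rightarrow V(x\lambda)$ sending $v_{\lambda}$ to it; the symmetric construction yields a candidate inverse sending $v_{x\lambda}$ to $S_{x}^{\norm}v_{\lambda}$, and evaluating the two composites on the generators---using $S_{x^{-1}}^{\norm}S_{x}^{\norm}=S_{e}^{\norm}=\mathrm{id}$---confirms that they are mutually inverse. The generator image is matched with the crystal picture through \eqref{eq:Gext} (Lemma~\ref{lem:extg}), which rewrites $S_{x^{-1}}^{\norm}v_{x\lambda}$ as a global basis element. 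The remaining global-basis compatibility $G(b) \mapsto G(S_{x}^{\ast}b)$ I would deduce from part (2) via the uniqueness characterization of the global basis: one checks that the isomorphism respects the crystal lattice $\CL(\lambda)$ and commutes with the bar-involution $\ol{\phantom{x}}$, and that it reduces modulo $q$ to the crystal isomorphism $S_{x}^{\ast}$, whence it must send each $G(b)$ to the unique bar-invariant lift of $S_{x}^{\ast}b$, namely $G(S_{x}^{\ast}b)$. I expect this last step to be the main obstacle: matching the generators is easy, but verifying that the abstractly constructed module isomorphism is fine enough on the integral and bar structures to pin down every global basis element is where one must lean most heavily on the cited results \cite[\S3.1]{K-lv0} and \cite[\S2.6]{K-rims}.
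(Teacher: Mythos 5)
Your proposal is correct, but note that the paper itself offers no proof of Proposition~\ref{prop:822}: it is presented purely as a recollection of known results, introduced by the sentence citing \cite[Proposition~8.2.2]{K-mod}, \cite[\S3.1]{K-lv0}, and \cite[\S2.6]{K-rims}. So for part (1) your deferral to \cite[Proposition~8.2.2]{K-mod} is exactly what the paper does, while for parts (2) and (3) you are reconstructing the arguments of the cited sources rather than any argument the paper contains. Those reconstructions are the standard ones and are sound: the identity $(S_{x}^{\ast}b)^{\ast}=S_{x}(b^{\ast})$ plus the fact that extremality is inherited along a $W_{\af}$-orbit (your reindexing observation) for part (2), and the universal property of extremal weight modules plus uniqueness of global bases for part (3).

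Two details deserve attention. First, a sign slip in part (2): for $b = b_{1} \otimes \tw{\lambda} \otimes b_{2} \in \CB(\lambda)$, formula \eqref{eq:astm} together with the weight-invariance of $\ast$ on $\CB(\pm\infty)$ gives $\wt(b^{\ast}) = -\lambda$, not $\lambda$, and correspondingly $S_{x}(b^{\ast})$ has weight $-x\lambda$. This is harmless, because your argument uses only that $S_{x}(b^{\ast})$ is extremal (a weight-independent property) and that $S_{x}^{\ast}$ carries the $\CT_{\lambda}$-component onto the $\CT_{x\lambda}$-component, which the paper records just before the proposition. Second, in part (3), pinning down $G(b) \mapsto G(S_{x}^{\ast}(b))$ by the uniqueness characterization of global bases requires compatibility not only with the crystal lattice and the bar-involution but also with the integral form of $V(\lambda)$; since the constructed isomorphism intertwines the $U_{q}$-actions and sends $v_{\lambda}$ to the global basis element $G(S_{x^{-1}}u_{x\lambda})$ (by \eqref{eq:Gext} applied in $V(x\lambda)$), all three compatibilities do follow, but this is precisely the point where, as you yourself anticipate, one must lean on \cite[\S2.6]{K-rims} and \cite[\S3.1]{K-lv0}. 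With these amendments your outline matches the proofs in the references the paper cites.
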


%
\subsection{Dual crystal of $\B$.}
\label{subsec:dual-B}

It is easily seen that 
the $\BQ(q_{s})$-algebra involutive automorphism 
$\vee:U_{q} \stackrel{\sim}{\rightarrow} U_{q}$ (see \eqref{eq:vee}) 
induces a bijection $\vee : \CB(\pm \infty) \rightarrow \CB(\mp \infty)$; 
we see that for $b \in \CB(\pm\infty)$, 
%
%
\begin{equation} \label{eq:vee-inf}
\begin{cases}
\wt(b^{\vee})=-\wt(b), \quad \text{and} \\[1.5mm]
(e_{j}b)^{\vee}=f_{j}b^{\vee}, 
(f_{j}b)^{\vee}=e_{j}b^{\vee} \quad 
\text{for all $j \in I_{\af}$},
\end{cases}
\end{equation}
where we set $\bzero^{\vee}:=\bzero$. 
We define an involution $\vee$ on the crystal $\bigsqcup_{\mu \in P_{\af}} 
\CB(\infty) \otimes \CT_{\mu} \otimes \CB(-\infty)$ as follows: 
for $b_{1} \otimes \tw{\mu} \otimes b_{2} \in 
\CB(\infty) \otimes \CT_{\mu} \otimes \CB(-\infty)$, $\mu \in P_{\af}$, 
%
%
\begin{equation} \label{eq:veem}
(b_{1} \otimes \tw{\mu} \otimes b_{2})^{\vee} : = 
b_{2}^{\vee} \otimes \tw{-\mu} \otimes b_{1}^{\vee} \in 
\CB(\infty) \otimes \CT_{-\mu} \otimes \CB(-\infty); 
\end{equation}
we see that the same equalities as those in \eqref{eq:vee-inf} hold for 
$b \in \bigsqcup_{\mu \in P_{\af}} 
\CB(\infty) \otimes \CT_{\mu} \otimes \CB(-\infty)$ and $j \in I_{\af}$. 
%
%
\begin{rem} \label{rem:ext-vee}
We deduce that $(S_{x}b)^{\vee} =S_{x}b^{\vee}$ 
for all $x \in W_{\af}$ and $b \in \bigsqcup_{\mu \in P_{\af}} 
\CB(\infty) \otimes \CT_{\mu} \otimes \CB(-\infty)$, 
which implies that 
if $b \in \bigsqcup_{\mu \in P_{\af}} \CB(\infty) \otimes \CT_{\mu} \otimes \CB(-\infty)$ 
is an extremal element, then so is $b^{\vee}$. 
\end{rem}

From the definitions \eqref{eq:vee} and \eqref{eq:ast}, we see easily 
that $\vee \circ \ast = \ast \circ \vee$ holds on $U_{q}$, and hence 
$(b^{\ast})^{\vee}=(b^{\vee})^{\ast}$ for all $b \in \CB(\pm\infty)$. 
Hence, from the definitions \eqref{eq:astm} and \eqref{eq:veem}, 
it follows that 
%
%
\begin{equation} \label{eq:vee-ast}
\vee \circ \ast = \ast \circ \vee \quad 
\text{holds on} \quad \bigsqcup_{\mu \in P_{\af}} 
\CB(\infty) \otimes \CT_{\mu} \otimes \CB(-\infty).
\end{equation}
Therefore, we deduce from Remark~\ref{rem:ext-vee} that 
if $b \in \CB(\lambda) \cong 
\bigl\{b \in \CB(\infty) \otimes \CT_{\lambda} \otimes \CB(-\infty)
\mid \text{$b^{\ast}$ is extremal}\bigr\}$, 
then $b^{\vee} \in \CB(-\lambda) \cong 
\bigl\{b \in \CB(\infty) \otimes \CT_{-\lambda} \otimes \CB(-\infty)
\mid \text{$b^{\ast}$ is extremal}\bigr\}$. 
Note that the following equalities hold for $b \in \CB(\lambda)$:
%
%
\begin{equation} \label{eq:veee}
\begin{cases}
\wt(b^{\vee})=-\wt(b), \quad \text{and} \\[1.5mm]
(e_{j}b)^{\vee}=f_{j}b^{\vee}, 
(f_{j}b)^{\vee}=e_{j}b^{\vee} \quad 
\text{for all $j \in I_{\af}$}.
\end{cases}
\end{equation}

%
\subsection{Isomorphism theorem.}
\label{subsec:isom}
Let $\lambda 
=\sum_{i \in I} m_{i}\vpi_{i} \in P^{+}$ 
be a level-zero dominant integral weight, 
and $\ol{\Par(\lambda)}$ and $\Par(\lambda)$ 
as defined in \eqref{eq:olpar} and \eqref{eq:par}, respectively. 
For each $\bc \in \ol{\Par(\lambda)}$, 
we define an element $S_{\bc} \in U_{q}^{+}$ 
of weight $|\bc|\delta$ as on page 352 of \cite{BN}; 
this is a basis element of the ``imaginary part'' 
of $U_{q}^{+}$, and is identical to 
$B_{\mathbf{c}}=L(\mathbf{c},\,0)$ 
for $\mathbf{c}=(0,\,\bc,\,0)=\bc$ 
(see \cite[the paragraph including Eq.\,(3.11)]{BN}). 
Also, we set $S_{\bc}^{-}:=\ol{S_{\bc}^{\vee}} \in 
U_{q}^{-}$ (see \cite[Remark~4.1]{BN}); 
note that the weight of $S_{\bc}^{-}$ is equal to $-|\bc|\delta$. 
We deduce from \cite[Proposition~3.27]{BN} that 
%
%
\begin{equation} \label{eq:bc}
b(\bc) : = S_{\bc}^{-}+q_{s}\CL(\infty)
\end{equation}
is contained in $\CB(\infty)$.

Let $\Bo$ denote the connected component of $\CB(\lambda)$ 
containing $u_{\lambda}$. We know the following 
from \cite[Proposition~4.3 and Theorem~4.16 (and their proofs)]{BN} . 
%
%
\begin{prop} \label{prop:ext}
Keep the notation and setting above. 
\begin{enu}

\item 
For each $\bc \in \Par(\lambda)$, 
the element $u^{\bc}:=b(\bc) \otimes \tw{\lambda} \otimes u_{-\infty}$ is 
an extremal element of weight $\lambda-|\bc|\delta$ contained in 
$\CB(\lambda) \subset \CB(\infty) \otimes \CT_{\lambda} \otimes \CB(-\infty)$. 

\item Each connected component of $\CB(\lambda)$ contains 
a unique element of the form 
$u^{\bc}=b(\bc) \otimes \tw{\lambda} \otimes u_{-\infty}$ 
with $\bc \in \Par(\lambda)$. Moreover, in this case, 
there exists an isomorphism of crystals from 
the connected component containing $u^{\bc}$ 
onto $\bigl\{\bc\bigr\} \otimes \CB_{0}(\lambda)$
that maps $u^{\bc}$ to $\bc \otimes u_{\lambda}$. 
Consequently, $\CB(\lambda)$ is isomorphic as a crystal to 
$\Par(\lambda) \otimes \CB_{0}(\lambda)$. 
\end{enu}
\end{prop}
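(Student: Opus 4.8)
The plan is to read Proposition~\ref{prop:ext} as a translation, into the notation of the present paper, of the structural results of Beck--Nakajima on the ``imaginary part'' of $U_{q}^{+}$ (namely \cite[Proposition~4.3 and Theorem~4.16]{BN}), and to supply the bookkeeping that matches the two setups. The two tools I would lean on throughout are the $\ast$-characterization of $\CB(\lambda)$ inside $\CB(\infty) \otimes \CT_{\lambda} \otimes \CB(-\infty)$ provided by Proposition~\ref{prop:822}(1), and the observation that the imaginary weight $-|\bc|\delta$ is invisible to every root operator. I would also keep Proposition~\ref{prop:SLS} in view, since the present statement has exactly the same shape on the $\CB(\lambda)$ side as that proposition has on the $\sLS$ side; arranging the argument so that the two decompositions are manifestly parallel is what makes the eventual isomorphism $\CB(\lambda) \cong \sLS$ transparent.

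First I would settle part~(1). The weight of $u^{\bc}$ is immediate: by \eqref{eq:bc} and the definition of $S_{\bc}^{-}$ we have $\wt(b(\bc)) = -|\bc|\delta$ and $\wt(u_{-\infty}) = 0$, so $u^{\bc} = b(\bc) \otimes \tw{\lambda} \otimes u_{-\infty}$ has weight $\lambda - |\bc|\delta$. To place $u^{\bc}$ in $\CB(\lambda)$ I would invoke Proposition~\ref{prop:822}(1) and verify instead that $(u^{\bc})^{\ast}$ is extremal; by \eqref{eq:astm} (and $u_{-\infty}^{\ast} = u_{-\infty}$) one computes $(u^{\bc})^{\ast} = b(\bc)^{\ast} \otimes \tw{-\lambda+|\bc|\delta} \otimes u_{-\infty}$, and the extremality of this element, together with the extremality of $u^{\bc}$ itself, is the content I would extract from \cite{BN}. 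What makes these two assertions reduce cleanly to the already-known base case $\bc=\emptyset$ (where $u^{\emptyset} = u_{\lambda}$ is extremal by Remark~\ref{rem:Gext}) is that $\delta$ is fixed by $W_{\af}$ and annihilated by every coroot; hence $\pair{\alpha_{j}^{\vee}}{x(\lambda-|\bc|\delta)} = \pair{\alpha_{j}^{\vee}}{x\lambda}$ for all $x \in W_{\af}$ and $j \in I_{\af}$, so the sign pattern governing the defining vanishing conditions for $u^{\bc}$ is identical to the one for $u_{\lambda}$.

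For part~(2) I would argue in two steps. The uniqueness of an element of the form $u^{\bc}$, $\bc \in \Par(\lambda)$, in each connected component is the crystal-theoretic translation of Beck--Nakajima's classification of the connected components of $\CB(\lambda)$ (\cite[Theorem~4.16]{BN}): every component meets $\bigl\{ u^{\bc} \mid \bc \in \Par(\lambda)\bigr\}$ in exactly one point, which plays for that component the role that $u_{\lambda}$ plays for $\Bo$. For the isomorphism onto $\bigl\{\bc\bigr\} \otimes \Bo$, I would define the map on the component of $u^{\bc}$ by sending $u^{\bc} \mapsto \bc \otimes u_{\lambda}$ and extending it along the root operators. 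Because $e_{j}$ and $f_{j}$ act as $\bzero$ on the $\Par(\lambda)$-factor (by the crystal structure fixed on $\Par(\lambda)$) and do not detect the $-|\bc|\delta$ shift in the weight, this extension respects $e_{j}$, $f_{j}$, $\wt$, $\ve_{j}$, and $\vp_{j}$, and is therefore an isomorphism of crystals; summing over all components yields $\CB(\lambda) \cong \Par(\lambda) \otimes \Bo$.

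The genuinely hard input, which I would not attempt to reprove, is the identification of the imaginary basis elements $S_{\bc}$ and the resulting extremality and uniqueness statements: these rest on Beck--Nakajima's detailed analysis of the imaginary part of $U_{q}^{+}$, in particular the construction of the $S_{\bc}$ and their behaviour under $\ast$, $\vee$, and $\ol{\phantom{x}}$. Granting those results, the remainder is routine weight bookkeeping together with the tensor-product rule for crystals, the one recurring subtlety being the systematic use of the $W_{\af}$-invariance of $\delta$ to reduce every extremality and connectedness check to the base point $u_{\lambda}$.
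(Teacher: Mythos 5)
Your proposal takes essentially the same route as the paper: the paper supplies no independent proof of Proposition~\ref{prop:ext}, but simply quotes it from \cite[Proposition~4.3 and Theorem~4.16 (and their proofs)]{BN}, which is exactly the reduction you carry out, and your weight and $\ast$-operation bookkeeping (via Proposition~\ref{prop:822}\,(1) and \eqref{eq:astm}) is correct. The one caution is that your ``extend along the root operators'' step in part~(2) is not self-justifying from extremality and uniqueness alone---the well-definedness of that extension is precisely what the operators $z_{\bc}$ of \cite{BN} (which commute with the Kashiwara operators and send $u_{\lambda}$ to $u^{\bc}$) provide---so that step, too, must be charged to the cited results of \cite{BN}, just as the paper's phrase ``and their proofs'' does.
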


We know from \cite[Proposition~3.2.2]{INS} that 
there exists an isomorphism $\Bo \stackrel{\sim}{\rightarrow}
\sLSo$ of crystals that maps $u_{\lambda} \in \Bo$ to 
$\eta_{e}=(e\,;\,0,\,1) \in \sLSo$. 
Therefore, by combining Propositions~\ref{prop:SLS} and \ref{prop:ext}, 
we deduce that there exists an isomorphism 
%
%
\begin{equation} \label{eq:Psi}
\Psi_{\lambda}:\B \stackrel{\sim}{\rightarrow} \sLS
\end{equation}
of crystals that maps $u^{\bc} \in \B$ to 
$\eta^{\Theta^{-1}(\bc)} \in \sLS$
for each $\bc \in \Par(\lambda)$. Also, we define a bijection 
$\Psi_{\lambda}^{\vee}:\B \rightarrow \sLS$ by 
the following commutative diagram: 
%
%
\begin{equation} \label{eq:Psivee}
\begin{CD}
\B @>{\Psi_{\lambda}^{\vee}}>> \sLS \\
@V{S_{w_0}^{\ast} \circ \vee}VV @VV{\vee}V \\
\CB(-w_{0}\lambda) @>{\Psi_{-w_{0}\lambda}}>> \BB^{\si}(-w_{0}\lambda).
\end{CD}
\end{equation}
Then we see from \eqref{eq:veep} and \eqref{eq:veee} that 
the map $\Psi_{\lambda}^{\vee}$ above is an isomorphism of crystals. 

%
\begin{rem} \label{rem:extp}
Keep the notation and setting above.
\begin{enu}

\item
We know from \cite[Remark~7.2.2]{INS} that $\eta^{C}$ is 
an extremal element for every $C \in \Conn(\sLS)$. 

\item
Let $\eta \in \sLS$ be such that $\ol{\eta}(t) \equiv t\lambda \mod \BR\delta$ for 
all $t \in [0,1]$. Then it is easily seen by using \eqref{eq:W^J_af} that 
$\eta$ is of the form:
\begin{equation*}
\eta = 
 (z_{\zeta_{1}}t_{\zeta_{1}},\,\dots,\,z_{\zeta_{s}}t_{\zeta_{s}}\,;\,
  a_{0},\,a_{1},\,\dots,\,a_{s})
\end{equation*}
for some $s \ge 1$ and 
$\zeta_{1},\,\dots,\,\zeta_{s} \in \jad$ (see also \cite[Proposition~7.1.1]{INS}). 
Moreover, by the same argument as for \cite[Eq.\,(5.1.6)]{INS}, we can show that 
%
%
\begin{equation} \label{eq:SxetaC}
S_{x}\eta=
 \bigl(\PJ(xz_{\zeta_1}t_{\zeta_1}),\,\dots,\,\PJ(xz_{\zeta_{s}}t_{\zeta_{s}}) \,;\, 
  a_{0},\,a_{1},\,\dots,\,a_{s}\bigr)
\end{equation}
for all $x \in W_{\af}$. In particular, 
$\eta=S_{z_{\zeta_{s}}t_{\zeta_{s}}}\eta^{C}$, 
with $C$ the connected component containing $\eta$. 
\end{enu}
\end{rem}
%
%
\section{Characterization of Demazure subcrystals in terms of SiLS paths.}
\label{sec:main}
%
%
\subsection{Demazure subcrystals of $\B$.}
\label{subsec:demazure}

Let $\lambda \in P_{\af}$. 
For each $x \in W_{\af}$, we set
%
%
\begin{equation} \label{eq:dem}
V_{x}^{\pm}(\lambda):=U_{q}^{\pm}S_{x}^{\norm}v_{\lambda} 
\subset V(\lambda).
\end{equation}
Under the $U_{q}$-module isomorphism $V(\lambda) 
\stackrel{\sim}{\rightarrow} V(x\lambda)$ in \eqref{eq:isom-ext}, 
we have 
\begin{equation*}
\begin{split}
& V(\lambda) \supset V_{x}^{\pm}(\lambda)=
  U_{q}^{\pm}S_{x}^{\norm}v_{\lambda} \stackrel{\sim}{\rightarrow} \\
& \hspace*{15mm}
 U_{q}^{\pm}S_{x}^{\norm}S_{x^{-1}}^{\norm}v_{x\lambda} = U_{q}^{\pm}v_{x\lambda} =
 V_{e}^{\pm}(x\lambda)=:V^{\pm}(x\lambda) \subset V(x\lambda).
\end{split}
\end{equation*}
We know from \cite[\S2.8]{K-rims} that 
$V^{\pm}(x\lambda)=U_{q}^{\pm}v_{x\lambda}$ 
is compatible with the global basis of $V(x\lambda)$, 
that is, there exists a subset $\CB^{\pm}(x\lambda)$ 
of the crystal basis $\CB(x\lambda)$ such that 
%
%
\begin{equation} \label{eq:deme}
V^{\pm}(x\lambda) = 
\bigoplus_{b \in \CB^{\pm}(x\lambda)} \BQ(q_{s}) G(b) 
\subset
V(x\lambda) = 
\bigoplus_{b \in \CB(x\lambda)} \BQ(q_{s}) G(b).
\end{equation}
Since the $U_{q}$-module isomorphism $V(\lambda) 
\stackrel{\sim}{\rightarrow} V(x\lambda)$ in \eqref{eq:isom-ext} 
is compatible with the global bases, it follows that 
$V_{x}^{\pm}(\lambda)=U_{q}^{\pm}S_{x}^{\norm}v_{\lambda}$ 
is also compatible with the global basis of $V(\lambda)$; namely, 
if we define $\CB_{x}^{\pm}(\lambda)$ to be 
the inverse image of $\CB^{\pm}(x\lambda)$ under 
the isomorphism $S_{x}^{\ast}:\B \stackrel{\sim}{\rightarrow} \CB(x\lambda)$ of crystals, 
i.e., $\CB_{x}^{\pm}(\lambda):=S_{x^{-1}}^{\ast}(\CB^{\pm}(x\lambda))$, then 
%
%
\begin{equation} \label{eq:demc}
V_{x}^{\pm}(\lambda)=
\bigoplus_{b \in \CB_{x}^{\pm}(\lambda)} \BQ(q_{s}) G(b)
\subset
V(\lambda)=
\bigoplus_{b \in \CB(\lambda)} \BQ(q_{s}) G(b).
\end{equation}
Here, the subsets $\CB^{\pm}(x\lambda)$ 
in \eqref{eq:deme} can be described as follows 
(see page 234 of \cite{K-rims}): 
\begin{align}
& \CB^{+}(x\lambda)=
  \CB(x\lambda) \cap (u_{\infty} \otimes \tw{x\lambda} \otimes \CB(-\infty)), 
  \label{eq:be+} \\
& \CB^{-}(x\lambda)=
  \CB(x\lambda) \cap (\CB(\infty) \otimes \tw{x\lambda} \otimes u_{-\infty}). 
  \label{eq:be-}
\end{align}
From these, we obtain 
\begin{align}
& \CB_{x}^{+}(\lambda) =
    S_{x^{-1}}^{\ast}
      \Bigl( 
        \underbrace{\CB(x\lambda) \cap 
        (u_{\infty} \otimes \tw{x\lambda} \otimes \CB(-\infty))}_{%
         =\CB^{+}(x\lambda)}
      \Bigr),
  \label{eq:b+} \\[3mm]
& \CB_{x}^{-}(\lambda) = 
    S_{x^{-1}}^{\ast}
      \Bigl(
       \underbrace{\CB(x\lambda) \cap 
       (\CB(\infty) \otimes \tw{x\lambda} \otimes u_{-\infty})}_{%
         =\CB^{-}(x\lambda)} 
      \Bigr).
  \label{eq:b-}
\end{align}
%
%
\begin{rem} \label{rem:demazure}
From \eqref{eq:b-} (resp., \eqref{eq:b+}), using 
the tensor product rule for crystals, we see that 
the set $\CB_{x}^{-}(\lambda) \cup \bigl\{\bzero\bigr\}$ 
(resp., $\CB_{x}^{+}(\lambda) \cup \bigl\{\bzero\bigr\}$) is 
stable under the action of the Kashiwara operator $f_{j}$ (resp., $e_{j}$) 
for all $j \in I_{\af}$ (see also \cite[Lemma~2.6\,(i)]{K-rims}). 
\end{rem}
%
%
\begin{lem} \label{lem:dem=}
Let $\lambda \in P^{+}$, and set 
$J=J_{\lambda}:=
\bigl\{i \in I \mid \pair{\alpha_{i}^{\vee}}{\lambda}=0\bigr\}$. 
For $x,\,y \in W_{\af}$, 
%
%
\begin{equation} \label{eq:dem=}
V_{x}^{\pm}(\lambda) = V_{y}^{\pm}(\lambda) \iff 
\CB_{x}^{\pm}(\lambda) = \CB_{y}^{\pm}(\lambda) \iff 
x^{-1}y \in (W_{J})_{\af}.
\end{equation}
\end{lem}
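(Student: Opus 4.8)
The plan is to establish the chain \eqref{eq:dem=} by proving the two equivalences in turn, using the global basis throughout. The equivalence $V_{x}^{\pm}(\lambda)=V_{y}^{\pm}(\lambda) \Leftrightarrow \CB_{x}^{\pm}(\lambda)=\CB_{y}^{\pm}(\lambda)$ is immediate from \eqref{eq:demc}: since $\bigl\{G(b)\mid b\in\CB(\lambda)\bigr\}$ is a $\BQ(q_{s})$-basis of $V(\lambda)$ and $V_{x}^{\pm}(\lambda)=\bigoplus_{b\in\CB_{x}^{\pm}(\lambda)}\BQ(q_{s})G(b)$, the subspace and the indexing subset determine one another. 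Two preliminary bookkeeping facts will drive the rest. First, by the uniqueness in Proposition~\ref{prop:P} (together with $\PJ(xz)=\PJ(x)$ for $z\in(W_{J})_{\af}$) one has $\PJ(x)=\PJ(y)\Leftrightarrow x^{-1}y\in(W_{J})_{\af}$. Second, applying \eqref{eq:SxetaC} to $\eta_{e}=(e\,;\,0,1)$ and using that the isomorphism $\Psi_{\lambda}$ of \eqref{eq:Psi} sends $u_{\lambda}$ to $\eta_{e}$ and commutes with the $W_{\af}$-action, one gets $\Psi_{\lambda}(S_{x}u_{\lambda})=S_{x}\eta_{e}=(\PJ(x)\,;\,0,1)$; as $\Psi_{\lambda}$ is bijective, this yields the key fact $S_{x}u_{\lambda}=S_{y}u_{\lambda}\Leftrightarrow\PJ(x)=\PJ(y)$.

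For the implication $x^{-1}y\in(W_{J})_{\af}\Rightarrow V_{x}^{\pm}(\lambda)=V_{y}^{\pm}(\lambda)$ I would simply chain these facts: $x^{-1}y\in(W_{J})_{\af}$ gives $\PJ(x)=\PJ(y)$, hence $S_{x}u_{\lambda}=S_{y}u_{\lambda}$, hence by \eqref{eq:Gext} the generating extremal vectors agree, $S_{x}^{\norm}v_{\lambda}=G(S_{x}u_{\lambda})=G(S_{y}u_{\lambda})=S_{y}^{\norm}v_{\lambda}$; applying $U_{q}^{\pm}$ and recalling the definition \eqref{eq:dem} gives $V_{x}^{\pm}(\lambda)=V_{y}^{\pm}(\lambda)$.

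The substantive direction is the converse $V_{x}^{\pm}(\lambda)=V_{y}^{\pm}(\lambda)\Rightarrow x^{-1}y\in(W_{J})_{\af}$, which I would prove via the weight grading of $U_{q}^{\pm}$; I treat the $+$ case, the $-$ case being entirely symmetric (with $Q_{\af}^{+}$ replaced by $Q_{\af}^{-}$). Every weight of $V_{x}^{+}(\lambda)=U_{q}^{+}S_{x}^{\norm}v_{\lambda}$ lies in $x\lambda+Q_{\af}^{+}$, so $x\lambda$ is the unique minimal weight of $V_{x}^{+}(\lambda)$; comparing with $V_{y}^{+}(\lambda)$ forces $x\lambda=y\lambda$. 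Moreover the degree-zero part of the grading shows $V_{x}^{+}(\lambda)_{x\lambda}=\BQ(q_{s})S_{x}^{\norm}v_{\lambda}$, and likewise $V_{y}^{+}(\lambda)_{y\lambda}=\BQ(q_{s})S_{y}^{\norm}v_{\lambda}$; since these one-dimensional spaces coincide, $S_{x}^{\norm}v_{\lambda}$ and $S_{y}^{\norm}v_{\lambda}$ are proportional. By \eqref{eq:Gext} both equal global basis elements, $G(S_{x}u_{\lambda})$ and $G(S_{y}u_{\lambda})$, and proportional basis vectors are equal, so $S_{x}u_{\lambda}=S_{y}u_{\lambda}$; by the second bookkeeping fact this gives $\PJ(x)=\PJ(y)$, i.e. $x^{-1}y\in(W_{J})_{\af}$. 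The main obstacle is precisely this last step: the weight $x\lambda$ alone does \emph{not} determine the coset, because the $W_{\af}$-stabilizer of $\lambda$ is strictly larger than $(W_{J})_{\af}$ in general, so one must exploit the finer fact that the extremal weight space is spanned by a single global basis vector in order to upgrade $x\lambda=y\lambda$ to $\PJ(x)=\PJ(y)$.
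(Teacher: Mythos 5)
Your proof is correct and follows essentially the same route as the paper's: both arguments use the weight grading of $U_{q}^{\pm}$ together with the one-dimensionality of the extremal weight space $V_{x}^{\pm}(\lambda)_{x\lambda}=\BQ(q_{s})G(S_{x}u_{\lambda})$ to force $x\lambda=y\lambda$ and then $S_{x}u_{\lambda}=S_{y}u_{\lambda}$, and both deduce the easy direction from $S_{x}u_{\lambda}=S_{y}u_{\lambda}$ via \eqref{eq:Gext}. The only divergence is in sourcing the stabilizer equivalence $S_{x}u_{\lambda}=S_{y}u_{\lambda}\iff x^{-1}y\in(W_{J})_{\af}$: the paper quotes it directly from \cite[Proposition~5.1.1]{INS}, whereas you re-derive it inside the paper's framework from \eqref{eq:SxetaC} (Remark~\ref{rem:extp}\,(2)) applied to $\eta_{e}$, the uniqueness in Proposition~\ref{prop:P}, and the fact that the crystal isomorphism $\Psi_{\lambda}$ intertwines the $W_{\af}$-actions and sends $u_{\lambda}$ to $\eta_{e}$ --- a harmless, slightly more self-contained substitute.
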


\begin{proof}
It is obvious from the definitions that 
$V_{x}^{\pm}(\lambda) = V_{y}^{\pm}(\lambda)$ if and only if 
$\CB_{x}^{\pm}(\lambda) = \CB_{y}^{\pm}(\lambda)$. 
First we show that if $\CB_{x}^{\pm}(\lambda) = \CB_{y}^{\pm}(\lambda)$, 
then $x^{-1}y \in (W_{J})_{\af}$. 
We see from the definitions that the weights of elements in 
$\CB_{x}^{\pm}(\lambda)$ are all contained in $x\lambda \pm Q_{\af}^{+}$. 
Moreover, since $V_{x}^{\pm}(\lambda)_{x\lambda} = \BQ(q_{s})S_{x}^{\norm}v_{\lambda} = 
\BQ(q_{s})G(S_{x}u_{\lambda})$ by Remark~\ref{rem:Gext}, we deduce that 
$S_{x}u_{\lambda}$ is a unique element of weight $x\lambda$ 
in $\CB_{x}^{\pm}(\lambda)$. 
Similarly, the weights of elements in $\CB_{y}^{\pm}(\lambda)$ 
are all contained in $y\lambda \pm Q_{\af}^{+}$, and 
$S_{y}u_{\lambda}$ is a unique element of weight $y\lambda$
in $\CB_{y}^{\pm}(\lambda)$. 
Since $\CB_{x}^{\pm}(\lambda) = \CB_{y}^{\pm}(\lambda)$ by the assumption, 
we conclude from the above that $x\lambda=y\lambda$, and hence 
$\CB_{x}^{\pm}(\lambda)_{x\lambda}=\CB_{y}^{\pm}(\lambda)_{y\lambda}$. 
Because $\CB_{x}^{\pm}(\lambda)_{x\lambda}=\bigl\{S_{x}u_{\lambda}\bigr\}$ and 
$\CB_{y}^{\pm}(\lambda)_{y\lambda}=\bigl\{S_{y}u_{\lambda}\bigr\}$ as seen above, 
it follows immediately that $S_{x}u_{\lambda}=S_{y}u_{\lambda}$, and hence 
$S_{x^{-1}y}u_{\lambda}=u_{\lambda}$. 
Therefore, by \cite[Proposition~5.1.1]{INS}, 
we obtain $x^{-1}y \in (W_{J})_{\af}$, as desired. 

Next we show that 
if $x^{-1}y \in (W_{J})_{\af}$, 
then $V_{x}^{\pm}(\lambda) = V_{y}^{\pm}(\lambda)$. 
If $x^{-1}y \in (W_{J})_{\af}$, then we have 
$S_{x}u_{\lambda}=S_{y}u_{\lambda}$ by \cite[Proposition~5.1.1]{INS}, 
and hence 
\begin{equation*}
S_{x}^{\norm}v_{\lambda} = 
G(S_{x}u_{\lambda})=G(S_{y}u_{\lambda})= 
S_{y}^{\norm}v_{\lambda} \quad \text{by Remark~\ref{rem:Gext}}.
\end{equation*}
Therefore, we obtain 
$V_{x}^{\pm}(\lambda) = 
 U_{q}^{\pm}S_{x}^{\norm}v_{\lambda} = 
 U_{q}^{\pm}S_{y}^{\norm}v_{\lambda} = 
 V_{y}^{\pm}(\lambda)$, as desired. 
\end{proof}

\begin{rem}
Let $\lambda \in P^{+}$, and set $J=J_{\lambda}:=
\bigl\{i \in I \mid \pair{\alpha_{i}^{\vee}}{\lambda}=0\bigr\}$. 
\begin{enu}

\item 
We have $\bigl\{ x \in W_{\af} \mid x\lambda=\lambda \bigr\} \supseteq (W_J)_{\af}$, 
in which the equality holds if and only if 
$\lambda$ is of the form $\lambda=m\vpi_{i}$ 
for some $m \in \BZ_{\ge 0}$ and $i \in I$. 

\item
Let $x,\,y \in W_{\af}$. In view of Lemma~\ref{lem:dem=} and part (1), 
the equality $x\lambda=y\lambda$ does not necessarily imply the equality 
$V_{x}^{\pm}(\lambda) = V_{y}^{\pm}(\lambda)$. 
However, in this case, 
there exists a $U_{q}$-module automorphism 
$V(\lambda) \stackrel{\sim}{\rightarrow} V(\lambda)$ such that 
$v_{\lambda} \mapsto S_{x^{-1}y}^{\norm}v_{\lambda}$ (see \eqref{eq:isom-ext}). 
Under this automorphism, 
$V_{x}^{\pm}(\lambda)=U_{q}^{\pm}S_{x}^{\norm}v_{\lambda}$ is mapped to 
$U_{q}^{\pm}S_{x}^{\norm}S_{x^{-1}y}^{\norm}v_{\lambda} = 
 U_{q}^{\pm}S_{y}^{\norm}v_{\lambda} = V_{y}^{\pm}(\lambda)$. 
Thus we conclude that if $x\lambda=y\lambda$, then 
$V_{x}^{\pm}(\lambda)$ is conjugate to $V_{y}^{\pm}(\lambda)$ 
under a $U_{q}$-module automorphism of $V(\lambda)$. 
Similarly, if $x\lambda=y\lambda$, then 
$\CB_{x}^{\pm}(\lambda)$ is conjugate to $\CB_{y}^{\pm}(\lambda)$ 
under the crystal automorphism $S_{y^{-1}x}^{\ast}$ of $\CB(\lambda)$.
\end{enu}
\end{rem}
%
%
\subsection{Demazure subcrystals of $\sLS$.}
\label{subsec:demazurep}

Let $\lambda \in P^{+}$, and set 
$J=J_{\lambda}:=
\bigl\{i \in I \mid \pair{\alpha_{i}^{\vee}}{\lambda}=0\bigr\}$. 
For $\eta \in \sLS$, we define 
$\iota(\eta) \in (W^{J})_{\af}$ and 
$\kappa(\eta) \in (W^{J})_{\af}$ as in \eqref{eq:ik}. 
For each $x \in (W^{J})_{\af}$, we set 
%
%
\begin{equation} \label{eq:def-demp}
\begin{split}
& \BB^{\si}_{x \sige}(\lambda):=
 \bigl\{\eta \in \sLS \mid x \sige \iota(\eta) \bigr\}, \\
& \BB^{\si}_{\sige x}(\lambda):=
 \bigl\{\eta \in \sLS \mid \kappa(\eta) \sige x \bigr\}. 
\end{split}
\end{equation}
We are now ready to state 
the main result of this paper. 
%
%
\begin{thm} \label{thm:main}
For every $\lambda \in P^{+}$ and 
$x \in (W^{J_{\lambda}})_{\af}$, 
there hold the equalities 
%
%
\begin{equation} \label{eq:main}
\Psi_{\lambda}(\CB_{x}^{-}(\lambda))=\BB^{\si}_{\sige x}(\lambda)
\qquad \text{\rm and} \qquad
\Psi_{\lambda}^{\vee}(\CB_{x}^{+}(\lambda))=\BB^{\si}_{x \sige}(\lambda). 
\end{equation}
\end{thm}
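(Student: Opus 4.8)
The plan is to prove the ``$-$'' equality $\Psi_{\lambda}(\CB_{x}^{-}(\lambda))=\BB^{\si}_{\sige x}(\lambda)$ for all $\lambda\in P^{+}$ and $x\in(W^{J_{\lambda}})_{\af}$, and then to deduce the ``$+$'' equality from it by means of the duality $\vee$. For this reduction I would rewrite the defining diagram \eqref{eq:Psivee} as $\Psi_{\lambda}^{\vee}=\vee\circ\Psi_{-w_{0}\lambda}\circ S_{w_{0}}^{\ast}\circ\vee$, recalling that $\vee$ is involutive on $\sLS$. Since $\vee$ commutes with every $S_{x}^{\ast}$ (combine \eqref{eq:vee-ast} with $(S_{x}b)^{\vee}=S_{x}b^{\vee}$ from Remark~\ref{rem:ext-vee}) and interchanges $\CB^{+}(\mu)$ with $\CB^{-}(-\mu)$ (compare \eqref{eq:b+}, \eqref{eq:b-} and the tensor rule \eqref{eq:veem}, using $u_{\infty}^{\vee}=u_{-\infty}$), a direct computation gives
\begin{equation*}
S_{w_{0}}^{\ast}\bigl(\vee(\CB_{x}^{+}(\lambda))\bigr)=S_{w_{0}x^{-1}}^{\ast}\bigl(\CB^{-}(-x\lambda)\bigr)=\CB_{xw_{0}}^{-}(-w_{0}\lambda)=\CB_{x^{\vee}}^{-}(-w_{0}\lambda),
\end{equation*}
where the last equality uses Lemma~\ref{lem:dem=}, as $(xw_{0})^{-1}x^{\vee}=w_{\sigma(J),0}\in(W_{\sigma(J)})_{\af}$. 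Applying the ``$-$'' equality at weight $-w_{0}\lambda$ to $x^{\vee}\in(W^{\sigma(J)})_{\af}$ and then $\vee$, and using $\kappa(\eta^{\vee})=\iota(\eta)^{\vee}$ (from \eqref{eq:etavee}) together with the order reversal $\iota(\eta)\sile x\Leftrightarrow\iota(\eta)^{\vee}\sige x^{\vee}$ (a consequence of \eqref{eq:sil-vee} and Lemma~\ref{lem:dual}), I obtain $\Psi_{\lambda}^{\vee}(\CB_{x}^{+}(\lambda))=\BB^{\si}_{x\sige}(\lambda)$, as required.

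It then remains to prove the ``$-$'' equality, and here the plan is to isolate two fundamental properties on each side and to transport them through the crystal isomorphism $\Psi_{\lambda}$. The first is stability: $\CB_{x}^{-}(\lambda)\cup\{\bzero\}$ is stable under $f_{j}$ (Remark~\ref{rem:demazure}), and $\BB^{\si}_{\sige x}(\lambda)\cup\{\bzero\}$ is likewise stable under $f_{j}$, which I would verify from Lemma~\ref{lem:vevp} and Lemma~\ref{lem:si2} (together these show that $f_{j}$ never decreases $\kappa$ in the semi-infinite Bruhat order, so $\kappa(\eta)\sige x$ forces $\kappa(f_{j}\eta)\sige x$). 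The second, decisive, property is a Demazure-type recursion: for $j\in I_{\af}$ with $x\sil r_{j}x$ (equivalently $\pair{\alpha_{j}^{\vee}}{x\lambda}>0$ by Lemma~\ref{lem:si2}),
\begin{equation*}
\CB_{x}^{-}(\lambda)=\mathcal{E}_{j}\bigl(\CB_{r_{j}x}^{-}(\lambda)\bigr)\qquad\text{and}\qquad\BB^{\si}_{\sige x}(\lambda)=\mathcal{E}_{j}\bigl(\BB^{\si}_{\sige r_{j}x}(\lambda)\bigr),
\end{equation*}
where $\mathcal{E}_{j}(S):=\{e_{j}^{k}b\mid k\ge 0,\ b\in S\}\setminus\{\bzero\}$. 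The SiLS-side identity I would prove by a case analysis according to whether $f_{j}^{\max}$ (resp.\ $e_{j}^{\max}$) moves the final direction, invoking the three implications of Lemma~\ref{lem:si-L41} to convert the inequalities $\kappa(\eta)\sige x$ and $\kappa(\eta)\sige r_{j}x$ into one another across the reflection $r_{j}$; the computations above already indicate that all four required cases close.

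Granting both recursions, the \emph{exact} equality $\Psi_{\lambda}(\CB_{x}^{-}(\lambda))=\BB^{\si}_{\sige x}(\lambda)$ propagates along covering edges of the semi-infinite Bruhat graph: since $\Psi_{\lambda}$ is an isomorphism of crystals it commutes with $e_{j}$, hence with $\mathcal{E}_{j}$, so equality at $r_{j}x$ yields equality at $x$ whenever $x\sil r_{j}x$. The base of the induction is the matching of the extremal data: by Proposition~\ref{prop:ext} and Remark~\ref{rem:extp} the component tops $u^{\bc}$ and $\eta^{C}=\eta^{\Theta^{-1}(\bc)}$ correspond under $\Psi_{\lambda}$, their $W_{\af}$-translates $S_{y}u_{\lambda}$ correspond to the extremal paths $S_{y}\eta^{C}$ (whose final direction is $y$, by \eqref{eq:SxetaC}), and Lemma~\ref{lem:dem=} guarantees that these extremal data depend only on the coset $y(W_{J})_{\af}$, i.e.\ only on $\kappa=y\in(W^{J})_{\af}$; this fixes $S_{y}u_{\lambda}\in\CB_{x}^{-}(\lambda)\Leftrightarrow y\sige x$, matching $\Psi_{\lambda}(S_{y}u_{\lambda})\in\BB^{\si}_{\sige x}(\lambda)\Leftrightarrow y\sige x$. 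To organize the induction despite the absence of maximal and minimal elements for $\sige$, I would first reduce, via Propositions~\ref{prop:SLS} and \ref{prop:ext} and the decompositions $\sLS\cong\Par(\lambda)\otimes\sLSo$ and $\CB(\lambda)\cong\Par(\lambda)\otimes\Bo$, to the single connected component $\Bo\cong\sLSo$ (on which $\iota,\kappa$ are controlled relative to the reference $\eta_{e}$), and then exhaust $\CB(\lambda)=\bigcup_{x}\CB_{x}^{-}(\lambda)$ through the covering-step recursion before re-assembling the components.

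I expect the crystal-side recursion $\CB_{x}^{-}(\lambda)=\mathcal{E}_{j}(\CB_{r_{j}x}^{-}(\lambda))$ to be the main obstacle. In contrast to the classical highest-weight setting (Kashiwara, Littelmann), it must be established inside the realization $\CB(\lambda)\subset\CB(\infty)\otimes\CT_{\lambda}\otimes\CB(-\infty)$ through the $\ast$-twisted description $\CB_{x}^{-}(\lambda)=S_{x^{-1}}^{\ast}(\CB^{-}(x\lambda))$ of \eqref{eq:b-}, where both the infinitely many connected components and the possibly infinite-dimensional weight spaces obstruct the naive $e_{j}$-/$f_{j}$-string arguments; one must track how $S_{x^{-1}}^{\ast}$ and the $\ast$-operation interact with a single covering step $x\mapsto r_{j}x$. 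The secondary difficulty is the book-keeping of the induction over $(W^{J})_{\af}$, handled by the reduction to $\Bo\cong\sLSo$ and the exhaustion argument described above.
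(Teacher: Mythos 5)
Your reduction of the ``$+$'' equality to the ``$-$'' one is exactly the paper's argument (end of \S\ref{subsec:demazurep}), and your two Demazure-type recursions are the paper's Proposition~\ref{prop:key} and Proposition~\ref{prop:key-p}, both of which are indeed provable (so that part of your plan, though unproved by you, is sound). The fatal problem is your base case. You claim that the equivalence $S_{y}u_{\lambda}\in\CB_{x}^{-}(\lambda)\iff y\sige x$ is ``fixed'' by the matching of extremal data (Proposition~\ref{prop:ext}, Remark~\ref{rem:extp}, \eqref{eq:SxetaC}, Lemma~\ref{lem:dem=}). It is not: those facts only say that extremal elements correspond to each other under $\Psi_{\lambda}$ and that $\CB_{x}^{\pm}(\lambda)$ depends only on the coset $x(W_{J})_{\af}$; they say nothing about when an extremal element lies in a Demazure subcrystal indexed by a \emph{different} $x$. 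That equivalence is precisely Proposition~\ref{prop:zt} of the paper, and it is the technical heart of the entire proof: the ``if'' direction requires Lemmas~\ref{lem:zt0} and \ref{lem:zt01} (resting on \cite[Proposition~2.8]{K-rims}), and the ``only if'' direction requires the $U_{q}$-module embedding $\Phi_{\lambda}:V(\lambda)\hookrightarrow\bigotimes_{i\in I}V(\vpi_{i})^{\otimes m_{i}}$, the Schur-polynomial operators $s_{\bc}(z^{-1})$ and the global-basis compatibility results of \cite{BN}, and a weight comparison in the fundamental factors to extract $[\xi-\zeta]\in\QJp{I \setminus J}$ and hence the semi-infinite Bruhat relation via \cite[Proposition~6.2.2]{INS}. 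Nothing in your proposal produces this statement.

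The second gap is the shape of your induction. Your recursion derives the set equality at $x$ from the set equality at $r_{j}x$ with $x\sil r_{j}x$, i.e.\ it runs \emph{downward} in the semi-infinite order; but $(W^{J})_{\af}$ has no maximal elements with respect to $\sile$, so this induction has nowhere to start, and reducing to the connected component $\sLSo$ does not change that. Even at translation-type elements $x=z_{\zeta}t_{\zeta}$ the full set equality is no easier than the general case, since $\CB_{x}^{-}(\lambda)$ contains far more than extremal elements. The paper's way out is element-wise rather than set-wise: Lemma~\ref{lem:seq} produces, for each \emph{fixed pair} $(\eta,x)$, a finite sequence $j_{1},\dots,j_{p}$ with $\pair{\alpha_{j_{m}}^{\vee}}{r_{j_{m-1}}\cdots r_{j_{1}}x\lambda}\ge 0$ such that $f_{j_{p}}^{\max}\cdots f_{j_{1}}^{\max}\eta=S_{z_{\xi}t_{\xi}}\eta^{C}$ is extremal; the membership statements for $b$ and for $\eta$ are then transported down this sequence (Lemmas~\ref{lem:subset} and \ref{lem:supset}) using Corollaries~\ref{cor:key} and \ref{cor:key-p3}, with Proposition~\ref{prop:zt} as the base case. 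You would need to supply both Lemma~\ref{lem:seq} and Proposition~\ref{prop:zt} for your plan to close; as written, the two places where you appeal to ``matching extremal data'' and ``exhaustion'' are exactly where the real work lies.
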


\begin{rem}
In view of Lemma~\ref{lem:dem=}, together with Proposition~\ref{prop:P},
we may assume that $x \in (W^{J_{\lambda}})_{\af}$ in 
Theorem~\ref{thm:main}. 
\end{rem}

Here, let us remark that the second equality in \eqref{eq:main} 
follows from the first one in \eqref{eq:main}. 
Let $\lambda \in P^{+}$, and $x \in (W^{J_{\lambda}})_{\af}$; 
recall that $\mu:=-w_{0}\lambda \in P^{+}$, and $x^{\vee} \in (W^{J_{\mu}})_{\af}$ 
(for the definitions, see \S\ref{subsec:dual-sLS}). 
Since $S_{y}^{\ast} \circ \vee = \vee \circ S_{y}^{\ast}$ holds on 
$\ti{\CB}=\bigsqcup_{\mu \in P_{\af}} \CB(\infty) \otimes \CT_{\mu} \otimes \CB(-\infty)$ 
for all $y \in W_{\af}$ (see Remark~\ref{rem:ext-vee} and \eqref{eq:vee-ast}), 
we have 
\begin{align*}
S_{w_{0}}^{\ast}\bigl(\CB_{x}^{+}(\lambda)\bigr)^{\vee} 
 & = S_{w_{0}}^{\ast} \Bigl\{ S_{x^{-1}}^{\ast}
   \Bigl(
    \CB(x\lambda) \cap (u_{\infty} \otimes \tw{x\lambda} \otimes \CB(-\infty))
   \Bigr)\Bigr\}^{\vee} \quad \text{by \eqref{eq:b+}} \\
 & = S_{w_{0}}^{\ast}S_{x^{-1}}^{\ast} 
   \bigl\{
    \CB(x\lambda) \cap (u_{\infty} \otimes \tw{x\lambda} \otimes \CB(-\infty))
   \bigr\}^{\vee} \\
 & = S_{w_{0}x^{-1}}^{\ast}
   \bigl(
    \CB(-x\lambda) \cap (\CB(\infty) \otimes \tw{-x\lambda} \otimes u_{-\infty})
   \bigr) \\
 & = S_{w_{0}x^{-1}}^{\ast}
   \bigl(
    \CB(xw_0\mu) \cap (\CB(\infty) \otimes \tw{xw_0\mu} \otimes u_{-\infty}
   \bigr) \\
 & = \CB_{xw_{0}}^{-}(\mu) \quad \text{by \eqref{eq:b-}} \\
 & = \CB_{x^{\vee}}^{-}(\mu) \quad \text{by Lemma~\ref{lem:dem=}}. 
\end{align*}
Also, it is easily seen from the definitions \eqref{eq:etavee} and 
\eqref{eq:def-demp}, by using Lemma~\ref{lem:dual}, that 
$\BB^{\si}_{x \sige}(\lambda) = 
\bigl(\BB^{\si}_{\sige x^{\vee}}(\mu)\bigr)^{\vee}$. 
Therefore, if the equality $\Psi_{\mu}(\CB_{x^{\vee}}^{-}(\mu)) = 
\BB^{\si}_{\sige x^{\vee}}(\mu)$ holds, then the equality 
$\Psi_{\lambda}^{\vee}(\CB_{x}^{+}(\lambda))=\BB^{\si}_{x \sige}(\lambda)$
also holds by the definition \eqref{eq:Psivee} of $\Psi_{\lambda}^{\vee}$. 
Thus, the remaining task is to prove the first equality in \eqref{eq:main}. 

%
\section{Proof of Theorem~\ref{thm:main}.}
\label{sec:proof}
Throughout this section, 
we fix $\lambda=\sum_{i \in I} m_{i}\vpi_{i} \in P^{+}$, 
with $m_{i} \in \BZ_{\ge 0}$ for $i \in I$, 
and then set $J=J_{\lambda}:=\bigl\{ i \in I \mid 
\pair{\alpha_{i}^{\vee}}{\lambda}=0 \bigr\}$. 

%
\subsection{Fundamental properties of $\CB_{x}^{-}(\lambda)$, part~1.}
\label{subsec:properties1a}

Proposition~\ref{prop:key} and Corollary~\ref{cor:key} below 
are easy consequences of \cite[\S2.8]{K-rims}, but 
we include their proofs for the convenience of the reader. 
%
%
\begin{prop} \label{prop:key}
Let $x \in W_{\af}$ and $j \in I_{\af}$ 
be such that $\pair{\alpha_{j}^{\vee}}{x\lambda} \ge 0$. Then, 
%
%
\begin{equation} \label{lem2b}
\CB_{x}^{-}(\lambda)=
 \bigl\{e_{j}^{k}b \mid b \in \CB_{r_{j}x}^{-}(\lambda),\,k \in \BZ_{\ge 0} \bigr\}
 \setminus \{\bzero\}; 
\end{equation}
in particular, $\CB_{x}^{-}(\lambda) \supset \CB_{r_{j}x}^{-}(\lambda)$. 
Consequently, the set $\CB_{x}^{-}(\lambda) \cup \bigl\{\bzero\bigr\}$ is stable under 
the action of the Kashiwara operator $e_{j}$ for $j \in I_{\af}$ such that 
$\pair{\alpha_{j}^{\vee}}{x\lambda} \ge 0$
(see also Remark~\ref{rem:demazure}). 
\end{prop}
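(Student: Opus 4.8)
The plan is to prove the recursion \eqref{lem2b} first at the level of the module $V_x^-(\lambda)$ and then transport it to the crystal basis via \eqref{eq:demc}. Throughout set $v := S_x^{\norm} v_\lambda = G(S_x u_\lambda)$ (see \eqref{eq:Gext}) and $n := \pair{\alpha_j^\vee}{x\lambda} \in \BZ_{\ge 0}$. Since $v$ is an extremal weight vector of weight $x\lambda$ and $n \ge 0$, its defining family yields $E_j v = 0$ and $F_j^{(n)} v = S_{r_j x}^{\norm} v_\lambda$. The module-level goal is the identity
\[
V_x^-(\lambda) = \sum_{k \ge 0} E_j^{(k)} V_{r_j x}^-(\lambda),
\]
from which \eqref{lem2b} is read off after passing to crystal bases.

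First I would record two purely algebraic facts obtained by commuting Chevalley generators. (i) The module $V_x^-(\lambda) = U_q^- v$ is stable under $E_j$: arguing by induction on the length of a monomial $F_{i_1}\cdots F_{i_l} \in U_q^-$ and using the relation $[E_j, F_i] = \delta_{ij}(K_j - K_j^{-1})/(q-q^{-1})$ together with $E_j v = 0$, every commutator term produced again lies in $U_q^- v$. (ii) Because $v$ is $j$-highest with $\pair{\alpha_j^\vee}{x\lambda}=n$, one has $v = E_j^{(n)} S_{r_j x}^{\norm} v_\lambda$; commuting an arbitrary monomial of $U_q^-$ to the right past $E_j^{(n)}$ (using $F_i E_j^{(n)} = E_j^{(n)} F_i$ for $i \ne j$ and the rank-one relation for $i=j$) then expresses every element of $U_q^- v$ as a sum $\sum_k E_j^{(k)} w_k\, S_{r_j x}^{\norm} v_\lambda$ with $w_k \in U_q^-$. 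Fact (ii) supplies the inclusion $\subseteq$ in the displayed identity; the reverse inclusion follows from $S_{r_j x}^{\norm} v_\lambda = F_j^{(n)} v \in U_q^- v = V_x^-(\lambda)$ together with the $E_j$-stability in (i). This last inclusion $V_{r_j x}^-(\lambda)\subset V_x^-(\lambda)$ is exactly the asserted $\CB_{r_j x}^-(\lambda) \subset \CB_x^-(\lambda)$ once one invokes \eqref{eq:demc}.

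The remaining step is to convert the module identity into the crystal identity \eqref{lem2b}. Here I would use that $V_x^-(\lambda)$ is a submodule over the rank-one subalgebra generated by $E_j, F_j$ and the Cartan part, compatible with the global basis (the content drawn from \cite[\S2.8]{K-rims}), and that $V_{r_j x}^-(\lambda)$ is likewise compatible. The rank-one compatibility of Kashiwara's global bases --- the same circle of ideas underlying \cite[Lemma~5.1.1]{K-GB}, by which $E_j^{(k)}$ sends a suitable global basis vector $G(b)$ to $G(e_j^k b)$ --- identifies the crystal basis of $\sum_{k\ge 0} E_j^{(k)} V_{r_j x}^-(\lambda)$ with $\bigl\{e_j^k b \mid b \in \CB_{r_j x}^-(\lambda),\, k \in \BZ_{\ge 0}\bigr\} \setminus \{\bzero\}$. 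Since this module coincides with $V_x^-(\lambda)$, whose crystal basis is $\CB_x^-(\lambda)$, the equality \eqref{lem2b} follows; the $e_j$-stability of $\CB_x^-(\lambda)\cup\{\bzero\}$ is then immediate, as the right-hand set is by construction closed under $e_j$.

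I expect the main obstacle to be precisely this last passage from the module to the crystal: whereas steps (i)--(ii) are formal commutation computations, matching the operator $\sum_k E_j^{(k)}(-)$ on the module side with the upward $e_j$-closure on the crystal side rests on the rank-one theory of global and crystal bases and on the compatibility assertions extracted from \cite[\S2.8]{K-rims}, rather than on any single explicit calculation.
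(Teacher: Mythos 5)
Your proposal is correct and follows essentially the same route as the paper's proof: the paper likewise first establishes the module identity $V_{x}^{-}(\lambda)=U_{q}^{(j)}U_{q}^{-}S_{r_{j}x}^{\norm}v_{\lambda}$ (which coincides with your $\sum_{k\ge 0}E_{j}^{(k)}V_{r_{j}x}^{-}(\lambda)$, since the Cartan part and $F_{j}$ are absorbed into $U_{q}^{-}$ acting on weight vectors) by the same commutation arguments together with $E_{j}S_{x}^{\norm}v_{\lambda}=0$ and $S_{x}^{\norm}v_{\lambda}=E_{j}^{(n)}S_{r_{j}x}^{\norm}v_{\lambda}$, and then converts this to the crystal statement by invoking Kashiwara's rank-one global-basis theory from \cite[\S2.8]{K-rims}. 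The only difference is organizational: the paper first transports everything to $V(r_{j}x\lambda)$ through the global-basis-compatible isomorphism \eqref{eq:isom-ext}, applies (the dual version of) \cite[Eq.\,(2.28)]{K-rims} to $V^{-}(r_{j}x\lambda)=U_{q}^{-}v_{r_{j}x\lambda}$ regarded as an $\langle F_{j}\rangle$-module, and pulls back via $S_{r_{j}x}^{\ast}$, whereas you apply the rank-one result in place to the $\langle F_{j}\rangle$-stable, global-basis-compatible submodule $V_{r_{j}x}^{-}(\lambda)\subset V(\lambda)$ --- which rests on exactly the same external input (note that \cite[Lemma~5.1.1]{K-GB} alone is not sufficient here, since divided powers applied to global basis vectors not at the end of a $j$-string produce correction terms; the genuine input is the compatibility theorem of \cite[\S2.8]{K-rims}, as you acknowledge).
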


\begin{proof}
First, we show the equality 
%
%
\begin{equation} \label{eq:lem2-1}
U_{q}^{-}S_{x}^{\norm}v_{\lambda}=
U_{q}^{(j)} U_{q}^{-}S_{r_{j}x}^{\norm}v_{\lambda}.
\end{equation}
Here, $U_{q}^{(j)}$ denotes the $\BQ(q_{s})$-subalgebra of $U_{q}$ 
generated by $E_{j}$, $F_{j}$, and $q^{\alpha_{j}^{\vee}}$, 
which is isomorphic to the quantized universal enveloping algebra 
associated with $\Fsl_{2}$; recall the triangular decomposition 
$U_{q}^{(j)} = 
\langle E_{j} \rangle 
\langle q^{\alpha_{j}^{\vee}} \rangle
\langle F_{j} \rangle$ of $U_{q}^{(j)}$. 
We show the inclusion $\supset$ as follows: 
\begin{align*}
& U_{q}^{(j)} U_{q}^{-}S_{r_{j}x}^{\norm}v_{\lambda}
  = U_{q}^{(j)} U_{q}^{-}S_{r_{j}}^{\norm}S_{x}^{\norm}v_{\lambda} 
  = U_{q}^{(j)} U_{q}^{-}F_{j}^{(\pair{\alpha_{j}^{\vee}}{x\lambda})}S_{x}^{\norm}v_{\lambda}
  \tag{$\ast$} \label{eq:ast1} \\
& \quad \subset U_{q}^{(j)} U_{q}^{-}S_{x}^{\norm}v_{\lambda} = 
\langle E_{j} \rangle 
\langle q^{\alpha_{j}^{\vee}} \rangle
\langle F_{j} \rangle 
U_{q}^{-}S_{x}^{\norm}v_{\lambda} \\
& \quad = \langle E_{j} \rangle U_{q}^{-}S_{x}^{\norm}v_{\lambda} 
  \subset U_{q}^{-} \langle E_{j} \rangle S_{x}^{\norm}v_{\lambda} 
\quad \text{since $[E_{j},\,F_{l}] \in \langle q^{\alpha_{j}^{\vee}} \rangle$ 
 for all $l \in I_{\af}$} \\
& \quad = U_{q}^{-}S_{x}^{\norm}v_{\lambda} 
\quad \text{since $E_{j}S_{x}^{\norm}v_{\lambda}=0$ 
  by the assumption $\pair{\alpha_{j}^{\vee}}{x\lambda} \ge 0$};
\end{align*}
the second equality in \eqref{eq:ast1} 
follows from the fact that $S_{x}^{\norm}v_{\lambda}$ 
is an extremal weight vector of weight $x\lambda$, and 
the assumption that $\pair{\alpha_{j}^{\vee}}{x\lambda} \ge 0$.
Similarly, we show the opposite inclusion $\subset$ as follows: 
\begin{align*}
U_{q}^{-}S_{x}^{\norm}v_{\lambda}
 & = U_{q}^{-}S_{r_{j}}^{\norm}S_{r_{j}x}^{\norm}v_{\lambda} 
   = U_{q}^{-}E_{j}^{(\pair{\alpha_{j}^{\vee}}{x\lambda})}S_{r_{j}x}^{\norm}v_{\lambda} \\
 & \subset U_{q}^{-} \langle E_{j} \rangle S_{r_{j}x}^{\norm}v_{\lambda}
   \subset \langle E_{j} \rangle U_{q}^{-} S_{r_{j}x}^{\norm}v_{\lambda} 
   \subset U_{q}^{(j)} U_{q}^{-}S_{r_{j}x}^{\norm}v_{\lambda}. 
\end{align*}
Thus we obtain the equality \eqref{eq:lem2-1}.
Since the left-hand side of \eqref{eq:lem2-1} is identical to 
$V^{-}_{x}(\lambda)$, we see that $U_{q}^{(j)} U_{q}^{-}S_{r_{j}x}^{\norm}v_{\lambda}$ 
is compatible with the global basis of $V(\lambda)$, and has the crystal basis 
$\CB^{-}_{x}(\lambda)$. 

Now, recall from \eqref{eq:isom-ext} that there exists a $U_{q}$-module 
isomorphism $V(\lambda) \stackrel{\sim}{\rightarrow} V(r_{j}x\lambda)$ 
that maps $v_{\lambda}$ to $S^{\norm}_{x^{-1}r_{j}}v_{r_{j}x\lambda}$. 
Under this isomorphism, the right-hand side 
$U_{q}^{(j)} U_{q}^{-}S_{r_{j}x}^{\norm}v_{\lambda}$ $(\subset V(\lambda))$ 
of \eqref{eq:lem2-1} is mapped to 
$U_{q}^{(j)} U_{q}^{-} v_{r_{j}x\lambda} = 
U_{q}^{(j)} V^{-}(r_{j}x\lambda)$ $(\subset V(r_{j}x\lambda))$. 
When we regard $V(r_{j}x\lambda)$ as a $U_{q}^{(j)}$-module by restriction, 
$V^{-}(r_{j}x\lambda)$ is regarded as an $\langle F_{j} \rangle$-module, 
which is compatible with the global basis of $V(r_{j}x\lambda)$. 
Therefore, we deduce from (the dual version of) 
\cite[Eq.\,(2.28) and comments following it]{K-rims} 
that $U_{q}^{(j)} V^{-}(r_{j}x\lambda) = 
 \langle E_{j} \rangle V^{-}(r_{j}x\lambda)$ is also compatible 
with the global basis of $V(r_{j}x)$, and has the crystal basis 
\begin{equation*}
\bigl\{e_{j}^{k}b \mid b \in \CB^{-}(r_{j}x\lambda),\,k \in \BZ_{\ge 0} \bigr\} 
\setminus \bigl\{\bzero\bigr\}.
\end{equation*}
Recall that the $U_{q}$-module 
isomorphism $V(\lambda) \stackrel{\sim}{\rightarrow} V(r_{j}x\lambda)$ 
is compatible with the global bases, and induces the crystal isomorphism 
$S_{r_{j}x}^{\ast}: \CB(\lambda) \stackrel{\sim}{\rightarrow} \CB(r_{j}x\lambda)$.
Namely, we have the following correspondences of modules and their crystal bases:
\begin{equation*}
\begin{array}{ccc}
V(\lambda) & \stackrel{\sim}{\longrightarrow} & V(r_{j}x\lambda) \\
\rotatebox[origin=c]{90}{$\subset$} & & \rotatebox[origin=c]{90}{$\subset$} \\
U_{q}^{(j)} U_{q}^{-}S_{r_{j}x}^{\norm}v_{\lambda} 
& \stackrel{\sim}{\longrightarrow} &
U_{q}^{(j)} V^{-}(r_{j}x\lambda), \\[1mm]
(=V_{x}^{-}(\lambda)) & & \\[3mm]

\CB_{x}^{-}(\lambda) & \stackrel{S_{r_{j}x}^{\ast}}{\longrightarrow} & 
\bigl\{e_{j}^{k}b \mid b \in \CB^{-}(r_{j}x\lambda),\,k \in \BZ_{\ge 0} \bigr\}
\setminus \bigl\{\bzero\bigr\}. 
\end{array}
\end{equation*}
From this, we conclude that 
\begin{align*}
\CB_{x}^{-}(\lambda) & = 
(S_{r_{j}x}^{\ast})^{-1}
 \bigl\{e_{j}^{k}b \mid b \in \CB^{-}(r_{j}x\lambda),\,k \in \BZ_{\ge 0} \bigr\}
 \setminus \bigl\{\bzero\bigr\} \\
& = 
 \bigl\{e_{j}^{k}(S_{r_{j}x}^{\ast})^{-1}(b) \mid 
 b \in \CB^{-}(r_{j}x\lambda),\,k \in \BZ_{\ge 0}\bigr\}
 \setminus \bigl\{\bzero\bigr\} \\
& =  
 \bigl\{e_{j}^{k}b \mid 
 b \in \CB_{r_{j}x}^{-}(\lambda),\,k \ge 0\bigr\}
 \setminus \bigl\{\bzero\bigr\} \quad \text{by \eqref{eq:b-}}. 
\end{align*}
This completes the proof of the proposition. 
\end{proof}
%
%
\begin{cor} \label{cor:key}
Let $x \in W_{\af}$, and $j \in I_{\af}$. 
For every $b \in \CB_{x}^{-}(\lambda)$, we have 
$f_{j}^{\max}b:=f_{j}^{\vp_j(b)}b \in \CB_{r_{j}x}^{-}(\lambda)$, 
where $\vp_{j}(b):=\max \bigl\{k \in \BZ_{\ge 0} \mid f_{j}^{k}b \ne \bzero \bigr\}$. 
\end{cor}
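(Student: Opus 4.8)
The plan is to deduce Corollary~\ref{cor:key} directly from Proposition~\ref{prop:key}, splitting into two cases according to the sign of $\pair{\alpha_{j}^{\vee}}{x\lambda}$. First I would observe that the statement is really about the relationship between the $j$-strings in $\CB_{x}^{-}(\lambda)$ and the Demazure subcrystal $\CB_{r_{j}x}^{-}(\lambda)$. Since Remark~\ref{rem:demazure} tells us that $\CB_{x}^{-}(\lambda) \cup \bigl\{\bzero\bigr\}$ is stable under $f_{j}$ for all $j \in I_{\af}$, the element $f_{j}^{\max}b$ always lies in $\CB_{x}^{-}(\lambda)$; the real content is to show it lands in the \emph{smaller} set $\CB_{r_{j}x}^{-}(\lambda)$.

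The key dichotomy is as follows. Suppose first that $\pair{\alpha_{j}^{\vee}}{x\lambda} \ge 0$. Then Proposition~\ref{prop:key} applies directly and gives
\begin{equation*}
\CB_{x}^{-}(\lambda)=
 \bigl\{e_{j}^{k}b' \mid b' \in \CB_{r_{j}x}^{-}(\lambda),\,k \in \BZ_{\ge 0} \bigr\}
 \setminus \{\bzero\}.
\end{equation*}
So any $b \in \CB_{x}^{-}(\lambda)$ is of the form $e_{j}^{k}b'$ with $b' \in \CB_{r_{j}x}^{-}(\lambda)$, and by taking $k$ maximal I may assume $b'$ is the $f_{j}$-highest (i.e.\ $e_{j}$-topmost within its string lying over $\CB_{r_{j}x}^{-}(\lambda)$) element. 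Then $f_{j}^{\max}b$ is the bottom of the full $j$-string through $b$, which coincides with the bottom of the $j$-string through $b'$; since $\CB_{r_{j}x}^{-}(\lambda) \cup \{\bzero\}$ is $f_{j}$-stable (Remark~\ref{rem:demazure} again), this bottom element lies in $\CB_{r_{j}x}^{-}(\lambda)$, giving $f_{j}^{\max}b \in \CB_{r_{j}x}^{-}(\lambda)$ as desired. The point is that the $e_{j}$-string description forces $b$ and $b'$ to share a common $f_{j}$-lowest weight vector.

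For the complementary case $\pair{\alpha_{j}^{\vee}}{x\lambda} < 0$, I would exploit the symmetry $r_{j}(r_{j}x)=x$ together with the inequality $\pair{\alpha_{j}^{\vee}}{(r_{j}x)\lambda}=-\pair{\alpha_{j}^{\vee}}{x\lambda} > 0$. Applying Proposition~\ref{prop:key} with $r_{j}x$ in place of $x$ yields $\CB_{r_{j}x}^{-}(\lambda) \supset \CB_{x}^{-}(\lambda)$ and
\begin{equation*}
\CB_{r_{j}x}^{-}(\lambda)=
 \bigl\{e_{j}^{k}b' \mid b' \in \CB_{x}^{-}(\lambda),\,k \in \BZ_{\ge 0} \bigr\}
 \setminus \{\bzero\}.
\end{equation*}
In this situation $\CB_{x}^{-}(\lambda) \subseteq \CB_{r_{j}x}^{-}(\lambda)$, and since the latter is $f_{j}$-stable by Remark~\ref{rem:demazure}, the element $f_{j}^{\max}b$ (computed for $b \in \CB_{x}^{-}(\lambda) \subseteq \CB_{r_{j}x}^{-}(\lambda)$) automatically lies in $\CB_{r_{j}x}^{-}(\lambda)$. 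Thus this case is immediate from the inclusion alone.

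The main obstacle I anticipate is the bookkeeping in the first case: one must check carefully that the maximal-power decomposition $b=e_{j}^{k}b'$ can be chosen so that the $f_{j}$-string through $b$ terminates inside $\CB_{r_{j}x}^{-}(\lambda)$, rather than merely inside the larger set $\CB_{x}^{-}(\lambda)$. This hinges on the fact that in the $\Fsl_{2}$-theory the set $\{e_{j}^{k}b'\}$ traces out the \emph{upper} portion of each $j$-string starting from elements of $\CB_{r_{j}x}^{-}(\lambda)$, while $f_{j}^{\max}$ always returns the lowest element of the string, which is forced to stay in the $f_{j}$-stable subset $\CB_{r_{j}x}^{-}(\lambda)$. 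Making this ``upper string versus lowest element'' argument precise, using only the crystal-combinatorial structure guaranteed by Proposition~\ref{prop:key} and Remark~\ref{rem:demazure}, is the crux; everything else is a formal consequence of the two displayed equalities above.
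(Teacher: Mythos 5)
Your proposal is correct and follows essentially the same route as the paper: split on the sign of $\pair{\alpha_{j}^{\vee}}{x\lambda}$, use the string description in Proposition~\ref{prop:key} together with the $f_{j}$-stability from Remark~\ref{rem:demazure} when $\pair{\alpha_{j}^{\vee}}{x\lambda} \ge 0$, and use the inclusion $\CB_{x}^{-}(\lambda) \subset \CB_{r_{j}x}^{-}(\lambda)$ (Proposition~\ref{prop:key} applied to $r_{j}x$) combined with $f_{j}$-stability when $\pair{\alpha_{j}^{\vee}}{x\lambda} < 0$. Your expansion of the first case (that $b=e_{j}^{k}b'$ forces $f_{j}^{\max}b=f_{j}^{\max}b'$, which stays in the $f_{j}$-stable set $\CB_{r_{j}x}^{-}(\lambda)$) is exactly what the paper's terse citation of those two results amounts to.
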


\begin{proof}
If $\pair{\alpha_{j}^{\vee}}{x\lambda} \ge 0$, 
then the assertion follows from Proposition~\ref{prop:key} 
and Remark~\ref{rem:demazure}. 
Assume that $\pair{\alpha_{j}^{\vee}}{x\lambda} < 0$. 
Let $b \in \CB_{x}^{-}(\lambda)$. 
We see from Remark~\ref{rem:demazure} that 
$f_{j}^{\max}b \in \CB_{x}^{-}(\lambda)$. 
Also, it follows from Proposition~\ref{prop:key} 
that $\CB_{x}^{-}(\lambda) \subset \CB_{r_{j}x}^{-}(\lambda)$.
Combining these, we obtain 
$f_{j}^{\max}b \in \CB_{x}^{-}(\lambda)
 \subset \CB_{r_{j}x}^{-}(\lambda)$, as desired. 
\end{proof}

%
\subsection{Fundamental properties of $\CB_{x}^{-}(\lambda)$, part~2.}
\label{subsec:properties1b}

Now we recall some results in \cite{K-lv0} and \cite{BN}.
We define a $\BQ(q_{s})$-subalgebra $U_{q}'$ of $U_{q}$ by
\begin{equation*}
U_{q}':=\langle E_{j},\,F_{j},\,q^{h} \mid 
j \in I_{\af},\,h \in d^{-1}
(\ts{\bigoplus_{j \in I_{\af}}}\BZ \alpha_{j}^{\vee})\rangle \subset U_{q}, 
\end{equation*}
which can be thought of as 
the quantized universal enveloping algebra
$U_{q}(\Fg_{\af}')$ associated with the derived subalgebra 
$\Fg_{\af}':=[\Fg_{\af},\,\Fg_{\af}]$ of $\Fg_{\af}$. 
We know from \cite[p.\,142]{K-lv0} that 
for each $i \in I$, there exists a $U_{q}'$-module automorphism 
$z_{i}:V(\vpi_i) \rightarrow V(\vpi_i)$ 
that maps $v_{\vpi_{i}}$ to $v_{\vpi_{i}}^{[1]}$, 
where for $k \in \BZ$, we denote by $u_{\vpi_{i}}^{[k]}$ 
the (unique) element of weight $\vpi_{i}+k\delta$ in $\CB(\vpi_{i})$, and 
set $v_{\vpi_{i}}^{[k]}:=G(u_{\vpi_{i}}^{[k]})$ (see \cite[Proposition~5.8]{K-lv0});
note that $z_{i}$ commutes with the Kashiwara operators 
$e_{j}$, $f_{j}$, $j \in I_{\af}$, on $V(\vpi_{i})$. 
Moreover, we see from \cite[Propositions~5.12 and 5.15]{K-lv0} 
that $z_{i}$ preserves the crystal lattice $\CL(\vpi_{i})$ of $V(\vpi_{i})$, 
and hence induces a $\BQ$-linear automorphism 
$z_{i}:\CL(\vpi_{i})/q_{s}\CL(\vpi_{i}) \stackrel{\sim}{\rightarrow}
 \CL(\vpi_{i})/q_{s}\CL(\vpi_{i})$. 
Because $z_{i}$ commutes with the Kashiwara operators 
$e_{j}$, $f_{j}$, $j \in I_{\af}$, 
on $\CL(\vpi_{i})/q_{s}\CL(\vpi_{i})$, and 
because $z_{i}(u_{\vpi_{i}})=u_{\vpi_{i}}^{[1]} \in \CB(\vpi_{i})$, 
it follows immediately 
from \cite[Proposition~5.12]{K-lv0} that 
$z_{i}$ preserves the crystal basis $\CB(\vpi_{i}) \subset 
\CL(\vpi_{i})/q_{s}\CL(\vpi_{i})$ of $V(\vpi_{i})$ 
(see also \cite[Theorem~5.17]{K-lv0}). 

Recall that $\lambda \in P^{+}$ is of the form 
$\lambda = \sum_{i \in I} m_{i} \vpi_{i}$, 
with $m_{i} \in \BZ_{\ge 0}$ for $i \in I$. 
We fix an arbitrary total ordering on $I$, and then set 
$\ti{V}(\lambda):=
\bigotimes_{i \in I} V(\vpi_{i})^{\otimes m_{i}}$. 
We can easily show (see also \cite[the comment preceding Eq.\,(4.8)]{BN}) that 
$\ti{v}_{\lambda} := \ts{\bigotimes_{i \in I} v_{\vpi_{i}}^{\otimes m_{i}}}
\in \ti{V}(\lambda)$ is an extremal weight vector of weight $\lambda$. 
By \cite[Eq.\,(4.8) and Corollary~4.15]{BN}, 
there exists a $U_{q}$-module embedding 
%
%
\begin{equation} \label{eq:Phi}
\Phi_{\lambda}:V(\lambda) \hookrightarrow 
\ti{V}(\lambda)=
\bigotimes_{i \in I} V(\vpi_{i})^{\otimes m_{i}}
\end{equation}
that maps $v_{\lambda}$ to $\ti{v}_{\lambda}:=
\bigotimes_{i \in I} v_{\vpi_{i}}^{\otimes m_{i}}$. 
%
%
\begin{rem} \label{rem:vlambda}
We can show by induction on $\ell(x)$ 
that for every $x \in W_{\af}$, 
\begin{equation*}
S_{x}^{\norm}\ti{v}_{\lambda}=
S_{x}^{\norm}
 \bigl(\ts{\bigotimes_{i \in I} v_{\vpi_{i}}^{\otimes m_{i}}}\bigr) \in 
\BQ(q_{s})
\Bigl(\ts{\bigotimes_{i \in I} 
 (S_{x}^{\norm}v_{\vpi_{i}})^{\otimes m_{i}}}
\Bigr); 
\end{equation*}
cf.~\cite[Lemma~1.6\,(1)]{AK}.
Therefore, under the $U_{q}$-module embedding 
$\Phi_{\lambda}:V(\lambda) \hookrightarrow \ti{V}(\lambda)$ 
in \eqref{eq:Phi}, $V^{-}_{x}(\lambda) \subset V(\lambda)$ 
for $x \in W_{\af}$ is mapped as follows: 
\begin{align}
& V^{-}_{x}(\lambda) = 
  U_{q}^{-}S_{x}^{\norm}v_{\lambda} 
  \stackrel{\Phi_{\lambda}}{\hookrightarrow} \nonumber \\
& \qquad 
U_{q}^{-}S_{x}^{\norm}\ti{v}_{\lambda} = 
U_{q}^{-}
\Bigl(\ts{\bigotimes_{i \in I} 
 (S_{x}^{\norm}v_{\vpi_{i}})^{\otimes m_{i}}}
\Bigr) \subset 
\bigotimes_{i \in I} 
(\underbrace{U_{q}^{-}S_{x}^{\norm}v_{\vpi_{i}}}_{=V_{x}^{-}(\vpi_{i})})^{\otimes m_{i}}. 
\label{eq:emb}
\end{align}
\end{rem}

Recall that $\ti{V}(\lambda)$ has the crystal basis 
$(\ti{\CL}(\lambda),\,\ti{\CB}(\lambda))$, where 
\begin{equation*}
\ti{\CL}(\lambda) : =
 \bigotimes_{i \in I} \CL(\vpi_{i})^{\otimes m_{i}}, \qquad
\ti{\CB}(\lambda) : =
 \bigotimes_{i \in I} \CB(\vpi_{i})^{\otimes m_{i}}. 
\end{equation*}
We see from \cite[page 369, the 2nd line from below]{BN} that
$\Phi_{\lambda}(\CL(\lambda)) \subset \ti{\CL}(\lambda)$, 
and hence $\Phi_{\lambda}$ induces a $\BQ$-linear embedding of 
$\CL(\lambda)/q_{s}\CL(\lambda)$ into 
$\ti{\CL}(\lambda)/q_{s}\ti{\CL}(\lambda)$, which we denote 
by $\Phi_{\lambda}|_{q=0}$. Note that we have the following 
commutative diagram for all $j \in I_{\af}$:
\begin{equation*}
\begin{CD}
\CL(\lambda)/q_{s}\CL(\lambda) 
 @>{\Phi_{\lambda}|_{q=0}}>> 
\ti{\CL}(\lambda)/q_{s}\ti{\CL}(\lambda) \\
@V{e_{j},\,f_{j}}VV @VV{e_{j},\,f_{j}}V \\
\CL(\lambda)/q_{s}\CL(\lambda) 
 @>{\Phi_{\lambda}|_{q=0}}>> 
\ti{\CL}(\lambda)/q_{s}\ti{\CL}(\lambda).
\end{CD}
\end{equation*}

For each $i \in I$ and $1 \le l \le m_{i}$, we define $z_{i,l}$ to be 
the $U_{q}'$-module automorphism of $\ti{V}(\lambda)$ 
which acts as $z_{i}$ only on the $l$-th factor of $V(\vpi_{i})^{\otimes m_{i}}$ 
in $\ti{V}(\lambda)$, and as the identity map on the other factors of $\ti{V}(\lambda)$; 
notice that $z_{i,l}$ commutes with the Kashiwara operators 
$e_{j}$, $f_{j}$, $j \in I_{\af}$, on $\ti{V}(\lambda)$. 
Since $z_{i,l}$ preserves $\ti{\CL}(\lambda)$ by the definition above, 
we deduce that $z_{i,l}$ induces a $\BQ$-linear automorphism  
$z_{i,l} : \ti{\CL}(\lambda)/q_{s}\ti{\CL}(\lambda) \rightarrow 
\ti{\CL}(\lambda)/q_{s}\ti{\CL}(\lambda)$, which commutes with 
the Kashiwara operators $e_{j}$, $f_{j}$, $j \in I_{\af}$, 
on $\ti{\CL}(\lambda)/q_{s}\ti{\CL}(\lambda)$. 
Also, notice that the $z_{i,l}$'s, $i \in I$, $1 \le l \le m_{i}$, 
commute with each other. For $\bc=(\rho^{(i)})_{i \in I} \in \Par(\lambda)$, 
we set
%
%
\begin{equation} \label{eq:Schur}
s_{\bc}(z^{-1}):=\prod_{i \in I} 
s_{\rho^{(i)}}(z_{i,1}^{-1},\,\dots,\,z_{i,m_i}^{-1})
 \in \End_{\BQ}(\ti{\CL}(\lambda)/q_{s}\ti{\CL}(\lambda)). 
\end{equation}
Here, for a partition 
$\rho=(\rho_{1} \ge \cdots \ge \rho_{m-1} \ge 0)$ 
of length less than $m \in \BZ_{\ge 1}$, 
$s_{\rho}(x)=s_{\rho}(x_{1},\,\dots,\,x_{m})$ denotes 
the Schur polynomial in the variables $x_{1},\,\dots,\,x_{m}$ 
corresponding to the partition $\rho$, that is, 
the character of the finite-dimensional, irreducible, 
polynomial representation of $\GL(m)$ whose highest weight 
corresponds to $\rho$ (see \cite[\S8.2]{F}); note that 
for each $\nu = (\nu_{1},\,\dots,\,\nu_{m}) \in \BZ_{\ge 0}^{m}$, 
the coefficient $c_{\nu}$ of 
$x^{\nu}=x_{1}^{\nu_{1}} \cdots x_{m}^{\nu_{m}}$ in $s_{\rho}(x)$ is 
equal to the dimension of the $\nu$-weight space, and in particular, 
$c_{\nu_{\rho}}=1$ for $\nu_{\rho}:=(\rho_{1},\,\dots,\,\rho_{m-1},\,0) 
\in \BZ_{\ge 0}^{m}$, which is the highest weight. In particular, we have 
%
%
\begin{equation} \label{eq:srho}
s_{\rho}(x) = x^{\nu_{\rho}} + 
\sum_{\nu \in \BZ_{\ge 0}^{m},\,\nu \ne \nu_{\rho}} c_{\nu} x^{\nu}. 
\end{equation}

By \cite[Proposition~4.13]{BN}, 
together with the fact that ``$\mathrm{sgn}(\mathbf{c},\,p)=1$'' 
shown on page 375 of \cite{BN}, 
the image of the crystal basis $\CB(\lambda) \subset 
\CL(\lambda)/q_{s}\CL(\lambda)$ under the $\BQ$-linear embedding 
$\Phi_{\lambda}|_{q=0} : \CL(\lambda)/q_{s}\CL(\lambda) \hookrightarrow 
\ti{\CL}(\lambda)/q_{s}\ti{\CL}(\lambda)$ is identical to 
%
%
\begin{equation} \label{eq:BN413}
\bigl\{ s_{\bc}(z^{-1})b \mid 
 \bc \in \Par(\lambda),\,b \in \ti{\CB}_{0}(\lambda) \bigr\}
\subset \ti{\CL}(\lambda)/q_{s}\ti{\CL}(\lambda), 
\end{equation}
where $\ti{\CB}_{0}(\lambda)$ denotes the connected component of 
$\ti{\CB}(\lambda)$ containing $\ti{u}_{\lambda}:=
\bigotimes_{i \in I} u_{\vpi_{i}}^{\otimes m_{i}}$, and 
%
%
\begin{equation} \label{eq:scu}
\Phi_{\lambda}|_{q=0} (u^{\bc}) = 
 s_{\bc}(z^{-1})\ti{u}_{\lambda} \quad 
\text{for each $\bc \in \Par(\lambda)$};
\end{equation}
recall from \S\ref{subsec:isom} that 
$u^{\bc} = b(\bc) \otimes \tw{\lambda} \otimes u_{-\infty}$ 
is an extremal element of weight $\lambda-|\bc|\delta$ contained in 
$\CB(\lambda)$. 
For later use, we rewrite the right-hand side 
$s_{\bc}(z^{-1})\ti{u}_{\lambda}$ of \eqref{eq:scu} as follows. 
We set 
\begin{equation*}
\BZ_{\ge 0}^{\lambda}:=\bigl\{
\bm{\nu}=(\nu^{(i)})_{i \in I} = 
(\nu^{(i)}_{1},\,\dots,\,\nu^{(i)}_{m_{i}})_{i \in I} \mid 
\nu^{(i)} \in \BZ_{\ge 0}^{m_{i}},\,i \in I \bigr\},
\end{equation*}
and then 
\begin{equation*}
\ti{u}_{\lambda}^{[\bm{\nu}]}: = 
\ts{
\bigotimes_{i \in I} 
(u_{\vpi_{i}}^{[-\nu^{(i)}_{1}]} \otimes \cdots \otimes 
 u_{\vpi_{i}}^{[-\nu^{(i)}_{m_i}]})
} \in \ti{\CB}(\lambda) \quad 
\text{for 
$\bm{\nu}=(\nu^{(i)}_{1},\,\dots,\,\nu^{(i)}_{m_{i}})_{i \in I} 
 \in \BZ_{\ge 0}^{\lambda}$}.
\end{equation*}
Also, if $\bc=(\rho^{(i)})_{i \in I} \in \Par(\lambda)$, 
with $\rho^{(i)}=(\rho^{(i)}_{1} \ge \cdots \ge 
\rho^{(i)}_{m_{i}-1} \ge 0)$ for $i \in I$, then 
we set $\bm{\nu}_{\bc}:=(\nu_{\rho^{(i)}})_{i \in I} = 
(\rho^{(i)}_{1},\,\dots,\,\rho^{(i)}_{m_{i}-1},\,0)_{i \in I} 
\in \BZ_{\ge 0}^{\lambda}$. 
Since $z_{i}$ maps $u_{\vpi_{i}} \in \CB(\vpi_{i})$ to 
$u_{\vpi_{i}}^{[1]} \in \CB(\vpi_{i})$ for each $i \in I$, 
we deduce from \eqref{eq:srho} that 
%
%
\begin{equation} \label{eq:scu2}
s_{\bc}(z^{-1})\ti{u}_{\lambda} = 
\ti{u}_{\lambda}^{[\bm{\nu}_{\bc}]} + 
\sum_{
 \bm{\nu} \in \BZ_{\ge 0}^{\lambda},\, 
 \bm{\nu} \ne \bm{\nu}_{\bc}} c_{\bm{\nu}} \ti{u}_{\lambda}^{[\bm{\nu}]},
\end{equation}
where $c_{\bm{\nu}}= \prod_{i \in I} c_{\nu^{(i)}} \in \BZ_{\ge 0}$ if 
$\bm{\nu}=(\nu^{(i)})_{i \in I} \in \BZ_{\ge 0}^{\lambda}$. 
%
%
\begin{prop} \label{prop:zt}
Let $\bc \in \Par(\lambda)$, and $x,\,y \in (W^{J})_{\af}$. 
Then, 
%
%
\begin{equation} \label{eq:zt}
S_{y}(u^{\bc}) \in \CB_{x}^{-}(\lambda) 
\iff
y \sige x.
\end{equation}
\end{prop}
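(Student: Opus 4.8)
The plan is to prove \eqref{eq:zt} by a single induction that propagates membership in $\CB_{x}^{-}(\lambda)$ and the relation $y \sige x$ across one simple reflection at a time. The key engine is a recursion. Since $\{S_{z}\}_{z \in W_{\af}}$ is an honest action of $W_{\af}$ on the normal crystal $\CB(\lambda)$, the element $S_{y}(u^{\bc})$ is extremal of weight $y\lambda - |\bc|\delta$, so that $S_{r_{j}}S_{y} = S_{r_{j}y}$ and $\pair{\alpha_{j}^{\vee}}{\wt(S_{y}u^{\bc})} = \pair{\alpha_{j}^{\vee}}{y\lambda}$. For $j \in I_{\af}$ with $\pair{\alpha_{j}^{\vee}}{x\lambda} > 0$ (equivalently $x \sil r_{j}x$, by Lemma~\ref{lem:si2}), Proposition~\ref{prop:key} and Corollary~\ref{cor:key} combine to give, for every $b$,
\begin{equation*}
b \in \CB_{x}^{-}(\lambda) \iff f_{j}^{\max} b \in \CB_{r_{j}x}^{-}(\lambda),
\end{equation*}
the forward direction being Corollary~\ref{cor:key} and the backward one the fact that $\CB_{x}^{-}(\lambda)$ is the $e_{j}$-closure of $\CB_{r_{j}x}^{-}(\lambda)$. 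Applying this to $b = S_{y}(u^{\bc})$ and using extremality, $f_{j}^{\max}(S_{y}u^{\bc})$ equals $S_{r_{j}y}(u^{\bc})$ if $\pair{\alpha_{j}^{\vee}}{y\lambda} \ge 0$ and equals $S_{y}(u^{\bc})$ if $\pair{\alpha_{j}^{\vee}}{y\lambda} < 0$.

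I would then induct on $\sell(y) - \sell(x)$, a nonnegative integer whenever $y \sige x$. The inductive step proceeds whenever there is a \emph{mixed} simple reflection, i.e.\ a $j$ with $\pair{\alpha_{j}^{\vee}}{x\lambda} \ge 0 \ge \pair{\alpha_{j}^{\vee}}{y\lambda}$ and not both zero. If $\pair{\alpha_{j}^{\vee}}{x\lambda} > 0$, the recursion reads $S_{y}(u^{\bc}) \in \CB_{x}^{-}(\lambda) \iff S_{y}(u^{\bc}) \in \CB_{r_{j}x}^{-}(\lambda)$, and Lemma~\ref{lem:si-L41}(1) (with $x \sil r_{j}x$) gives $y \sige x \iff y \sige r_{j}x$; since $\sell(r_{j}x) = \sell(x)+1$, the difference drops. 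If instead $\pair{\alpha_{j}^{\vee}}{x\lambda} = 0$ and $\pair{\alpha_{j}^{\vee}}{y\lambda} < 0$, then $\CB_{x}^{-}(\lambda) \cup \{\bzero\}$ is stable under both $e_{j}$ and $f_{j}$ (Proposition~\ref{prop:key} and Remark~\ref{rem:demazure}), so membership is constant along the $j$-string through $S_{y}(u^{\bc})$ and $S_{r_{j}y}(u^{\bc})$; hence $S_{y}(u^{\bc}) \in \CB_{x}^{-}(\lambda) \iff S_{r_{j}y}(u^{\bc}) \in \CB_{x}^{-}(\lambda)$, while Lemma~\ref{lem:si-L41}(2) gives $y \sige x \iff r_{j}y \sige x$, and $\sell(r_{j}y) = \sell(y)-1$ again drops the difference. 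Either way the induction hypothesis finishes the step. The base case $\sell(y) = \sell(x)$ forces $x = y$; here the claim $S_{x}(u^{\bc}) \in \CB_{x}^{-}(\lambda)$ is carried, by the equivalences $S_{z}(u^{\bc}) \in \CB_{z}^{-}(\lambda) \iff S_{r_{j}z}(u^{\bc}) \in \CB_{r_{j}z}^{-}(\lambda)$ (the same recursion with $x = y = z$) along a chain of simple reflections connecting $x$ to $e$ in $(W^{J})_{\af}$, down to $x = e$, where $u^{\bc} = b(\bc) \otimes \tw{\lambda} \otimes u_{-\infty} \in \CB_{e}^{-}(\lambda)$ by \eqref{eq:be-}.

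The hard part is the remaining configuration, where no mixed $j$ exists: then $\mathrm{sgn}\pair{\alpha_{j}^{\vee}}{x\lambda} = \mathrm{sgn}\pair{\alpha_{j}^{\vee}}{y\lambda}$ for all $j$, every simple reflection is an ascent (or a descent) of $x$ and $y$ simultaneously, and the recursion moves $(x,y)$ to $(r_{j}x, r_{j}y)$ without changing $\sell(y) - \sell(x)$, so the simple-reflection induction stalls. As the affine $\Fsl_{2}$ example $x = e$, $y = t_{\alpha_{1}^{\vee}}$ shows, such synchronized pairs are never separated by simple reflections; they are exactly the pairs lying along the imaginary ($\delta$-shift) directions, with $x$ and $y$ sharing a common finite Weyl part and differing by a dominant coweight translation. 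I expect this to be the main obstacle, and I would treat it outside the simple-reflection recursion, via the tensor embedding $\Phi_{\lambda}$ of \S\ref{subsec:properties1b}. Using that each $z_{i,l}$ is a $\delta$-shift commuting with all $e_{j}, f_{j}$ and fixing the rightmost tensor factor $u_{-\infty}$, one identifies $S_{y}(u^{\bc})$ under $\Phi_{\lambda}|_{q=0}$ with a vector of the shape $s_{\bc'}(z^{-1})\ti{u}_{\lambda}$ (with $\bc'$ obtained from $\bc$ by adding the content of the translation) from \eqref{eq:BN413} and \eqref{eq:scu2}; its third factor being $u_{-\infty}$ places it in the required Demazure subcrystal, while the matching order relation is read off from Lemma~\ref{lem:SiB2}, which turns the comparison of the common finite parts over a fixed $J$-adjusted translation into the ordinary Bruhat order on $W^{J}$. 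Finally, the reverse implication of \eqref{eq:zt} is obtained from the same chain of equivalences, anchored by the weight constraint that $S_{y}(u^{\bc}) \in \CB_{x}^{-}(\lambda)$ forces $x\lambda - y\lambda \in Q_{\af}^{+}$ (so in particular $\sell(y) \ge \sell(x)$), together with the synchronized analysis, which is an equivalence in both directions.
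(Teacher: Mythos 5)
Your recursion across ``mixed'' reflections is sound, and it is essentially the paper's own machinery (Proposition~\ref{prop:key}, Corollary~\ref{cor:key}, Lemmas~\ref{lem:si2} and \ref{lem:si-L41}); the proof therefore stands or falls with the stalled case, and there your argument has concrete defects. First, the stalled configurations are mischaracterized: stalling only requires that no $j \in I_{\af}$ satisfy $\pair{\alpha_{j}^{\vee}}{x\lambda} \ge 0 \ge \pair{\alpha_{j}^{\vee}}{y\lambda}$ (not both zero), and this does not force a common finite Weyl part. In untwisted type $A_{3}$ with $\lambda$ regular, $x = r_{2}t_{\xi}$ and $y = t_{\zeta}$ have sign patterns $(+,-,+,-)$ and $(+,+,+,-)$ on $\alpha_{1}^{\vee},\alpha_{2}^{\vee},\alpha_{3}^{\vee},\alpha_{0}^{\vee}$, so the pair stalls although the finite parts differ; your fallback is sketched only for a common finite part. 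Nor need stalled translations differ by a dominant coweight: in type $A_{2}$ with $m_{1}>m_{2}$, the pair $x = t_{\xi}$, $y = t_{\xi+\alpha_{1}^{\vee}-\alpha_{2}^{\vee}}$ stalls, satisfies your weight condition, has $\sell(y)=\sell(x)$, and yet $y \ne x$ and $y \not\sige x$; for such pairs the stalled analysis must deliver \emph{non}-membership, i.e.\ it must be a genuine equivalence, which your sketch does not provide. Second, the comparison tool you invoke is the wrong one: Lemma~\ref{lem:SiB2} compares $w_{1}z_{\xi}t_{\xi}$ with $w_{2}z_{\xi}t_{\xi}$ over a \emph{fixed} translation part, whereas stalled pairs differ precisely in their translation parts; the relevant criterion is $z_{\xi}t_{\xi} \sige z_{\zeta}t_{\zeta} \iff [\xi-\zeta] \in \QJp{I \setminus J}$ from \cite[Proposition~6.2.2]{INS}. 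Finally, the heart of the matter --- that $S_{z_{\xi}t_{\xi}}(u^{\bc}) \in \CB^{-}_{z_{\zeta}t_{\zeta}}(\lambda)$ \emph{forces} $[\xi-\zeta] \in \QJp{I \setminus J}$ --- is exactly where the paper spends its effort (Claim~\ref{c:zt1}): push the element through $\Phi_{\lambda}|_{q=0}$, expand $s_{\bc}(z^{-1})\ti{u}_{\lambda}$ by \eqref{eq:scu2}, use linear independence of the $S_{z_{\xi}t_{\xi}}(\ti{u}_{\lambda}^{[\bm{\nu}]})$ and the global-basis compatibility of each factor $V^{-}_{z_{\zeta}t_{\zeta}}(\vpi_{i})$ to place each tensor factor $S_{z_{\xi}t_{\xi}}(u_{\vpi_{i}})$ in $\CB^{-}_{z_{\zeta}t_{\zeta}}(\vpi_{i})$, and then read off $\pair{\xi-\zeta}{\vpi_{i}} \ge 0$ for $i \in I \setminus J$ from weights. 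None of this substance appears in your proposal; the phrase about the third factor being $u_{-\infty}$ addresses membership, not the converse you need.

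The induction bookkeeping in the ``only if'' direction also fails. Your base case asserts that $\sell(y)=\sell(x)$ forces $x=y$, which is true only when $y \sige x$ is already known --- precisely what is to be proved (the $A_{2}$ pair above is a counterexample); and your anchor, that membership forces $\sell(y) \ge \sell(x)$, does not follow from weights: membership only gives $x\lambda - y\lambda + |\bc|\delta \in Q_{\af}^{+}$, which controls neither the length of the finite part nor $\pair{\xi}{\rho}$ (variants of the same $A_{2}$ example even reverse the length inequality). The paper avoids both problems by organizing the induction differently: \cite[Lemma~1.4]{AK} produces a sequence of simple reflections, all pairings against the successive images of $x$ being positive, which carries $x$ up to a translation $z_{\zeta}t_{\zeta}$ \emph{independently of $y$}; Corollary~\ref{cor:key}, valid for every sign of $\pair{\alpha_{j}^{\vee}}{y\lambda}$, moves $S_{y}(u^{\bc})$ along (with $y$ either following or standing still), and Lemma~\ref{lem:si-L41} transports the order relation back down the sequence. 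The base case ($x$ a translation, $y$ arbitrary) is then reduced by a second induction on the length of the finite part of $y$ (Claim~\ref{c:zt2}) to the two-translation case (Claim~\ref{c:zt1}). To salvage your approach you would need to handle stalled pairs with distinct finite parts and to supply the embedding argument in full; at that point you would have reproduced the paper's proof.
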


In order to show the proposition above, we need some lemmas. 
%
%
\begin{lem} \label{lem:zt0}
Let $x,\,y \in (W^{J})_{\af}$. If $y \sige x$, 
then $\CB_{y}^{-}(\lambda) \subset \CB_{x}^{-}(\lambda)$. 
\end{lem}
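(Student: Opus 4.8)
The plan is to bootstrap the general inclusion from the single simple-reflection case, which is already packaged in Proposition~\ref{prop:key}, and then to climb from $x$ up to $y$ along the semi-infinite order using the lifting property recorded in Lemma~\ref{lem:si-L41}. First I would isolate the one-step statement. By \eqref{lem2b}, if $w \in (W^{J})_{\af}$ and $j \in I_{\af}$ satisfy $\pair{\alpha_{j}^{\vee}}{w\lambda} \ge 0$, then $\CB_{w}^{-}(\lambda) = \{e_{j}^{k}b \mid b \in \CB_{r_{j}w}^{-}(\lambda),\,k \ge 0\}\setminus\{\bzero\}$; that is, $\CB_{w}^{-}(\lambda)$ is the $e_{j}$-string closure of $\CB_{r_{j}w}^{-}(\lambda)$. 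Since the operation $B \mapsto \{e_{j}^{k}b \mid b \in B,\,k\ge 0\}\setminus\{\bzero\}$ is inclusion-preserving, I get two monotonicities from Lemma~\ref{lem:si2}: if $\pair{\alpha_{j}^{\vee}}{w\lambda} > 0$ (an \emph{ascending} step $w \sil r_{j}w$) then $\CB_{r_{j}w}^{-}(\lambda) \subseteq \CB_{w}^{-}(\lambda)$, and dually, applying this to $r_{j}w$ when $\pair{\alpha_{j}^{\vee}}{w\lambda} < 0$ gives $\CB_{w}^{-}(\lambda) \subseteq \CB_{r_{j}w}^{-}(\lambda)$. Thus an ascending simple step shrinks the Demazure subcrystal, while a descending one enlarges it, and moreover the enlargement is realized as an $e_{j}$-closure (this exact-equality, not just the inclusion, will be what drives the hard case).

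Next I would run an induction on the integer $d := \sell(y) - \sell(x) \in \BZ_{\ge 0}$, which is legitimate because $\sell$ increases by one along every edge of $\SB$ (Definition~\ref{dfn:sell}). The base $d = 0$ forces $x = y$. For $d \ge 1$ I would try to choose a simple reflection $r_{j}$ that makes progress, and the lifting property supplies two clean cases. If there is $j$ with $\pair{\alpha_{j}^{\vee}}{x\lambda} > 0$ and $\pair{\alpha_{j}^{\vee}}{y\lambda} \le 0$, then Lemma~\ref{lem:si-L41}(1) gives $r_{j}x \sile y$ with $\sell(r_{j}x) = \sell(x)+1$, so the inductive hypothesis yields $\CB_{y}^{-}(\lambda) \subseteq \CB_{r_{j}x}^{-}(\lambda)$, and the one-step shrink $\CB_{r_{j}x}^{-}(\lambda) \subseteq \CB_{x}^{-}(\lambda)$ closes the case. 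Symmetrically, if there is $j$ with $\pair{\alpha_{j}^{\vee}}{y\lambda} < 0$ and $\pair{\alpha_{j}^{\vee}}{x\lambda} \ge 0$, then Lemma~\ref{lem:si-L41}(2) gives $x \sile r_{j}y$ with $\sell(r_{j}y) = \sell(y)-1$, and $\CB_{y}^{-}(\lambda) \subseteq \CB_{r_{j}y}^{-}(\lambda) \subseteq \CB_{x}^{-}(\lambda)$ finishes it. Both of these strictly decrease $d$.

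The remaining configuration is when no such ``mixed'' simple reflection is available and I am forced to move $x$ and $y$ together. The usable joint move is a \emph{common ascent}: some $j$ with $\pair{\alpha_{j}^{\vee}}{x\lambda} > 0$ and $\pair{\alpha_{j}^{\vee}}{y\lambda} > 0$. Here Lemma~\ref{lem:si-L41}(3) gives $r_{j}x \sile r_{j}y$, and the exact one-step equalities express both sides as $e_{j}$-closures, $\CB_{x}^{-}(\lambda) = \{e_{j}^{k}b \mid b \in \CB_{r_{j}x}^{-}(\lambda)\}\setminus\{\bzero\}$ and likewise for $y$. If the reduced pair $(r_{j}x,\,r_{j}y)$ satisfied $\CB_{r_{j}y}^{-}(\lambda) \subseteq \CB_{r_{j}x}^{-}(\lambda)$, then applying the monotone $e_{j}$-closure would give $\CB_{y}^{-}(\lambda) \subseteq \CB_{x}^{-}(\lambda)$, exactly as in the classical Demazure nesting argument. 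The difficulty is that this move leaves $d$ unchanged (both $\sell$'s rise by one), so it is not by itself a valid induction step.

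The hard part, and the genuine departure from the highest-weight setting, is therefore termination: in the classical case one inducts on the (bounded-below) length of the larger element, whereas $\sell$ is unbounded in both directions on $(W^{J})_{\af}$, so common-ascent reductions could a priori loop forever, and I also need to rule out degenerate pairs (for instance $x\lambda$ anti-dominant and $y\lambda$ dominant) for which no admissible simple reflection exists at all. My plan to defeat this is to refine the induction by first filtering along the translation part. Writing $x = w z_{\xi}t_{\xi}$ and $y = v z_{\zeta}t_{\zeta}$ in the form \eqref{eq:W^J_af}, Remark~\ref{rem:SiB} shows that along every ascending edge the $(I \setminus J)$-projection $[\zeta - \xi]$ lies in $Q^{\vee +}_{I \setminus J}$, so the translation parts are forced monotonically apart through the interval; and Lemma~\ref{lem:SiB2} identifies the semi-infinite order on elements sharing a fixed translation part with the \emph{ordinary} Bruhat order on $W^{J}$, which is bounded below and where the classical lifting induction terminates. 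The crux is thus to organize a two-level induction --- outer on the finitely many translation layers occurring between $x$ and $y$, inner on the finite Bruhat order within a layer --- so that each common-ascent step provably decreases a well-founded parameter and eventually exposes a mixed simple reflection, at which point $d$ drops and the argument of the previous paragraph applies.
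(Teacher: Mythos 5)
Your simple-reflection machinery is assembled correctly as far as it goes: the one-step monotonicities extracted from Proposition~\ref{prop:key}, combined with Lemma~\ref{lem:si2} and the lifting moves of Lemma~\ref{lem:si-L41}, do handle your Cases A and B, and each of those strictly decreases $d=\sell(y)-\sell(x)$. But the gap you flag at the end is not merely an unfinished termination argument --- the two-level induction you hope for cannot exist. In the level-zero setting the pairing $\pair{\alpha_{j}^{\vee}}{x\lambda}$ depends only on the classical part $\cl(x)\in W^{J}$ of $x$: writing $x=wz_{\xi}t_{\xi}$ as in \eqref{eq:W^J_af}, one has $x\lambda=w\lambda-\pair{\xi}{\lambda}\delta$, so the sign pattern over $j\in I_{\af}$ is a function of $w$ alone. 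Hence if $\cl(x)=\cl(y)$, then $x$ and $y$ have \emph{identical} sign patterns, so no mixed reflection exists; and since a common ascent or descent at $j$ replaces both classical parts by the same element of $W^{J}$ (namely $\mcr{r_{j}\cl(x)}$ for $j\in I$, and $\mcr{r_{\theta}\cl(x)}$ for $j=0$), every chain of common moves preserves the equality $\cl(x)=\cl(y)$, so a mixed reflection never appears, no matter how the moves are chosen. Such pairs genuinely occur in the hypothesis of the lemma: for regular $\lambda$ (so $J=\emptyset$), take $x=e$ and $y=t_{\xi}$ with $\xi\in Q^{\vee+}$, $\xi\neq 0$; then $y\sige x$ by \cite[Proposition~6.2.2]{INS} and $\cl(x)=\cl(y)=e$, yet your induction can make no progress whatsoever on this pair. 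The inclusion it demands, $U_{q}^{-}S^{\norm}_{t_{\xi}}v_{\lambda}\subset U_{q}^{-}v_{\lambda}$, concerns a reflection datum that is invisible to Proposition~\ref{prop:key}, which only sees simple reflections.

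This is exactly why the paper does not induct on simple reflections at all. It reduces (by transitivity along a directed path) to a single covering edge $x\edge{\beta}y=r_{\beta}x$ of $\SB$, where $\beta\in\prr$ is an arbitrary positive real root --- possibly of the non-simple form $\beta=\alpha+\delta$ with $-\alpha\in\Delta^{+}$, which is precisely the kind of edge that moves between translation layers. By Remark~\ref{rem:SiB} one has $\pair{\beta^{\vee}}{x\lambda}>0$, hence $\pair{\beta^{\vee}}{y\lambda}<0$; transporting both subcrystals through the $S^{\ast}$-isomorphisms via \eqref{eq:b-}, the desired inclusion becomes $V^{-}_{r_{\beta}}(\mu)\subset V^{-}(\mu)$ for $\mu=x\lambda$, and this follows from Kashiwara's result \cite[Proposition~2.8]{K-rims}: $S^{\norm}_{r_{\beta}}v_{\mu}\in U_{q}^{-}v_{\mu}$ whenever $\pair{\beta^{\vee}}{\mu}>0$, together with the compatibility of these submodules with the global basis. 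That single module-theoretic input --- valid for reflections along \emph{arbitrary} real roots, not just simple ones --- is what your toolkit lacks; with it the lemma falls in one step, and without it (or an equivalent substitute, such as the $z_{i}$-operator analysis of $U_{q}^{-}v^{[k]}_{\vpi_{i}}$ inside $V(\vpi_{i})$) the translation-versus-translation case cannot be closed.
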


\begin{proof}
We may assume that $x \edge{\beta} y$ in $\SB$ for some $\beta \in \prr$. 
Write $x \in (W^{J})_{\af}$ in the form  $x = wz_{\xi}t_{\xi}$, 
with $w \in W^{J}$ and $\xi \in \jad$ (see \eqref{eq:W^J_af}). 
By Remark~\ref{rem:SiB}, $\beta$ is either of the following forms: 
$\beta=\alpha$ for some $\alpha \in \Delta^{+}$, or 
$\beta=\alpha+\delta$ for some $-\alpha \in \Delta^{+}$; 
in both cases, we have 
$w^{-1}\alpha \in \Delta^{+} \setminus \Delta_{J}^{+}$, and hence 
$\pair{\beta^{\vee}}{x\lambda}= 
\pair{\alpha^{\vee}}{wz_{\xi}t_{\xi}\lambda}=
\pair{w^{-1}\alpha^{\vee}}{\lambda} > 0$. Thus we obtain 
$\pair{\beta^{\vee}}{y\lambda} = \pair{\beta^{\vee}}{r_{\beta}x\lambda} < 0$. 
Also, by \eqref{eq:b-}, we have 
\begin{align*}
\CB_{y}^{-}(\lambda) & =
  S_{y^{-1}}^{\ast}
  \bigl( \CB(y\lambda) \cap 
  (\CB(\infty) \otimes \tw{y\lambda} \otimes u_{-\infty}) \bigr), \\
\CB_{x}^{-}(\lambda)=
\CB_{r_{\beta}y}^{-}(\lambda) & =
  S_{(r_{\beta}y)^{-1}}^{\ast}
  \bigl( \CB(r_{\beta}y\lambda) \cap 
  (\CB(\infty) \otimes \tw{r_{\beta}y\lambda} \otimes u_{-\infty}) \bigr) \\
& = 
  S_{y^{-1}}^{\ast}S_{r_{\beta}}^{\ast}
  \bigl( \CB(r_{\beta}y\lambda) \cap 
  (\CB(\infty) \otimes \tw{r_{\beta}y\lambda} \otimes u_{-\infty}) \bigr).
\end{align*}
Therefore, in order to show that $\CB_{x}^{-}(\lambda) \supset \CB_{y}^{-}(\lambda)$, 
it suffices to show that
%
%
\begin{equation} \label{eq:bruhat1}
S_{r_{\beta}}^{\ast}
\bigl( \CB(r_{\beta}\mu) \cap 
(\CB(\infty) \otimes \tw{r_{\beta}\mu} \otimes u_{-\infty}) \bigr)
\subset
\CB(\mu) \cap (\CB(\infty) \otimes \tw{\mu} \otimes u_{-\infty}),
\end{equation}
where we set $\mu:=r_{\beta}y\lambda$.
The set on the right-hand side of the inclusion $\subset$ in 
\eqref{eq:bruhat1} is the crystal basis of $V^{-}(\mu)=
U_{q}^{-}v_{\mu}$ (see \eqref{eq:be-}), that is, 
%
%
\begin{equation} \label{eq:Bru1-1}
\CB(\mu) \cap 
(\CB(\infty) \otimes \tw{\mu} \otimes u_{-\infty})=
\bigl\{b \in \CB(\mu) \mid G(b) \in V^{-}(\mu)\bigr\}.
\end{equation}
The set on the left-hand side of the inclusion $\subset$ 
in \eqref{eq:bruhat1} is the crystal basis of 
$V_{r_{\beta}}^{-}(\mu)=U_{q}^{-}S_{r_{\beta}}^{\norm}v_{\mu}$ 
by \eqref{eq:b-}, that is, 
%
%
\begin{equation} \label{eq:Bru1-2}
S_{r_{\beta}}^{\ast}
\bigl( \CB(r_{\beta}\mu) \cap 
 (\CB(\infty) \otimes \tw{r_{\beta}\mu} \otimes u_{-\infty}) \bigr) =
\bigl\{b \in \CB(\mu) \mid G(b) \in V_{r_{\beta}}^{-}(\mu)\bigr\}.
\end{equation}
Since $\pair{\beta^{\vee}}{\mu} = - \pair{\beta^{\vee}}{y\lambda} > 0$ as shown above, 
it follows from \cite[Proposition~2.8]{K-rims} that
$S_{r_{\beta}}^{\norm}v_{\mu} \in V^{-}(\mu)=U_{q}^{-}v_{\mu}$, 
which implies that $V^{-}(\mu) \supset V_{r_{\beta}}^{-}(\mu)$. 
Combining this containment with \eqref{eq:Bru1-1} and \eqref{eq:Bru1-2}, 
we conclude \eqref{eq:bruhat1}. This proves the lemma. 
\end{proof}
%
%
\begin{lem} \label{lem:zt01}
For each $y \in W_{\af}$, we have
$S_{y}(u^{\bc}) \in \CB_{y}^{-}(\lambda)$. 
\end{lem}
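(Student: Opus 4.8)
The plan is to prove the statement by induction on the length $\ell(y)$ of $y \in W_{\af}$, using the fact that $S$ is a genuine action of $W_{\af}$ on the regular crystal $\CB(\lambda)$ (so that $S_{xy} = S_{x}S_{y}$ for all $x,y \in W_{\af}$; in particular each $S_{x}$ is a bijection of $\CB(\lambda)$ preserving the set of extremal elements). For the base case $y = e$, recall from Proposition~\ref{prop:ext}\,(1) that $u^{\bc} = b(\bc) \otimes \tw{\lambda} \otimes u_{-\infty}$ lies in $\CB(\lambda)$; since its third tensor factor is $u_{-\infty}$, it belongs to $\CB(\infty) \otimes \tw{\lambda} \otimes u_{-\infty}$, and hence $S_{e}(u^{\bc}) = u^{\bc} \in \CB^{-}(\lambda) = \CB_{e}^{-}(\lambda)$ by \eqref{eq:be-} (with $x = e$).

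For the inductive step, let $y \ne e$ and write $y = r_{j}y'$ with $j \in I_{\af}$ and $\ell(y') = \ell(y) - 1$. By the induction hypothesis, $b' := S_{y'}(u^{\bc}) \in \CB_{y'}^{-}(\lambda)$; moreover $b'$ is extremal, of weight $y'\lambda - |\bc|\delta$, so that $\pair{\alpha_{j}^{\vee}}{\wt(b')} = \pair{\alpha_{j}^{\vee}}{y'\lambda} =: n$ (using $\pair{\alpha_{j}^{\vee}}{\delta} = 0$). Because $S_{y} = S_{r_{j}}S_{y'}$, we have $S_{y}(u^{\bc}) = S_{r_{j}}(b')$; and since $b'$ is extremal, the defining relations of an extremal element (applied with $x = e$) give $e_{j}b' = \bzero$ when $n \ge 0$ and $f_{j}b' = \bzero$ when $n < 0$. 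Hence $S_{r_{j}}(b') = f_{j}^{\max}b'$ if $n \ge 0$, and $S_{r_{j}}(b') = e_{j}^{\max}b'$ if $n < 0$, where $f_{j}^{\max}$ and $e_{j}^{\max}$ denote the top applications of $f_{j}$ and $e_{j}$.

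It remains to place this element in $\CB_{y}^{-}(\lambda)$. If $n \ge 0$, we apply Corollary~\ref{cor:key} with $x = y'$: since $b' \in \CB_{y'}^{-}(\lambda)$, we obtain $f_{j}^{\max}b' \in \CB_{r_{j}y'}^{-}(\lambda) = \CB_{y}^{-}(\lambda)$, as desired. If $n < 0$, set $x = y = r_{j}y'$; then $\pair{\alpha_{j}^{\vee}}{x\lambda} = -n > 0$ and $r_{j}x = y'$, so Proposition~\ref{prop:key} yields $\CB_{y}^{-}(\lambda) = \{ e_{j}^{k}b \mid b \in \CB_{y'}^{-}(\lambda),\,k \in \BZ_{\ge 0} \} \setminus \{\bzero\}$. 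Since $e_{j}^{\max}b' \ne \bzero$ and $b' \in \CB_{y'}^{-}(\lambda)$, we get $S_{y}(u^{\bc}) = e_{j}^{\max}b' \in \CB_{y}^{-}(\lambda)$. This completes the induction.

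The main point to verify carefully is the interplay between the abstract Weyl-group action $S$ on $\CB(\lambda)$ and the Demazure subcrystals: namely that $S$ is a genuine action (so that $S_{y} = S_{r_{j}}S_{y'}$), that $S_{y'}$ preserves extremality (so that $S_{r_{j}}$ acts on $b'$ as $f_{j}^{\max}$ or $e_{j}^{\max}$), and that the sign of $n = \pair{\alpha_{j}^{\vee}}{y'\lambda}$ selects precisely the hypothesis needed to invoke either Corollary~\ref{cor:key} or Proposition~\ref{prop:key}. No case analysis on $\ell(r_{j}y')$ versus $\ell(y')$ is required beyond choosing $y'$ shorter than $y$; both signs of $n$ are handled uniformly by the two results above, which were designed for exactly this purpose.
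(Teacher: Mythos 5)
Your proof is correct and follows essentially the same route as the paper's: induction on $\ell(y)$, with the base case settled by Proposition~\ref{prop:ext} and \eqref{eq:b-}, and the inductive step splitting on the sign of $n=\pair{\alpha_{j}^{\vee}}{y'\lambda}$ so that extremality of $u^{\bc}$ turns $S_{r_{j}}$ into $f_{j}^{\max}$ (handled by Corollary~\ref{cor:key}) or into a power of $e_{j}$ (handled by Proposition~\ref{prop:key} applied at $x=y$). The only cosmetic difference is that you phrase the second case via $e_{j}^{\max}$ rather than $e_{j}^{-n}$, which agrees with the paper's argument since $f_{j}b'=\bzero$ forces $\ve_{j}(b')=-n$.
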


\begin{proof}
We prove the assertion by induction on $\ell(y)$. 
If $\ell(y)=0$, then $y=e$, and hence the assertion 
follows immediately from Proposition~\ref{prop:ext}, 
\eqref{eq:b-}, and the definition of $u^{\bc}$. 
Assume now that $\ell(y) > 0$, and take $j \in I_{\af}$ such that 
$\ell(r_{j}y) = \ell(y) -1$; by our induction hypothesis, 
we have $S_{r_{j}y}(u^{\bc}) \in \CB_{r_{j}y}^{-}(\lambda)$. 
If $\pair{\alpha_{j}^{\vee}}{r_{j}y\lambda} \ge 0$, then
$S_{y}(u^{\bc}) = S_{r_{j}}S_{r_{j}y}(u^{\bc})=f_{j}^{\max}S_{r_{j}y}(u^{\bc})$ 
since $u^{\bc}$ is an extremal element. 
Therefore, we deduce $S_{y}(u^{\bc}) \in \CB_{y}^{-}(\lambda)$ 
by Corollary~\ref{cor:key}, 
since $S_{r_{j}y}(u^{\bc}) \in \CB_{r_{j}y}^{-}(\lambda)$
by our induction hypothesis. 
If $n:=\pair{\alpha_{j}^{\vee}}{r_{j}y\lambda} \le 0$, then 
$S_{y}(u^{\bc}) = S_{r_{j}}S_{r_{j}y}(u^{\bc}) = e_{j}^{-n}S_{r_{j}y}(u^{\bc})$. 
Since $\pair{\alpha_{j}^{\vee}}{y\lambda} \ge 0$, and 
$S_{r_{j}y}(u^{\bc}) \in \CB_{r_{j}y}^{-}(\lambda)$ by our induction hypothesis, 
it follows from Proposition~\ref{prop:key} that
$S_{y}(u^{\bc}) \in \CB_{y}^{-}(\lambda)$. This proves the lemma. 
\end{proof}

\begin{proof}[Proof of Proposition~\ref{prop:zt}]
The ``if'' part follows immediately from 
Lemmas~\ref{lem:zt0} and \ref{lem:zt01}. 
Indeed, assume that $y \sige x$. 
Then, $\CB_{y}^{-}(\lambda) \subset \CB_{x}^{-}(\lambda)$ 
by Lemma~\ref{lem:zt0}. Therefore, by Lemma~\ref{lem:zt01}, 
$S_{y}(u^{\bc}) \in \CB_{y}^{-}(\lambda) \subset \CB_{x}^{-}(\lambda)$. 
This proves the ``if'' part. 

Now, we prove the ``only if'' part. 
%
%
\begin{claim} \label{c:zt1}
Let $\bc \in \Par(\lambda)$, and $\xi,\,\zeta \in \jad$.  
If $S_{z_{\xi}t_{\xi}}(u^{\bc}) \in \CB_{z_{\zeta}t_{\zeta}}^{-}(\lambda)$, 
then $z_{\xi}t_{\xi} \sige z_{\zeta}t_{\zeta}$.
\end{claim}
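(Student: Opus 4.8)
The plan is to transport the statement to the path model through the crystal isomorphism $\Psi_{\lambda}$ and then read off the conclusion from the final direction $\kappa$. Recall from \eqref{eq:Psi} that $\Psi_{\lambda}$ is an isomorphism of crystals with $\Psi_{\lambda}(u^{\bc})=\eta^{C}$, where $C:=\Theta^{-1}(\bc)\in\Conn(\sLS)$; being a crystal isomorphism, it commutes with the $W_{\af}$-action, so $\Psi_{\lambda}(S_{z_{\xi}t_{\xi}}(u^{\bc}))=S_{z_{\xi}t_{\xi}}\eta^{C}$. Since $\eta^{C}$ has the form \eqref{eq:etaC}, whose last direction is $e$ and all of whose direction vectors are congruent to $t\lambda$ modulo $\BR\delta$, formula \eqref{eq:SxetaC} in Remark~\ref{rem:extp}(2) applies and gives the final direction $\kappa(S_{z_{\xi}t_{\xi}}\eta^{C})=\PJ(z_{\xi}t_{\xi})=z_{\xi}t_{\xi}$, because $z_{\xi}t_{\xi}\in(W^{J})_{\af}$. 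This reduces Claim~\ref{c:zt1} to the following assertion about the extremal element $b_{0}:=S_{z_{\xi}t_{\xi}}(u^{\bc})$: if $b_{0}\in\CB_{x}^{-}(\lambda)$ for some $x\in(W^{J})_{\af}$, then $x\sile\kappa(\Psi_{\lambda}(b_{0}))=z_{\xi}t_{\xi}$; applying this with $x=z_{\zeta}t_{\zeta}$ yields $z_{\xi}t_{\xi}\sige z_{\zeta}t_{\zeta}$.

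To prove this reduced assertion I would argue by induction, combining the descent of Corollary~\ref{cor:key} with the evolution of $\kappa$ recorded in Lemma~\ref{lem:vevp}. For $j\in I_{\af}$ we have $f_{j}^{\max}b_{0}\in\CB_{r_{j}x}^{-}(\lambda)$ by Corollary~\ref{cor:key}, while on the path side $\Psi_{\lambda}(f_{j}^{\max}b_{0})=f_{j}^{\max}\Psi_{\lambda}(b_{0})$ has final direction $r_{j}\kappa(\Psi_{\lambda}(b_{0}))$ when $\pair{\alpha_{j}^{\vee}}{\kappa(\Psi_{\lambda}(b_{0}))\lambda}>0$ and the unchanged $\kappa$ otherwise, by Lemma~\ref{lem:vevp}. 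Because $b_{0}$ is extremal and $\pair{\alpha_{j}^{\vee}}{\delta}=0$, one has $\pair{\alpha_{j}^{\vee}}{\wt(b_{0})}=\pair{\alpha_{j}^{\vee}}{z_{\xi}t_{\xi}\lambda}$, so $f_{j}b_{0}=\bzero$ exactly when $\pair{\alpha_{j}^{\vee}}{z_{\xi}t_{\xi}\lambda}\le 0$; in that case $f_{j}^{\max}b_{0}=b_{0}$, and Corollary~\ref{cor:key} shows $b_{0}\in\CB_{r_{j}x}^{-}(\lambda)$ with $\kappa$ pinned at $z_{\xi}t_{\xi}$. The sign dictionary of Lemma~\ref{lem:si2} (telling when $x\sil r_{j}x$, i.e. $\pair{\alpha_{j}^{\vee}}{x\lambda}>0$) and the propagation Lemma~\ref{lem:si-L41} (which pushes the relation $x\sile z_{\xi}t_{\xi}$ through a simple reflection under hypotheses on the signs of $\pair{\alpha_{j}^{\vee}}{x\lambda}$ and $\pair{\alpha_{j}^{\vee}}{z_{\xi}t_{\xi}\lambda}$) are precisely what is needed to keep $x\sile z_{\xi}t_{\xi}$ as the induction invariant and to raise $x$ toward $z_{\xi}t_{\xi}$. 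The induction should be anchored by the $\delta$-grading: membership forces $\wt(b_{0})=\lambda-(\pair{\xi}{\lambda}+|\bc|)\delta\in x\lambda-Q_{\af}^{+}$, which bounds the gap between $\sell(z_{\xi}t_{\xi})$ and $\sell(x)$ and guarantees that the raising terminates exactly at $x=z_{\xi}t_{\xi}$, as already $b_{0}\in\CB_{z_{\xi}t_{\xi}}^{-}(\lambda)$ by Lemma~\ref{lem:zt01}.

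The hard part will be the synchronization of the two bookkeeping devices: raising $x$ in the semi-infinite Bruhat order through Corollary~\ref{cor:key} must be kept in lock-step with the motion of $\kappa$ under $f_{j}^{\max}$ from Lemma~\ref{lem:vevp}, and these behave differently according to the sign of $\pair{\alpha_{j}^{\vee}}{x\lambda}$ versus that of $\pair{\alpha_{j}^{\vee}}{z_{\xi}t_{\xi}\lambda}$. A careful analysis of the resulting four sign configurations, matched against the three cases of Lemma~\ref{lem:si-L41}, is where the genuine work lies; the delicate configuration is $\pair{\alpha_{j}^{\vee}}{x\lambda}>0$ together with $\pair{\alpha_{j}^{\vee}}{z_{\xi}t_{\xi}\lambda}\le 0$, in which $x$ advances but $\kappa$ is fixed, so one must verify $r_{j}x\sile z_{\xi}t_{\xi}$ rather than merely $x\sile z_{\xi}t_{\xi}$. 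A further subtlety, absent in the ordinary highest-weight setting, is that the connected components of $\sLS$ and of $\CB(\lambda)$ have no lowest-weight vertex, so the induction cannot be seeded at an extreme element and must instead be driven by the $\delta$-degree bound above together with the extremality of $b_{0}$. As an alternative route, should the synchronization prove awkward, one could instead descend via the embedding $\Phi_{\lambda}|_{q=0}$ into $\bigotimes_{i\in I}\CB(\vpi_{i})^{\otimes m_{i}}$ (using Remark~\ref{rem:vlambda} and \eqref{eq:scu2}) to reduce to the fundamental-weight factors $S_{z_{\xi}t_{\xi}}u_{\vpi_{i}}$ and then recombine the resulting parabolic comparisons; but the $\kappa$-tracking argument is the more direct and is the one I would pursue first.
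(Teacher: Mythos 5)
Your main route has a genuine gap: it is circular. The reduction in your first paragraph is correct ($\Psi_{\lambda}$ commutes with the $S_{x}$-action because both crystals are regular, and $\kappa(S_{z_{\xi}t_{\xi}}\eta^{C})=z_{\xi}t_{\xi}$ by \eqref{eq:SxetaC}), but the ``reduced assertion'' you arrive at is exactly Lemma~\ref{lem:subset}, and the $f_{j}^{\max}$-induction you then sketch (Corollary~\ref{cor:key} on the module side, Lemma~\ref{lem:vevp} and Lemma~\ref{lem:si-L41} on the order side) is precisely the induction the paper runs \emph{after} this Claim is available: in that induction the reflections $r_{j_{1}},\dots,r_{j_{p}}$ serve to bring $x$ to translation form, and its base case is the situation where both $x=z_{\zeta}t_{\zeta}$ and $\kappa=z_{\xi}t_{\xi}$ are already of translation type --- which is the present Claim. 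At that point no application of $f_{j}^{\max}$ can make progress (for $j\in I$ both sides have $\pair{\alpha_{j}^{\vee}}{\cdot\,\lambda}=\pair{\alpha_{j}^{\vee}}{\lambda}\ge 0$ and the two translation elements just move in parallel around the orbit), so the comparison of $\xi$ with $\zeta$ must come from an independent input. Your proposed anchor cannot supply it: membership of $b_{0}$ in $\CB_{z_{\zeta}t_{\zeta}}^{-}(\lambda)$ together with $\wt(b_{0})=\lambda-(\pair{\xi}{\lambda}+|\bc|)\delta$ and the containment of weights in $z_{\zeta}t_{\zeta}\lambda-Q_{\af}^{+}$ yields only the single scalar inequality $\pair{\xi-\zeta}{\lambda}+|\bc|\ge 0$, whereas the conclusion $z_{\xi}t_{\xi}\sige z_{\zeta}t_{\zeta}$ is equivalent, by \cite[Proposition~6.2.2]{INS}, to the componentwise condition $[\xi-\zeta]\in \QJp{I\setminus J}$, i.e.\ $\pair{\xi-\zeta}{\vpi_{i}}\ge 0$ for \emph{every} $i\in I\setminus J$. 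One inequality against $\lambda=\sum_{i}m_{i}\vpi_{i}$ (offset, moreover, by $|\bc|$) cannot pin down these $|I\setminus J|$ separate inequalities, so the step ``the raising terminates exactly at $x=z_{\xi}t_{\xi}$'' has no justification; citing Lemma~\ref{lem:zt01} only shows the target membership is consistent, not that it is forced.

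The componentwise information is exactly what the paper's proof extracts, and it is the route you relegate to a fallback in your final sentence. One embeds $V(\lambda)$ into $\bigotimes_{i\in I}V(\vpi_{i})^{\otimes m_{i}}$ via $\Phi_{\lambda}$, uses Remark~\ref{rem:vlambda} to see that $\Phi_{\lambda}|_{q=0}(b_{0})$ lies in $\Span_{\BQ}\bigl(\bigotimes_{i\in I}\CB_{z_{\zeta}t_{\zeta}}^{-}(\vpi_{i})^{\otimes m_{i}}\bigr)$, and then uses the expansion \eqref{eq:scu2} together with the linear independence of the elements $S_{z_{\xi}t_{\xi}}(\ti{u}_{\lambda}^{[\bm{\nu}]})$, $\bm{\nu}\in\BZ_{\ge 0}^{\lambda}$, to isolate the leading term $S_{z_{\xi}t_{\xi}}(\ti{u}_{\lambda}^{[\bm{\nu}_{\bc}]})$ and conclude that it lies in $\bigotimes_{i\in I}\CB_{z_{\zeta}t_{\zeta}}^{-}(\vpi_{i})^{\otimes m_{i}}$; in particular each factor $S_{z_{\xi}t_{\xi}}(u_{\vpi_{i}})$ with $i\in I\setminus J$ lies in $\CB_{z_{\zeta}t_{\zeta}}^{-}(\vpi_{i})$, and comparing weights factor by factor gives $\pair{\xi-\zeta}{\vpi_{i}}\ge 0$ for all $i\in I\setminus J$. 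Note that this leading-term extraction is also what absorbs the $|\bc|\delta$ offset that defeats your degree bound. So the tensor-product ``alternative'' is not optional here: of your two routes it is the only one that can close the argument, and it requires the details above to become a proof.
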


\noindent
{\it Proof of Claim~\ref{c:zt1}.}
By \cite[Proposition~6.2.2]{INS}, 
it suffices to show that $[\xi-\zeta] \in \QJp{I \setminus J}$, 
where $[\,\cdot\,]:Q^{\vee} = Q^{\vee}_{I \setminus J} \oplus Q_{J}^{\vee} 
\twoheadrightarrow Q^{\vee}_{I \setminus J}$ is the projection 
(see \eqref{eq:prj}). 
By \eqref{eq:scu} and \eqref{eq:scu2}, we have 
%
%
\begin{equation} \label{eq:zt1-1a}
\Phi_{\lambda}|_{q=0}(S_{z_{\xi}t_{\xi}}(u^{\bc})) = 
S_{z_{\xi}t_{\xi}}(s_{\bc}(z^{-1})\ti{u}_{\lambda}) = 
\underbrace{S_{z_{\xi}t_{\xi}} (\ti{u}_{\lambda}^{[\bm{\nu}_{\bc}]})}_{\in \ti{\CB}(\lambda)} 
+ 
\sum_{\bm{\nu} \in \BZ_{\ge 0}^{\lambda},\,\bm{\nu} \ne \bm{\nu}_{\bc}} 
c_{\bm{\nu}} 
\underbrace{S_{z_{\xi}t_{\xi}} (\ti{u}_{\lambda}^{[\bm{\nu}]})}_{\in \ti{\CB}(\lambda)}; 
\end{equation}
notice that for $\bm{\nu},\,\bm{\nu}' \in \BZ_{\ge 0}^{\lambda}$, 
$S_{z_{\xi}t_{\xi}} (\ti{u}_{\lambda}^{[\bm{\nu}]}) = 
S_{z_{\xi}t_{\xi}} (\ti{u}_{\lambda}^{[\bm{\nu}']})$ if and only if 
$\bm{\nu}=\bm{\nu}'$, and hence that the elements 
$S_{z_{\xi}t_{\xi}} (\ti{u}_{\lambda}^{[\bm{\nu}]}) \in \ti{\CB}(\lambda)$, 
$\bm{\nu} \in \BZ_{\ge 0}^{\lambda}$, are linearly independent over $\BQ$. 
Also, from \cite[Lemma~1.6\,(1)]{AK}, we deduce that 
for every $\bm{\nu}=(\nu^{(i)}_{1},\,\dots,\,\nu^{(i)}_{m_{i}})_{i \in I} 
\in \BZ_{\ge 0}^{\lambda}$, 
\begin{align}
S_{z_{\xi}t_{\xi}}(\ti{u}_{\lambda}^{[\bm{\nu}]}) 
& = 
S_{z_{\xi}t_{\xi}} \Bigl(
\ts{
\bigotimes_{i \in I} 
(u_{\vpi_{i}}^{[-\nu^{(i)}_{1}]} \otimes \cdots \otimes 
 u_{\vpi_{i}}^{[-\nu^{(i)}_{m_i}]}) } \Bigr) \nonumber \\
& = 
\ts{
\bigotimes_{i \in I} 
\bigl(
 S_{z_{\xi}t_{\xi}}(u_{\vpi_{i}}^{[-\nu^{(i)}_{1}]}) \otimes \cdots \otimes 
 S_{z_{\xi}t_{\xi}}(u_{\vpi_{i}}^{[-\nu^{(i)}_{m_i}]})
\bigr) } \in \ti{\CB}(\lambda). \label{eq:Su}
\end{align}

Since the global basis element 
$G(S_{z_{\xi}t_{\xi}}(u^{\bc})) \in \CL(\lambda) \subset V(\lambda)$ 
is contained in $V_{z_{\zeta}t_{\zeta}}^{-}(\lambda) = 
U_{q}^{-} S^{\norm}_{z_{\zeta}t_{\zeta}}v_{\lambda}$ by the assumption, 
and since $\Phi_{\lambda}(\CL(\lambda)) \subset \ti{\CL}(\lambda)$ 
as mentioned above, it follows from Remark~\ref{rem:vlambda} that 
\begin{equation*}
\Phi_{\lambda}\bigl(G(S_{z_{\xi}t_{\xi}}(u^{\bc}))\bigr) \in 
\left(\bigotimes_{i \in I} 
V_{z_{\zeta}t_{\zeta}}^{-}(\vpi_{i})^{\otimes m_{i}}
\right) 
\cap \ti{\CL}(\lambda). 
\end{equation*}
Because $\Phi_{\lambda}|_{q=0}:
\CL(\lambda)/q_{s}\CL(\lambda) \hookrightarrow 
\ti{\CL}(\lambda)/q_{s}\ti{\CL}(\lambda)$ is 
induced by $\Phi_{\lambda}:V(\lambda) \hookrightarrow \ti{V}(\lambda)$, 
and because $V_{z_{\zeta}t_{\zeta}}^{-}(\vpi_{i})$ has the global basis 
$\bigl\{ G(b) \mid b \in \CB_{z_{\zeta}t_{\zeta}}^{-}(\vpi_{i})\bigr\}$ 
for each $i \in I$, we see that 
%
%
\begin{equation} \label{eq:zt1-1b}
\Phi_{\lambda}|_{q=0}(S_{z_{\xi}t_{\xi}}(u^{\bc})) \in 
\underbrace{
\Span_{\BQ} 
 \left(\bigotimes_{i \in I} \CB_{z_{\zeta}t_{\zeta}}^{-}(\vpi_{i})^{\otimes m_{i}}\right)}_{=:U}
\subset \ti{\CL}(\lambda)/q_{s}\ti{\CL}(\lambda)= 
\Span_{\BQ} \ti{\CB}(\lambda). 
\end{equation}
Here we recall that $\ti{\CB}(\lambda)=\bigotimes_{i \in I} \CB(\vpi_{i})^{\otimes m_{i}}$ 
is a $\BQ$-basis of the $\BQ$-vector space $\ti{\CL}(\lambda)/q_{s}\ti{\CL}(\lambda)$, 
and that $\bigotimes_{i \in I} \CB_{z_{\zeta}t_{\zeta}}^{-}(\vpi_{i})^{\otimes m_{i}}$ 
is a subset of $\ti{\CB}(\lambda)$ generating the vector space $U$ over $\BQ$ 
in \eqref{eq:zt1-1b}.
Therefore, we deduce from \eqref{eq:zt1-1a} and \eqref{eq:zt1-1b} that 
%
%
\begin{equation} \label{eq:emb3}
S_{z_{\xi}t_{\xi}} (\ti{u}_{\lambda}^{[\bm{\nu}_{\bc}]}) \in 
 \bigotimes_{i \in I} \CB_{z_{\zeta}t_{\zeta}}^{-}(\vpi_{i})^{\otimes m_{i}}.
\end{equation}
If $\bc = (\rho^{(i)})_{i \in I}$, with 
$\rho^{(i)}=(\rho^{(i)}_1 \ge \cdots \ge \rho^{(i)}_{m_{i}-1} \ge 0)$ 
for $i \in I$, then it follows immediately from \eqref{eq:Su} that 
\begin{equation*}
S_{z_{\xi}t_{\xi}}(\ti{u}_{\lambda}^{[\bm{\nu}_{\bc}]}) = 
\ts{
\bigotimes_{i \in I} 
\bigl(
 S_{z_{\xi}t_{\xi}}(u_{\vpi_{i}}^{[-\rho^{(i)}_{1}]}) \otimes \cdots \otimes 
 S_{z_{\xi}t_{\xi}}(u_{\vpi_{i}}^{[-\rho^{(i)}_{m_i-1}]}) \otimes 
 \underbrace{S_{z_{\xi}t_{\xi}}(u_{\vpi_{i}})}_{\text{(*)}}
\bigr) }. 
\end{equation*}
By combining this equality with \eqref{eq:emb3}, we find that 
for every $i \in I \setminus J$, 
the tensor factor $S_{z_{\xi}t_{\xi}}(u_{\vpi_{i}})$ in the position (*) is 
contained in $\CB_{z_{\zeta}t_{\zeta}}^{-}(\vpi_{i})$. 
Let $i \in I \setminus J$. 
Since the weights of elements in $\CB_{z_{\zeta}t_{\zeta}}^{-}(\vpi_{i})$ 
are contained in $z_{\zeta}t_{\zeta}\vpi_{i}-Q_{\af}^{+}$, 
we conclude that $z_{\xi}t_{\xi}\vpi_{i} = \wt(S_{z_{\xi}t_{\xi}}(u_{\vpi_{i}})) \in 
z_{\zeta}t_{\zeta}\vpi_{i} - Q^{+}_{\af}$; 
since $z_{\xi},\,z_{\zeta} \in W_{J}$, and $i \in I \setminus J$, 
we have $z_{\xi}\vpi_{i}=z_{\zeta}\vpi_{i}=\vpi_{i}$. 
From these, we obtain
\begin{equation*}
\vpi_{i}-\pair{\xi}{\vpi_{i}}\delta = z_{\xi}t_{\xi}\vpi_{i} \in 
z_{\zeta}t_{\zeta}\vpi_{i}-Q_{\af}^{+} = 
\vpi_{i}-\pair{\zeta}{\vpi_{i}}\delta-Q^{+}_{\af},
\end{equation*}
and hence $\pair{\xi-\zeta}{\vpi_{i}}\delta \in Q^{+}_{\af}$. 
Hence it follows that 
$\pair{\xi-\zeta}{\vpi_{i}} \ge 0$ for every $i \in I \setminus J$, 
which implies that $[\xi-\zeta] \in \QJp{I \setminus J}$. 
This proves the claim. \bqed
%
%
\begin{claim} \label{c:zt2}
Let $\bc \in \Par(\lambda)$, and $y \in (W^{J})_{\af}$, $\zeta \in \jad$.  
If $S_{y}(u^{\bc}) \in \CB_{z_{\zeta}t_{\zeta}}^{-}(\lambda)$, 
then $y \sige z_{\zeta}t_{\zeta}$.
\end{claim}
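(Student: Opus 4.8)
The plan is to prove Claim~\ref{c:zt2} by induction on $\ell(w)$, where we write $y = w z_{\xi} t_{\xi}$ with $w \in W^{J}$ and $\xi \in \jad$ using Lemma~\ref{lem:J-adj}\,(3). The base case $\ell(w)=0$ is exactly Claim~\ref{c:zt1}, since then $y = z_{\xi}t_{\xi}$. For the inductive step, assume $\ell(w) > 0$ and choose a left descent $j \in I$ of $w$, so that $w' := r_{j}w \in W^{J}$ with $\ell(w')=\ell(w)-1$; that $w'$ is again in $W^{J}$ follows from $w^{-1}\alpha_{j} \in \Delta^{-}$ together with $w \in W^{J}$ (if $w\alpha=\alpha_j$ for some $\alpha\in\Delta_J^+$ then $\alpha=w^{-1}\alpha_j\in\Delta^-$, impossible). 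Set $y' := w' z_{\xi}t_{\xi} = r_{j}y$; by Lemma~\ref{lem:J-adj}\,(3) we have $y' \in (W^{J})_{\af}$, and $\ell(w') < \ell(w)$ makes $y'$ available to the induction hypothesis.

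First I would pin down the relevant sign conditions. Using $z_{\xi}\lambda = \lambda$ (as $z_{\xi} \in W_{J}=W_{J_{\lambda}}$) and $\pair{\alpha_{j}^{\vee}}{\delta}=0$, one gets $\pair{\alpha_{j}^{\vee}}{y\lambda}=\pair{w^{-1}\alpha_{j}^{\vee}}{\lambda} \le 0$, because $w^{-1}\alpha_{j} \in \Delta^{-}$ and $\lambda$ is dominant. Since $r_{j}y = y' \in (W^{J})_{\af}$, Lemma~\ref{lem:si2} forces $\pair{\alpha_{j}^{\vee}}{y\lambda} \ne 0$, hence $\pair{\alpha_{j}^{\vee}}{y\lambda}<0$; by \eqref{eq:simple2} this gives $y' = r_{j}y \sil y$, and also $\pair{\alpha_{j}^{\vee}}{y'\lambda}=-\pair{\alpha_{j}^{\vee}}{y\lambda}>0$.

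The heart of the argument is the reduction from $y$ to $y'$. Put $b' := S_{y'}(u^{\bc})$; since $u^{\bc}$ is extremal, so is $b'$, of weight $y'\lambda - |\bc|\delta$, and $\pair{\alpha_{j}^{\vee}}{\wt b'}=\pair{\alpha_{j}^{\vee}}{y'\lambda}>0$ forces $e_{j}b'=\bzero$ and $S_{y}(u^{\bc})=S_{r_{j}}b' = f_{j}^{\max}b'$ (using $S_{r_j y'}=S_{r_j}S_{y'}$ on the regular crystal $\CB(\lambda)$, as already exploited in Lemma~\ref{lem:zt01}). Consequently $b'$ is the $j$-highest element of its $j$-string, so $e_{j}^{\max}S_{y}(u^{\bc})=b'$, where $e_{j}^{\max}b := e_{j}^{\ve_{j}(b)}b$. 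Now by hypothesis $S_{y}(u^{\bc}) \in \CB_{z_{\zeta}t_{\zeta}}^{-}(\lambda)$, while $\pair{\alpha_{j}^{\vee}}{z_{\zeta}t_{\zeta}\lambda}=\pair{\alpha_{j}^{\vee}}{\lambda}\ge 0$ since $\lambda \in P^{+}$ and $j\in I$; hence Proposition~\ref{prop:key} shows that $\CB_{z_{\zeta}t_{\zeta}}^{-}(\lambda)\cup\{\bzero\}$ is stable under $e_{j}$, so $b'=e_{j}^{\max}S_{y}(u^{\bc}) \in \CB_{z_{\zeta}t_{\zeta}}^{-}(\lambda)$. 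Thus $S_{y'}(u^{\bc}) \in \CB_{z_{\zeta}t_{\zeta}}^{-}(\lambda)$, and the induction hypothesis (Claim~\ref{c:zt2} applied to $y'$, with the same $\bc$ and $\zeta$) yields $y' \sige z_{\zeta}t_{\zeta}$. Combining this with $y' \sil y$ gives $y \sig y' \sige z_{\zeta}t_{\zeta}$, hence $y \sige z_{\zeta}t_{\zeta}$, completing the induction.

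I expect the main obstacle to be the bookkeeping that guarantees the reduced element $b'$ lands in the \emph{same} Demazure subcrystal $\CB_{z_{\zeta}t_{\zeta}}^{-}(\lambda)$ rather than a shifted one. This hinges on two points that must be lined up carefully: that $\pair{\alpha_{j}^{\vee}}{z_{\zeta}t_{\zeta}\lambda}\ge 0$ holds for every finite $j$, so that Proposition~\ref{prop:key} supplies the $e_{j}$-stability needed to move from $S_{y}(u^{\bc})$ up to $b'$; and the delicate fact that $r_{j}w \in W^{J}$ forces the \emph{strict} inequality $\pair{\alpha_{j}^{\vee}}{y\lambda}<0$ via Lemma~\ref{lem:si2}, which is exactly what makes $b'$ the top of its $j$-string (so that $e_j^{\max}$ recovers it) and simultaneously places $y$ strictly above $y'$ in the semi-infinite Bruhat order. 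Once these sign relations are secured, the rest is a clean descent on $\ell(w)$ down to the translation case handled by Claim~\ref{c:zt1}.
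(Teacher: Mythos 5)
Your proof is correct and follows essentially the same route as the paper's: induction on $\ell(w)$ with Claim~1 as the base case, descent via a left descent $j \in I$ of $w$ using Lemma~\ref{lem:si2}, and the $e_{j}$-stability of $\CB_{z_{\zeta}t_{\zeta}}^{-}(\lambda)$ from Proposition~\ref{prop:key} (valid because $\pair{\alpha_{j}^{\vee}}{z_{\zeta}t_{\zeta}\lambda}=\pair{\alpha_{j}^{\vee}}{\lambda}\ge 0$ for $j \in I$) to transport the hypothesis from $S_{y}(u^{\bc})$ to $S_{r_{j}y}(u^{\bc})$. The only cosmetic differences are that the paper writes the reduction directly as $S_{r_{j}y}(u^{\bc})=e_{j}^{-n}S_{y}(u^{\bc})$ rather than via $e_{j}^{\max}$, and obtains the strict inequality $\pair{\alpha_{j}^{\vee}}{y\lambda}<0$ by citing $-w^{-1}\alpha_{j}\in\Delta^{+}\setminus\Delta_{J}^{+}$ instead of your direct verification that $r_{j}w \in W^{J}$.
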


\noindent
{\it Proof of Claim~\ref{c:zt2}.} 
Write $y \in (W^{J})_{\af}$ in the form $y = w z_{\xi}t_{\xi}$, 
with $w \in W^{J}$ and $\xi \in \jad$ (see \eqref{eq:W^J_af}). 
We prove the claim by induction on $\ell(w)$. 
If $\ell(w)=0$, then $w=e$, and hence the claim follows immediately from Claim~\ref{c:zt1}. 
Assume that $\ell(w) > 0$, and take $j \in I$ such that $\ell(r_{j}w)=\ell(w)-1$; 
in this case, we have $-w^{-1}\alpha_{j} \in \Delta^{+} \setminus \Delta_{J}^{+}$ 
(see \cite[Proposition~5.10]{LNSSS} for example), which implies that 
$n:=\pair{\alpha_{j}^{\vee}}{y\lambda}=
\pair{\alpha_{j}^{\vee}}{w\lambda} < 0$. 
Therefore, we obtain $r_{j}y \in (W^{J})_{\af}$ and 
$y \sig r_{j}y$ by Lemma~\ref{lem:si2}. 
Also, since $\wt (u^{\bc}) = \lambda - |\bc|\delta$, we have
\begin{equation*}
S_{r_{j}y}(u^{\bc})= S_{r_{j}}S_{y}(u^{\bc})=
e_{j}^{-n}S_{y}(u^{\bc}).
\end{equation*}
Here, since $j \in I$, we have 
$\pair{\alpha_{j}^{\vee}}{z_{\zeta}t_{\zeta}\lambda}=
 \pair{\alpha_{j}^{\vee}}{\lambda} \ge 0$. 
Hence it follows from Proposition~\ref{prop:key} that 
the set $\CB_{z_{\zeta}t_{\zeta}}^{-}(\lambda) \cup \bigl\{\bzero\bigr\}$ 
is stable under the action of the Kashiwara operator $e_{j}$. 
Because $S_{y}(u^{\bc}) \in \CB_{z_{\zeta}t_{\zeta}}^{-}(\lambda)$ 
by the assumption, we deduce that 
$S_{r_{j}y}(u^{\bc}) = e_{j}^{-n}S_{y}(u^{\bc}) \in 
 \CB_{z_{\zeta}t_{\zeta}}^{-}(\lambda)$. 
Therefore, by our induction hypothesis, 
we obtain $r_{j}y \sige z_{\zeta}t_{\zeta}$. 
Since $y \sig r_{j}y$ as seen above, we conclude that 
$y \sig r_{j}y \sige z_{\zeta}t_{\zeta}$, as desired. \bqed

\vsp

Now, let $x,\,y \in (W^{J})_{\af}$, and 
assume that $S_{y}(u^{\bc}) \in \CB_{x}^{-}(\lambda)$. 
By \cite[Lemma~1.4]{AK}, there exist $j_{1},\,j_{2},\,\dots,\,j_{p} \in I_{\af}$ 
such that 
\begin{enu}

\item 
$\pair{\alpha_{j_{m}}^{\vee}}{r_{j_{m-1}} \cdots r_{j_{2}}r_{j_{1}}x\lambda} > 0$ 
for all $1 \le m \le p$; 
\item 
$r_{j_{p}}r_{j_{p-1}} \cdots r_{j_{2}}r_{j_{1}}x\lambda \in \lambda+\BZ\delta$. 

\end{enu}
By Lemma~\ref{lem:si2}, together with condition (1), we see that 
$r_{j_{m}} \cdots r_{j_{2}}r_{j_{1}}x \in (W^{J})_{\af}$ for all $0 \le m \le p$.
From this, we deduce by condition (2) that 
$r_{j_{p}}r_{j_{p-1}} \cdots r_{j_{2}}r_{j_{1}}x = z_{\zeta}t_{\zeta}$ 
for some $\zeta \in \jad$. We show by induction on the length $p$ 
of the sequence above that $y \sige x$. 
If $p=0$, then $x=z_{\zeta}t_{\zeta}$, and 
hence the assertion follows immediately 
from Claim~\ref{c:zt1}. Assume that $p > 0$. 

\paragraph{Case 1.}
Assume that $\pair{\alpha_{j_{1}}^{\vee}}{y\lambda} > 0$; 
note that $r_{j_1}y \in (W^{J})_{\af}$ by Lemma~\ref{lem:si2}. 
Since $u^{\bc} \in \CB(\lambda)$ is 
an extremal element of weight $\lambda-|\bc|\delta$, we have 
\begin{equation*}
S_{r_{j_1}y}(u^{\bc})= S_{r_{j_1}}S_{y}(u^{\bc}) = 
f_{j_1}^{\max}S_{y}(u^{\bc}). 
\end{equation*}
Since $S_{y}(u^{\bc}) \in \CB_{x}^{-}(\lambda)$ by the assumption, 
it follows from Corollary~\ref{cor:key} that 
$S_{r_{j_1}y}(u^{\bc}) = f_{j_1}^{\max}S_{y}(u^{\bc}) \in 
 \CB_{r_{j_{1}}x}^{-}(\lambda)$. Therefore, by our induction hypothesis 
 (applied to $r_{j_1}x \in (W^{J})_{\af}$), 
we obtain $r_{j_1}y \sige r_{j_{1}}x$. 
Because $\pair{\alpha_{j_1}^{\vee}}{r_{j_1}x\lambda} < 0$ by condition (1), and 
because $\pair{\alpha_{j_1}^{\vee}}{r_{j_1}y\lambda} < 0$ by our assumption above, 
we conclude from Lemma~\ref{lem:si-L41}\,(3) that $y \sige x$. 

\paragraph{Case 2.}
Assume that $\pair{\alpha_{j_{1}}^{\vee}}{y\lambda} \le 0$. 
Since $u^{\bc}$ is an extremal element of weight $\lambda-|\bc|\delta$, 
it follows that $f_{j_{1}}S_{y}(u^{\bc}) = \bzero$, 
and hence $f_{j_{1}}^{\max}S_{y}(u^{\bc}) = 
f_{j_{1}}^{0}S_{y}(u^{\bc}) = S_{y}(u^{\bc})$. 
Since $S_{y}(u^{\bc}) \in \CB^{-}_{x}(\lambda)$ by the assumption, 
we deduce from Corollary~\ref{cor:key} that $S_{y}(u^{\bc}) = 
f_{j_{1}}^{\max}S_{y}(u^{\bc}) \in \CB_{r_{j_{1}}x}^{-}(\lambda)$. 
Therefore, by our induction hypothesis 
(applied to $r_{j_{1}}x \in (W^{J})_{\af}$), we obtain $y \sige r_{j_{1}}x$. 
Since $\pair{\alpha_{j_1}^{\vee}}{x\lambda} > 0$ by condition (1), 
we have $r_{j_1}x \sig x$ by Lemma~\ref{lem:si2}. 
Hence we conclude that $y \sige r_{j_{1}}x \sig x$, as desired. 

This completes the proof of the proposition. 
\end{proof}
%
%
\begin{cor} \label{cor:zt}
Let $x,\,y \in (W^{J})_{\af}$. 
Then, $\CB_{y}^{-}(\lambda) \subset \CB_{x}^{-}(\lambda)$ 
if and only if $y \sige x$. 
\end{cor}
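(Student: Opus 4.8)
The plan is to deduce this corollary with essentially no new work, by combining the "only if" direction of Proposition~\ref{prop:zt} with the two lemmas that went into its proof, namely Lemmas~\ref{lem:zt0} and~\ref{lem:zt01}. The corollary merely transfers the containment statement from the level of the distinguished elements $S_{y}(u^{\bc})$ to the level of the full subcrystals $\CB_{\bullet}^{-}(\lambda)$.

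The ``if'' part is immediate: if $y \sige x$, then Lemma~\ref{lem:zt0} gives exactly the desired inclusion $\CB_{y}^{-}(\lambda) \subset \CB_{x}^{-}(\lambda)$, so nothing further is required.

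For the ``only if'' part, I would fix an arbitrary $\bc \in \Par(\lambda)$; such an element exists since the empty multi-partition (with $\rho^{(i)}=\emptyset$ for every $i \in I$) satisfies the length conditions in \eqref{eq:par}, so $\Par(\lambda) \neq \emptyset$. By Lemma~\ref{lem:zt01}, the element $S_{y}(u^{\bc})$ lies in $\CB_{y}^{-}(\lambda)$. Under the hypothesis $\CB_{y}^{-}(\lambda) \subset \CB_{x}^{-}(\lambda)$, this forces $S_{y}(u^{\bc}) \in \CB_{x}^{-}(\lambda)$. Now I invoke the ``only if'' direction of Proposition~\ref{prop:zt}, which asserts precisely that $S_{y}(u^{\bc}) \in \CB_{x}^{-}(\lambda)$ implies $y \sige x$. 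This yields the claim.

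There is no genuine obstacle in this corollary: the real content has already been carried out in the proof of Proposition~\ref{prop:zt} (in particular Claims~\ref{c:zt1} and~\ref{c:zt2}), and the only thing to check in passing is the nonemptiness of $\Par(\lambda)$ so that a witness $S_{y}(u^{\bc})$ is available. Consequently I expect the written proof to be a single short paragraph, quoting Lemmas~\ref{lem:zt0} and~\ref{lem:zt01} for the two easy implications and Proposition~\ref{prop:zt} for the reverse direction.
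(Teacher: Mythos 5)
Your proposal is correct and follows essentially the same route as the paper: the ``if'' direction via Lemma~\ref{lem:zt0}, and the ``only if'' direction by applying Lemma~\ref{lem:zt01} and Proposition~\ref{prop:zt} to a witness $S_{y}(u^{\bc})$. The only cosmetic difference is that the paper fixes $\bc$ to be the empty multi-partition (so that $u^{\bc}=u_{\lambda}$), whereas you allow an arbitrary $\bc \in \Par(\lambda)$; both choices work.
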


\begin{proof}
The ``if'' part is already proved in Lemma~\ref{lem:zt0}. 
Let us prove the ``only if'' part. 
Since $S_{y}(u_{\lambda}) \in \CB_{y}^{-}(\lambda)$ 
by Lemma~\ref{lem:zt01}, we have 
$S_{y}(u_{\lambda}) \in \CB_{x}^{-}(\lambda)$. 
Therefore, by applying Proposition~\ref{prop:zt} to 
$\bc=(\rho^{(i)})_{i \in I}$ 
with $\rho^{(i)}=\emptyset$ for all $i \in I$, 
we obtain $y \sige x$ (note that in this case, 
$b(\bc)=u_{\infty}$ and $u^{\bc}=u_{\lambda}$). 
This proves the corollary. 
\end{proof}
%
%
\subsection{Fundamental properties of $\BB^{\si}_{\sige x}(\lambda)$.}
\label{subsec:properties2}
%
%
\begin{lem}[cf. Remark~\ref{rem:demazure}] \label{lem:stable-p}
Let $x \in (W^{J})_{\af}$. 
The set $\BB^{\si}_{\sige x}(\lambda) \cup \bigl\{\bzero\bigr\}$ 
(resp., $\BB^{\si}_{x \sige}(\lambda) \cup \bigl\{\bzero\bigr\}$) 
is stable under the action of the root operator 
$f_{j}$ (resp., $e_{j}$) for all $j \in I_{\af}$.
\end{lem}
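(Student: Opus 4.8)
The plan is to prove the $f_{j}$-statement for $\BB^{\si}_{\sige x}(\lambda)$ directly, and then to deduce the $e_{j}$-statement for $\BB^{\si}_{x\sige}(\lambda)$ from it by means of the duality $\vee$ of \S\ref{subsec:dual-sLS}. Indeed, set $\mu:=-w_{0}\lambda \in P^{+}$. As recorded just before \S\ref{sec:proof} (using \eqref{eq:etavee}, \eqref{eq:def-demp}, and Lemma~\ref{lem:dual}), we have $\BB^{\si}_{x\sige}(\lambda)=\bigl(\BB^{\si}_{\sige x^{\vee}}(\mu)\bigr)^{\vee}$, while \eqref{eq:veep} gives $(e_{j}\eta)^{\vee}=f_{j}\eta^{\vee}$ together with $\bzero^{\vee}=\bzero$. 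Since $\vee$ is an involution carrying $\sLS$ bijectively onto $\BB^{\si}(\mu)$, the stability of $\BB^{\si}_{x\sige}(\lambda)\cup\bigl\{\bzero\bigr\}$ under $e_{j}$ is equivalent to the stability of $\BB^{\si}_{\sige x^{\vee}}(\mu)\cup\bigl\{\bzero\bigr\}$ under $f_{j}$. Hence it suffices to treat the pair $\BB^{\si}_{\sige x}(\lambda)$, $f_{j}$, for arbitrary $\lambda \in P^{+}$ and $x \in (W^{J})_{\af}$.

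The heart of the matter is the following monotonicity statement: for every $\eta \in \sLS$ and $j \in I_{\af}$ with $f_{j}\eta \ne \bzero$, one has $\kappa(f_{j}\eta) \sige \kappa(\eta)$. To prove it I would read off $\kappa(f_{j}\eta)$ from the explicit description of the root operator: with $t_{0},\,t_{1}$ as in \eqref{eq:t-f}, the trailing directions $x_{q+1},\,\dots,\,x_{s}$ are left untouched, so that $\kappa(f_{j}\eta)=\kappa(\eta)=x_{s}$ \emph{unless} $t_{1}=1$ (equivalently $q+1=s$ and $a_{q+1}=t_{1}$, so that $x_{s}$ is dropped), in which case $\kappa(f_{j}\eta)=r_{j}x_{s}=r_{j}\kappa(\eta)$. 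In this exceptional case $H_{j}^{\eta}$ is strictly increasing on $[t_{0},\,t_{1}]=[t_{0},\,1]$ (see the comment following \eqref{eq:t-f}); since the last segment $[a_{s-1},\,1]$ is contained in this interval, the slope $\pair{\alpha_{j}^{\vee}}{\kappa(\eta)\lambda}$ of $H_{j}^{\eta}$ there is positive. Thus $\pair{\alpha_{j}^{\vee}}{\kappa(\eta)\lambda}>0$, and Lemma~\ref{lem:si2} (that is, \eqref{eq:simple2}) yields $\kappa(\eta) \sil r_{j}\kappa(\eta)=\kappa(f_{j}\eta)$. Hence $\kappa(f_{j}\eta) \sige \kappa(\eta)$ in either case.

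With this in hand the lemma is immediate: if $\eta \in \BB^{\si}_{\sige x}(\lambda)$, i.e.\ $\kappa(\eta) \sige x$, then either $f_{j}\eta=\bzero$, or $f_{j}\eta \in \sLS$ with $\kappa(f_{j}\eta) \sige \kappa(\eta) \sige x$ by transitivity of $\sile$, so that $f_{j}\eta \in \BB^{\si}_{\sige x}(\lambda)$; in either case $f_{j}\eta \in \BB^{\si}_{\sige x}(\lambda)\cup\bigl\{\bzero\bigr\}$. Applying this with $\mu$ in place of $\lambda$ and transferring through $\vee$ as in the first paragraph yields the $e_{j}$-statement for $\BB^{\si}_{x\sige}(\lambda)$. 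I expect the only delicate point to be the case analysis in the middle step: one must determine precisely when a single application of $f_{j}$ alters the final direction $x_{s}$, handling the boundary and dropping conventions in the definition of $f_{j}$ with care, and then correctly identify that such an alteration forces $t_{1}=1$ and hence the positivity of the slope $\pair{\alpha_{j}^{\vee}}{\kappa(\eta)\lambda}$. Once $\kappa(f_{j}\eta)\in\bigl\{\kappa(\eta),\,r_{j}\kappa(\eta)\bigr\}$ is pinned down, Lemma~\ref{lem:si2} does the rest.
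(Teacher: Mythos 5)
Your proposal is correct, and for the core statement---stability of $\BB^{\si}_{\sige x}(\lambda) \cup \bigl\{\bzero\bigr\}$ under $f_{j}$---it is essentially the paper's own proof: both arguments rest on the dichotomy $\kappa(f_{j}\eta) \in \bigl\{\kappa(\eta),\,r_{j}\kappa(\eta)\bigr\}$, the observation (from the comment following \eqref{eq:t-f}) that the case $\kappa(f_{j}\eta)=r_{j}\kappa(\eta)$ forces $t_{1}=1$ and hence $\pair{\alpha_{j}^{\vee}}{\kappa(\eta)\lambda}>0$, then Lemma~\ref{lem:si2} to get $\kappa(\eta) \sil r_{j}\kappa(\eta)$, and transitivity of $\sile$; your case analysis of the dropping conventions is just a more explicit rendering of what the paper compresses into one sentence. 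The only genuine divergence is in the second statement: the paper handles $\BB^{\si}_{x \sige}(\lambda)$ and $e_{j}$ by declaring ``the proof is similar'' (i.e., the mirror-image argument using $\iota$ and the comment following \eqref{eq:t-e}), whereas you deduce it formally from the first statement via the duality $\vee$, using the identity $\BB^{\si}_{x \sige}(\lambda)=\bigl(\BB^{\si}_{\sige x^{\vee}}(\mu)\bigr)^{\vee}$ with $\mu=-w_{0}\lambda$ (noted at the end of \S\ref{subsec:demazurep}) together with \eqref{eq:veep}. Both routes are valid: yours reuses machinery the paper has already set up---it is exactly the mechanism by which the paper reduces the second equality of Theorem~\ref{thm:main} to the first---and avoids repeating the root-operator analysis, at the cost of invoking the $\vee$-identities; the paper's route is self-contained and symmetric but leaves the mirror argument to the reader.
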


\begin{proof}
We give a proof only for $\BB^{\si}_{\sige x}(\lambda)$; 
the proof for $\BB^{\si}_{x \sige}(\lambda)$ is similar. 
Let $\eta \in \BB^{\si}_{\sige x}(\lambda)$, i.e., $\kappa(\eta) \sige x$, 
and let $j \in I_{\af}$ be such that $f_{j}\eta \ne \bzero$. 
If $\kappa(f_{j}\eta) = \kappa(\eta)$, then there is nothing to prove. 
Now, assume that $\kappa(f_{j}\eta) = r_{j}\kappa(\eta)$. 
Then we deduce from the comment following \eqref{eq:t-f} that 
$\pair{\alpha_{j}^{\vee}}{\kappa(\eta)\lambda} > 0$, 
and hence $r_{j}\kappa(\eta) \sig \kappa(\eta)$ by Lemma~\ref{lem:si2}.
Since $\kappa(\eta) \sige x$ by our assumption, it follows that 
$\kappa(f_{j}\eta) = r_{j}\kappa(\eta) \sig \kappa(\eta) \sige x$, 
which implies that $f_{j}\eta \in \BB^{\si}_{\sige x}(\lambda)$. 
This proves the lemma. 
\end{proof}
%
%
\begin{prop}[cf. Proposition~\ref{prop:key}] \label{prop:key-p}
Let $x \in (W^{J})_{\af}$, and $j \in I_{\af}$. 
\begin{enu}
\item If $\pair{\alpha_{j}^{\vee}}{x\lambda} > 0$ 
(note that $r_{j}x \in (W^{J})_{\af}$ by Lemma~\ref{lem:si2}), then 
%
%
\begin{equation} \label{keyp2}
\BB^{\si}_{\sige x}(\lambda)=
 \bigl\{e_{j}^{k}\eta \mid 
   \eta \in \BB^{\si}_{\sige r_{j}x}(\lambda),\,k \in \BZ_{\ge 0}\bigr\}
 \setminus \{\bzero\} \quad (\supset \BB^{\si}_{\sige r_{j}x}(\lambda)).
\end{equation}

\item The set $\BB^{\si}_{\sige x}(\lambda) \cup \bigl\{\bzero\bigr\}$ is 
stable under the action of the root operator $e_{j}$ for $j \in I_{\af}$ 
such that $\pair{\alpha_{j}^{\vee}}{x\lambda} \ge 0$. 
\end{enu}
\end{prop}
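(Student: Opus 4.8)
The plan is to deduce both parts from a single observation about how the root operator $e_j$ affects the final direction $\kappa$, combined with the order-transfer rules of Lemmas~\ref{lem:si2} and~\ref{lem:si-L41}; this parallels, on the path side, the representation-theoretic argument of Proposition~\ref{prop:key}. I would organize the proof so that part (2) is established first, directly and uniformly for all $j$ with $\pair{\alpha_j^{\vee}}{x\lambda}\ge 0$, and then part (1) is obtained from it together with a symmetric use of Lemma~\ref{lem:vevp}.

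The preliminary step is to record the counterpart of Lemma~\ref{lem:vevp} for $e_j$: for $\eta\in\sLS$ and $j\in I_{\af}$ with $e_j\eta\ne\bzero$, one has $\kappa(e_j\eta)\in\{\kappa(\eta),\,r_j\kappa(\eta)\}$, and if $\kappa(e_j\eta)=r_j\kappa(\eta)\ne\kappa(\eta)$ then $\pair{\alpha_j^{\vee}}{\kappa(\eta)\lambda}<0$. This is read off from the explicit description \eqref{eq:t-e} of $e_j$: writing $\eta=(x_1,\dots,x_s\,;\,a_0,\dots,a_s)$, the final direction is altered only when the index $q$ with $t_1=a_q$ equals $s$, i.e.\ only when the minimum $m_j^{\eta}$ of $H_j^{\eta}$ is attained at $t=1$. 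Since $H_j^{\eta}$ has slope $\pair{\alpha_j^{\vee}}{\kappa(\eta)\lambda}$ on the last interval $[a_{s-1},1]$, the equality $t_1=1$ forces this slope to be strictly negative, for if the slope were nonnegative the first occurrence of the minimum would already be reached before $t=1$. In that situation $\kappa(e_j\eta)=r_j\kappa(\eta)$, and $r_j\kappa(\eta)\in(W^J)_{\af}$ by Lemma~\ref{lem:si2}.

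Granting this, part (2) follows at once. Take $\eta$ with $x\sile\kappa(\eta)$ and $e_j\eta\ne\bzero$. If $\kappa(e_j\eta)=\kappa(\eta)$ there is nothing to prove; otherwise $\kappa(e_j\eta)=r_j\kappa(\eta)$ with $\pair{\alpha_j^{\vee}}{\kappa(\eta)\lambda}<0$, and since $\pair{\alpha_j^{\vee}}{x\lambda}\ge 0$, Lemma~\ref{lem:si-L41}(2) gives $x\sile r_j\kappa(\eta)=\kappa(e_j\eta)$, so $e_j\eta\in\BB^{\si}_{\sige x}(\lambda)$. For part (1), the containment $\BB^{\si}_{\sige r_jx}(\lambda)\subset\BB^{\si}_{\sige x}(\lambda)$ is immediate, since $\pair{\alpha_j^{\vee}}{x\lambda}>0$ gives $x\sil r_jx$ by Lemma~\ref{lem:si2}; hence by part (2) the set $\{e_j^{k}\eta\mid\eta\in\BB^{\si}_{\sige r_jx}(\lambda),\,k\in\BZ_{\ge 0}\}\setminus\{\bzero\}$ lies in $\BB^{\si}_{\sige x}(\lambda)$. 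For the reverse inclusion, given $\zeta\in\BB^{\si}_{\sige x}(\lambda)$ I would set $\eta:=f_j^{\max}\zeta$, so that $\zeta=e_j^{\vp_j(\zeta)}\eta$, and verify $\eta\in\BB^{\si}_{\sige r_jx}(\lambda)$ via Lemma~\ref{lem:vevp}: if $\pair{\alpha_j^{\vee}}{\kappa(\zeta)\lambda}>0$ then $\kappa(\eta)=r_j\kappa(\zeta)$ and Lemma~\ref{lem:si-L41}(3) (both values positive) yields $r_jx\sile r_j\kappa(\zeta)=\kappa(\eta)$; if $\pair{\alpha_j^{\vee}}{\kappa(\zeta)\lambda}\le 0$ then $\kappa(\eta)=\kappa(\zeta)$ and Lemma~\ref{lem:si-L41}(1) yields $r_jx\sile\kappa(\zeta)=\kappa(\eta)$. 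In both cases $\kappa(\eta)\sige r_jx$, as needed.

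I expect the only delicate point to be the preliminary observation on $\kappa(e_j\eta)$: one must argue carefully, using Remark~\ref{rem:local} and the piecewise-linear shape of $H_j^{\eta}$, that the first time $t_1$ at which the minimum is attained can equal $1$ only when the last linear piece is strictly decreasing, and in particular one must treat the flat case $\pair{\alpha_j^{\vee}}{\kappa(\eta)\lambda}=0$ so as to exclude a spurious change of $\kappa$. Everything after that is a routine combination of the three transfer rules of Lemma~\ref{lem:si-L41} with Lemma~\ref{lem:vevp}.
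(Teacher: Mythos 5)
Your proposal is correct and takes essentially the same approach as the paper: the same key facts (the effect of $e_{j}$ and of $f_{j}^{\max}$ on the final direction $\kappa$, i.e.\ Lemma~\ref{lem:vevp} together with the observation, following \eqref{eq:t-e}, that $\kappa(e_{j}\eta)=r_{j}\kappa(\eta)$ forces $\pair{\alpha_{j}^{\vee}}{\kappa(\eta)\lambda}<0$) are combined with the transfer rules of Lemmas~\ref{lem:si2} and \ref{lem:si-L41} in the same case analysis. The only difference is organizational: you prove part (2) uniformly first and deduce the inclusion $\supset$ of \eqref{keyp2} from it, whereas the paper proves part (1) directly (both inclusions case by case) and then obtains part (2) from \eqref{keyp2} plus a separate argument for the case $\pair{\alpha_{j}^{\vee}}{x\lambda}=0$; both orderings are sound.
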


\begin{proof}
(1) First we prove the inclusion $\subset$. 
Let $\eta \in \BB^{\si}_{\sige x}(\lambda)$; 
note that $\kappa(\eta) \sige x$ by the definition. 
Assume that $\pair{\alpha_{j}^{\vee}}{\kappa(\eta)\lambda} \le 0$. 
Since $\pair{\alpha_{j}^{\vee}}{x\lambda} > 0$ by the assumption, 
we see by Lemma~\ref{lem:si-L41}\,(1) that 
$\kappa(\eta) \sige r_{j}x$, and hence 
$\eta \in \BB^{\si}_{\sige r_{j}x}(\lambda)$. 
Thus, $\eta=e_{j}^{0}\eta$ is contained in the set on 
the right-hand side of \eqref{keyp2}. 
Assume now that $\pair{\alpha_{j}^{\vee}}{\kappa(\eta)\lambda} > 0$. 
It follows from Lemma~\ref{lem:vevp} that 
$\kappa(f_{j}^{\max}\eta)=r_{j}\kappa(\eta)$. 
Also, because $\pair{\alpha_{j}^{\vee}}{\kappa(\eta)\lambda} > 0$ and 
$\pair{\alpha_{j}^{\vee}}{x\lambda} > 0$ by the assumption, we deduce from 
Lemma~\ref{lem:si-L41}\,(3), together with 
our assumption $\kappa(\eta) \sige x$, that 
$\kappa(f_{j}^{\max}\eta) = r_{j}\kappa(\eta) \sige r_{j}x$, 
which implies that 
$f_{j}^{\max}\eta \in \BB^{\si}_{\sige r_{j}x}(\lambda)$. 
From this, we conclude that $\eta$ is contained 
in the set on the right-hand side of \eqref{keyp2}. 
This proves the inclusion $\subset$. 

Next we prove the opposite inclusion $\supset$. 
Let $\eta \in \BB^{\si}_{\sige r_{j}x}(\lambda)$, 
and assume that $e_{j}^{k}\eta \ne \bzero$ for some $k \in \BZ_{\ge 0}$; 
note that $\kappa(\eta) \sige r_{j}x$, and that 
$\kappa(e_{j}^{k}\eta)$ is equal either to $\kappa(\eta)$ or 
to $r_{j}\kappa(\eta)$. 
If $\kappa(e_{j}^{k}\eta)=\kappa(\eta)$, then 
we have $\kappa(e_{j}^{k}\eta) = \kappa(\eta) \sige r_{j}x$. 
Since $\pair{\alpha_{j}^{\vee}}{x\lambda} > 0$ by the assumption, 
it follows from Lemma~\ref{lem:si2} that $r_{j}x \sig x$. 
Combining these, we obtain $\kappa(e_{j}^{k}\eta) \sige x$, 
which implies that $e_{j}^{k}\eta \in \BB^{\si}_{\sige x}(\lambda)$. 
Assume that 
$\kappa(e_{j}^{k}\eta)=r_{j}\kappa(\eta)$. 
Then we see from the definition of the root operator $e_{j}$ 
(see the comment following \eqref{eq:t-e}) that 
$\pair{\alpha_{j}^{\vee}}{\kappa(\eta)\lambda} < 0$. 
Recall that $\pair{\alpha_{j}^{\vee}}{r_{j}x\lambda} < 0$ by the assumption. 
Since $\kappa(\eta) \sige r_{j}x$ by our assumption, 
we deduce from Lemma~\ref{lem:si-L41}\,(3) that 
$\kappa(e_{j}^{k}\eta) = r_{j}\kappa(\eta) \sige x$, 
which implies that $e_{j}^{k}\eta \in \BB^{\si}_{\sige x}(\lambda)$. 
This proves part (1). 

(2) The assertion for $j \in I_{\af}$ such that 
$\pair{\alpha_{j}^{\vee}}{x\lambda} > 0$ follows immediately from \eqref{keyp2}. 
Let $j \in I_{\af}$ be such that $\pair{\alpha_{j}^{\vee}}{x\lambda} = 0$, 
and let $\eta \in \BB^{\si}_{\sige x}(\lambda)$ be such that $e_{j}\eta \ne \bzero$; 
note that $\kappa(\eta) \sige x$ by our assumption, and that 
$\kappa(e_{j}\eta)$ is equal either to $\kappa(\eta)$ or to $r_{j}\kappa(\eta)$. 
If $\kappa(e_{j}\eta) = \kappa(\eta)$, then it is obvious that 
$e_{j}\eta \in \BB^{\si}_{\sige x}(\lambda)$. 
If $\kappa(e_{j}\eta) = r_{j}\kappa(\eta)$, then 
$\pair{\alpha_{j}^{\vee}}{\kappa(\eta)\lambda} < 0$ 
by the same argument as in the proof of part (1). 
Since $\pair{\alpha_{j}^{\vee}}{x\lambda} = 0$, and 
$\kappa(\eta) \sige x$ by our assumption, 
it follows from Lemma~\ref{lem:si-L41}\,(2) 
that $\kappa(e_{j}\eta) = r_{j}\kappa(\eta) \sige x$, 
which implies that $e_{j}\eta \in \BB^{\si}_{\sige x}(\lambda)$. 
This completes the proof of the proposition. 
\end{proof}
%
%
\begin{cor}[cf. Corollary~\ref{cor:key}] \label{cor:key-p3}
Let $x \in (W^{J})_{\af}$ and $j \in I_{\af}$ be such that 
$\pair{\alpha_{j}^{\vee}}{x\lambda} \ne 0$ 
 (note that 
$r_{j}x \in (W^{J})_{\af}$ by Lemma~\ref{lem:si2}). 
For every $\eta \in \BB^{\si}_{\sige x}(\lambda)$, 
we have $f_{j}^{\max}\eta \in \BB^{\si}_{\sige r_{j}x}(\lambda)$. 
\end{cor}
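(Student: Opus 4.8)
The plan is to mimic exactly the two-case argument used to prove Corollary~\ref{cor:key}, replacing the module-theoretic inputs there (Proposition~\ref{prop:key} and Remark~\ref{rem:demazure}) by their SiLS-path counterparts, namely Proposition~\ref{prop:key-p} and Lemma~\ref{lem:stable-p}. Throughout I fix $\eta \in \BB^{\si}_{\sige x}(\lambda)$, and I first record the general observation that, since every $j$-string in $\sLS$ is finite, $f_{j}^{\max}\eta$ is a well-defined nonzero element of $\sLS$ lying in the same $j$-string as $\eta$; in particular $f_{j}^{\max}\eta = f_{j}^{\max}\eta'$ whenever $\eta = e_{j}^{k}\eta'$ with $e_j^k\eta' \ne \bzero$. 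This is the only structural fact I need beyond the cited results.

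First I would treat the case $\pair{\alpha_{j}^{\vee}}{x\lambda} > 0$. Here Proposition~\ref{prop:key-p}\,(1) gives the description $\BB^{\si}_{\sige x}(\lambda) = \bigl\{e_{j}^{k}\eta' \mid \eta' \in \BB^{\si}_{\sige r_{j}x}(\lambda),\,k \in \BZ_{\ge 0}\bigr\} \setminus \{\bzero\}$, so I may write $\eta = e_{j}^{k}\eta'$ with $\eta' \in \BB^{\si}_{\sige r_{j}x}(\lambda)$ and some $k \ge 0$. By the string observation above, $f_{j}^{\max}\eta = f_{j}^{\max}\eta'$. Since $\eta' \ne \bzero$ and, by Lemma~\ref{lem:stable-p}, the set $\BB^{\si}_{\sige r_{j}x}(\lambda) \cup \{\bzero\}$ is stable under $f_{j}$, the nonzero element $f_{j}^{\max}\eta'$ stays inside $\BB^{\si}_{\sige r_{j}x}(\lambda)$; hence $f_{j}^{\max}\eta \in \BB^{\si}_{\sige r_{j}x}(\lambda)$.

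Second, for the case $\pair{\alpha_{j}^{\vee}}{x\lambda} < 0$, I would note that $\pair{\alpha_{j}^{\vee}}{r_{j}x\lambda} = -\pair{\alpha_{j}^{\vee}}{x\lambda} > 0$, so Proposition~\ref{prop:key-p}\,(1) applied with $r_{j}x$ in place of $x$ (using $r_{j}(r_{j}x) = x$) yields the inclusion $\BB^{\si}_{\sige x}(\lambda) \subset \BB^{\si}_{\sige r_{j}x}(\lambda)$. On the other hand, Lemma~\ref{lem:stable-p} shows that $\BB^{\si}_{\sige x}(\lambda) \cup \{\bzero\}$ is stable under $f_{j}$, whence $f_{j}^{\max}\eta \in \BB^{\si}_{\sige x}(\lambda)$. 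Combining the two, I obtain $f_{j}^{\max}\eta \in \BB^{\si}_{\sige x}(\lambda) \subset \BB^{\si}_{\sige r_{j}x}(\lambda)$, as desired.

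Since the hypothesis $\pair{\alpha_{j}^{\vee}}{x\lambda} \ne 0$ forces exactly one of these two cases, this completes the argument. I do not expect any serious obstacle here: all the genuine work has already been carried out in Proposition~\ref{prop:key-p} and Lemma~\ref{lem:stable-p}, which are the SiLS-path analogs of the module statements. The only point demanding care is the remark that $f_{j}^{\max}$ depends only on the $j$-string through its argument, so that in the first case replacing $\eta$ by $\eta'$ with $\eta = e_{j}^{k}\eta'$ leaves $f_{j}^{\max}$ unchanged; this is precisely what lets that case reduce cleanly to the $f_j$-stability of $\BB^{\si}_{\sige r_{j}x}(\lambda)$.
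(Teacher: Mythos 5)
Your proposal is correct and follows essentially the same route as the paper: for $\pair{\alpha_{j}^{\vee}}{x\lambda}>0$ the paper likewise invokes the decomposition \eqref{keyp2} together with the $f_{j}$-stability of Lemma~\ref{lem:stable-p} (your explicit remark that $f_{j}^{\max}$ depends only on the $j$-string is exactly the detail the paper leaves implicit in ``follows immediately''), and for $\pair{\alpha_{j}^{\vee}}{x\lambda}<0$ your argument---stability gives $f_{j}^{\max}\eta\in\BB^{\si}_{\sige x}(\lambda)$, and Proposition~\ref{prop:key-p}\,(1) applied to $r_{j}x$ gives $\BB^{\si}_{\sige x}(\lambda)\subset\BB^{\si}_{\sige r_{j}x}(\lambda)$---is word-for-word the paper's proof.
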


\begin{proof}
If $\pair{\alpha_{j}^{\vee}}{x\lambda} > 0$, then the assertion 
follows immediately from \eqref{keyp2}
and Lemma~\ref{lem:stable-p}. 
Assume now that $\pair{\alpha_{j}^{\vee}}{x\lambda} < 0$. 
Let $\eta \in \BB^{\si}_{\sige x}(\lambda)$. 
We see from Lemma~\ref{lem:stable-p} that 
$f_{j}^{\max}\eta \in \BB^{\si}_{\sige x}(\lambda)$. 
Also, it follows from Proposition~\ref{prop:key-p}\,(1) that 
$\BB^{\si}_{\sige x}(\lambda) \subset \BB^{\si}_{\sige r_{j}x}(\lambda)$.
Combining these, we obtain $f_{j}^{\max}\eta \in 
\BB^{\si}_{\sige x}(\lambda) \subset \BB^{\si}_{\sige r_{j}x}(\lambda)$, 
as desired. 
\end{proof}
%
%
\begin{prop}[cf. Corollary~\ref{cor:zt}] 
\label{prop:bruhat-p}
Let $x,\,y \in (W^{J})_{\af}$. Then, 
$y \sige x$ if and only if 
$\BB^{\si}_{\sige y}(\lambda) \subset \BB^{\si}_{\sige x}(\lambda)$.
\end{prop}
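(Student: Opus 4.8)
The plan is to exploit the fact that membership of a path $\eta$ in $\BB^{\si}_{\sige x}(\lambda)$ is, by definition \eqref{eq:def-demp}, literally the single inequality $\kappa(\eta) \sige x$. This reduces the proposition to two short order-theoretic observations, in sharp contrast to the module-side Corollary~\ref{cor:zt}, where the analogous statement required the full force of Proposition~\ref{prop:zt}.

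For the ``only if'' direction, I would argue by transitivity of the semi-infinite Bruhat order $\sige$ on $(W^{J})_{\af}$, which is a partial order by Definition~\ref{def:SiB}. Assuming $y \sige x$, I take any $\eta \in \BB^{\si}_{\sige y}(\lambda)$, so that $\kappa(\eta) \sige y$; then $\kappa(\eta) \sige y \sige x$ yields $\kappa(\eta) \sige x$, i.e.\ $\eta \in \BB^{\si}_{\sige x}(\lambda)$. Hence $\BB^{\si}_{\sige y}(\lambda) \subset \BB^{\si}_{\sige x}(\lambda)$.

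For the converse, the key is to produce a single explicit witness path whose final direction is exactly $y$. I would take the one-step path $\eta_{y} := (y\,;\,0,\,1)$, which lies in $\sLS$ directly from Definition~\ref{dfn:SiLS}: with $s = 1$, the requisite existence of a directed path from $x_{u+1}$ to $x_{u}$ in the parabolic semi-infinite Bruhat graph is an empty condition, so every $y \in (W^{J})_{\af}$ yields a legitimate SiLS path. Since $\kappa(\eta_{y}) = y \sige y$, we have $\eta_{y} \in \BB^{\si}_{\sige y}(\lambda)$; the assumed inclusion then forces $\eta_{y} \in \BB^{\si}_{\sige x}(\lambda)$, which unwinds to $y = \kappa(\eta_{y}) \sige x$, as desired.

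There is no serious obstacle here: the only point requiring care is the verification that $\eta_{y} \in \sLS$, which is immediate once one observes that the chain condition in Definition~\ref{dfn:SiLS} is vacuous for a one-step path. It is worth emphasizing that the asymmetry in difficulty between this proposition and Corollary~\ref{cor:zt} is precisely the payoff of the path model: the combinatorial realization makes the Bruhat comparison transparent through the single statistic $\kappa$, whereas on the representation-theoretic side the same equivalence demanded the extremal-element analysis underlying Proposition~\ref{prop:zt}.
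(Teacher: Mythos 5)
Your proposal is correct and is essentially identical to the paper's own proof: the ``only if'' direction by transitivity of $\sige$, and the ``if'' direction by testing the inclusion on the one-step witness path $(y\,;\,0,\,1) \in \sLS$, whose final direction $\kappa$ equals $y$. The paper's proof is just a more terse version of the same argument, so nothing further is needed.
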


\begin{proof}
The ``only if'' part is obvious from the definitions. 
Let us prove the ``if'' part. 
It is obvious from the definition that 
$(y\,;\,0,\,1) \in \sLS$ is contained in $\BB^{\si}_{\sige y}(\lambda)$. 
Since $\BB^{\si}_{\sige y}(\lambda) \subset \BB^{\si}_{\sige x}(\lambda)$ by the assumption, 
we have $(y \,;\, 0,\,1) \in \BB^{\si}_{\sige x}(\lambda)$, and hence $y \sige x$. 
This proves the proposition. 
\end{proof}
%
%
\subsection{Proof of Theorem~\ref{thm:main}.}
\label{subsec:proof}

We need the following technical lemma. 
%
%
\begin{lem} \label{lem:seq}
For each $\eta \in \sLS$ and $x \in W_{\af}$, 
there exist $j_{1},\,j_{2},\,\dots,\,j_{p} \in I_{\af}$ 
satisfying the following conditions:
\begin{enu}
\item[\rm (i)] $\pair{\alpha_{j_{m}}^{\vee}}
{r_{j_{m-1}} \cdots r_{j_{2}}r_{j_{1}}x\lambda} \ge 0$ 
for all $1 \le m \le p$; 
\item[\rm (ii)] $f_{j_{p}}^{\max}f_{j_{p-1}}^{\max} \cdots 
      f_{j_{2}}^{\max}f_{j_{1}}^{\max}\eta=S_{z_{\xi}t_{\xi}}\eta^{C}$
      for some $\xi \in \jad$ and $C \in \Conn(\sLS)$.
\end{enu}
\end{lem}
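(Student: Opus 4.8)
The plan is to prove the statement by induction on a complexity statistic of $\eta$ measuring how far $\eta$ is from being ``straight.'' First I observe, using Remark~\ref{rem:extp}\,(2), that condition (ii) is equivalent to the assertion that $\eta':=f_{j_p}^{\max}\cdots f_{j_1}^{\max}\eta$ is straight, in the sense that $\ol{\eta'}(t)\equiv t\lambda \bmod \BR\delta$ for all $t$; indeed, for such a path all direction vectors lie in $\{z_\zeta t_\zeta\lambda \mid \zeta\in\jad\}$, and Remark~\ref{rem:extp}\,(2) identifies it with $S_{z_\zeta t_\zeta}\eta^{C}$ for the component $C$ containing it. Writing each direction $x_u$ of $\eta$ as $x_u=w_u z_{\xi_u}t_{\xi_u}$ with $w_u\in W^{J}$ and $\xi_u\in\jad$ (see \eqref{eq:W^J_af}), straightness is precisely the condition that $w_u=e$ for all $u$; I therefore take as complexity statistic the quantity $\sum_u \ell(w_u)$ (refining it by the number of non-straight segments if necessary). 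When this statistic is $0$ there is nothing to prove ($p=0$).

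For the inductive step I must produce a first index $j_1\in I_{\af}$ with $\pair{\alpha_{j_1}^\vee}{x\lambda}\ge 0$ for which the statistic of $f_{j_1}^{\max}\eta$ is smaller; the induction hypothesis applied to the pair $(f_{j_1}^{\max}\eta,\,r_{j_1}x)$ then supplies $j_2,\dots,j_p$, and prepending $j_1$ yields both (i) and (ii). The behaviour of the final direction under $f_j^{\max}$ is controlled by Lemma~\ref{lem:vevp}: if $\pair{\alpha_j^\vee}{\kappa(\eta)\lambda}>0$ then $\kappa(f_j^{\max}\eta)=r_j\kappa(\eta)$, while otherwise $\kappa$ is unchanged; by Lemma~\ref{lem:si2} the former case raises $\kappa$ in the semi-infinite Bruhat order. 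A short computation (as in the reflection $r_0$ that sends $r_\alpha$ back to a translation $t_{\alpha^\vee}$) shows that a suitable such $f_j^{\max}$ removes the $W^{J}$-part from the relevant segment and hence strictly decreases the statistic, the signs of the intervening pairings being tracked by Lemma~\ref{lem:si2} and Lemma~\ref{lem:si-L41}.

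The main obstacle is that the index $j$ dictated by straightening $\eta$ need not satisfy the constraint $\pair{\alpha_j^\vee}{x\lambda}\ge 0$ of (i), since $x$ is arbitrary and unrelated to $\eta$, yet (i) and (ii) must be realized by one and the same sequence. To reconcile them I will, whenever the straightening index is momentarily forbidden by (i), first insert ``preparatory'' reflections $r_{j}$ that are permitted by (i) and under which $f_j^{\max}$ does not increase the statistic of $\eta$, invoking \cite[Lemma~1.4]{AK} to guarantee that finitely many permitted reflections of $x\lambda$ eventually render the desired pairing non-negative — exactly as in the toy case where one applies $r_1$ before the straightening reflection $r_0$ becomes available. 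The delicate point, which I expect to be the crux, is to organize this interleaving into a single terminating induction: one must exhibit a combined statistic on the pair $(\eta,x)$ that strictly decreases at every chosen reflection — decreasing the non-straightness of $\eta$ on productive steps, and decreasing the ``distance of $x\lambda$ to a permitting configuration'' (measured via the length of an \cite[Lemma~1.4]{AK} straightening sequence) on preparatory steps — while checking, through Lemma~\ref{lem:si-L41}, that the non-negativity required in (i) persists along the entire sequence.
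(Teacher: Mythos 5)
Your reduction of condition (ii) to straightness of $\eta'$ (via Remark~\ref{rem:extp}\,(2)) is correct, but the inductive mechanism you build on it fails at its very first step. The statistic $\sum_{u}\ell(w_{u})$ cannot be made to decrease monotonically under the operators $f_{j}^{\max}$. By Lemma~\ref{lem:vevp}, $f_{j}^{\max}$ changes the final direction only when $\pair{\alpha_{j}^{\vee}}{\kappa(\eta)\lambda}>0$; writing $\kappa(\eta)=wz_{\xi}t_{\xi}$, for $j\in I$ this pairing equals $\pair{w^{-1}\alpha_{j}^{\vee}}{\lambda}$, which is positive precisely when $r_{j}w\in W^{J}$ with $\ell(r_{j}w)=\ell(w)+1$, i.e.\ precisely when the move \emph{increases} your statistic; and for $j=0$ it equals $-\pair{w^{-1}\theta^{\vee}}{\lambda}$, which is positive only when $w^{-1}\theta$ is a negative root, i.e.\ only when $w$ is already long. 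Concretely, in type $A_{2}$ with $\lambda=\vpi_{1}+\vpi_{2}$ (so $J=\emptyset$) and $\eta=(r_{1}\,;\,0,\,1)$, no index $j\in I_{\af}$ makes the statistic of $f_{j}^{\max}\eta$ smaller: one checks $\pair{\alpha_{1}^{\vee}}{r_{1}\lambda}=-1<0$ and $\pair{\alpha_{0}^{\vee}}{r_{1}\lambda}=-\pair{\alpha_{2}^{\vee}}{\lambda}=-1<0$, so $f_{1}^{\max}$ and $f_{0}^{\max}$ fix $\kappa(\eta)=r_{1}$, while $f_{2}^{\max}$ sends it to $r_{2}r_{1}$, raising the statistic to $2$ (and the number of non-straight segments stays at one, so your proposed refinement does not help). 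Thus your inductive step has no valid first move, \emph{independently} of any interference from $x$; straightening necessarily passes upward through $w_{0}$ before $\alpha_{0}$-reflections can wrap the directions around into translations, so the ``combined statistic'' you flag as the crux cannot exist in the form you describe.

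The paper's proof avoids any interleaving by decoupling (i) from (ii) in three explicit stages. First, \cite[Lemma~1.4]{AK} is applied to $x$ \emph{alone}, producing $j_{1},\dots,j_{a}$ satisfying (i) with $r_{j_{a}}\cdots r_{j_{1}}x=zt_{\zeta}$, $z\in W_{J}$ (no attempt is made to control what the corresponding $f^{\max}$'s do to $\eta$). Since $zt_{\zeta}\lambda\equiv\lambda\bmod\BZ\delta$, condition (i) becomes automatic for every $j\in I$; so, second, one applies $f^{\max}$'s along a reduced word of $w_{0}$, reaching an element $\eta'$ with $f_{j}\eta'=\bzero$ for all $j\in I$ (regularity of the crystal), whence $\kappa(\eta')\lambda\equiv w_{0}\lambda\bmod\BR\delta$ --- note this stage deliberately \emph{increases} your statistic. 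Third, the dual version of \cite[Lemma~4.3.2]{NSdeg} straightens $\eta'$ by a sequence whose pairings against $\kappa(\eta')\lambda\equiv w_{0}\lambda$ are all positive; since the transformed $x$ is now $w_{0}zt_{\zeta}$ and $w_{0}zt_{\zeta}\lambda\equiv w_{0}\lambda\bmod\BZ\delta$, those same pairings verify (i) for free. This decoupling --- fix $x$ first, ride up to the $I$-lowest-weight element, then invoke the existing straightening lemma --- is the organizational idea your plan is missing.
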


\begin{proof}
First, we see from \cite[Lemma~1.4]{AK} that 
there exist $j_{1},\,j_{2},\,\dots,\,j_{a} \in I_{\af}$ such that
\begin{enu}
\item[(1a)] 
$\pair{\alpha_{j_{m}}^{\vee}}{r_{j_{m-1}} \cdots r_{j_{2}}r_{j_{1}}x\lambda} \ge 0$ 
for all $1 \le m \le a$; 
\item[(2a)] 
$r_{j_{a}}r_{j_{a-1}} \cdots r_{j_{2}}r_{j_{1}}x\lambda \in \lambda+\BZ\delta$. 
\end{enu}
We deduce from condition (2a) that 
$r_{j_{a}}r_{j_{a-1}} \cdots r_{j_{2}}r_{j_{1}}x = z t_{\zeta}$ 
for some $z \in W_{J}$ and $\zeta \in Q^{\vee}$. 
Let $w_{0}=r_{j_{b}}r_{j_{b-1}} \cdots r_{j_{a+2}}r_{j_{a+1}}$ be 
a reduced expression of the longest element $w_{0} \in W$. Then, 
there hold the following: 
\begin{enu}
\item[(1b)] for all $a+1 \le m \le b$, 
\begin{equation*}
\pair{\alpha_{j_{m}}^{\vee}}{r_{j_{m-1}} \cdots r_{j_{a+1}}
 \underbrace{r_{j_{a}} \cdots r_{j_{2}}r_{j_{1}}x}_{=zt_{\zeta}}\lambda} = 
\pair{\alpha_{j_{m}}^{\vee}}{r_{j_{m-1}} \cdots r_{j_{a+1}}\lambda} \ge 0;
\end{equation*}
\item[(2b)] 
$\underbrace{r_{j_{b}}r_{j_{b-1}} \cdots r_{j_{a+1}}}_{=w_{0}}
 \underbrace{r_{j_{a}} \cdots r_{j_{2}}r_{j_{1}}x}_{=zt_{\zeta}}
=w_{0}zt_{\zeta}$; 
\item[(3b)]
the element 
\begin{equation*}
\eta':=
\underbrace{f_{j_{b}}^{\max}f_{j_{b-1}}^{\max} \cdots f_{j_{a+1}}^{\max}}_{\text{corresponds to $w_{0}$}}
\underbrace{f_{j_{a}}^{\max} \cdots f_{j_{2}}^{\max}f_{j_{1}}^{\max}\eta}_{\in \sLS}
\end{equation*}
is a lowest weight element with respect to $I$, i.e., 
$f_{j}\eta'=\bzero$ for all $j \in I$ 
(this follows from \cite[Corollarie~9.1.4\,(2)]{K-Fr} 
 since $\sLS \cong \CB(\lambda)$ is a regular crystal; 
see Remark~\ref{rem:Gext}).
\end{enu}
It follows from Lemma~\ref{lem:vevp}, together with condition (3b), that 
$\pair{\alpha_{j}^{\vee}}{\kappa(\eta')\lambda} \le 0$ for all $j \in I$, 
which implies that $\kappa(\eta')\lambda \equiv w_{0}\lambda \mod \BR\delta$ 
since $W\lambda \cap (-P^{+}) = \bigl\{w_{0}\lambda\bigr\}$. 
We deduce from (the dual version of) \cite[Lemma~4.3.2]{NSdeg} that 
for the element $\eta'$ in (3b), 
there exist $j_{b+1},\,j_{b+2},\,\dots,\,j_{p} \in I_{\af}$ 
satisfying the following conditions: 
\begin{enu}

\item[(1c)] for all $b+1 \le m \le p$, 
\begin{equation*}
   \pair{\alpha_{j_{m}}^{\vee}}
      {r_{j_{m-1}} \cdots r_{j_{b+2}}r_{j_{b+1}}w_{0}\lambda} = 
\pair{\alpha_{j_{m}}^{\vee}}
      {r_{j_{m-1}} \cdots r_{j_{b+2}}r_{j_{b+1}}\kappa(\eta')\lambda} > 0;
\end{equation*}
\item[(2c)]
     $\eta'':=f_{j_{p}}^{\max}f_{j_{n-1}}^{\max} \cdots 
      f_{j_{b+2}}^{\max}f_{j_{b+1}}^{\max}\eta'$ is an element of $\sLS$ 
     such that $\ol{\eta}''(t) \equiv t\lambda \mod \BR\delta$
     for all $t \in [0,1]$.
\end{enu}
We see by Remark~\ref{rem:extp}\,(2) that $\eta''=S_{ z_{\xi}t_{\xi} }\eta^{C}$ 
for some $\xi \in \jad$ and $C \in \Conn(\sLS)$. 

Concatenating the three sequences of elements in $I_{\af}$ above, 
we obtain the sequence 
\begin{equation*}
\underbrace{j_{1},\,j_{2},\,\dots,\,j_{a}}_{\text{satisfy (1a), (2a)}},\,
\underbrace{j_{a+1},\,j_{a+2},\,\dots,\,j_{b}}_{\text{satisfy (1b), (2b), (3b)}},\,
\underbrace{j_{b+1},\,j_{b+2},\,\dots,\,j_{p}}_{\text{satisfy (1c), (2c)}}.
\end{equation*}
We show that this sequence indeed satisfies conditions (i) and (ii). 
We see from the definition of $\eta'$ in condition (3b) and condition (2c) that 
the sequence above satisfies condition (ii). For $1 \le m \le b$, 
it follows immediately from (1a) and (1b) that 
the sequence above satisfies condition (i).
Also, for $b+1 \le m \le p$, we have 
\begin{align*}
& \pair{\alpha_{j_{m}}^{\vee}}
  {r_{j_{m-1}} \cdots r_{j_{b+1}}r_{j_{b}} \cdots r_{j_{2}}r_{j_{1}}x\lambda} \\
& \qquad =
  \pair{\alpha_{j_{m}}^{\vee}}
  {r_{j_{m-1}} \cdots r_{j_{b+1}}w_{0}zt_{\zeta}\lambda}
  \quad \text{by condition (2b)} \\
& \qquad =
  \pair{\alpha_{j_{m}}^{\vee}}
  {r_{j_{m-1}} \cdots r_{j_{b+1}}w_{0}\lambda} > 0 
  \quad \text{by (1c)}.
\end{align*}
This completes the proof of the lemma. 
\end{proof}
%
%
\begin{lem} \label{lem:subset}
Let $b \in \CB(\lambda)$, and set 
$\eta:=\Psi_{\lambda}(b) \in \BB^{\si}(\lambda)$. 
If $x \in (W^{J})_{\af}$ satisfies 
$b \in \CB_{x}^{-}(\lambda)$, then $\kappa(\eta) \sige x$, 
and hence $\eta \in \BB^{\si}_{\sige x}(\lambda)$. 
\end{lem}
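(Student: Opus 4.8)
The plan is to reduce the statement to the single inequality $\kappa(\eta)\sige x$, since $\BB^{\si}_{\sige x}(\lambda)=\bigl\{\eta\in\sLS\mid\kappa(\eta)\sige x\bigr\}$ by \eqref{eq:def-demp}, after which the ``hence'' clause is immediate. To get a handle on $\kappa(\eta)$ I would first apply Lemma~\ref{lem:seq} to the pair $(\eta,x)$, obtaining indices $j_{1},\dots,j_{p}\in I_{\af}$ satisfying conditions (i) and (ii), and then argue by induction on the length $p$ of this sequence.

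The engine of the argument is a transport between the two sides via $\Psi_{\lambda}$. Since $\Psi_{\lambda}$ is an isomorphism of crystals, it commutes with the operators $f_{j}^{\max}$ and with the Weyl group action $S_{w}$ (the latter being defined purely through Kashiwara operators). Writing $\bc:=\Theta(C)$, so that $\Psi_{\lambda}(u^{\bc})=\eta^{C}$ by \eqref{eq:Psi}, condition (ii) transports under $\Psi_{\lambda}^{-1}$ to $f_{j_{p}}^{\max}\cdots f_{j_{1}}^{\max}b=S_{z_{\xi}t_{\xi}}u^{\bc}$. Applying Corollary~\ref{cor:key} once for each index then shows $f_{j_{p}}^{\max}\cdots f_{j_{1}}^{\max}b\in\CB_{r_{j_{p}}\cdots r_{j_{1}}x}^{-}(\lambda)$, which is the compatibility I will feed into Proposition~\ref{prop:zt}.

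For the base case $p=0$, condition (ii) reads $\eta=S_{z_{\xi}t_{\xi}}\eta^{C}$; since $\ol{\eta^{C}}(t)\equiv t\lambda\bmod\BR\delta$, formula \eqref{eq:SxetaC} in Remark~\ref{rem:extp}(2) gives $\kappa(\eta)=\PJ(z_{\xi}t_{\xi})=z_{\xi}t_{\xi}$, while $b=S_{z_{\xi}t_{\xi}}u^{\bc}\in\CB_{x}^{-}(\lambda)$ forces $z_{\xi}t_{\xi}\sige x$ by Proposition~\ref{prop:zt}; hence $\kappa(\eta)\sige x$. For the inductive step I would set $\eta_{1}:=f_{j_{1}}^{\max}\eta=\Psi_{\lambda}(f_{j_{1}}^{\max}b)$ and $x_{1}':=\PJ(r_{j_{1}}x)\in(W^{J})_{\af}$; Corollary~\ref{cor:key} together with Lemma~\ref{lem:dem=} gives $f_{j_{1}}^{\max}b\in\CB_{x_{1}'}^{-}(\lambda)$, and because $(W_{J})_{\af}$ fixes $\lambda$ we have $x_{1}'\lambda=r_{j_{1}}x\lambda$, so the tail $j_{2},\dots,j_{p}$ witnesses Lemma~\ref{lem:seq} for the pair $(\eta_{1},x_{1}')$. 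The induction hypothesis then yields $\kappa(\eta_{1})\sige x_{1}'$.

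It remains to descend from $\kappa(\eta_{1})\sige x_{1}'$ to $\kappa(\eta)\sige x$, and this is the step I expect to be the main obstacle. By Lemma~\ref{lem:vevp}, $\kappa(\eta_{1})$ equals $r_{j_{1}}\kappa(\eta)$ when $\pair{\alpha_{j_{1}}^{\vee}}{\kappa(\eta)\lambda}>0$ and equals $\kappa(\eta)$ otherwise, while condition (i) supplies $\pair{\alpha_{j_{1}}^{\vee}}{x\lambda}\ge 0$. Splitting into the cases $\pair{\alpha_{j_{1}}^{\vee}}{x\lambda}>0$ (where $x_{1}'=r_{j_{1}}x\sig x$ by Lemma~\ref{lem:si2}) and $\pair{\alpha_{j_{1}}^{\vee}}{x\lambda}=0$ (where $x_{1}'=x$), and crossing these with the two possibilities for $\kappa(\eta_{1})$, I would in each subcase invoke the appropriate part of Lemma~\ref{lem:si-L41} to transport the relation $\sige$ across the reflection $r_{j_{1}}$ and conclude $\kappa(\eta)\sige x$; the delicate point is matching the sign hypotheses of Lemma~\ref{lem:si-L41}\,(2),(3) to the signs of $\pair{\alpha_{j_{1}}^{\vee}}{x\lambda}$ and $\pair{\alpha_{j_{1}}^{\vee}}{\kappa(\eta)\lambda}$ in each of the four subcases (the subcases with $\pair{\alpha_{j_{1}}^{\vee}}{\kappa(\eta)\lambda}\le 0$ being immediate since there $\kappa(\eta_{1})=\kappa(\eta)$).
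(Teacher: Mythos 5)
Your proposal is correct and follows essentially the same route as the paper's proof: induction on the length $p$ of the sequence produced by Lemma~\ref{lem:seq}, with the base case $p=0$ settled by Proposition~\ref{prop:zt} (using Remark~\ref{rem:extp}\,(2) to identify $\kappa(\eta)=z_{\xi}t_{\xi}$), and the inductive step driven by Corollary~\ref{cor:key} applied to $f_{j_1}^{\max}b$ (your uniform use of $\PJ(r_{j_1}x)$ just merges the paper's two cases $\pair{\alpha_{j_1}^{\vee}}{x\lambda}>0$ and $=0$, via Lemma~\ref{lem:dem=}). The only real difference is the final lifting step from $\kappa(f_{j_1}^{\max}\eta)\sige x_1'$ to $\kappa(\eta)\sige x$: the paper concludes directly from Proposition~\ref{prop:key-p} (equation~\eqref{keyp2} and part~(2)), which already packages precisely the Lemma~\ref{lem:vevp}/Lemma~\ref{lem:si-L41} sign-by-sign analysis you propose to carry out by hand, so your four subcases---which do all check out---can be replaced by a single citation.
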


\begin{proof}
We fix an arbitrary $x \in (W^{J})_{\af}$ such that 
$b \in \CB_{x}^{-}(\lambda)$. 
Take $j_{1},\,j_{2},\,\dots,\,j_{p} \in I_{\af}$ 
satisfying conditions (i) and (ii) in Lemma~\ref{lem:seq} 
for $\eta=\Psi_{\lambda}(b) \in \sLS$ and $x \in (W^{J})_{\af} \subset W_{\af}$. 
We prove the assertion by induction on the length $p$ of this sequence. 
If $p=0$, then $\eta=S_{z_{\xi}t_{\xi}}\eta^{C}$ 
for some $\xi \in \jad$ and $C \in \Conn(\sLS)$; 
note that in this case, $\kappa(\eta)=z_{\xi}t_{\xi}$ by Remark~\ref{rem:extp}\,(2).
By the assumption, we have $S_{z_{\xi}t_{\xi}}(u^{\Theta(C)}) = 
\Psi_{\lambda}^{-1}(\eta) = b \in \CB_{x}^{-}(\lambda)$. 
Therefore, we conclude from Proposition~\ref{prop:zt} that 
$\kappa(\eta)=z_{\xi}t_{\xi} \sige x$, and hence 
$\eta \in \BB^{\si}_{\sige x}(\lambda)$, as desired. 
Now, assume that $p > 0$. 
%
\paragraph{Case 1.}
%
Assume that $\pair{\alpha_{j_{1}}^{\vee}}{x\lambda} > 0$; 
note that $r_{j_1}x \in (W^{J})_{\af}$ by Lemma~\ref{lem:si2}, and that 
$f_{j_1}^{\max}b \in \CB_{r_{j_1}x}^{-}(\lambda)$ by Corollary~\ref{cor:key}.
Observe that the sequence $j_{2},\,\dots,\,j_{p} \in I_{\af}$ satisfies 
conditions (i) and (ii) in Lemma~\ref{lem:seq} for 
$f_{j_1}^{\max}\eta = \Psi_{\lambda}(f_{j_1}^{\max}b)$ and $r_{j_1}x$. 
Therefore, by our induction hypothesis, 
we obtain $\kappa(f_{j_1}^{\max}\eta) \sige r_{j_1}x$, 
and hence $f_{j_1}^{\max}\eta \in \BB^{\si}_{\sige r_{j_1}x}(\lambda)$. 
Since $\pair{\alpha_{j_{1}}^{\vee}}{x\lambda} > 0$ by our assumption, 
we conclude from \eqref{keyp2} that $\eta \in \BB^{\si}_{\sige x}(\lambda)$.

\paragraph{Case 2.}
%
Assume that $\pair{\alpha_{j_{1}}^{\vee}}{x\lambda} = 0$; 
remark that $f_{j_1}^{\max}b \in \CB_{x}^{-}(\lambda)$ by Remark~\ref{rem:demazure}. 
Observe that the sequence $j_{2},\,\dots,\,j_{p} \in I_{\af}$ satisfies 
conditions (i) and (ii) in Lemma~\ref{lem:seq} for 
$f_{j_1}^{\max}\eta = \Psi_{\lambda}(f_{j_1}^{\max}b)$ and $x$; 
notice that $r_{j_{1}}x\lambda=x\lambda$ 
since $\pair{\alpha_{j_{1}}^{\vee}}{x\lambda} = 0$. 
Therefore, by our induction hypothesis, 
we obtain $\kappa(f_{j_1}^{\max}\eta) \sige x$,
and hence $f_{j_1}^{\max}\eta \in \BB^{\si}_{\sige x}(\lambda)$. 
Since $\pair{\alpha_{j_{1}}^{\vee}}{x\lambda} = 0$ by our assumption, 
it follows from Proposition~\ref{prop:key-p}\,(2) that 
$\eta \in \BB^{\si}_{\sige x}(\lambda)$. 

This proves the lemma.
\end{proof}
%
%
\begin{lem} \label{lem:supset}
Let $\eta \in \sLS$, and set 
$b:=\Psi_{\lambda}^{-1}(\eta) \in \B$. 
If $x \in (W^{J})_{\af}$ satisfies 
$\eta \in \BB^{\si}_{\sige x}(\lambda)$, then 
$b \in \CB_{x}^{-}(\lambda)$. 
\end{lem}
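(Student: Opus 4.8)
The plan is to establish this lemma as the reverse of Lemma~\ref{lem:subset}, mirroring its inductive skeleton but transporting each step across the isomorphism $\Psi_{\lambda}$ in the opposite direction. Given $\eta \in \sLS$ with $\eta \in \BB^{\si}_{\sige x}(\lambda)$ (equivalently $\kappa(\eta) \sige x$), I would first apply Lemma~\ref{lem:seq} to $\eta$ and $x$ to obtain a sequence $j_{1},\,j_{2},\,\dots,\,j_{p} \in I_{\af}$ satisfying conditions (i) and (ii) there, and then prove $b := \Psi_{\lambda}^{-1}(\eta) \in \CB_{x}^{-}(\lambda)$ by induction on the length $p$ of this sequence. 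The induction hypothesis is to be read as: the assertion holds whenever a sequence of length strictly less than $p$ witnesses conditions (i) and (ii) for the relevant pair of a path and an element of $(W^{J})_{\af}$.

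For the base case $p = 0$, condition (ii) reduces to $\eta = S_{z_{\xi}t_{\xi}}\eta^{C}$ for some $\xi \in \jad$ and $C \in \Conn(\sLS)$, and Remark~\ref{rem:extp}\,(2) gives $\kappa(\eta) = z_{\xi}t_{\xi}$. Since $\Psi_{\lambda}$ is an isomorphism of crystals carrying $u^{\Theta(C)}$ to $\eta^{C}$ and commuting with the operator $S_{z_{\xi}t_{\xi}}$ (which is built from Kashiwara operators), I obtain $b = S_{z_{\xi}t_{\xi}}(u^{\Theta(C)})$. The hypothesis $\eta \in \BB^{\si}_{\sige x}(\lambda)$ then says precisely $z_{\xi}t_{\xi} \sige x$, so Proposition~\ref{prop:zt} (its ``if'' direction) delivers $b \in \CB_{x}^{-}(\lambda)$ at once.

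For the inductive step $p > 0$, I would split according to $\pair{\alpha_{j_{1}}^{\vee}}{x\lambda}$, which is $\ge 0$ by condition (i). If $\pair{\alpha_{j_{1}}^{\vee}}{x\lambda} > 0$, then $r_{j_{1}}x \in (W^{J})_{\af}$ by Lemma~\ref{lem:si2}, Corollary~\ref{cor:key-p3} gives $f_{j_{1}}^{\max}\eta \in \BB^{\si}_{\sige r_{j_{1}}x}(\lambda)$, and the truncated sequence $j_{2},\,\dots,\,j_{p}$ satisfies (i) and (ii) for $f_{j_{1}}^{\max}\eta = \Psi_{\lambda}(f_{j_{1}}^{\max}b)$ and $r_{j_{1}}x$; the induction hypothesis then yields $f_{j_{1}}^{\max}b \in \CB_{r_{j_{1}}x}^{-}(\lambda)$, whence $b = e_{j_{1}}^{\vp_{j_{1}}(b)}(f_{j_{1}}^{\max}b) \in \CB_{x}^{-}(\lambda)$ by Proposition~\ref{prop:key}. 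If instead $\pair{\alpha_{j_{1}}^{\vee}}{x\lambda} = 0$, then $r_{j_{1}}x\lambda = x\lambda$, so the truncated sequence satisfies (i) and (ii) for $f_{j_{1}}^{\max}\eta$ and $x$ itself; Lemma~\ref{lem:stable-p} gives $f_{j_{1}}^{\max}\eta \in \BB^{\si}_{\sige x}(\lambda)$, the induction hypothesis yields $f_{j_{1}}^{\max}b \in \CB_{x}^{-}(\lambda)$, and the stability of $\CB_{x}^{-}(\lambda) \cup \{\bzero\}$ under $e_{j_{1}}$ (Proposition~\ref{prop:key}) upgrades this to $b \in \CB_{x}^{-}(\lambda)$.

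Since the argument is a faithful dualization of Lemma~\ref{lem:subset}, I do not expect a single deep conceptual difficulty; the point demanding the most care is the bookkeeping of the base element for the truncated sequence, namely tracking that $r_{j_{1}}x$ is the correct base in the positive case while $x$ (justified by $r_{j_{1}}x\lambda = x\lambda$) is correct in the zero case, and correspondingly invoking Corollary~\ref{cor:key-p3}/Lemma~\ref{lem:stable-p} on the path side against Proposition~\ref{prop:key} on the module side with matching sign hypotheses. Keeping these two inductions in lockstep through $\Psi_{\lambda}$ is the only place where an inattentive choice of base or of transport lemma would break the induction, so that is where I would concentrate the verification.
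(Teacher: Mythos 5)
Your proposal is correct and follows essentially the same route as the paper's own proof: induction on the length $p$ of the sequence from Lemma~\ref{lem:seq}, with the base case handled by Proposition~\ref{prop:zt} via $b = S_{z_{\xi}t_{\xi}}(u^{\Theta(C)})$, and the inductive step split into the cases $\pair{\alpha_{j_{1}}^{\vee}}{x\lambda} > 0$ (using Lemma~\ref{lem:si2}, Corollary~\ref{cor:key-p3}, and \eqref{lem2b}) and $\pair{\alpha_{j_{1}}^{\vee}}{x\lambda} = 0$ (using Lemma~\ref{lem:stable-p} and the $e_{j_{1}}$-stability from Proposition~\ref{prop:key}). The bookkeeping you flag—$r_{j_{1}}x$ as the base in the positive case versus $x$ itself in the zero case—is exactly how the paper organizes it.
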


\begin{proof}
We fix an arbitrary $x \in (W^{J})_{\af}$ such that 
$\eta \in \BB^{\si}_{\sige x}(\lambda)$. 
Take $j_{1},\,j_{2},\,\dots,\,j_{p} \in I_{\af}$ 
satisfying conditions (i) and (ii) in Lemma~\ref{lem:seq} 
for $\eta$ and $x$. We prove the assertion by induction 
on the length $p$ of this sequence. 
If $p=0$, then $\eta=S_{z_{\xi}t_{\xi}}\eta^{C}$ 
for some $\xi \in \jad$ and $C \in \Conn(\sLS)$; 
note that in this case, $\kappa(\eta)=z_{\xi}t_{\xi}$ 
by Remark~\ref{rem:extp}\,(2).
Since $\eta \in \BB^{\si}_{\sige x}(\lambda)$ by the assumption, 
we have $z_{\xi}t_{\xi} \sige x$. 
Therefore, we conclude from Proposition~\ref{prop:zt} that 
$b = \Psi_{\lambda}^{-1}(\eta) = 
 S_{z_{\xi}t_{\xi}}(u^{\Theta(C)}) \in \CB_{x}^{-}(\lambda)$. 
Now, assume that $p > 0$. 
%
\paragraph{Case 1.}
%
If $\pair{\alpha_{j_{1}}^{\vee}}{x\lambda} > 0$, then 
$r_{j_1}x \in (W^{J})_{\af}$ by Lemma~\ref{lem:si2}, and 
$f_{j_{1}}^{\max}\eta \in \BB^{\si}_{\sige r_{j_1}x}(\lambda)$ 
by Corollary~\ref{cor:key-p3}. 
Observe that the sequence $j_{2},\,\dots,\,j_{p} \in I_{\af}$ satisfies 
conditions (i) and (ii) in Lemma~\ref{lem:seq} for 
$f_{j_1}^{\max}\eta$ and $r_{j_1}x$. 
Therefore, by our induction hypothesis, 
we obtain $f_{j_{1}}^{\max}b \in \CB_{r_{j}x}^{-}(\lambda)$. 
Since $\pair{\alpha_{j_{1}}^{\vee}}{x\lambda} > 0$ by our assumption, 
it follows from \eqref{lem2b} that $b \in \CB^{-}_{x}(\lambda)$. 
%
\paragraph{Case 2.}
%
Assume that $\pair{\alpha_{j_{1}}^{\vee}}{x\lambda} = 0$. 
By Lemma~\ref{lem:stable-p}, we have 
$f_{j_{1}}^{\max}\eta \in \BB^{\si}_{\sige x}(\lambda)$. 
Observe that the sequence $j_{2},\,\dots,\,j_{p} \in I_{\af}$ 
satisfies conditions (i) and (ii) in Lemma~\ref{lem:seq} for 
$f_{j_1}^{\max}\eta$ and $x$; 
notice that $r_{j_{1}}x\lambda=x\lambda$ 
since $\pair{\alpha_{j_{1}}^{\vee}}{x\lambda} = 0$.
Therefore, by our induction hypothesis, 
we obtain $f_{j_{1}}^{\max}b \in \CB_{x}^{-}(\lambda)$. 
Since $\pair{\alpha_{j_{1}}^{\vee}}{x\lambda} = 0$ by our assumption, 
it follows from Proposition~\ref{prop:key} 
that $b \in \CB^{-}_{x}(\lambda)$. 

This proves the lemma.
\end{proof}

\begin{proof}[Proof of Theorem~\ref{thm:main}]
As mentioned at the end of \S\ref{subsec:demazurep}, 
our remaining task is to prove the equality 
%
%
\begin{equation} \label{eq:main1}
\Psi_{\lambda}(\CB_{x}^{-}(\lambda))=\BB^{\si}_{\sige x}(\lambda)
\qquad \text{for all $x \in (W^{J})_{\af}$}.
\end{equation}
The inclusion $\subset$ in \eqref{eq:main1} 
follows immediately from Lemma~\ref{lem:subset}. 
Also, the opposite inclusion $\supset$ in \eqref{eq:main1} 
follows immediately from Lemma~\ref{lem:supset}. 
This completes the proof of Theorem~\ref{thm:main}. 
\end{proof}
%
%
\section{Graded characters.} 
\label{sec:gch}

In this section, we fix 
$\lambda=\sum_{i \in I} m_{i}\vpi_{i} \in P^{+}$, 
and set $J=J_{\lambda}:=\bigl\{i \in I \mid \pair{\alpha_{i}^{\vee}}{\lambda}=0\bigr\}$. 

%
\subsection{Graded character formula for $V_{e}^{-}(\lambda)$.} 
\label{subsec:gch-e}

The weights of the Demazure submodule 
$V_{e}^{-}(\lambda) = U_{q}^{-}v_{\lambda}$ 
(corresponding to $x=e$, the identity element) 
are all contained in $\lambda-Q_{\af}^{+} \subset \lambda-Q_{\af}$, 
and hence every weight space of $V_{e}^{-}(\lambda)$ 
is finite-dimensional. Therefore, we can define 
the (ordinary) character $\ch V_{e}^{-}(\lambda)$ 
of $V_{e}^{-}(\lambda)$ to be
\begin{equation*}
\ch V_{e}^{-}(\lambda) := \sum_{\beta \in Q_{\af}}
\dim V_{e}^{-}(\lambda)_{\lambda-\beta}\,x^{\lambda-\beta}.
\end{equation*}
Here we recall that an element $\beta \in Q_{\af}$ 
can be written uniquely in the form: $\beta=\gamma + k\delta$ 
for $\gamma \in Q$ and $k \in \BZ$; if we set 
$x^{\delta}:=q$, then $x^{\lambda-\beta} = 
x^{\lambda-\gamma} q^{k}$. 
Now we define the graded character
$\gch V_{e}^{-}(\lambda)$ of $V_{e}^{-}(\lambda)$ to be 
\begin{equation*}
\gch V_{e}^{-}(\lambda) := \sum_{\gamma \in Q,\,k \in \BZ} 
\dim V_{e}^{-}(\lambda)_{\lambda-\gamma+k\delta}\,x^{\lambda-\gamma}q^{k},
\end{equation*}
which is obtained from the ordinary character $\ch V_{e}^{-}(\lambda)$ 
by replacing $x^{\delta}$ with $q$. 
%
%
\begin{thm} \label{thm:grch1}
Keep the notation and setting above. 
The graded character $\gch V_{e}^{-}(\lambda)$ of 
$V_{e}^{-}(\lambda)$ can be expressed as 
\begin{equation*}
\gch V_{e}^{-}(\lambda) = 
\biggl(\prod_{i \in I} \prod_{r=1}^{m_{i}}(1-q^{-r})\biggr)^{-1} 
P_{\lambda}(x\,;\,q^{-1},\,0),
\end{equation*}
where $P_{\lambda}(x\,;\,q,\,0)$ 
denotes the specialization at $t=0$ 
of the symmetric Macdonald polynomial $P_{\lambda}(x\,;\,q,\,t)$. 
\end{thm}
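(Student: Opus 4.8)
The plan is to turn the character of $V_{e}^{-}(\lambda)$ into a weighted count of semi-infinite LS paths via Theorem~\ref{thm:main}, and then to split that count into an ``imaginary'' (partition) factor and a finite part governed by quantum LS paths, the latter producing $P_{\lambda}(x\,;\,q^{-1},\,0)$. First I would observe that $V_{e}^{-}(\lambda)=U_{q}^{-}v_{\lambda}$ has all weights in $\lambda-Q_{\af}^{+}$, so its weight spaces are finite-dimensional and it is compatible with the global basis with crystal basis $\CB_{e}^{-}(\lambda)$; hence
\[
\gch V_{e}^{-}(\lambda)=\sum_{b\in\CB_{e}^{-}(\lambda)}x^{\cl\wt(b)}\,q^{k(b)},
\]
where $\wt(b)=\cl\wt(b)+k(b)\delta$ and $x^{\delta}=q$. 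Applying Theorem~\ref{thm:main} with $x=e$, the weight-preserving crystal isomorphism $\Psi_{\lambda}$ identifies $\CB_{e}^{-}(\lambda)$ with $\BB^{\si}_{\sige e}(\lambda)=\{\eta\in\sLS\mid\kappa(\eta)\sige e\}$, so it remains to evaluate the weighted count of this path set, with $k(\eta)$ the $\delta$-coefficient of $\wt(\eta)$.

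Next I would factor this count through the crystal decomposition $\sLS\cong\Par(\lambda)\otimes\sLSo$ of Proposition~\ref{prop:SLS}\,(3). The key step here is to show that $\kappa(\eta)$, and therefore the condition $\kappa(\eta)\sige e$, depends only on the $\sLSo$-component of $\eta$; this should follow from the explicit description of the representatives $\eta^{C}$ in Proposition~\ref{prop:SLS}\,(1) and Remark~\ref{rem:extp}\,(2), together with the way $\kappa$ transforms under the root operators (Lemma~\ref{lem:vevp}). Granting this, $\BB^{\si}_{\sige e}(\lambda)\cong\Par(\lambda)\otimes\bigl(\BB^{\si}_{\sige e}(\lambda)\cap\sLSo\bigr)$, and the $\delta$-grading on the $\Par(\lambda)$-factor contributes the generating function $\prod_{i\in I}\prod_{r=1}^{m_{i}-1}(1-q^{-r})^{-1}$ of partitions of length $<m_{i}$.

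It then remains to evaluate the finite-part count over $\BB^{\si}_{\sige e}(\lambda)\cap\sLSo$. For this I would project by $\cl$ onto the finite set $\QLS(\lambda)$ of quantum LS paths, organise the fibres by the tail-degree function $\Degt$, and invoke the known expression of the $t=0$ specialization $P_{\lambda}(x\,;\,q,\,0)$ as the $\Degt$-graded character of $\QLS(\lambda)$ (as in \cite{LNSSS2}; cf. \cite{NSdeg}). Summing the $\delta$-grading along the fibres of $\cl$ should contribute the remaining factor $\prod_{i\in I}(1-q^{-m_{i}})^{-1}$, and together with the previous step yield $\bigl(\prod_{i\in I}\prod_{r=1}^{m_{i}}(1-q^{-r})\bigr)^{-1}P_{\lambda}(x\,;\,q^{-1},\,0)$; the replacement $q\mapsto q^{-1}$ reflects that $V_{e}^{-}(\lambda)$ is built from $U_{q}^{-}$, so that its $\delta$-degrees are nonpositive.

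The hard part will be this last step. Two points require genuine care: first, proving that $\kappa$ is insensitive to the $\Par(\lambda)$-factor, so that the factorization of $\BB^{\si}_{\sige e}(\lambda)$ is legitimate; and, above all, matching the $\delta$-grading of the base paths in $\BB^{\si}_{\sige e}(\lambda)\cap\sLSo$ with the tail-degree statistic on $\QLS(\lambda)$ while correctly extracting the extra factor $\prod_{i\in I}(1-q^{-m_{i}})^{-1}$ --- that is, accounting for the gap between partitions of length $<m_{i}$ (the $\Par(\lambda)$-factor) and of length $\le m_{i}$ (appearing in the final product). Establishing that this degree matching holds uniformly along the fibres of $\cl$, so that the Macdonald formula can be applied factor by factor, is where the main technical effort will lie.
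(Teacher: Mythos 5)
Your plan is correct and is essentially the paper's own proof: reduce via Theorem~\ref{thm:main} to a weighted count over $\BB^{\si}_{\sige e}(\lambda)$, fibre by $\cl$ over $\QLS(\lambda)$, show that the Demazure condition on each fibre amounts to positivity of the translation part, and finish with the $\Degt$-graded QLS-path expression for $P_{\lambda}$ (Proposition~\ref{prop:DegP}). The only difference is bookkeeping: you split off the $\Par(\lambda)$-factor first via Proposition~\ref{prop:SLS}\,(3) (your claim that $\kappa$ depends only on the $\sLSo$-component is exactly what the paper deduces from \cite[Lemma~7.1.4]{INS} in the proof of Lemma~\ref{lem:Be-}), whereas the paper parametrizes each fibre intersection by $\ol{\Par(\lambda)}$ in one stroke via \eqref{eq:grch1-1} and \eqref{eq:par+}; the two agree because a partition of length $\le m_{i}$ is a partition of length $<m_{i}$ plus $c_{i}$ full-height columns, i.e., $\prod_{r=1}^{m_{i}}(1-q^{-r})^{-1}=\prod_{r=1}^{m_{i}-1}(1-q^{-r})^{-1}\cdot(1-q^{-m_{i}})^{-1}$.
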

%
%
\subsection{Degree function.}
\label{subsec:degree}

Let $\cl : \BR \otimes_{\BZ} P_{\af} \twoheadrightarrow 
(\BR \otimes_{\BZ} P_{\af})/\BR\delta$ denote the canonical projection. 
For an element 
$\eta=(x_{1},\,\dots,\,x_{s}\,;\,a_{0},\,a_{1},\,\dots,\,a_{s}) \in \sLS$, 
we define $\cl(\eta)$ to be the piecewise-linear, continuous map 
$\cl(\eta) : [0,1] \rightarrow \BR \otimes_{\BZ} P_{\af}/\BR\delta$
whose ``direction vector'' for the interval 
$[a_{u-1},\,a_{u}]$ is equal to $\cl(x_{u}\lambda)$ 
for each $1 \le u \le s$; that is, 
\begin{equation} \label{eq:cl-LSpath}
\begin{split}
& (\cl(\eta))(t) := 
  \sum_{p = 1}^{u-1}(a_p - a_{p-1}) \cl(x_{p}\lambda) + (t - a_{u-1}) \cl(x_{u}\lambda) \\[-1.5mm]
& \hspace*{60mm} \text{for $t \in [a_{u-1},\,a_u]$, $1 \le u \le s$}.
\end{split}
\end{equation}
Because the map 
$\ol{\eta}:[0,1] \rightarrow \BR \otimes_{\BZ} P_{\af}$ defined by \eqref{eq:LSpath}
is an LS path of shape $\lambda$, 
the map $\cl(\eta)$ above is a ``projected (by $\cl$)'' 
LS path of shape $\lambda$, 
introduced in \cite[(3.4)]{NStensor} and on page 117 of \cite{NSpext2} 
(see also \cite[\S2.2]{LNSSS2}). By \cite[Theorem~3.3]{LNSSS2}, 
the crystal, denoted by $\BB(\lambda)_{\cl}$ in \cite{NStensor} and \cite{NSpext2}, 
of ``projected'' LS paths of shape $\lambda$ 
is identical to the crystal, denoted by $\QLS(\lambda)$, of 
all quantum Lakshmibai-Seshadri paths (QLS paths for short) of shape $\lambda$ 
under the identification $\cl(W_{\af}\lambda) = W \cl(\lambda) \cong W^{J}$; 
for the definition of QLS paths of shape $\lambda$, see \cite[\S3.2]{LNSSS2}. 

\begin{rem}
Define a surjective map $\cl : (W^{J})_{\af} \twoheadrightarrow W^{J}$ by 
(see \eqref{eq:W^J_af})
\begin{equation*}
\cl (x) = w \quad \text{if $x = wz_{\xi}t_{\xi}$, 
   with $w \in W^{J}$ and $\xi \in \jad$}. 
\end{equation*}
The quantum LS path of shape $\lambda$ corresponding to 
the map $\cl(\eta)$ defined by \eqref{eq:cl-LSpath} is given as:
\begin{equation*}
 (\cl(x_{1}),\,\dots,\,\cl(x_{s})\,;\,a_{0},\,a_{1},\,\dots,\,a_{s}),
\end{equation*}
where, for each $1 \le p < q \le s$ such that $\cl(x_{p})= \cdots = \cl(x_{q})$, 
we drop $\cl(x_{p}),\,\dots,\,\cl(x_{q-1})$ and $a_{p},\,\dots,\,a_{q-1}$. 
\end{rem}

Thus we obtain a map $\cl:\BB^{\si}(\lambda) \rightarrow \QLS(\lambda)$. 
Recall that root operators on $\QLS(\lambda)$ are defined 
in exactly the same manner as those on $\BB^{\si}(\lambda)$ 
(see, e.g., \cite[\S2.3]{LNSSS2}); 
from these definitions of root operators on
$\BB^{\si}(\lambda)$ and $\QLS(\lambda)$, we see that 
the map $\cl:\BB^{\si}(\lambda) \rightarrow \QLS(\lambda)$ commutes with 
root operators. Because the crystal $\QLS(\lambda)$ is connected by 
\cite[Proposition~3.23]{NStensor}, 
the map $\cl$ above is surjective. 
%
%
\begin{lem} \label{lem:cl-inv}
Let $\psi \in \QLS(\lambda)$, and assume that $\psi=X\cl(\eta_{e})$ 
for some monomial $X$ in root operators. Then, 
\begin{equation} \label{eq:cl-inv}
\cl^{-1}(\psi)=\bigl\{X S_{t_{\zeta}}\eta^{C} \mid 
 C \in \Conn(\BB^{\si}(\lambda)),\,\zeta \in Q^{\vee}_{I \setminus J} \bigr\}; 
\end{equation}
for the definition of $\eta^{C}$, see Proposition~\ref{prop:SLS}\,(1).
\end{lem}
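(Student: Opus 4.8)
The plan is to reduce to the special case $\psi=\cl(\eta_{e})$ by lifting root operators through $\cl$, and then to analyze this single fiber explicitly using Remark~\ref{rem:extp}(2) together with the bookkeeping of Peterson's coset representatives.

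First I would record that $\cl$ is a \emph{strict} morphism of crystals, not merely one commuting with root operators. Since $\cl$ is induced by the projection modulo $\BR\delta$ and $\pair{\alpha_{j}^{\vee}}{\delta}=0$ for all $j\in I_{\af}$, the functions $H_{j}^{\eta}(t)=\pair{\alpha_{j}^{\vee}}{\ol{\eta}(t)}$ and $H_{j}^{\cl(\eta)}(t)$ coincide; hence $\ve_{j}(\eta)=\ve_{j}(\cl(\eta))$ and $\vp_{j}(\eta)=\vp_{j}(\cl(\eta))$ for every $\eta\in\sLS$ and $j\in I_{\af}$. From this I would derive the fiber-lifting identity: for $\psi'\in\QLS(\lambda)$ and $j\in I_{\af}$ with $f_{j}\psi'\neq\bzero$,
\begin{equation*}
\cl^{-1}(f_{j}\psi')=\bigl\{f_{j}\eta \mid \eta\in\cl^{-1}(\psi')\bigr\},
\end{equation*}
and the analogous identity for $e_{j}$ when $e_{j}\psi'\neq\bzero$. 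The inclusion $\supseteq$ holds because each $\eta\in\cl^{-1}(\psi')$ has $\vp_{j}(\eta)=\vp_{j}(\psi')\geq 1$, so $f_{j}\eta\neq\bzero$ and $\cl(f_{j}\eta)=f_{j}\psi'$; the inclusion $\subseteq$ holds because any $\eta'$ with $\cl(\eta')=f_{j}\psi'$ has $\ve_{j}(\eta')\geq 1$, whence $e_{j}\eta'\in\cl^{-1}(\psi')$ and $\eta'=f_{j}(e_{j}\eta')$.

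Next I would establish the base case
\begin{equation*}
\cl^{-1}(\cl(\eta_{e}))=\bigl\{S_{t_{\zeta}}\eta^{C} \mid C\in\Conn(\sLS),\,\zeta\in Q_{I\setminus J}^{\vee}\bigr\}.
\end{equation*}
Since $\cl(\eta_{e})$ is the straight path of direction $\cl(\lambda)$, an element $\eta$ lies in this fiber precisely when $\ol{\eta}(t)\equiv t\lambda\bmod\BR\delta$; by Remark~\ref{rem:extp}(2) such an $\eta$ has all directions of the form $z_{\xi}t_{\xi}$ with $\xi\in\jad$ and satisfies $\eta=S_{\kappa(\eta)}\eta^{C}$ for $C$ its component, so it is determined by the pair $(C,\kappa(\eta))$. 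For $\supseteq$, a direct computation with \eqref{eq:SxetaC} shows each direction of $S_{t_{\zeta}}\eta^{C}$ has $W^{J}$-part equal to $e$ (the $W$-part of $t_{\zeta}z_{\xi_{k}}t_{\xi_{k}}$ is $z_{\xi_{k}}\in W_{J}$, whose minimal representative is $e$), so $S_{t_{\zeta}}\eta^{C}$ lies in the fiber. For $\subseteq$, given $\eta=S_{z_{\xi}t_{\xi}}\eta^{C}$ in the fiber I would set $\zeta:=[\xi]\in Q_{I\setminus J}^{\vee}$; the uniqueness in Lemma~\ref{lem:J-adj}(1) forces $\zeta+\phi_{J}(\zeta)=\xi$ and $z_{\zeta}=z_{\xi}$, so Lemma~\ref{lem:J-adj}(2) gives $\kappa(S_{t_{\zeta}}\eta^{C})=\PJ(t_{\zeta})=z_{\xi}t_{\xi}=\kappa(\eta)$, whence $S_{t_{\zeta}}\eta^{C}=\eta$ by the determination above. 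That $\zeta\mapsto\zeta+\phi_{J}(\zeta)$ is a bijection $Q_{I\setminus J}^{\vee}\to\jad$ guarantees nothing is missed.

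Finally I would assemble the two pieces. Writing $\psi=X\cl(\eta_{e})$ with $X$ a monomial in the root operators and noting that $\psi\neq\bzero$ forces every partial application of $X$ to $\cl(\eta_{e})$ to be nonzero, I would iterate the fiber-lifting identity to obtain $\cl^{-1}(\psi)=\{X\eta\mid\eta\in\cl^{-1}(\cl(\eta_{e}))\}$, and then substitute the base case to conclude. I expect the main obstacle to be the base case, specifically reconciling the parametrization of the ``straight modulo $\delta$'' fiber furnished by Remark~\ref{rem:extp}(2) (indexed by $\xi\in\jad$) with the one in the statement (indexed by $\zeta\in Q_{I\setminus J}^{\vee}$); this is where the Peterson-coset computations of Lemma~\ref{lem:J-adj} and formula \eqref{eq:SxetaC} must be handled with care.
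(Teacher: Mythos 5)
Your proposal is correct and is essentially the paper's own argument: the paper proves the nontrivial inclusion by exactly your fiber-lifting identity, applied inductively along the monomial $X$ (peeling off one operator $x_{j}$ at a time via the inverse operator $y_{j}$), and its base case $\psi=\cl(\eta_{e})$ likewise starts from Remark~\ref{rem:extp}\,(2). The only difference is cosmetic, in how the base case is finished: the paper shows $S_{z_{\xi}t_{\xi}}\eta^{C}=S_{t_{[\xi]}}\eta^{C}$ directly from the triviality of $S_{r_{j}}$ for $j\in J$ on weight-$\lambda$-mod-$\delta$ elements and of $S_{t_{\beta}}$ on $\eta^{C}$ for $\beta\in Q_{J}^{\vee}$, while you get the same equality from $\kappa(S_{t_{[\xi]}}\eta^{C})=\PJ(t_{[\xi]})=z_{\xi}t_{\xi}$ together with the fact that elements of the fiber are determined by their component and final direction --- noting only that your claim $z_{[\xi]}=z_{\xi}$ is not literally a consequence of the uniqueness in Lemma~\ref{lem:J-adj}\,(1), but follows in one line from $\PJ(t_{[\xi]})=\PJ(t_{\xi})$, since $t_{\xi-[\xi]}\in(W_{J})_{\af}$.
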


\begin{proof}
First we prove the inclusion $\supset$. 
Let $C \in \Conn(\BB^{\si}(\lambda))$, 
and $\zeta \in Q^{\vee}_{I \setminus J}$. 
Since the map $\cl:\BB^{\si}(\lambda) \twoheadrightarrow \QLS(\lambda)$ 
commutes with root operators, and since 
$\cl(S_{t_{\zeta}}\eta^{C}) = \cl(\eta_{e})$ (see Remark~\ref{rem:extp}\,(2)), 
we have
\begin{equation*}
\cl(XS_{t_{\zeta}}\eta^{C}) = X \cl(S_{t_{\zeta}}\eta^{C}) = 
X \cl(\eta_{e}) = \psi, 
\end{equation*}
which implies that $XS_{t_{\zeta}}\eta^{C} \in \cl^{-1}(\psi)$. 

Next we prove the opposite inclusion $\subset$. 
Write $X$ as: $X=x_{j_{1}}x_{j_{2}} \cdots x_{j_{p}}$, where 
$x_{j_{q}}$ is either $e_{j_{q}}$ or $f_{j_{q}}$ for each $1 \le q \le p$. 
We prove the inclusion $\subset$ by induction on $p$. 
Assume that $p=0$, i.e., $\psi=\cl(\eta_{e})$. 
If $\eta \in \cl^{-1}(\psi)$, then we deduce from Remark~\ref{rem:extp}\,(2)
that $\eta$ is of the form:
\begin{equation*}
\eta = 
 (z_{\zeta_1}t_{\zeta_1},\,\dots,\,z_{\zeta_{s}}t_{\zeta_{s}} \,;\, 
  a_{0},\,a_{1},\,\dots,\,a_{s})
\end{equation*}
for some $s \ge 1$ and $\zeta_{1},\,\ldots,\,\zeta_{s} \in \jad$, 
and hence that $\eta=S_{ z_{\zeta_{s}} t_{\zeta_{s}} }\eta^{C}$, 
with $C$ the connected component containing $\eta$. 
Because 
\begin{equation*}
\pair{\alpha_{j}^{\vee}}{ \underbrace{\wt (S_{ t_{\zeta_{s}} }\eta^{C})}_{\in \lambda+\BZ\delta}} = 
\pair{\alpha_{j}^{\vee}}{\lambda} = 0
\end{equation*}
for $j \in J$, we see that
$S_{r_{j}}S_{ t_{\zeta_{s}} }\eta^{C} = S_{ t_{\zeta_{s}} }\eta^{C}$ 
for $j \in J$. Therefore, we have
$S_{ z_{\zeta_{s}} t_{\zeta_{s}} }\eta^{C} = 
S_{ z_{\zeta_{s}} } S_{ t_{\zeta_{s}} }\eta^{C} = 
S_{ t_{\zeta_{s}} }\eta^{C}$.
Also, since $S_{ t_{\beta} }\eta^{C}=\eta^{C}$ 
for $\beta \in Q^{\vee}_{J}$, it follows that 
\begin{equation*}
\eta = S_{ z_{\zeta_{s}} t_{\zeta_{s}} }\eta^{C} =
S_{ t_{\zeta_{s}} }\eta^{C} = 
S_{ t_{ [\zeta_{s}] } } S_{ t_{ \zeta_{s}-[\zeta_{s}] } }\eta^{C} = 
S_{ t_{ [\zeta_{s}] } }\eta^{C}
\quad \text{(note that $\zeta_{s}-[\zeta_{s}] \in Q^{\vee}_{J}$)},
\end{equation*}
where
$[\,\cdot\,]:Q^{\vee} = Q^{\vee}_{I \setminus J} \oplus Q_{J}^{\vee} 
\twoheadrightarrow Q^{\vee}_{I \setminus J}$ denotes the projection 
(see \eqref{eq:prj}). 
Thus, we conclude that 
$\eta$ is contained in the set
$\bigl\{S_{t_{\zeta}}\eta^{C} \mid 
 C \in \Conn(\BB^{\si}(\lambda)),\,
 \zeta \in Q^{\vee}_{I \setminus J} \bigr\}$.

Assume now that $p > 0$, and set $j:=j_{1}$, $X':=x_{j_{2}} \cdots x_{j_{p}}$, 
$\psi':=X'\cl(\eta_{e}) \in \QLS(\lambda)$. We define $y_{j} := e_{j}$ 
(resp., $y_{j}:=f_{j}$) if $x_{j}=f_{j}$ (resp., $x_{j}=e_{j}$); 
note that $y_{j}\psi=\psi'$. 
Let $\eta \in \cl^{-1}(\psi)$. 
Since the map $\cl:\BB^{\si}(\lambda) \twoheadrightarrow \QLS(\lambda)$ 
commutes with root operators, we have 
$\cl(y_{j}\eta)=y_{j}\cl(\eta)=y_{j}\psi = \psi'$, and hence 
$y_{j}\eta \in \cl^{-1}(\psi')$. By our induction hypothesis, 
there exists $C \in \Conn(\BB^{\si}(\lambda))$ and 
$\zeta \in Q^{\vee}_{I \setminus J}$
such that $y_{j}\eta = X'S_{t_{\zeta}}\eta^{C}$. 
Therefore, we deduce that 
$\eta = x_{j} X'S_{t_{\zeta}}\eta^{C} = X S_{t_{\zeta}}\eta^{C}$. 
Thus we have proved the inclusion $\subset$. 
This completes the proof of the lemma.
\end{proof}
%
%
\begin{lem} \label{lem:pi-eta}
For each $\psi \in \QLS(\lambda)$, 
there exists a unique $\eta_{\psi} \in \BB^{\si}_{0}(\lambda)$ 
such that $\cl(\eta_{\psi})=\psi$ and $\kappa(\eta_{\psi}) \in W^{J}$. 
\end{lem}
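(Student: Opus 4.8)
The plan is to combine the explicit fibre description of $\cl$ from Lemma~\ref{lem:cl-inv} with an inductive computation of how the translation part of the final direction $\kappa$ evolves along a chain of root operators. Since $\QLS(\lambda)$ is connected, I would first write $\psi = X\cl(\eta_{e})$ for a monomial $X = x_{j_{1}} \cdots x_{j_{p}}$ in the root operators (each $x_{j}$ being $e_{j}$ or $f_{j}$). Because both $S_{t_{\zeta}}$ and the root operators preserve connected components, and because $\eta^{C} = \eta_{e}$ for $C = \sLSo$, Lemma~\ref{lem:cl-inv} yields $\cl^{-1}(\psi) \cap \sLSo = \{X S_{t_{\zeta}}\eta_{e} \mid \zeta \in Q^{\vee}_{I \setminus J}\}$; moreover $\pair{\alpha_{j}^{\vee}}{\delta} = 0$ gives $H_{j}^{\eta} = H_{j}^{\cl(\eta)}$, so $\cl$ preserves $\ve_{j}$ and $\vp_{j}$ and each $X S_{t_{\zeta}}\eta_{e}$ is a genuine (nonzero) path. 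Writing $\kappa(X S_{t_{\zeta}}\eta_{e}) = w_{\zeta} z_{\xi_{\zeta}} t_{\xi_{\zeta}}$ with $w_{\zeta} \in W^{J}$ and $\xi_{\zeta} \in \jad$, and recalling that $\jad \cap Q_{J}^{\vee} = \{0\}$ by Lemma~\ref{lem:J-adj}, the three conditions $\kappa(X S_{t_{\zeta}}\eta_{e}) \in W^{J}$, $\xi_{\zeta} = 0$, and $[\xi_{\zeta}] = 0$ become equivalent.

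The heart of the matter is the claim that $[\xi_{\zeta}] = \zeta + c_{X}$ for some constant $c_{X} \in Q^{\vee}_{I \setminus J}$ independent of $\zeta$, which I would prove by induction on $p$. For the base case $p = 0$, Remark~\ref{rem:extp}\,(2) gives $S_{t_{\zeta}}\eta_{e} = (\PJ(t_{\zeta})\,;\,0,1)$, whence $\kappa = z_{\zeta} t_{\zeta + \phi_{J}(\zeta)}$ and $[\xi_{\zeta}] = \zeta$. For the inductive step I set $\eta^{(\zeta)} := x_{j_{2}} \cdots x_{j_{p}} S_{t_{\zeta}}\eta_{e}$; since $\cl$ commutes with root operators, the projected path $\cl(\eta^{(\zeta)})$, and hence $w := \cl(\kappa(\eta^{(\zeta)}))$, is independent of $\zeta$. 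Because $z_{\xi}\lambda = \lambda$ and $\pair{\alpha_{j}^{\vee}}{\delta} = 0$, the quantity $\pair{\alpha_{j_{1}}^{\vee}}{\kappa(\eta^{(\zeta)})\lambda} = \pair{\alpha_{j_{1}}^{\vee}}{w\lambda}$ is also independent of $\zeta$, so whether $x_{j_{1}}$ alters the final direction is decided uniformly in $\zeta$. If it does not alter it, $[\xi_{\cdot}]$ is unchanged; if it sends $\kappa$ to $r_{j_{1}}\kappa$ with $j_{1} \in I$, then left multiplication by a finite reflection preserves the translation part, so $[\xi_{\cdot}]$ is again unchanged.

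The one genuinely delicate case is $j_{1} = 0$. Here $r_{0} = r_{\theta} t_{-\theta^{\vee}}$ (with $\alpha_{0} = \delta - \theta$), and writing $\kappa(\eta^{(\zeta)}) = w z_{\xi} t_{\xi}$ I would compute that $r_{0}\kappa(\eta^{(\zeta)})$ has translation part $\xi - z_{\xi}^{-1} w^{-1}\theta^{\vee}$ up to a $\phi_{J}$-correction lying in $Q_{J}^{\vee}$. The point that makes the induction close is that $W_{J}$ acts trivially modulo $Q_{J}^{\vee}$ (each $r_{j}$, $j \in J$, changes a coroot by a multiple of $\alpha_{j}^{\vee}$), so $[z_{\xi}^{-1} w^{-1}\theta^{\vee}] = [w^{-1}\theta^{\vee}]$ depends only on the $\zeta$-independent element $w$; thus $[\xi_{\cdot}]$ changes by the constant $-[w^{-1}\theta^{\vee}]$, establishing the claim. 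Existence then follows by taking $\zeta = -c_{X}$, which forces $[\xi_{\zeta}] = 0$ and hence $\kappa \in W^{J}$, and uniqueness follows since $[\xi_{\zeta}] = 0$ determines $\zeta = -c_{X}$. I expect the bookkeeping around the $\phi_{J}$-corrections, together with the careful verification that the alter/no-alter dichotomy for the final direction is genuinely $\zeta$-independent, to be the main obstacle to making this argument fully rigorous.
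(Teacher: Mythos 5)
Your proposal is correct, and it reaches the lemma by a route that differs from the paper's in its key technical step. Both arguments begin identically: write $\psi = X\cl(\eta_{e})$ and use Lemma~\ref{lem:cl-inv} to identify $\cl^{-1}(\psi)\cap\BB^{\si}_{0}(\lambda)$ with $\bigl\{XS_{t_{\zeta}}\eta_{e} \mid \zeta \in Q^{\vee}_{I\setminus J}\bigr\}$. The difference lies in how one controls $\kappa(XS_{t_{\zeta}}\eta_{e})$ as $\zeta$ varies. The paper invokes \cite[Lemma~7.1.4]{INS} as a black box: it yields the product formula $\kappa(XS_{t_{\zeta}}\eta_{e}) = \kappa(X\eta_{e})\,\PJ(t_{\zeta})$, from which existence follows by taking $\zeta=[-\xi]$ (where $\kappa(X\eta_{e})=wz_{\xi}t_{\xi}$) together with the uniqueness of the Peterson decomposition, and uniqueness follows because $\PJ(t_{\zeta_{1}})=\PJ(t_{\zeta_{2}})$ forces $S_{t_{\zeta_{1}}}\eta_{e}=S_{t_{\zeta_{2}}}\eta_{e}$. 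You instead reprove, by induction on the length of $X$, exactly the $[\,\cdot\,]$-projected shadow of that formula: $[\xi_{\zeta}]=\zeta+c_{X}$ with $c_{X}$ independent of $\zeta$ (indeed $c_{X}=[\xi]$ in the paper's notation). Your three cases are sound: the alter/no-alter dichotomy at each step is uniform in $\zeta$; a reflection $r_{j}$ with $j\in I$ leaves the translation component untouched; and for $j=0$ the translation component changes by $-z_{\xi}^{-1}w^{-1}\theta^{\vee}$, whose projection $-[w^{-1}\theta^{\vee}]$ depends only on the $\zeta$-independent element $w$, since $W_{J}$ acts trivially modulo $Q_{J}^{\vee}$. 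What the paper's route buys is brevity; what yours buys is self-containedness, in that you never need the full strength of \cite[Lemma~7.1.4]{INS}, only its image under $[\,\cdot\,]$, and the verification is elementary.

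One repair is needed in the inductive step. You justify the uniformity of the alter/no-alter dichotomy by the $\zeta$-independence of $\pair{\alpha_{j_{1}}^{\vee}}{\kappa(\eta^{(\zeta)})\lambda}$. That pairing alone does not decide whether a single root operator reflects the final direction: positivity of the final slope is necessary but not sufficient for $f_{j_{1}}$ to do so (one needs $t_{1}=1$ in \eqref{eq:t-f}, which can fail even when the final slope is positive), and similarly for $e_{j_{1}}$ with \eqref{eq:t-e}. The correct justification is the one you already recorded at the outset: $H_{j_{1}}^{\eta^{(\zeta)}} = H_{j_{1}}^{\cl(\eta^{(\zeta)})}$, and $\cl(\eta^{(\zeta)})$ is independent of $\zeta$ because $\cl$ commutes with root operators; since the points $t_{0},\,t_{1}$ in \eqref{eq:t-e} and \eqref{eq:t-f} are determined entirely by this function, whether the last direction gets reflected is indeed decided uniformly in $\zeta$. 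With that substitution, your argument is complete.
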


\begin{proof}
We write $\psi = X\cl(\eta_{e})$ for some monomial $X$ 
in root operators. 
Note that for $C \in \Conn(\BB^{\si}(\lambda))$, 
$C=\BB^{\si}_{0}(\lambda)$ holds if and only if $\eta^{C}=\eta_{e}$ 
(see Proposition~\ref{prop:SLS}\,(1)). 
Therefore, by Lemma~\ref{lem:cl-inv}, 
\begin{equation} \label{eq:cl-inv1}
\cl^{-1}(\psi) \cap \BB^{\si}_{0}(\lambda) = 
 \bigl\{ X S_{t_{\zeta}}\eta_{e} \mid \zeta \in Q^{\vee}_{I \setminus J} \bigr\}. 
\end{equation}
Let us write $\kappa(X\eta_{e}) \in (W^{J})_{\af}$ as 
$\kappa(X\eta_{e}) = wz_{\xi}t_{\xi}$ for 
$w \in W^{J}$ and $\xi \in \jad$, and 
write $\PJ(t_{-\xi}) = z_{-\xi}t_{-\xi+\phi_{J}(-\xi)} \in (W^{J})_{\af}$ 
(see Lemma~\ref{lem:J-adj}\,(2)) as 
$\PJ(t_{-\xi}) = t_{-\xi}y$ for some $y \in (W_{J})_{\af}$. 
Since $S_{t_{\beta}}\eta_{e}=\eta_{e}$ for all $\beta \in Q_{J}^{\vee}$, 
we have $S_{t_{[-\xi]}}\eta_{e} = S_{t_{[-\xi]}}S_{t_{-\xi-[-\xi]}}\eta_{e} = 
S_{t_{-\xi}}\eta_{e}$, where
$[\,\cdot\,]:Q^{\vee} = Q^{\vee}_{I \setminus J} \oplus Q_{J}^{\vee} 
\twoheadrightarrow Q^{\vee}_{I \setminus J}$ is the projection 
(see \eqref{eq:prj}). 
Hence it follows from \eqref{eq:cl-inv1} that 
$XS_{t_{-\xi}}\eta_{e} = XS_{t_{[-\xi]}}\eta_{e}$ is contained in 
$\cl^{-1}(\psi) \cap \BB^{\si}_{0}(\lambda)$.
We show that $\kappa(XS_{t_{-\xi}}\eta_{e}) = w \in W^{J}$. 
Note that
%
%
\begin{equation} \label{eq:Seta}
S_{t_{-\xi}}\eta_{e} = 
(\PJ(t_{-\xi}) \,;\, 0,\,1) = 
(z_{-\xi}t_{-\xi+\phi_{J}(-\xi)} \,;\, 0,\,1) = 
(t_{-\xi}y \,;\, 0,\,1)
\end{equation}
by Remark~\ref{rem:extp}\,(2). 
Here we remark that $\eta_{e} = (e \,;\, 0,\,1)$ and 
$S_{t_{-\xi}}\eta_{e}$ are both of the form as in 
\cite[(7.1.1)]{INS}. 
Because $X\eta_{e}$ is of the form: 
\begin{equation*}
X\eta_{e} = 
(\dots,\, \underbrace{wz_{\xi}t_{\xi}}_{=\kappa(X\eta_{e})} \,;\,
  0,\,\dots,\,1) =
(\dots,\, (wz_{\xi}t_{\xi}) e \,;\,
  0,\,\dots,\,1)
\end{equation*}
by the assumption that $\kappa(X\eta)=wz_{\xi}t_{\xi}$, 
we deduce from \cite[Lemma~7.1.4]{INS}, together with \eqref{eq:Seta}, 
that $XS_{t_{-\xi}}\eta_{e}$ is of the form:
\begin{align*}
XS_{t_{-\xi}}\eta_{e} & = 
(\dots,\, (wz_{\xi}t_{\xi})(z_{-\xi}t_{-\xi+\phi_{J}(-\xi)}) \,;\,
  0,\,\dots,\,1) \\
& = 
(\dots,\, \underbrace{(wz_{\xi}t_{\xi})(t_{-\xi}y)}_{
      =\kappa(XS_{t_{-\xi}}\eta_{e})} \,;\,
  0,\,\dots,\,1),
\end{align*}
and hence that
$\kappa(XS_{t_{-\xi}}\eta_{e}) \in (W^{J})_{\af}$ is 
equal to $(wz_{\xi}t_{\xi})(t_{-\xi}y)$; here, 
$z_{\xi}y$ must be equal to $e$, since 
$(wz_{\xi}t_{\xi})(t_{-\xi}y)=wz_{\xi}y \in (W^{J})_{\af}$, 
with $w \in W^{J} \subset (W^{J})_{\af}$ and 
$z_{\xi}y \in (W_{J})_{\af}$. Hence we conclude that 
$\kappa(XS_{t_{-\xi}}\eta_{e}) = w$. 
This proves the existence of $\eta_{\psi}$. 

It remains to prove the uniqueness of $\eta_{\psi}$. 
Assume that $\kappa(X S_{t_{\zeta_{1}}}\eta_{e}),\,
\kappa(X S_{t_{\zeta_{2}}}\eta_{e}) \in W^{J}$ for some 
$\zeta_{1},\,\zeta_{2} \in Q^{\vee}_{I \setminus J}$. 
Then we have 
$\kappa(X S_{t_{\zeta_{1}}}\eta_{e}) = 
 \kappa(X S_{t_{\zeta_{2}}}\eta_{e})$.
Observe that 
\begin{equation*}
S_{ t_{\zeta_{k}} }\eta_{e} = (\PJ(t_{\zeta_{k}})\,;\,0,\,1) = 
(z_{\zeta_{k}} t_{\zeta_{k}+\phi_{J}(\zeta_{k})}\,;\,0,\,1) \quad 
\text{for $k=1,\,2$}.
\end{equation*}
If we write $\kappa(X\eta_{e}) = wz_{\xi}t_{\xi}$ as above, 
then we see from \cite[Lemma~7.1.4]{INS} that
\begin{equation*}
\kappa(X S_{t_{\zeta_{1}}}\eta_{e}) = (wz_{\xi}t_{\xi})\PJ(t_{\zeta_{1}})
\quad \text{and} \quad
\kappa(X S_{t_{\zeta_{2}}}\eta_{e}) = (wz_{\xi}t_{\xi})\PJ(t_{\zeta_{2}}).
\end{equation*}
Therefore, we have $(wz_{\xi}t_{\xi})\PJ(t_{\zeta_{1}}) = 
(wz_{\xi}t_{\xi})\PJ(t_{\zeta_{2}})$, and hence 
$\PJ(t_{\zeta_{1}}) = \PJ(t_{\zeta_{2}})$. 
From this, it follows that 
\begin{equation*}
S_{t_{\zeta_{1}}}\eta_{e} = (\PJ(t_{\zeta_{1}})\,;\,0,\,1) = 
(\PJ(t_{\zeta_{2}})\,;\,0,\,1) = S_{t_{\zeta_{2}}}\eta_{e},
\end{equation*}
which implies that 
$XS_{t_{\zeta_{1}}}\eta_{e} = XS_{t_{\zeta_{2}}}\eta_{e}$. 
Thus we have proved the uniqueness of $\eta_{\psi}$. 
This completes the proof of the lemma.
\end{proof}

Now we define the (tail) degree function 
$\Degt:\QLS(\lambda) \rightarrow \BZ_{\le 0}$ 
as follows. 
For $\psi \in \QLS(\lambda)$, take 
$\eta_{\psi} \in \BB^{\si}_{0}(\lambda)$ 
as in Lemma~\ref{lem:pi-eta}. We write $\eta_{\psi}$ as:
\begin{equation*}
\eta_{\psi}
= (w_{1}z_{\xi_{1}}t_{\xi_{1}},\,\dots,\,
   w_{s-1}z_{\xi_{s-1}}t_{\xi_{s-1}},\,w;
a_{0},\,a_{1},\,\dots,\,a_{s-1},\,a_{s})
\end{equation*}
for $w_{1},\,\dots,\,w_{s-1},\,w \in W^{J}$ and 
$\xi_{1},\,\dots,\,\xi_{s-1} \in \jad$. Since 
\begin{equation*}
w_{1}z_{\xi_{1}}t_{\xi_{1}} \sig \cdots \sig 
 w_{s-1}z_{\xi_{s-1}}t_{\xi_{s-1}} \sig w
\end{equation*}
in the semi-infinite Bruhat order, 
it follows from Remark~\ref{rem:SiB} (and the definition of 
the semi-infinite Bruhat order) that 
$[\xi_{s-1}] = [\xi_{s-1}-0] \in \QJp{I \setminus J}$, and 
$[\xi_{u}-\xi_{u+1}] \in \QJp{I \setminus J}$ for every $1 \le u \le s-2$; 
in particular, $[\xi_{u}] \in \QJp{I \setminus J}$ for all $1 \le u \le s-1$. 
Therefore, we have
\begin{equation*}
w_{u}z_{\xi_{u}}t_{\xi_{u}}\lambda \in 
w_{u}\lambda+\BZ_{\le 0}\delta \subset 
\lambda-Q^{+}+\BZ_{\le 0}\delta
\end{equation*}
for every $1 \le u \le s-1$. Also, we have 
$w\lambda \in \lambda - Q^{+}$. 
From these, we deduce that 
%
%
\begin{equation} \label{eq:wtep}
\wt(\eta_{\psi}) = 
\underbrace{\lambda-\beta}_{=\wt(\psi)} + K \delta \quad 
\text{for some $\beta \in Q^{+}$ and $K \in \BZ_{\le 0}$}.
\end{equation}
We define $\Degt(\psi):=K \in \BZ_{\le 0}$; 
it is easily seen that the $\Degt$ thus defined agrees with 
$\Deg_{\lambda} \circ S$ in \cite[Corollary~4.8]{LNSSS2}, 
where $\Deg_{\lambda}:\QLS(\lambda)=\BB(\lambda)_{\cl} \rightarrow \BZ_{\le 0}$ 
is the degree function defined in \cite[\S3.1]{NSdeg} (see also \cite[\S4.2]{LNSSS2}), 
and $S$ is the Lusztig involution on $\QLS(\lambda)=\BB(\lambda)_{\cl}$ 
(see \cite[\S4.5]{LNSSS2}). The next proposition follows from the proof of 
\cite[Proposition~7.8]{LNSSS2}.
%
%
\begin{prop} \label{prop:DegP}
Keep the notation and setting above. 
There holds the equality
\begin{equation} \label{eq:DegP}
\sum_{\psi \in \QLS(\lambda)} 
q^{\Degt(\psi)}x^{\wt (\psi)} = 
P_{\lambda}(x\,;\,q^{-1},\,0).
\end{equation}
\end{prop}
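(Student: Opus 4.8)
The plan is to deduce the identity \eqref{eq:DegP} from the corresponding generating-function identity in \cite{LNSSS2}, transferring it through the Lusztig involution. First I would record that $\QLS(\lambda)$ is a \emph{finite} set, so that the left-hand side of \eqref{eq:DegP} is a genuine polynomial in $x$ and $q^{-1}$ (recall $\Degt$ takes values in $\BZ_{\le 0}$) and there is no convergence issue to worry about. The proof of \cite[Proposition~7.8]{LNSSS2} establishes, via the quantum alcove model and the $t=0$ specialization of the Ram--Yip type expansion of $P_{\lambda}(x\,;\,q,\,t)$, an identity of the form
\begin{equation*}
\sum_{\psi \in \QLS(\lambda)} q^{\Deg_{\lambda}(\psi)} x^{\wt(\psi)} = P_{\lambda}(x\,;\,q^{-1},\,0),
\end{equation*}
where $\Deg_{\lambda}:\QLS(\lambda) \rightarrow \BZ_{\le 0}$ is the (head) degree function of \cite[\S3.1]{NSdeg}. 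Thus the remaining task is to replace $\Deg_{\lambda}$ by $\Degt=\Deg_{\lambda}\circ S$ on the left-hand side without altering the sum.

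To carry this out I would invoke two properties of the Lusztig involution $S$ on $\QLS(\lambda)$ recorded in \cite[\S4.5]{LNSSS2}: that $S$ is an involutive bijection of $\QLS(\lambda)$ onto itself, and that it reverses weights through the longest element, i.e. $\wt(S\psi)=w_{0}\wt(\psi)$ for all $\psi \in \QLS(\lambda)$. Combining the equality $\Degt=\Deg_{\lambda}\circ S$ (established in the discussion preceding the proposition, cf.~\cite[Corollary~4.8]{LNSSS2}) with a reindexing of the sum under the bijection $\psi \mapsto S\psi$, and using $S^{2}=\mathrm{id}$, I obtain
\begin{align*}
\sum_{\psi \in \QLS(\lambda)} q^{\Degt(\psi)} x^{\wt(\psi)}
&= \sum_{\psi \in \QLS(\lambda)} q^{\Deg_{\lambda}(S\psi)} x^{\wt(\psi)}
 = \sum_{\psi \in \QLS(\lambda)} q^{\Deg_{\lambda}(\psi)} x^{\wt(S\psi)} \\
&= \sum_{\psi \in \QLS(\lambda)} q^{\Deg_{\lambda}(\psi)} x^{w_{0}\wt(\psi)}.
\end{align*}
The last expression is exactly the image of the generating function $\sum_{\psi} q^{\Deg_{\lambda}(\psi)} x^{\wt(\psi)}=P_{\lambda}(x\,;\,q^{-1},\,0)$ under the Weyl group action $x \mapsto w_{0}x$ on the variables. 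Since the symmetric Macdonald polynomial $P_{\lambda}(x\,;\,q,\,t)$, hence its specialization $P_{\lambda}(x\,;\,q^{-1},\,0)$, is $W$-invariant, this image equals $P_{\lambda}(x\,;\,q^{-1},\,0)$, which yields \eqref{eq:DegP}.

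The step I expect to require the most care is matching conventions with \cite{LNSSS2}: in particular the sign convention of the degree function together with the accompanying substitution $q \leftrightarrow q^{-1}$, and the precise normalization of the Lusztig involution $S$, so that both $\Degt=\Deg_{\lambda}\circ S$ and $\wt\circ S=w_{0}\circ\wt$ hold on the nose. Once these are pinned down, the argument is a purely formal reindexing combined with the $W$-symmetry of $P_{\lambda}$; the genuinely substantive input -- the expansion of $P_{\lambda}(x\,;\,q^{-1},\,0)$ as a degree-weighted sum over $\QLS(\lambda)$ -- is imported wholesale from the proof of \cite[Proposition~7.8]{LNSSS2}.
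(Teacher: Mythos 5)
Your proof is correct, but it takes a genuinely different (if mild) detour from the paper's. The paper's entire proof of Proposition~\ref{prop:DegP} is the one-line citation that it ``follows from the proof of \cite[Proposition~7.8]{LNSSS2}''; the point, made explicit later in the proof of Theorem~\ref{thm:grch2} (see \eqref{eq:P}), is that the proof of that cited proposition \emph{already} establishes the $S$-twisted identity
$P_{\lambda}(x\,;\,q,\,0)=\sum_{\eta \in \QLS(\lambda)} q^{-\Deg_{\lambda}(S(\eta))}x^{\wt(\eta)}$,
which, since $\Degt=\Deg_{\lambda}\circ S$, becomes \eqref{eq:DegP} upon substituting $q \mapsto q^{-1}$ --- no reindexing at all. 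You instead import only the \emph{stated} form of \cite[Proposition~7.8]{LNSSS2} (the untwisted sum weighted by $\Deg_{\lambda}$, which is indeed how the paper quotes that proposition at the end of the proof of Theorem~\ref{thm:grch2}) and then produce the twisted sum yourself, via the reindexing $\psi \mapsto S\psi$, the property $\wt(S\psi)=w_{0}\wt(\psi)$ of the Lusztig involution, and the $W$-invariance of $P_{\lambda}$. Both arguments are sound. Yours has the advantage of relying only on the statement of the cited result plus standard facts, rather than on an identity buried inside its proof --- cleaner citation hygiene --- at the cost of needing $\wt\circ S=w_{0}\circ\wt$ to hold in the normalization of \cite[\S 4.5]{LNSSS2}, which it does and which is precisely the convention-matching issue you flag. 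It is worth noting that the paper carries out a conversion of the same nature in the proof of Theorem~\ref{thm:grch2}, but it does so through the duality map $\ast:\QLS(-w_{0}\lambda)\rightarrow\QLS(\lambda)$ and \cite[Corollary~7.4]{LNSSS2} (namely $\Deg_{-w_{0}\lambda}(S(\eta))=\Deg_{\lambda}(\eta^{\ast})$ and $\wt(\eta^{\ast})=-\wt(\eta)$) rather than through your $w_{0}$-twist-plus-symmetry mechanism; the two devices are interchangeable here.
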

%
%
\subsection{Proof of the graded character formula.}
\label{subsec:prf-grch1}

We recall from \S\ref{subsec:demazure} that 
$V_{e}^{-}(\lambda) = 
\bigoplus_{b \in \CB_{e}^{-}(\lambda)} \BQ(q_{s})G(b)$. 
Therefore, by Theorem~\ref{thm:main}, we obtain
\begin{equation*}
\ch V_{e}^{-}(\lambda) = 
\sum_{\eta \in \BB^{\si}_{\sige e}(\lambda)} x^{\wt (\eta)}.
\end{equation*}
Because 
$\BB^{\si}_{\sige e}(\lambda) = 
\bigsqcup_{\psi \in \QLS(\lambda)} 
 \bigl(\cl^{-1}(\psi) \cap \BB^{\si}_{\sige e}(\lambda)\bigr)$, we see that
\begin{equation} \label{eq:ch1}
\ch V_{e}^{-}(\lambda) = 
\sum_{\psi \in \QLS(\lambda)} 
\Biggl(\underbrace{\sum_{\eta \in \cl^{-1}(\psi) \cap \BB^{\si}_{\sige e}(\lambda)} 
x^{\wt (\eta)}}_{(\ast)}\Biggr).
\end{equation}
In order to obtain a graded character formula for $V_{e}^{-}(\lambda)$, 
we will compute the sum ($\ast$) above of the terms $x^{\wt(\eta)}$ 
over all $\eta \in \cl^{-1}(\psi) \cap \BB^{\si}_{\sige e}(\lambda)$
for each $\psi \in \QLS(\lambda)$. 
Let $\psi \in \QLS(\lambda)$, and take 
$\eta_{\psi} \in \BB^{\si}_{0}(\lambda)$ 
as in Lemma~\ref{lem:pi-eta}. 
Let $X$ be a monomial in root operators 
such that $\eta_{\psi}=X\eta_{e}$; we see that 
$\psi = X \cl(\eta_{e})$. By Lemma~\ref{lem:cl-inv}, 
we have 
\begin{equation*}
\cl^{-1}(\psi)=\bigl\{X S_{t_{\zeta}}\eta^{C} \mid 
 C \in \Conn(\BB^{\si}(\lambda)),\,
 \zeta \in Q^{\vee}_{I \setminus J} \bigr\}.
\end{equation*}

We claim that
%
%
\begin{equation} \label{eq:grch1-1}
\cl^{-1}(\psi) \cap \BB^{\si}_{\sige e}(\lambda) = 
\bigl\{X S_{t_{\zeta}}\eta^{C} \mid 
 C \in \Conn(\BB^{\si}(\lambda)),\,
 \zeta \in Q^{\vee+}_{I \setminus J} \bigr\}.
\end{equation}
First we show the inclusion $\subset$. 
Let $\eta \in \cl^{-1}(\psi) \cap \BB^{\si}_{\sige e}(\lambda)$, 
and write $\eta$ as $\eta=X S_{t_{\zeta}}\eta^{C}$ 
for $C \in \Conn(\BB^{\si}(\lambda))$ and 
$\zeta \in Q^{\vee}_{I \setminus J}$. 
Let $w:=\kappa(\eta_{\psi}) \in W^{J}$; note that 
$\eta_{e} = (e \,;\,0,\,1)$ is of the form as in \cite[(7.1.1)]{INS}, 
and $X\eta_{e} = \eta_{\psi}$ is of the form:
\begin{equation*}
X\eta_{e} = (\dots,\,w \,;\,0,\,\dots,\,1) = 
(\dots,\,w e \,;\,0,\,\dots,\,1). 
\end{equation*}
We see from Remark~\ref{rem:extp}\,(2) that 
$S_{t_{\zeta}}\eta^{C}$ is also of the form as in \cite[(7.1.1)]{INS}, 
with $\kappa(S_{t_{\zeta}}\eta^{C}) = \PJ(t_{\zeta})$ 
(recall that $\kappa(\eta^{C})=e$). 
Therefore, we deduce from \cite[Lemma~7.1.4]{INS} that 
$XS_{t_{\zeta}}\eta^{C}$ is of the form:
\begin{equation*}
XS_{t_{\zeta}}\eta^{C} = 
 (\dots,\,w \PJ(t_{\zeta}) \,;\,0,\,\dots,\,1),
\end{equation*}
and hence that 
$\kappa(XS_{t_{\zeta}}\eta^{C}) = w \PJ(t_{\zeta}) = 
 wz_{\zeta}t_{\zeta+\phi_{J}(\zeta)}$ (see Lemma~\ref{lem:J-adj}\,(2)). 
Since $\eta = XS_{t_{\zeta}}\eta^{C} \in \BB^{\si}_{\sige e}(\lambda)$, we have 
$wz_{\zeta}t_{\zeta+\phi_{J}(\zeta)} \sige e$. 
Hence it follows from Remark~\ref{rem:SiB} that 
$\zeta = [\zeta+\phi_{J}(\zeta)] \in Q_{I \setminus J}^{\vee}$ 
is contained in $Q^{\vee+}_{I \setminus J}$. 
Thus, $\eta$ is contained in the set on the right-hand side of \eqref{eq:grch1-1}. 
Conversely, let $C \in \Conn(\BB^{\si}(\lambda))$, and 
$\zeta \in Q^{\vee+}_{I \setminus J}$. 
Then, by the same argument as above, we see that 
$\kappa(X S_{t_{\zeta}}\eta^{C})$ is equal to $wz_{\zeta}t_{\zeta+\phi_{J}(\zeta)}$, 
where $w=\kappa(\eta_{\psi}) \in W^{J}$. 
Since $[(\zeta+\phi_{J}(\zeta))-0] = [\zeta] = \zeta \in 
Q^{\vee+}_{I \setminus J}$, we see from \cite[Proposition~6.2.2]{INS} 
(with $a=1)$ that $z_{\zeta}t_{\zeta+\phi_{J}(\zeta)} 
\sige e \ (=t_{0})$. 
Also, we see from Lemma~\ref{lem:SiB2} that 
$w z_{\zeta}t_{\zeta+\phi_{J}(\zeta)} \sige z_{\zeta}t_{\zeta+\phi_{J}(\zeta)}$, 
since $w \ge e$ in the (ordinary) Bruhat order $\ge$ on $W^{J}$.
Combining these inequalities, 
we obtain $\kappa(X S_{t_{\zeta}}\eta^{C}) = 
wz_{\zeta}t_{\zeta+\phi_{J}(\zeta)} \sige e$, 
which implies that $X S_{t_{\zeta}}\eta^{C} \in 
\cl^{-1}(\psi) \cap \BB^{\si}_{\sige e}(\lambda)$. 
This proves claim \eqref{eq:grch1-1}. 

Let $C \in \Conn(\BB^{\si}(\lambda))$, and write $\Theta(C) \in \Par(\lambda)$ as: 
$\Theta(C)=(\rho^{(i)})_{i \in I}$, with 
$\rho^{(i)} = (\rho^{(i)}_{1} \ge \cdots \ge \rho^{(i)}_{m_{i}-1})$ for each $i \in I$. 
Also, let $\zeta \in Q^{\vee+}_{I \setminus J}$, and 
write it as $\zeta = \sum_{i \in I} c_{i}\alpha_{i}^{\vee}$, 
with $c_{i} \in \BZ_{\ge 0}$, $i \in I$. 
For each $i \in I$, we set $c_{i}+\rho^{(i)} := 
(c_{i}+\rho^{(i)}_{1} \ge \cdots \ge c_{i}+\rho^{(i)}_{m_{i}-1} \ge c_{i})$, 
which is a partition of length less than or equal to $m_{i}$. Then we set 
%
%
\begin{equation} \label{eq:par+}
(c_{i})_{i \in I} + \Theta(C) : = 
(c_{i}+\rho^{(i)})_{i \in I} \in \ol{\Par(\lambda)};
\end{equation}
for the definition of $\ol{\Par(\lambda)}$, see \eqref{eq:olpar}. 
We compute: 
\begin{align*}
\wt (S_{t_{\zeta}}\eta^{C}) & = 
t_{\zeta} (\wt (\eta^{C})) = 
t_{\zeta}\bigl(\lambda-|(\rho^{(i)})_{i \in I}|\delta\bigr) =
\lambda - \pair{\zeta}{\lambda}\delta - |(\rho^{(i)})_{i \in I}|\delta \\[1.5mm]
& = \lambda - \biggl(\sum_{i \in I} m_{i}c_{i}\biggr)\delta  - |(\rho^{(i)})_{i \in I}|\delta 
  = \wt (\eta_{e}) - |(c_{i}+\rho^{(i)})_{i \in I}|\delta. 
\end{align*}
From this computation, together with \eqref{eq:wtep}, 
we deduce that
%
%
\begin{align} 
\wt ( X S_{t_{\zeta}}\eta^{C} ) & = 
\wt ( X\eta_{e} ) - |(c_{i}+\rho^{(i)})_{i \in I}|\delta = 
\wt ( \eta_{\psi} ) - |(c_{i}+\rho^{(i)})_{i \in I}|\delta \nonumber \\
& = \wt (\psi) + \bigl(\Degt(\psi) - |(c_{i}+\rho^{(i)})_{i \in I}|\bigr)\delta. 
\label{eq:wtXS}
\end{align}
Therefore, 
we conclude that for each $\psi \in \QLS(\lambda)$, 
\begin{align*}
\sum_{\eta \in \cl^{-1}(\psi) \cap \BB^{\si}_{\sige e}(\lambda)} x^{\wt (\eta)} 
 & = \sum_{
      \begin{subarray}{c} 
       C \in \Conn(\BB^{\si}(\lambda)) \\[1mm] 
       \zeta \in Q^{\vee+}_{I \setminus J}
      \end{subarray}}
     x^{\wt (XS_{t_{\zeta}}\eta^{C})} 
   = x^{\wt(\psi)}x^{\Degt(\psi)\delta} 
     \sum_{\bc \in \ol{\Par}(\lambda)} x^{-|\bc|\delta} \\[3mm]
 & = x^{\wt(\psi)} q^{\Degt(\psi)} 
     \sum_{\bc \in \ol{\Par}(\lambda)} q^{-|\bc|} \qquad 
     \text{(by replacing $x^{\delta}$ with $q$)} \\[3mm]
 & = x^{\wt(\psi)} q^{\Degt(\psi)}
     \prod_{i \in I} \prod_{r=1}^{m_{i}}(1-q^{-r})^{-1}.
\end{align*}
Substituting this into \eqref{eq:ch1}, 
we finally obtain
\begin{align*}
\gch V_{e}^{-}(\lambda) & = 
  \sum_{\psi \in \QLS(\lambda)} x^{\wt(\psi)} q^{\Degt(\psi)}
     \prod_{i \in I} \prod_{r=1}^{m_{i}}(1-q^{-r})^{-1} \\
  & = \biggl(\prod_{i \in I} \prod_{r=1}^{m_{i}}(1-q^{-r})\biggr)^{-1}
      P_{\lambda}(x\,;\,q^{-1},\,0) \qquad 
      \text{by Proposition~\ref{prop:DegP}}.
\end{align*}
This completes the proof of Theorem~\ref{thm:grch1}.
%
%
\begin{rem} \label{rem:wtXS}
Let $\psi \in \QLS(\lambda)$. 
We see from \eqref{eq:wtXS} that 
for every $\eta \in \cl^{-1}(\psi) \cap \BB^{\si}_{\sige e}(\lambda)$, 
\begin{equation*}
\wt (\eta) - \wt (\eta_{\psi}) \in \BZ_{\le 0} \delta,
\end{equation*}
with $\wt (\eta) - \wt (\eta_{\psi}) = 0$ if and only if 
$\eta =\eta_{\psi}$. 
\end{rem}
%
%
\subsection{Graded character formula for $V_{w_0}^{+}(\lambda)$.} 
\label{subsec:gch-w0}

We define the character $\ch V_{w_0}^{+}(\lambda)$ and 
the graded character $\gch V_{w_0}^{+}(\lambda)$ of 
the Demazure submodule $V_{w_0}^{+}(\lambda)$ in exactly the same manner 
as those of $V_{e}^{-}(\lambda)$ are defined in \S\ref{subsec:gch-e}. 

Recall from \S\ref{subsec:dual-sLS} the bijection 
$\vee:\BB^{\si}(\lambda) \rightarrow \BB^{\si}(-w_{0}\lambda)$. 
It follows from Lemma~\ref{lem:dual} that
%
%
\begin{equation} \label{eq:Demv}
\Bigl(\BB^{\si}_{\mcr{w_{0}}^{J} \sige}(\lambda)\Bigr)^{\vee} = 
\BB^{\si}_{\sige e}(-w_{0}\lambda).
\end{equation}
From this, together with Theorem~\ref{thm:main}, 
we deduce that 
\begin{align*}
\ch V_{w_0}^{+}(\lambda) = 
 \sum_{\eta \in \BB^{\si}_{\mcr{w_{0}}^{J} \sige}(\lambda)} x^{\wt(\eta)}
  = 
 \sum_{\eta \in \BB^{\si}_{\sige e}(-w_0\lambda)} x^{\wt(\eta^{\vee})}
   =
 \sum_{\eta \in \BB^{\si}_{\sige e}(-w_0\lambda)} x^{-\wt(\eta)}
 \quad \text{by \eqref{eq:veep}}, 
\end{align*}
which is equal to the one obtained from $\ch V_{e}^{-}(-w_{0}\lambda)$ 
by replacing $x$ with $x^{-1}$. Therefore,
the graded character $\gch V_{w_0}^{+}(\lambda)$ is 
obtained from the graded character $\gch V_{e}^{-}(-w_{0}\lambda)$ by 
replacing $x$ with $x^{-1}$, and $q$ with $q^{-1}$. 
%
%
%
%
\begin{thm} \label{thm:grch2}
Keep the notation and setting above. 
The graded character $\gch V_{w_0}^{+}(\lambda)$ of 
$V_{w_0}^{+}(\lambda)$ can be expressed as 
\begin{align*}
\gch V_{w_0}^{+}(\lambda) 
& = 
\biggl(\prod_{i \in I} \prod_{r=1}^{m_{i}}(1-q^{r})\biggr)^{-1} 
P_{-w_{0}\lambda}(x^{-1}\,;\,q,\,0) \\[1.5mm]
& = 
\biggl(\prod_{i \in I} \prod_{r=1}^{m_{i}}(1-q^{r})\biggr)^{-1} 
P_{\lambda}(x\,;\,q,\,0).
\end{align*}
\end{thm}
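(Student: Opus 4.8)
The plan is to deduce Theorem~\ref{thm:grch2} from Theorem~\ref{thm:grch1} by the duality already assembled in the discussion preceding the statement. There, combining Theorem~\ref{thm:main}, the identity $(\BB^{\si}_{\mcr{w_{0}}^{J} \sige}(\lambda))^{\vee} = \BB^{\si}_{\sige e}(-w_{0}\lambda)$ coming from Lemma~\ref{lem:dual}, and the weight-reversal \eqref{eq:veep}, it is shown that $\ch V_{w_0}^{+}(\lambda)$ is obtained from $\ch V_{e}^{-}(-w_{0}\lambda)$ by the substitution $x \mapsto x^{-1}$; passing to graded characters, $\gch V_{w_0}^{+}(\lambda)$ is obtained from $\gch V_{e}^{-}(-w_{0}\lambda)$ by replacing $x$ with $x^{-1}$ and $q$ with $q^{-1}$. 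So the only work left is to feed the formula of Theorem~\ref{thm:grch1} through this substitution.

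First I would apply Theorem~\ref{thm:grch1} to the level-zero dominant integral weight $-w_{0}\lambda \in P^{+}$. Writing $-w_{0}\lambda = \sum_{i \in I} m_{i}' \vpi_{i}$, this gives
\[
\gch V_{e}^{-}(-w_{0}\lambda) =
\Bigl(\prod_{i \in I}\prod_{r=1}^{m_{i}'}(1-q^{-r})\Bigr)^{-1}
P_{-w_{0}\lambda}(x\,;\,q^{-1},\,0).
\]
Since $-w_{0}\vpi_{i} = \vpi_{\sigma(i)}$ for the involution $\sigma$ of \S\ref{subsec:dual-sLS}, we have $m_{i}' = m_{\sigma(i)}$, so the multiset $\{m_{i}'\}_{i \in I}$ is merely a reindexing of $\{m_{i}\}_{i \in I}$; hence
\[
\prod_{i \in I}\prod_{r=1}^{m_{i}'}(1-q^{-r}) =
\prod_{i \in I}\prod_{r=1}^{m_{i}}(1-q^{-r}).
\]

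Now I would perform the substitution $x \mapsto x^{-1}$, $q \mapsto q^{-1}$ dictated by the reduction above: $(1-q^{-r})$ becomes $(1-q^{r})$, the second argument $q^{-1}$ of the Macdonald polynomial becomes $q$, and $x$ becomes $x^{-1}$. This yields
\[
\gch V_{w_0}^{+}(\lambda) =
\Bigl(\prod_{i \in I}\prod_{r=1}^{m_{i}}(1-q^{r})\Bigr)^{-1}
P_{-w_{0}\lambda}(x^{-1}\,;\,q,\,0),
\]
which is the first asserted equality. The second equality then follows from the standard duality $P_{-w_{0}\lambda}(x^{-1}\,;\,q,\,t) = P_{\lambda}(x\,;\,q,\,t)$ for symmetric Macdonald polynomials, specialized at $t = 0$; this reflects the fact that inverting the variables sends the monomial symmetric function attached to a dominant weight $\nu$ to the one attached to $-w_{0}\nu$ while preserving the dominance order and the Weyl-group invariance that characterize $P_{\lambda}$, and so interchanges $P_{\lambda}$ and $P_{-w_{0}\lambda}$.

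The genuine content sits in Theorem~\ref{thm:grch1} (hence ultimately in Theorem~\ref{thm:main} and Proposition~\ref{prop:DegP}); the argument above is bookkeeping. The one place demanding care --- and the only real, if mild, obstacle --- is tracking the two substitutions and the reindexing by $\sigma$ simultaneously, so that the prefactor emerges as exactly $\prod_{i \in I}\prod_{r=1}^{m_{i}}(1-q^{r})$ and the Macdonald duality is invoked in the correct variables.
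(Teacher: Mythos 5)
Your proposal is correct, and the first equality is handled exactly as in the paper: the reduction preceding the theorem shows that $\gch V_{w_0}^{+}(\lambda)$ is obtained from $\gch V_{e}^{-}(-w_{0}\lambda)$ by the substitution $x \mapsto x^{-1}$, $q \mapsto q^{-1}$, and feeding Theorem~\ref{thm:grch1} (applied to $-w_{0}\lambda$) through it yields the prefactor $\prod_{i}\prod_{r=1}^{m_{i}}(1-q^{r})$; your explicit remark that $m_{i}'=m_{\sigma(i)}$ only reindexes the product is a detail the paper leaves implicit. Where you genuinely diverge is the second equality $P_{-w_{0}\lambda}(x^{-1};q,0)=P_{\lambda}(x;q,0)$: the paper proves this only at $t=0$, staying inside the path-model framework, by expressing $P_{\lambda}(x;q,0)$ as a sum over $\QLS(\lambda)$ (\cite[Proposition~7.8]{LNSSS2}) and then transporting the sum from $\QLS(-w_{0}\lambda)$ to $\QLS(\lambda)$ via the dual-path bijection $\ast$ and the identities $\Deg_{-w_{0}\lambda}(S(\eta))=\Deg_{\lambda}(\eta^{\ast})$ and $-\wt(\eta)=\wt(\eta^{\ast})$ (\cite[Corollary~7.4 and (4.17), (4.18)]{LNSSS2}); you instead invoke the general, $t$-independent symmetry $P_{-w_{0}\lambda}(x^{-1};q,t)=P_{\lambda}(x;q,t)$ of Macdonald polynomials and specialize at $t=0$. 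Your route is shorter and more standard, but your justification of the symmetry is slightly incomplete as stated: triangularity with respect to dominance order and $W$-invariance do not by themselves characterize $P_{\lambda}$; you also need that the substitution $x\mapsto x^{-1}$ preserves the Macdonald inner product (which holds because the weight function is a product over all roots and hence invariant under $\alpha\mapsto-\alpha$), or equivalently that it intertwines the Macdonald operators appropriately. With that one sentence added, your argument is complete; the paper's version buys self-containedness within the crystal/QLS technology already developed, at the cost of reproving at $t=0$ a fact that holds for all $t$.
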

\begin{proof}
The first equality follows immediately from Theorem~\ref{thm:grch1}, 
together with the comment preceding this theorem. 
For the second equality, it suffices to show that 
$P_{-w_{0}\lambda}(x^{-1}\,;\,q,\,0) = P_{\lambda}(x\,;\,q,\,0)$.
In the proof of \cite[Proposition~7.8]{LNSSS2}, we showed that 
\begin{equation} \label{eq:P}
P_{\lambda}(x\,;\,q,\,0) =
\sum_{\eta \in \QLS(\lambda)}
q^{-\Deg_{\lambda}(S(\eta))}x^{\wt (\eta)}, 
\end{equation}
where $S$ is the Lusztig involution on $\QLS(\lambda)$ 
(see \cite[\S4.5]{LNSSS2}). Replacing $\lambda$ and $x$ with
$-w_{0}\lambda$ and $x^{-1}$, respectively, we obtain 
from \eqref{eq:P}
\begin{equation*}
P_{-w_{0}\lambda}(x^{-1}\,;\,q,\,0) = 
\sum_{\eta \in \QLS(-w_{0}\lambda)}
q^{-\Deg_{-w_{0}\lambda}(S(\eta))}x^{-\wt (\eta)}. 
\end{equation*}
Here, we know from \cite[Corollary~7.4]{LNSSS2} that 
$\Deg_{-w_{0}\lambda}(S(\eta)) = \Deg_{\lambda}(\eta^{\ast})$ for all
$\eta \in \QLS(-w_{0}\lambda)$, where $\eta^{\ast} \in \QLS(\lambda)$ 
is the ``dual'' QLS path of $\eta \in \QLS(-w_{0}\lambda)$ 
(see \cite[(4.17)]{LNSSS2}). Also, we have $-\wt (\eta) = \wt(\eta^{\ast})$ 
by \cite[(4.18)]{LNSSS2}. Therefore, we deduce that 
\begin{equation*}
P_{-w_{0}\lambda}(x^{-1}\,;\,q,\,0) = 
\sum_{\eta \in \QLS(-w_{0}\lambda)}
q^{-\Deg_{\lambda}(\eta^{\ast})}x^{\wt (\eta^{\ast})}. 
\end{equation*}
Since $\eta^{\ast} \in \QLS(\lambda)$ if and only if 
$\eta \in \QLS(-w_{0}\lambda)$, we conclude that
\begin{equation*}
P_{-w_{0}\lambda}(x^{-1}\,;\,q,\,0) = 
\sum_{\eta \in \QLS(\lambda)}
q^{-\Deg_{\lambda}(\eta)}x^{\wt (\eta)}. 
\end{equation*}
By \cite[Proposition~7.8]{LNSSS2}, 
the right-hand side of this equality is 
identical to $P_{\lambda}(x\,;\,q,\,0)$. Thus, we have shown that 
$P_{-w_{0}\lambda}(x^{-1}\,;\,q,\,0) = P_{\lambda}(x\,;\,q,\,0)$, as desired. 
This proves the theorem. 
\end{proof}

%
\section{Certain quotients of Demazure submodules.}
\label{sec:quotient}

In this section, we fix 
$\lambda=\sum_{i \in I} m_{i}\vpi_{i} \in P^{+}$, 
and set $J=J_{\lambda}:=\bigl\{i \in I \mid \pair{\alpha_{i}^{\vee}}{\lambda}=0\bigr\}$. 
%
%
\subsection{Some technical lemmas.}
\label{subsec:somelems}

The following is an easy lemma.
%
%
\begin{lem}[see Lemma~\ref{lem:pi-eta}] \label{lem:epc}
The subset $\bigl\{\eta_{\psi} \mid \psi \in \QLS(\lambda) \bigr\} 
\cup \bigl\{\bzero\bigr\}$ of $\BB^{\si}(\lambda) \cup \bigl\{\bzero\bigr\}$ 
is stable under the action of the root operators $e_{j}$ and $f_{j}$ for $j \in I$.
\end{lem}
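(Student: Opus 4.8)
The plan is to show that for $j \in I$ each operator $e_j$ and $f_j$ carries $\eta_\psi$ either to $\bzero$ or to $\eta_{\psi'}$ for a suitable $\psi' \in \QLS(\lambda)$; since the set we must stabilize already contains $\bzero$, only the nonzero case needs attention. First I would reduce the whole statement to a claim about the terminal direction $\kappa$. Recall from Lemma~\ref{lem:pi-eta} that $\eta_\psi$ is the \emph{unique} element of $\sLSo$ lying over $\psi$ under $\cl$ whose terminal direction $\kappa(\eta_\psi)$ belongs to $W^{J}$. Fix $j \in I$ and suppose $e_j\eta_\psi \ne \bzero$ (the argument for $f_j$ is identical). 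Since $\sLSo$ is a connected component of the crystal $\sLS$, it is stable under the Kashiwara operators, so $e_j\eta_\psi \in \sLSo$. Since $\cl : \sLS \to \QLS(\lambda)$ commutes with root operators and never takes the value $\bzero$, the element $\psi' := \cl(e_j\eta_\psi) = e_j\cl(\eta_\psi) = e_j\psi$ is a genuine QLS path, hence $e_j\psi \in \QLS(\lambda)$. By the uniqueness clause of Lemma~\ref{lem:pi-eta}, it then suffices to prove $\kappa(e_j\eta_\psi) \in W^{J}$: granting this, $e_j\eta_\psi$ and $\eta_{\psi'}$ are two elements of $\sLSo$ lying over $\psi'$ with terminal direction in $W^{J}$, so they coincide and $e_j\eta_\psi = \eta_{\psi'}$.

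The core is therefore the following assertion: if $\eta \in \sLS$ has $\kappa(\eta) \in W^{J}$ and $j \in I$, then $\kappa(e_j\eta)$ and $\kappa(f_j\eta)$ again lie in $W^{J}$ whenever defined. Inspecting the explicit piecewise-linear definitions of the root operators, the block of directions that $e_j$ (resp.\ $f_j$) multiplies on the left by $r_j$ reaches the final segment $[a_{s-1},1]$ precisely when $t_1 = 1$; in that case the terminal direction $\kappa(\eta)$ is replaced by $r_j\kappa(\eta)$, and otherwise it is unchanged. Hence $\kappa(e_j\eta),\,\kappa(f_j\eta) \in \{\kappa(\eta),\,r_j\kappa(\eta)\}$. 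If the value equals $\kappa(\eta) = w \in W^{J}$, there is nothing to prove. If it equals $r_j w$, then since $j \in I$ we have $r_j \in W$, so $r_j w \in W$; on the other hand $r_j w$ is a direction of a SiLS path and therefore lies in $(W^{J})_{\af}$ by definition of SiLS paths. Combining these with the identity $(W^{J})_{\af} \cap W = W^{J}$, which is immediate from \eqref{eq:W^J_af} (an element $w'z_\xi t_\xi$ lies in $W$ only when $\xi = 0$, in which case $z_\xi = e$ and it equals $w' \in W^{J}$), yields $r_j w \in W^{J}$, as required.

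The only delicate point, the lemma being elementary, will be the inspection establishing $\kappa(e_j\eta),\,\kappa(f_j\eta) \in \{\kappa(\eta),\,r_j\kappa(\eta)\}$: one has to read off from the definitions of $e_j$ and $f_j$ exactly when the modified segment extends to $t = 1$ (and, for $f_j$, when the final direction is dropped), so that the terminal direction is altered by $r_j$ rather than left fixed. I would emphasize that the hypothesis $j \in I$ rather than $j \in I_{\af}$ is essential, since it is exactly what guarantees $r_j \in W$, keeping $r_j\kappa(\eta)$ inside the finite Weyl group and hence, via the intersection identity above, inside $W^{J}$. Everything else is formal: the stability of the connected component $\sLSo$, the compatibility of $\cl$ with the root operators, and the uniqueness in Lemma~\ref{lem:pi-eta}.
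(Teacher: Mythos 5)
Your proposal is correct and follows essentially the same route as the paper's own proof: reduce to showing $x_{j}\eta_{\psi}=\eta_{x_{j}\psi}$ via the uniqueness in Lemma~\ref{lem:pi-eta}, use that $\cl$ commutes with root operators and that $\sLSo$ is stable under them, observe that $\kappa(x_{j}\eta_{\psi}) \in \{\kappa(\eta_{\psi}),\,r_{j}\kappa(\eta_{\psi})\}$, and conclude in the latter case from $r_{j}\kappa(\eta_{\psi}) \in W \cap (W^{J})_{\af} = W^{J}$ (using $j \in I$ and \eqref{eq:W^J_af}). Your explicit verification of the identity $W \cap (W^{J})_{\af} = W^{J}$ and of when the root operators alter the terminal direction merely spells out details the paper leaves implicit.
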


\begin{proof}
It suffices to show that $x_{j}\eta_{\psi} = \eta_{x_{j}\psi}$, 
assuming that $x_{j}\eta_{\psi} \ne \bzero$ 
for $\psi \in \QLS(\lambda)$ and $j \in I$, 
where $x_{j}$ is either $e_{j}$ or $f_{j}$. 
Since the map $\cl:\BB^{\si}(\lambda) \twoheadrightarrow \QLS(\lambda)$ 
commutes with root operators, it follows that 
$\cl(x_{j}\eta_{\psi})=x_{j}\psi \ (\ne \bzero)$. 
It is obvious that $x_{j}\eta_{\psi} \in \BB^{\si}_{0}(\lambda)$. 
Also, we deduce from the definition of root operators 
that $\kappa(x_{j}\eta_{\psi})$ is equal either 
to $\kappa(\eta_{\psi}) \in W^{J}$ or 
to $r_{j}\kappa(\eta_{\psi}) \in (W^{J})_{\af}$. 
In the latter case, we have 
$r_{j}\kappa(\eta_{\psi}) \in W \cap (W^{J})_{\af}$ 
since $\kappa(\eta_{\psi}) \in W^{J}$ and $j \in I$, 
which implies that $r_{j}\kappa(\eta_{\psi}) \in W^{J}$
since $W \cap (W^{J})_{\af} = W^{J}$ by \eqref{eq:W^J_af}. 
Therefore, by the uniqueness of $\eta_{x_{j}\psi}$, we obtain 
$x_{j}\eta_{\psi} = \eta_{x_{j}\psi}$, as desired. 
\end{proof}

We know from \cite[\S5.1]{LNSSS2} that there exists 
a bijection $\ast : \QLS(\lambda) \rightarrow \QLS(-w_{0}\lambda)$ 
such that for $\psi \in \QLS(\lambda)$, 
\begin{equation} \label{eq:qls*}
\begin{cases}
\wt(\psi^{\ast})=-\wt(\psi), \quad \text{and} \\[1.5mm]
(e_{j}\psi)^{\ast}=f_{j}\psi^{\ast}, 
(f_{j}\psi)^{\ast}=e_{j}\psi^{\ast} \quad 
\text{for all $j \in I_{\af}$}, 
\end{cases}
\end{equation}
where we set $\bzero^{\ast}:=\bzero$. 
We see easily from the definitions that 
the following diagram commutes: 
%
%
\begin{equation} \label{eq:CDva}
\begin{CD}
\BB^{\si}(\lambda) @>{\vee}>> \BB^{\si}(-w_{0}\lambda) \\
@V{\cl}VV @VV{\cl}V \\
\QLS(\lambda) @>{\ast}>> \QLS(-w_{0}\lambda).
\end{CD}
\end{equation}
The next lemma follows immediately from Lemma~\ref{lem:pi-eta}, 
by using the commutative diagram \eqref{eq:CDva}, 
together with \eqref{eq:veep} and \eqref{eq:qls*}. 
%
%
\begin{lem}[{cf. \cite[Proposition~3.1.3]{NSdeg}}] \label{lem:pi-eta2}
For each $\psi \in \QLS(\lambda)$, 
the element $\ti{\eta}_{\psi} : = (\eta_{\psi^{\ast}})^{\vee}$ is 
a unique element in $\BB^{\si}_{0}(\lambda)$ 
such that $\cl(\ti{\eta}_{\psi})=\psi$ and $\iota(\eta_{\psi}) \in W^{J}$. 
\end{lem}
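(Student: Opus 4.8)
The plan is to prove Lemma~\ref{lem:pi-eta2} by transporting the existence-and-uniqueness statement of Lemma~\ref{lem:pi-eta} from $-w_{0}\lambda$ back to $\lambda$ through the duality bijection $\vee:\BB^{\si}(\lambda)\rightarrow\BB^{\si}(-w_{0}\lambda)$, using the commutative diagram \eqref{eq:CDva} as the central bookkeeping device. Concretely, Lemma~\ref{lem:pi-eta} applied to the weight $-w_{0}\lambda$ and to the QLS path $\psi^{\ast}\in\QLS(-w_{0}\lambda)$ produces a \emph{unique} element $\eta_{\psi^{\ast}}\in\BB^{\si}_{0}(-w_{0}\lambda)$ with $\cl(\eta_{\psi^{\ast}})=\psi^{\ast}$ and $\kappa(\eta_{\psi^{\ast}})\in W^{\sigma(J)}$. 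The definition $\ti{\eta}_{\psi}:=(\eta_{\psi^{\ast}})^{\vee}$ then lands in $\BB^{\si}(\lambda)$ by Lemma~\ref{lem:dual}, and I must verify the three claimed properties: membership in $\BB^{\si}_{0}(\lambda)$, the identity $\cl(\ti{\eta}_{\psi})=\psi$, and $\iota(\ti{\eta}_{\psi})\in W^{J}$.

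\textbf{First I would} check that $\vee$ carries the distinguished component $\BB^{\si}_{0}(-w_{0}\lambda)$ to $\BB^{\si}_{0}(\lambda)$, so that $\ti{\eta}_{\psi}\in\BB^{\si}_{0}(\lambda)$; this follows because $\vee$ is a crystal anti-isomorphism (by \eqref{eq:veep}) and sends $\eta_{e}$ for $-w_{0}\lambda$ to (an element generating) the component of $\eta_{e}$ for $\lambda$, the two distinguished components being matched under duality. \textbf{Next} the weight-and-operator compatibility \eqref{eq:veep} together with \eqref{eq:qls*} is exactly what makes the square \eqref{eq:CDva} commute; applying $\cl$ to $\ti{\eta}_{\psi}=(\eta_{\psi^{\ast}})^{\vee}$ and chasing the diagram gives $\cl(\ti{\eta}_{\psi})=(\cl(\eta_{\psi^{\ast}}))^{\ast}=(\psi^{\ast})^{\ast}=\psi$, using that $\ast$ is an involution on QLS paths. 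The remaining point is the initial-direction condition: from \eqref{eq:etavee} the initial direction of $\eta^{\vee}$ is the dual $(\,\cdot\,)^{\vee}$ of the \emph{final} direction of $\eta$, so
\begin{equation*}
\iota(\ti{\eta}_{\psi})=\bigl(\kappa(\eta_{\psi^{\ast}})\bigr)^{\vee}
=\kappa(\eta_{\psi^{\ast}})\,\mcr{w_{0}}^{\sigma(J)},
\end{equation*}
and I must check that this lies in $W^{J}$. Since $\kappa(\eta_{\psi^{\ast}})\in W^{\sigma(J)}$ and $\mcr{w_{0}}^{\sigma(J)}=w_{0}w_{\sigma(J),0}$, the element $\kappa(\eta_{\psi^{\ast}})w_{0}w_{\sigma(J),0}$ is a finite Weyl group element; the computation $W^{\sigma(J)}\cdot w_{0}w_{\sigma(J),0}=W^{J}$ follows from the relations $w_{0}(\Delta_{K}^{+})=-\Delta_{\sigma(K)}^{+}$ recorded in \S\ref{subsec:dual-sLS} and the characterization \eqref{eq:mcr} of minimal coset representatives, so that $(\,\cdot\,)^{\vee}$ restricts to a bijection $W^{\sigma(J)}\xrightarrow{\sim}W^{J}$.

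\textbf{For uniqueness}, suppose $\eta'\in\BB^{\si}_{0}(\lambda)$ also satisfies $\cl(\eta')=\psi$ and $\iota(\eta')\in W^{J}$. Applying $\vee$ and using the diagram \eqref{eq:CDva} again, $(\eta')^{\vee}\in\BB^{\si}_{0}(-w_{0}\lambda)$ satisfies $\cl((\eta')^{\vee})=\psi^{\ast}$, while $\kappa((\eta')^{\vee})=(\iota(\eta'))^{\vee}\in W^{\sigma(J)}$ by the same bijection as above. By the uniqueness clause of Lemma~\ref{lem:pi-eta} (applied to $-w_{0}\lambda$), we get $(\eta')^{\vee}=\eta_{\psi^{\ast}}$, hence $\eta'=\ti{\eta}_{\psi}$ since $\vee$ is an involution.

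\textbf{The hard part will be} the careful verification that $\vee$ respects all three structures simultaneously, and in particular pinning down that the final-direction duality $\kappa(\eta^{\vee})=\iota(\eta)^{\vee}$ turns the $\kappa$-condition of Lemma~\ref{lem:pi-eta} into the $\iota$-condition claimed here, with the parabolic datum correctly toggling between $W^{\sigma(J)}$ and $W^{J}$ via the bijection $(\,\cdot\,)^{\vee}$. All three properties are genuinely just diagram-chases once this dictionary is in place, so the lemma should follow formally from Lemma~\ref{lem:pi-eta}, \eqref{eq:veep}, \eqref{eq:qls*}, and \eqref{eq:CDva} with no new representation-theoretic input; the only real care is ensuring the initial/final directions and the subsets $J$, $\sigma(J)$ are matched consistently throughout.
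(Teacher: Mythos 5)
Your route is exactly the one the paper takes: the paper obtains Lemma~\ref{lem:pi-eta2} by applying Lemma~\ref{lem:pi-eta} to the weight $-w_{0}\lambda$ and the path $\psi^{\ast} \in \QLS(-w_{0}\lambda)$, and then transporting the result back through the duality $\vee$ using the commutative diagram \eqref{eq:CDva} together with \eqref{eq:veep} and \eqref{eq:qls*}; your existence and uniqueness chases, based on the observation $\iota(\eta^{\vee})=\kappa(\eta)^{\vee}$ from \eqref{eq:etavee}, are precisely this argument written out.

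There is, however, one step that fails as written: your dictionary for $\vee$ on directions. Since $\eta_{\psi^{\ast}}$ is a SiLS path of shape $-w_{0}\lambda$, whose parabolic subset is $J_{-w_{0}\lambda}=\sigma(J)$, the relevant dual map is the one attached to the shape $-w_{0}\lambda$, namely $x^{\vee}=x\mcr{w_{0}}^{\sigma(\sigma(J))}=x\mcr{w_{0}}^{J}=xw_{0}w_{J,0}$ for $x \in (W^{\sigma(J)})_{\af}$, and \emph{not} $x\mcr{w_{0}}^{\sigma(J)}=xw_{0}w_{\sigma(J),0}$ as you wrote. Accordingly, your claimed identity $W^{\sigma(J)}\cdot w_{0}w_{\sigma(J),0}=W^{J}$ is false in general: in type $A_{2}$ with $J=\{1\}$, $\sigma(J)=\{2\}$, taking $v=e \in W^{\{2\}}$ gives $vw_{0}w_{\{2\},0}=w_{0}r_{2}=r_{2}r_{1}$, and $r_{2}r_{1}\alpha_{1}=-(\alpha_{1}+\alpha_{2})\notin\Delta^{+}$, so $r_{2}r_{1}\notin W^{\{1\}}$ by \eqref{eq:mcr}. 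The corrected identity $W^{\sigma(J)}\cdot w_{0}w_{J,0}=W^{J}$ does hold, by the very argument you sketch: for $v \in W^{\sigma(J)}$ and $\alpha \in \Delta_{J}^{+}$ one has $w_{J,0}\alpha \in -\Delta_{J}^{+}$, hence $w_{0}w_{J,0}\alpha \in -w_{0}\Delta_{J}^{+}=\Delta_{\sigma(J)}^{+}$, hence $vw_{0}w_{J,0}\alpha \in \Delta^{+}$. With this correction the remaining points you invoke are sound: $\vee$ is involutive, since $(xw_{0}w_{\sigma(J),0})w_{0}w_{J,0}=xw_{J,0}w_{J,0}=x$ (using $w_{0}w_{\sigma(J),0}w_{0}=w_{J,0}$), and $\vee$ matches the distinguished components, since $(\eta_{e})^{\vee}=(\mcr{w_{0}}^{J}\,;\,0,\,1)=S_{w_{0}}\eta_{e}\in\BB^{\si}_{0}(\lambda)$ by \eqref{eq:SxetaC}. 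So the gap is localized and fixable, but as stated the verification of the initial-direction condition, which is the crux of the lemma, rests on a false identity.
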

We can prove the next lemma by an argument similar 
to the one for Lemma~\ref{lem:epc}. 
%
%
\begin{lem} \label{lem:epc2}
The subset $\bigl\{\ti{\eta}_{\psi} \mid \psi \in \QLS(\lambda) \bigr\} 
\cup \bigl\{\bzero\bigr\}$ of $\BB^{\si}(\lambda) \cup \bigl\{\bzero\bigr\}$ 
is stable under the action of the root operators $e_{j}$ and $f_{j}$ for $j \in I$.
\end{lem}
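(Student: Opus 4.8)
The plan is to mirror the proof of Lemma~\ref{lem:epc}, replacing the final direction $\kappa$ everywhere by the initial direction $\iota$, and Lemma~\ref{lem:pi-eta} by its ``dual'' Lemma~\ref{lem:pi-eta2}. As in that proof, it suffices to show that for each $\psi \in \QLS(\lambda)$ and each $j \in I$, if $x_{j}\ti{\eta}_{\psi} \ne \bzero$, where $x_{j}$ denotes either $e_{j}$ or $f_{j}$, then $x_{j}\ti{\eta}_{\psi} = \ti{\eta}_{x_{j}\psi}$. To prove this, I would verify that the element $x_{j}\ti{\eta}_{\psi}$ enjoys the three properties that characterize $\ti{\eta}_{x_{j}\psi}$ uniquely by Lemma~\ref{lem:pi-eta2}.

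First, since the root operators preserve connected components, $x_{j}\ti{\eta}_{\psi}$ again lies in $\BB^{\si}_{0}(\lambda)$. Second, because the projection $\cl:\BB^{\si}(\lambda) \twoheadrightarrow \QLS(\lambda)$ commutes with the root operators, we have $\cl(x_{j}\ti{\eta}_{\psi}) = x_{j}\cl(\ti{\eta}_{\psi}) = x_{j}\psi$, which is nonzero by assumption. Third, I must check that $\iota(x_{j}\ti{\eta}_{\psi}) \in W^{J}$.

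This last point is the step where the argument departs most visibly from that of Lemma~\ref{lem:epc}, and I expect it to require the most care. Inspecting the explicit definitions of the root operators (the formulas following \eqref{eq:t-e} and \eqref{eq:t-f}), one sees that the initial direction is altered only when the interval on which the operator acts reaches the left endpoint $t=0$; consequently $\iota(x_{j}\ti{\eta}_{\psi})$ equals either $\iota(\ti{\eta}_{\psi})$ or $r_{j}\iota(\ti{\eta}_{\psi})$. In the former case there is nothing to prove, since $\iota(\ti{\eta}_{\psi}) \in W^{J}$ by Lemma~\ref{lem:pi-eta2}. In the latter case, since $j \in I$ we have $r_{j} \in W$, so that $r_{j}\iota(\ti{\eta}_{\psi}) \in W$; on the other hand $r_{j}\iota(\ti{\eta}_{\psi}) = \iota(x_{j}\ti{\eta}_{\psi}) \in (W^{J})_{\af}$, because $\BB^{\si}(\lambda)$ is stable under the root operators. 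Hence $r_{j}\iota(\ti{\eta}_{\psi}) \in W \cap (W^{J})_{\af} = W^{J}$, where the equality $W \cap (W^{J})_{\af} = W^{J}$ is read off from \eqref{eq:W^J_af}. Having verified these three properties, the uniqueness assertion in Lemma~\ref{lem:pi-eta2} forces $x_{j}\ti{\eta}_{\psi} = \ti{\eta}_{x_{j}\psi}$, which establishes the claimed stability.

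The only genuinely new ingredient compared with Lemma~\ref{lem:epc} is thus the behaviour of $\iota$ under the root operators; everything else is a formal transcription. As an alternative, and arguably cleaner, route, one could deduce the result directly from Lemma~\ref{lem:epc}: since $\ti{\eta}_{\psi} = (\eta_{\psi^{\ast}})^{\vee}$ with $\psi^{\ast} \in \QLS(-w_{0}\lambda)$, applying \eqref{eq:veep} gives $e_{j}\ti{\eta}_{\psi} = (f_{j}\eta_{\psi^{\ast}})^{\vee}$, then Lemma~\ref{lem:epc} (for $-w_{0}\lambda$) gives $f_{j}\eta_{\psi^{\ast}} = \eta_{f_{j}\psi^{\ast}}$, and finally \eqref{eq:qls*} identifies $f_{j}\psi^{\ast} = (e_{j}\psi)^{\ast}$, whence $e_{j}\ti{\eta}_{\psi} = \ti{\eta}_{e_{j}\psi}$, and symmetrically for $f_{j}$.
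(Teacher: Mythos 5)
Your proof is correct and is exactly the argument the paper intends: the paper's own ``proof'' of this lemma is just the remark that it follows by an argument similar to that of Lemma~\ref{lem:epc}, and your transcription (replacing $\kappa$ by $\iota$, Lemma~\ref{lem:pi-eta} by Lemma~\ref{lem:pi-eta2}, and checking that $\iota(x_{j}\ti{\eta}_{\psi})$ is either $\iota(\ti{\eta}_{\psi})$ or $r_{j}\iota(\ti{\eta}_{\psi}) \in W \cap (W^{J})_{\af} = W^{J}$, then invoking uniqueness) is precisely that argument. Your alternative derivation via $\vee$-duality, using \eqref{eq:veep}, \eqref{eq:qls*}, and Lemma~\ref{lem:epc} applied to $-w_{0}\lambda$, is also valid and is in the same spirit as the paper's own derivation of Lemma~\ref{lem:pi-eta2}.
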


\begin{rem}
By Lemmas~\ref{lem:epc} and \ref{lem:epc2}, each of the sets
$\bigl\{\eta_{\psi} \mid \psi \in \QLS(\lambda) \bigr\}$ and 
$\bigl\{\ti{\eta}_{\psi} \mid \psi \in \QLS(\lambda) \bigr\}$
has a crystal structure for $U_{q}(\Fg)$, where 
$\Fg$ is the canonical finite-dimensional simple Lie subalgebra of $\Fg_{\af}$.
Moreover, these crystals for $U_{q}(\Fg)$ are both isomorphic to 
$\QLS(\lambda)$, regarded as a crystal for $U_{q}(\Fg)$ by restriction. 
\end{rem}
%
%
\subsection{Certain quotients of Demazure submodules and their crystal bases.}
\label{subsec:quot}

We define
\begin{equation*}
X_{e}^{-}(\lambda):=
\sum_{
 \begin{subarray}{c}
  \bc \in \ol{\Par(\lambda)} \\[1.5mm]
  \bc \ne (\emptyset)_{i \in I}
 \end{subarray}
 } U_{q}^{-} S_{\bc}^{-}v_{\lambda}; 
\end{equation*}
note that 
$X_{e}^{-}(\lambda) \subset V_{e}^{-}(\lambda)=U_{q}^{-}v_{\lambda}$ 
since $S_{\bc}^{-} \in U_{q}^{-}$ for all $\bc \in \ol{\Par(\lambda)}$ 
(see \S\ref{subsec:isom}). We denote by 
$\Xi_{\lambda}^{-} : V_{e}^{-}(\lambda) \twoheadrightarrow 
V_{e}^{-}(\lambda)/X_{e}^{-}(\lambda)$ the canonical projection, and set
\begin{equation}
U_{w}^{-}(\lambda):=\Xi_{\lambda}^{-}(V_{w}^{-}(\lambda)) \qquad 
 \text{for each $w \in W^{J}$}; 
\end{equation}
we have $V_{w}^{-}(\lambda) \subset V_{e}^{-}(\lambda)$
since $w \sige e$ (see Corollary~\ref{cor:zt}). 
%
%
\begin{thm} \label{thm:quotient}
Keep the notation and setting above. 
\begin{enu}

\item There exists a subset $\CB(X_{e}^{-}(\lambda))$ of $\CB(\lambda)$ such that 
%
%
\begin{equation} \label{eq:GX}
X_{e}^{-}(\lambda) = 
 \bigoplus_{b \in \CB(X_{e}^{-}(\lambda))} \BQ(q_{s}) G(b).
\end{equation}
Under the isomorphism $\Psi_{\lambda}:\CB(\lambda) \stackrel{\sim}{\rightarrow} \BB^{\si}(\lambda)$, 
the subset $\CB(X_{e}^{-}(\lambda)) \subset \CB(\lambda)$ is mapped to the subset
\begin{equation*}
\BB^{\si}_{\sige e}(\lambda) \setminus \bigl\{\eta_{\psi} \mid \psi \in \QLS(\lambda)\bigr\}
\end{equation*}
of $\BB^{\si}(\lambda)$. Therefore, 
if we define $\CB(U_{e}^{-}(\lambda)) \subset \CB(\lambda)$ 
to be the inverse image of 
$\bigl\{\eta_{\psi} \mid \psi \in \QLS(\lambda)\bigr\} \subset \BB^{\si}_{0}(\lambda)$ 
under the isomorphism $\Psi_{\lambda}$, then 
$\bigl\{ \Xi_{\lambda}^{-}(G(b)) \mid b \in \CB(U_{e}^{-}(\lambda)) \bigr\}$ 
is a $\BQ(q_{s})$-basis of the quotient 
$U_{e}^{-}(\lambda)=V_{e}^{-}(\lambda)/X_{e}^{-}(\lambda)$.

\item For each $w \in W^{J}$, the quotient 
$U_{w}^{-}(\lambda)=\Xi_{\lambda}^{-}(V_{w}^{-}(\lambda))$ 
of $V_{w}^{-}(\lambda)$ has a $\BQ(q_{s})$-basis 
$\bigl\{ \Xi_{\lambda}^{-}(G(b)) \mid b \in \CB(U_{w}^{-}(\lambda)) \bigr\}$, where 
$\CB(U_{w}^{-}(\lambda))$ is defined to be the inverse image of the following subset of 
$\BB^{\si}_{0}(\lambda)$ under the isomorphism $\Psi_{\lambda}$: 
\begin{equation} \label{eq:qls-dem}
\bigl\{\eta_{\psi} \mid 
 \text{\rm $\psi \in \QLS(\lambda)$ such that $\kappa(\eta_{\psi}) \ge w$} \bigr\}, 
\end{equation}
where $\kappa(\eta_{\psi}) \ge w$ means that 
$\kappa(\eta_{\psi}) \in W^{J}$ is greater than or equal to $w \in W^{J}$ 
in the (ordinary) Bruhat order on $W^{J}$. 
\end{enu}
\end{thm}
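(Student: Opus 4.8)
The plan is to reduce both parts to Theorem~\ref{thm:main} together with the fiber-by-fiber analysis already carried out in \S\ref{subsec:prf-grch1}. Recall that $V_{e}^{-}(\lambda)$ has the global basis $\{G(\Psi_{\lambda}^{-1}(\eta)) \mid \eta \in \BB^{\si}_{\sige e}(\lambda)\}$, and that by \eqref{eq:grch1-1} and \eqref{eq:wtXS} each fiber $\cl^{-1}(\psi) \cap \BB^{\si}_{\sige e}(\lambda)$ is parametrized bijectively by $\ol{\Par(\lambda)}$, the member attached to $\bc \in \ol{\Par(\lambda)}$ having $\delta$-degree $\Degt(\psi) - |\bc|$; by Remark~\ref{rem:wtXS} the element $\eta_{\psi}$ is the unique one of maximal $\delta$-degree in its fiber, namely the one attached to $\bc = (\emptyset)_{i \in I}$. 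The goal of Part~(1) is therefore to show that $X_{e}^{-}(\lambda)$ accounts for exactly the members attached to $\bc \neq (\emptyset)_{i \in I}$, i.e.\ $\Psi_{\lambda}(\CB(X_{e}^{-}(\lambda))) = \BB^{\si}_{\sige e}(\lambda) \setminus \{\eta_{\psi} \mid \psi \in \QLS(\lambda)\}$.

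First I would prove that $X_{e}^{-}(\lambda)$ is compatible with the global basis, producing the subset $\CB(X_{e}^{-}(\lambda))$ of \eqref{eq:GX}. The key observation is that every generator $S_{\bc}^{-}v_{\lambda}$ is, up to a nonzero scalar, an extremal weight vector: writing $\bc = (\rho^{(i)})_{i \in I} \in \ol{\Par(\lambda)}$ and decomposing each $\rho^{(i)}$ as $c_{i}(1^{m_{i}}) + \sigma^{(i)}$ with $c_{i} \in \BZ_{\ge 0}$ and $\sigma^{(i)}$ of length $< m_{i}$, and setting $\zeta := \sum_{i} c_{i}\alpha_{i}^{\vee} \in \QJp{I \setminus J}$ and $\bc' := (\sigma^{(i)})_{i \in I} \in \Par(\lambda)$, the operators $z_{i,l}$ of \S\ref{subsec:properties1b}, together with a direct extension of the computation \eqref{eq:scu}--\eqref{eq:scu2} using the Schur-function identity $s_{c(1^{m})+\sigma} = (x_{1}\cdots x_{m})^{c}\,s_{\sigma}$, identify $S_{\bc}^{-}v_{\lambda}$ with $S_{t_{\zeta}}^{\norm}G(u^{\bc'})$, which is extremal by Lemma~\ref{lem:extg} and Proposition~\ref{prop:ext}. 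Hence each summand $U_{q}^{-}S_{\bc}^{-}v_{\lambda} = U_{q}^{-}S_{t_{\zeta}}^{\norm}G(u^{\bc'})$ is compatible with the global basis by the extremal-vector theory of \cite[\S2.8]{K-rims} (exactly as in the proof of Lemma~\ref{lem:zt01}); since a sum of subspaces, each spanned by a subset of the global basis, is again of this form, so is $X_{e}^{-}(\lambda)$.

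It then remains to identify $\CB(X_{e}^{-}(\lambda))$, and given the compatibility it suffices to match, fiber by fiber, $\CB(X_{e}^{-}(\lambda)) \cap \cl^{-1}(\psi)$ with the members attached to $\bc \neq (\emptyset)_{i \in I}$. For the reverse inclusion I would start from the ``base'' members $S_{t_{\zeta}}\eta^{C}$ of the fiber over $\cl(\eta_{e})$, whose global basis vectors $S_{t_{\zeta}}^{\norm}G(u^{\Theta(C)}) = S_{\bc}^{-}v_{\lambda}$ (with $\bc \neq (\emptyset)_{i \in I}$) are literally generators of $X_{e}^{-}(\lambda)$, and then transport them to a general fiber by the root-operator monomial carrying $\eta_{e}$ to $\eta_{\psi}$; the exclusion $\eta_{\psi} \notin \Psi_{\lambda}(\CB(X_{e}^{-}(\lambda)))$ then follows since $\eta_{\psi}$, being the strictly maximal-weight element of its fiber (Remark~\ref{rem:wtXS}), is attached to $\bc = (\emptyset)_{i \in I}$ and so is not among the transported generators. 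This matching is the main obstacle: the Demazure-type submodules generated by the $\delta$-shifted extremal vectors $S_{\bc}^{-}v_{\lambda}$ spread across all connected components of $\CB(\lambda)$, and the monomials above in general involve the affine root operators $e_{0},f_{0}$, so the clean crystal-stability arguments of \S\ref{sec:proof} do not transfer directly; the resolution is to organize the entire comparison through the $\delta$-degree/fiber structure of Remark~\ref{rem:wtXS}. Once $\CB(X_{e}^{-}(\lambda)) = \CB_{e}^{-}(\lambda) \setminus \Psi_{\lambda}^{-1}(\{\eta_{\psi}\})$ is established, Part~(1) follows, because the global basis elements annihilated by $\Xi_{\lambda}^{-}$ are exactly those of $\CB(X_{e}^{-}(\lambda))$, so $\{\Xi_{\lambda}^{-}(G(b)) \mid b \in \CB(U_{e}^{-}(\lambda))\}$ is a basis of the quotient.

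Part~(2) is then nearly immediate. For $w \in W^{J}$ we have $w \sige e$, hence $V_{w}^{-}(\lambda) \subseteq V_{e}^{-}(\lambda)$ by Corollary~\ref{cor:zt}, and by Theorem~\ref{thm:main} its global basis is $\{G(\Psi_{\lambda}^{-1}(\eta)) \mid \eta \in \BB^{\si}_{\sige w}(\lambda)\}$. Applying $\Xi_{\lambda}^{-}$ and invoking Part~(1), the surviving basis vectors are exactly the $\Xi_{\lambda}^{-}(G(\Psi_{\lambda}^{-1}(\eta_{\psi})))$ with $\eta_{\psi} \in \BB^{\si}_{\sige w}(\lambda)$, i.e.\ with $\kappa(\eta_{\psi}) \sige w$; since $\kappa(\eta_{\psi}) \in W^{J}$ and $w \in W^{J}$, Lemma~\ref{lem:SiB2} applied with $\xi = 0$ rewrites $\kappa(\eta_{\psi}) \sige w$ as $\kappa(\eta_{\psi}) \ge w$ in the ordinary Bruhat order on $W^{J}$, which is precisely the index set \eqref{eq:qls-dem}. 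Their linear independence in the quotient is inherited from the global basis, completing the proof.
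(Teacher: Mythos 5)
Your reduction of Part~(2) to Part~(1) and the fiber-by-fiber matching over $\QLS(\lambda)$ are in the spirit of the paper's proof, but Part~(1) rests on a claim that is not only unjustified but false in general: that each individual summand $U_{q}^{-}S_{\bc}^{-}v_{\lambda}$ is compatible with the global basis of $V(\lambda)$. The extremal-vector theory of \cite[\S2.8]{K-rims} (and the arguments of \S\ref{subsec:demazure}, Lemma~\ref{lem:zt01}) applies to Demazure submodules $U_{q}^{\pm}S_{x}^{\norm}v_{\lambda}$ generated by vectors in the $W_{\af}$-orbit of the \emph{defining} extremal vector $v_{\lambda}$; when $\bc' \ne (\emptyset)_{i \in I}$, your vector $S_{\bc}^{-}v_{\lambda}=S_{t_{\zeta}}^{\norm}G(u^{\bc'})$ lies in a different connected component of $\CB(\lambda)$, and that theory does not apply. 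In fact one has $U_{q}^{-}S_{\bc}^{-}v_{\lambda}=z_{\bc}\bigl(V_{e}^{-}(\lambda)\bigr)$, and $z_{\bc}$ does \emph{not} send global basis elements to global basis elements: for $b'\in\CB_{0}(\lambda)$ one has $z_{\bc}G(z_{\bc'}b')=z_{\bc}z_{\bc'}G(b')=\sum_{\bc''}n_{\bc''}G(z_{\bc''}b')$, where the $n_{\bc''}$ are the (Littlewood--Richardson) coefficients of the Schur-polynomial product. Concretely, for $\Fg_{\af}$ of type $A_{1}^{(1)}$ and $\lambda=2\vpi_{1}$, take $\bc=((1))$: since $s_{(1)}s_{(1)}=s_{(2)}+s_{(1,1)}$, the subspace $z_{(1)}V_{e}^{-}(\lambda)$ contains $G(z_{(2)}b')+G(z_{(1,1)}b')$ but neither summand separately, so $U_{q}^{-}S_{(1)}^{-}v_{\lambda}$ is not spanned by a subset of the global basis. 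Thus the very first step of your Part~(1) collapses, and everything downstream (the identification of $\CB(X_{e}^{-}(\lambda))$, hence Part~(2)) is affected.

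This is precisely the subtlety the paper's proof is organized around: it never asserts compatibility of the individual summands, only of the \emph{total} sum $X_{e}^{-}(\lambda)=\sum_{\bc\ne(\emptyset)_{i\in I}}z_{\bc}(V_{e}^{-}(\lambda))$. The candidate basis $\CB=\bigl\{z_{\bc}b \mid \bc\in\ol{\Par(\lambda)}\setminus(\emptyset)_{i\in I},\ b\in\CB_{e}^{-}(\lambda)\cap\CB_{0}(\lambda)\bigr\}$ of \eqref{eq:GX1} is closed under the ``error terms'' of the expansion $z_{\bc}z_{\bc'}=\sum_{|\bc''|=|\bc|+|\bc'|}n_{\bc''}z_{\bc''}$ (all $\bc''$ occurring still have $|\bc''|\ge 1$), and it is exactly this closure property, together with Lemma~\ref{lem:Be-} (which rewrites an arbitrary $b\in\CB_{e}^{-}(\lambda)$ as $z_{\bc'}b'$ with $b'\in\CB_{e}^{-}(\lambda)\cap\CB_{0}(\lambda)$) and the BN identity $z_{\bc}G(b')=G(z_{\bc}b')$ for $b'\in\CB_{0}(\lambda)$, that yields \eqref{eq:GX}. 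If you want to salvage your outline, you must replace the summand-by-summand compatibility claim with this global closure argument; your subsequent fiber analysis (transport by root-operator monomials, exclusion of $\eta_{\psi}$ via Remark~\ref{rem:wtXS}) then does go through essentially as in the paper, using Remark~\ref{rem:par} to identify $z_{\bc}u_{\lambda}=S_{t_{\xi}}u^{\bc'}$, and your treatment of Part~(2) via Lemma~\ref{lem:SiB2} is correct.
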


We will prove Theorem~\ref{thm:quotient} in the next subsection. 

Similarly, we define
\begin{equation*}
X_{w_0}^{+}(\lambda):=
\sum_{
 \begin{subarray}{c}
  \bc \in \ol{\Par(\lambda)} \\[1.5mm]
  \bc \ne (\emptyset)_{i \in I}
 \end{subarray}
 } U_{q}^{+} S_{\bc}S_{w_{0}}^{\norm}v_{\lambda}; 
\end{equation*}
note that 
$X_{w_0}^{+}(\lambda) \subset V_{w_0}^{+}(\lambda)=
U_{q}^{+}S_{w_0}^{\norm}v_{\lambda}$ since 
$S_{\bc} \in U_{q}^{+}$ for all $\bc \in \ol{\Par(\lambda)}$. 
We denote by 
$\Xi_{\lambda}^{+} : V_{w_0}^{+}(\lambda) \twoheadrightarrow 
V_{w_0}^{+}(\lambda)/X_{w_0}^{+}(\lambda)$ the canonical projection, and set
\begin{equation}
U_{w}^{+}(\lambda):=\Xi_{\lambda}^{+}(V_{w}^{+}(\lambda)) \qquad 
 \text{for each $w \in W^{J}$}; 
\end{equation}
we have $V_{w}^{+}(\lambda) \subset V_{w_0}^{+}(\lambda)$
since $\mcr{w_0}^{J} \sige w$. 
%
%
\begin{thm} \label{thm:quotient2}
Keep the notation and setting above. 
\begin{enu}

\item There exists a subset 
$\CB(X_{w_0}^{+}(\lambda))$ of $\CB(\lambda)$ such that 
\begin{equation*}
X_{w_0}^{+}(\lambda) = 
 \bigoplus_{b \in \CB(X_{w_0}^{+}(\lambda))} \BQ(q_{s}) G(b).
\end{equation*}
Under the isomorphism $\Psi_{\lambda}^{\vee}:
\CB(\lambda) \stackrel{\sim}{\rightarrow} \BB^{\si}(\lambda)$, 
the subset $\CB(X_{w_0}^{+}(\lambda)) \subset \CB(\lambda)$ is mapped to 
the subset
\begin{equation*}
\BB^{\si}_{\mcr{w_0}^{J} \sige}(\lambda) \setminus 
 \bigl\{\ti{\eta}_{\psi} \mid \psi \in \QLS(\lambda)\bigr\}
\end{equation*}
of $\BB^{\si}(\lambda)$. Therefore, 
if we define $\CB(U_{w_0}^{+}(\lambda)) \subset \CB(\lambda)$ 
to be the inverse image of 
$\bigl\{\ti{\eta}_{\psi} \mid \psi \in \QLS(\lambda)\bigr\} \subset \BB^{\si}_{0}(\lambda)$ 
under the isomorphism $\Psi_{\lambda}^{\vee}$, then 
$\bigl\{ \Xi_{\lambda}^{+}(G(b)) \mid b \in \CB(U_{w_0}^{+}(\lambda)) \bigr\}$ 
is a $\BQ(q_{s})$-basis of the quotient 
$U_{w_0}^{+}(\lambda)=V_{w_0}^{+}(\lambda)/X_{w_0}^{+}(\lambda)$.

\item For each $w \in W^{J}$, the quotient 
$U_{w}^{+}(\lambda)=\Xi_{\lambda}^{+}(V_{w}^{+}(\lambda))$ 
of $V_{w}^{+}(\lambda)$ has a $\BQ(q_{s})$-basis 
$\bigl\{ \Xi_{\lambda}^{+}(G(b)) \mid b \in \CB(U_{w}^{+}(\lambda)) \bigr\}$, where 
$\CB(U_{w}^{+}(\lambda))$ is defined to be the inverse image of the following subset of 
$\BB^{\si}_{0}(\lambda)$ under the isomorphism $\Psi_{\lambda}^{\vee}$: 
\begin{equation*}
\bigl\{\ti{\eta}_{\psi} \mid 
 \text{\rm $\psi \in \QLS(\lambda)$ such that $w \ge \iota(\ti{\eta}_{\psi})$} \bigr\}. 
\end{equation*}
\end{enu}
\end{thm}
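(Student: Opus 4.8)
The plan is to deduce Theorem~\ref{thm:quotient2} from Theorem~\ref{thm:quotient}, applied to the weight $\mu := -w_{0}\lambda \in P^{+}$, by transporting everything through the duality operations $\vee$ and $S_{w_{0}}^{\ast}$, in the same spirit as the derivation at the end of \S\ref{subsec:demazurep} by which the ``$+$'' equality in \eqref{eq:main} was obtained from the ``$-$'' equality. Writing $\Omega := S_{w_{0}}^{\ast}\circ\vee : \CB(\lambda)\xrightarrow{\sim}\CB(\mu)$, the defining diagram \eqref{eq:Psivee} gives $\Psi_{\lambda}^{\vee}=\vee\circ\Psi_{\mu}\circ\Omega$, so for any subset $\CB'\subset\CB(\lambda)$ one has $\Psi_{\lambda}^{\vee}(\CB')=\vee\bigl(\Psi_{\mu}(\Omega(\CB'))\bigr)$. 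Recall also that the computation of \S\ref{subsec:demazurep} already shows $\Omega(\CB_{x}^{+}(\lambda))=\CB_{x^{\vee}}^{-}(\mu)$; taking $x=\mcr{w_{0}}^{J}$ (so that $\CB_{w_{0}}^{+}(\lambda)=\CB_{\mcr{w_{0}}^{J}}^{+}(\lambda)$ by Lemma~\ref{lem:dem=}) and using $(\mcr{w_{0}}^{J})^{\vee}=\mcr{w_{0}}^{J}\cdot\mcr{w_{0}}^{\sigma(J)}=e$, which follows from $\mcr{w_{0}}^{J}=w_{0}w_{J,0}$ and the conjugation $w_{0}W_{J}w_{0}=W_{\sigma(J)}$, we get $\Omega(\CB_{w_{0}}^{+}(\lambda))=\CB_{e}^{-}(\mu)$.

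The key step, and the part requiring genuine module-theoretic work, is the identity $\Omega(\CB(X_{w_{0}}^{+}(\lambda)))=\CB(X_{e}^{-}(\mu))$ together with the compatibility $X_{w_{0}}^{+}(\lambda)=\bigoplus_{b}\BQ(q_{s})G(b)$ over this index set, i.e.\ part~(1a). To establish it I would realise $\Omega$ at the level of modules as the composite of the $\BQ(q_{s})$-linear bijection $\omega:V(\lambda)\to V(-\lambda)$ induced by the algebra automorphism $\vee:U_{q}\to U_{q}$ (so that $\omega(um)=u^{\vee}\omega(m)$, $\omega$ negates weights, interchanges $U_{q}^{+}$ and $U_{q}^{-}$, and sends $G(b)\mapsto G(b^{\vee})$, the last because $\vee$ commutes with the bar involution $\overline{\phantom{x}}$) with the genuine, global-basis-compatible $U_{q}$-module isomorphism $V(-\lambda)\to V(\mu)$ of Proposition~\ref{prop:822}\,(3) for $x=w_{0}$ (which induces $S_{w_{0}}^{\ast}$). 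Under this composite the generator $S_{\bc}S_{w_{0}}^{\norm}v_{\lambda}$ of $X_{w_{0}}^{+}(\lambda)$ is sent into $U_{q}^{-}S_{\bc}^{\vee}v_{\mu}$, with $S_{\bc}^{\vee}=\overline{S_{\bc}^{-}}$ and $S_{w_{0}}^{\norm}v_{\lambda}$ mapping to the generating vector $v_{\mu}$ (as $S_{w_{0}}^{\norm}v_{\lambda}$ is extremal of weight $w_{0}\lambda$); since $\sum_{\bc\neq\emptyset}U_{q}^{-}\overline{S_{\bc}^{-}}v_{\mu}=\sum_{\bc\neq\emptyset}U_{q}^{-}S_{\bc}^{-}v_{\mu}=X_{e}^{-}(\mu)$ (the span being bar-stable), we obtain $\omega$-image $X_{e}^{-}(\mu)$, and compatibility transports from Theorem~\ref{thm:quotient} because $\Omega$ carries global basis to global basis.

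Granting this, part~(1b) is pure transport. By Theorem~\ref{thm:quotient}\,(1) for $\mu$ we have $\Psi_{\mu}(\CB(X_{e}^{-}(\mu)))=\BB^{\si}_{\sige e}(\mu)\setminus\{\eta_{\psi}\mid\psi\in\QLS(\mu)\}$. Applying $\vee$ and using (i) $\vee(\BB^{\si}_{\sige e}(\mu))=\BB^{\si}_{\mcr{w_{0}}^{J}\sige}(\lambda)$, which is the relation $\BB^{\si}_{x\sige}(\lambda)=(\BB^{\si}_{\sige x^{\vee}}(\mu))^{\vee}$ from \S\ref{subsec:demazurep} at $x=\mcr{w_{0}}^{J}$, and (ii) $\vee(\eta_{\psi})=\ti{\eta}_{\psi^{\ast}}$ for $\psi\in\QLS(\mu)$, which is exactly Lemma~\ref{lem:pi-eta2}, I obtain $\Psi_{\lambda}^{\vee}(\CB(X_{w_{0}}^{+}(\lambda)))=\BB^{\si}_{\mcr{w_{0}}^{J}\sige}(\lambda)\setminus\{\ti{\eta}_{\psi}\mid\psi\in\QLS(\lambda)\}$. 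Part~(1c) then follows formally: since each $\ti{\eta}_{\psi}$ satisfies $\iota(\ti{\eta}_{\psi})\in W^{J}$ and hence $\mcr{w_{0}}^{J}\sige\iota(\ti{\eta}_{\psi})$, the index set $\CB_{w_{0}}^{+}(\lambda)=(\Psi_{\lambda}^{\vee})^{-1}(\BB^{\si}_{\mcr{w_{0}}^{J}\sige}(\lambda))$ decomposes as the disjoint union of $\CB(X_{w_{0}}^{+}(\lambda))$ and $\CB(U_{w_{0}}^{+}(\lambda)):=(\Psi_{\lambda}^{\vee})^{-1}(\{\ti{\eta}_{\psi}\})$, so the images $\Xi_{\lambda}^{+}(G(b))$, $b\in\CB(U_{w_{0}}^{+}(\lambda))$, form a basis of the quotient.

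For part~(2) I would transport Theorem~\ref{thm:quotient}\,(2), tracking the Bruhat condition. The duality carries $V_{w}^{+}(\lambda)$ to $V_{w^{\vee}}^{-}(\mu)$ (Lemma~\ref{lem:dem=} and \S\ref{subsec:demazurep}), and by Theorem~\ref{thm:quotient}\,(2) the surviving indices for $U_{w^{\vee}}^{-}(\mu)$ are $\{\eta_{\psi}\mid\kappa(\eta_{\psi})\ge w^{\vee}\}$. From \eqref{eq:etavee} one has $\iota(\ti{\eta}_{\psi^{\ast}})=(\kappa(\eta_{\psi}))^{\vee}$, and the duality map $W^{\sigma(J)}\to W^{J}$, $v\mapsto v^{\vee}$ (right multiplication by $\mcr{w_{0}}^{J}$; for $J=\emptyset$ simply $v\mapsto vw_{0}$), reverses the ordinary Bruhat order and satisfies $(w^{\vee})^{\vee}=w$, so $\kappa(\eta_{\psi})\ge w^{\vee}$ becomes $w\ge\iota(\ti{\eta}_{\psi})$; since $\iota(\ti{\eta}_{\psi}),w\in W^{J}$ this coincides with the semi-infinite condition of $\BB^{\si}_{w\sige}(\lambda)$ by Lemma~\ref{lem:SiB2}. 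Because $V_{w}^{+}(\lambda)$ and $X_{w_{0}}^{+}(\lambda)$ are both spanned by subsets of the global basis, so are their intersection and the quotient, whence the claimed basis of $U_{w}^{+}(\lambda)$ indexed by $\{\ti{\eta}_{\psi}\mid w\ge\iota(\ti{\eta}_{\psi})\}$. The main obstacle is the module-level identity of the second paragraph: because the intertwiner $\omega$ mixes $E$'s and $F$'s and the construction involves the bar involution in $S_{\bc}^{-}=\overline{S_{\bc}^{\vee}}$, one must verify carefully that $S_{\bc}S_{w_{0}}^{\norm}v_{\lambda}$ is sent into $X_{e}^{-}(\mu)$ and that global basis vectors are preserved throughout.
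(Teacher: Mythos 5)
Your route is genuinely different from the paper's. The paper's own proof of Theorem~\ref{thm:quotient2} is by redoing the argument of Theorem~\ref{thm:quotient} directly on the ``$+$'' side, with \cite[Lemma~5.2]{BN} supplying the extra input needed there; you instead transport Theorem~\ref{thm:quotient} for $\mu:=-w_{0}\lambda$ through the duality $\Omega=S_{w_{0}}^{\ast}\circ\vee$. Your crystal-level bookkeeping is correct and reuses the paper's results efficiently: $(\mcr{w_{0}}^{J})^{\vee}=e$, the identity $\Omega(\CB_{x}^{+}(\lambda))=\CB_{x^{\vee}}^{-}(\mu)$ from \S\ref{subsec:demazurep}, the relation $\vee(\eta_{\phi})=\ti{\eta}_{\phi^{\ast}}$ from Lemma~\ref{lem:pi-eta2}, the Bruhat-order reversal under $v\mapsto v^{\vee}$, and Lemma~\ref{lem:SiB2}; the observation that $X_{e}^{-}(\mu)$ is bar-stable \emph{because} Theorem~\ref{thm:quotient}\,(1) exhibits it as a span of global basis elements is a genuinely nice trick.

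However, there is a real gap at the step you yourself flag as the main obstacle, and it is not a routine verification. Your argument needs a $\BQ(q_{s})$-linear bijection $\ti{\omega}:V(\lambda)\rightarrow V(\mu)$ which (i) carries $X_{w_{0}}^{+}(\lambda)$ onto $X_{e}^{-}(\mu)$, (ii) carries the global basis to the global basis, and (iii) induces on crystals \emph{precisely} the combinatorial map $\Omega=S_{w_{0}}^{\ast}\circ\vee$ of \S\ref{subsec:dual-B}, i.e.\ the one entering the definition \eqref{eq:Psivee} of $\Psi_{\lambda}^{\vee}$. Your justification of (ii), ``because $\vee$ commutes with the bar involution,'' is insufficient: bar-invariance is only one of the characterizing properties of $G(b)$, and one must also show that $\omega$ preserves the crystal lattices and the integral forms; even granting all of this, one only obtains $\ti{\omega}(G(b))=G(\bar{\Omega}(b))$ for \emph{some} induced bijection $\bar{\Omega}$ of crystals. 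The heart of the matter is (iii): since $\CB(\lambda)$ has infinitely many connected components, the facts that $\bar{\Omega}$ intertwines $e_{j}\leftrightarrow f_{j}$ and agrees with $\Omega$ at $u_{\lambda}$ pin down $\bar{\Omega}=\Omega$ only on $\CB_{0}(\lambda)$, whereas the generators $S_{\bc}S_{w_{0}}^{\norm}v_{\lambda}$ of $X_{w_{0}}^{+}(\lambda)$ correspond exactly to extremal elements in the \emph{other} components (and weight considerations cannot separate components with $|\bc|=|\bc'|$). Concretely, matching the module picture with the combinatorial one requires an identity of the shape $\overline{S_{\bc}}\,v_{-\lambda}=G\bigl(u_{\infty}\otimes\tw{-\lambda}\otimes b(\bc)^{\vee}\bigr)$, i.e.\ control of how $\vee$ and the bar involution interact with $S_{\bc}$, $S_{\bc}^{-}$ and the global basis -- which is essentially the content the paper delegates to \cite[Lemma~5.2]{BN}. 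Without (iii), your argument proves the theorem only for \emph{some} normalization of the isomorphism $\CB(\lambda)\cong\sLS$, not for the specific $\Psi_{\lambda}^{\vee}$ in the statement, and for a disconnected crystal that is a strictly weaker assertion.
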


We leave the proof of Theorem~\ref{thm:quotient2} to the reader
since it is similar to that of Theorem~\ref{thm:quotient}; 
use also \cite[Lemma~5.2]{BN}. 

In our forthcoming paper \cite{LNSSS3}, 
we will prove that for each $w \in W^{J}$, 
the graded character of $U_{w}^{+}(\lambda)$ 
is identical to the specialization at $t = 0$ of 
the nonsymmetric Macdonald polynomial $E_{w\lambda}(x;\,q,\,t)$. 
This generalizes \cite[Corollary~7.10]{LNSSS2}, since 
$E_{w_{0}\lambda}(x;\,q,\,0)=P_{\lambda}(x;\,q,\,0)$. 
%
%
\subsection{Proof of Theorem~\ref{thm:quotient}.}
\label{subsec:prf-quot}

Recall from \S\ref{subsec:properties1b} 
the $U_{q}$-module embedding
\begin{equation*}
\Phi_{\lambda} : V(\lambda) \hookrightarrow \ti{V}(\lambda)=
\bigotimes_{i \in I} V(\vpi_{i})^{\otimes m_{i}}
\end{equation*}
that maps $v_{\lambda}$ to $\ti{v}_{\lambda}=
\bigotimes_{i \in I} v_{\vpi_{i}}^{\otimes m_{i}}$. 
For each $\bc = (\rho^{(i)})_{i \in I} \in \ol{\Par(\lambda)}$, 
we define a $U_{q}'$-module homomorphism 
$s_{\bc}(z^{-1}):\ti{V}(\lambda) \rightarrow \ti{V}(\lambda)$ by:
\begin{equation*}
s_{\bc}(z^{-1})=\prod_{i \in I} 
s_{\rho^{(i)}}(z_{i,1}^{-1},\,\dots,\,z_{i,m_i}^{-1}), 
\end{equation*}
where for $i \in I$ and $1 \le l \le m_{i}$, 
$z_{i,\,l}:\ti{V}(\lambda) \stackrel{\sim}{\rightarrow} \ti{V}(\lambda)$ 
is the $U_{q}'$-module automorphism of $\ti{V}(\lambda)$, and 
for $i \in I$, $s_{\rho^{(i)}}(x_{1},\,\dots,\,x_{m_i})$ 
denotes the Schur polynomial corresponding to the partition $\rho^{(i)}$.
We claim that
\begin{equation*}
s_{\bc}(z^{-1})(\Img \Phi_{\lambda}) \subset \Img \Phi_{\lambda}. 
\end{equation*}
Indeed, since $V(\lambda) = U_{q}v_{\lambda} = U_{q}'v_{\lambda}$, 
it follows that $\Img \Phi_{\lambda} = U_{q} \ti{v}_{\lambda} 
= U_{q}' \ti{v}_{\lambda}$. Therefore, we see from \cite[Proposition 4.10]{BN} that 
\begin{equation*}
s_{\bc}(z^{-1})(\Img \Phi_{\lambda}) = 
s_{\bc}(z^{-1})(U_{q}'\ti{v}_{\lambda}) = 
U_{q}'s_{\bc}(z^{-1})\ti{v}_{\lambda} = 
\underbrace{U_{q}'S_{\bc}^{-}}_{\in U_{q}}\ti{v}_{\lambda} \subset \Img \Phi_{\lambda}, 
\end{equation*}
as desired. Hence we can define a $U_{q}'$-module homomorphism
$z_{\bc}:V(\lambda) \rightarrow V(\lambda)$ in such a way that 
the following diagram commutes:
\begin{equation}
\begin{CD}
V(\lambda) @>{\Phi_{\lambda}}>> \ti{V}(\lambda) \\
@V{z_{\bc}}VV @VV{s_{\bc}(z^{-1})}V \\
V(\lambda) @>{\Phi_{\lambda}}>> \ti{V}(\lambda).
\end{CD}
\end{equation}
Here, note that $z_{\bc}v_{\lambda} = S_{\bc}^{-}v_{\lambda}$, and that 
$z_{\bc}$ commutes with Kashiwara operators on $V(\lambda)$. 
Because $z_{i,l}$ preserves the crystal lattice $\ti{\CL}(\lambda)=
\bigotimes_{i \in I} \CL(\vpi_{i})^{\otimes m_{i}} \subset \ti{V}(\lambda)$ 
for all $i \in I$ and $1 \le l \le m_{i}$ (see \S\ref{subsec:properties1b}), and 
because $\Phi_{\lambda}(\CL(\lambda)) \subset \ti{\CL}(\lambda)$, 
we deduce that $z_{\bc}(\CL(\lambda)) \subset \CL(\lambda)$. 
Thus, we obtain an induced $\BQ$-linear map 
$z_{\bc}:\CL(\lambda)/q\CL(\lambda) \rightarrow \CL(\lambda)/q\CL(\lambda)$, 
for which the following diagram commutes: 
\begin{equation}
\begin{CD}
\CL(\lambda)/q_{s}\CL(\lambda) 
  @>{\Phi_{\lambda}|_{q=0}}>> 
\ti{\CL}(\lambda)/q_{s}\ti{\CL}(\lambda) \\
@V{z_{\bc}}VV @VV{s_{\bc}(z^{-1})}V \\
\CL(\lambda)/q_{s}\CL(\lambda) 
  @>{\Phi_{\lambda}|_{q=0}}>> 
\ti{\CL}(\lambda)/q_{s}\ti{\CL}(\lambda).
\end{CD}
\end{equation}
It follows from \cite[p.\,371]{BN} (see also \eqref{eq:BN413}) that
%
%
\begin{equation} \label{eq:CBlam}
\CB(\lambda) = \bigl\{
 z_{\bc}b \mid 
 \bc \in \Par(\lambda),\,b \in \CB_{0}(\lambda) \bigr\}. 
\end{equation}
Also, by \eqref{eq:scu}, we have 
$z_{\bc}u_{\lambda} = u^{\bc}$ for $\bc \in \Par(\lambda)$ 
(for the definition of $u^{\bc}$, see Proposition~\ref{prop:ext}). 
%
%
\begin{rem} \label{rem:par}
Let $\bc = (\rho^{(i)})_{i \in I} \in \ol{\Par(\lambda)}$.
Let $c_{i} \in \BZ_{\ge 0}$, $i \in I$, be the number of columns of 
length $m_{i}$ in the Young diagram corresponding to the partition $\rho^{(i)}$, 
and set $\xi : = \sum_{i \in I} c_{i}\alpha_{i}^{\vee} \in Q^{\vee+}$; 
note that $\xi \in Q^{\vee+}_{I \setminus J}$.
Also, let $\varrho^{(i)}$, $i \in I$, denote
the partition corresponding to the Young diagram 
obtained from the Young diagram corresponding to $\rho^{(i)}$
by removing all columns of length $m_{i}$ 
(i.e., the first $c_{i}$-columns), 
and set $\bc':=(\varrho^{(i)})_{i \in I}$; 
note that $\bc' \in \Par(\lambda)$. Then we deduce from 
\cite[Lemma~4.14 and its proof]{BN} that
%
%
\begin{equation} \label{eq:zcu}
z_{\bc}u_{\lambda} = S_{t_{\xi}}z_{\bc'}u_{\lambda} = 
S_{t_{\xi}} u^{\bc'}.
\end{equation}
\end{rem}
%
%
\begin{lem} \label{lem:Be-}
We have
%
%
\begin{equation} \label{eq:Be-1}
\CB_{e}^{-}(\lambda) = \bigl\{
 z_{\bc}b \mid 
 \bc \in \Par(\lambda),\,
 b \in \CB_{e}^{-}(\lambda) \cap \CB_{0}(\lambda) \bigr\}.
\end{equation}
Moreover, for every 
$\bc \in \ol{\Par(\lambda)}$ and 
$b \in \CB_{e}^{-}(\lambda) \cap \CB_{0}(\lambda)$, 
the element $z_{\bc}b$ is contained in $\CB_{e}^{-}(\lambda)$. 
\end{lem}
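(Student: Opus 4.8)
The plan is to transport the statement to the crystal $\BB^{\si}(\lambda)$ of SiLS paths through the isomorphism $\Psi_{\lambda}$ and Theorem~\ref{thm:main}, which gives $\Psi_{\lambda}(\CB_{e}^{-}(\lambda))=\BB^{\si}_{\sige e}(\lambda)=\{\eta\mid\kappa(\eta)\sige e\}$, and to combine this with the description \eqref{eq:CBlam} of $\CB(\lambda)$ in terms of the operators $z_{\bc}$. The heart of the matter is the identity
\[
\kappa\bigl(\Psi_{\lambda}(z_{\bc}b)\bigr)=\kappa\bigl(\Psi_{\lambda}(b)\bigr)
\qquad\text{for $\bc\in\Par(\lambda)$ and $b\in\CB_{0}(\lambda)$.}
\]
Granting this, I would argue as follows. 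By \eqref{eq:CBlam} together with Proposition~\ref{prop:ext}\,(2), every element of $\CB(\lambda)$ is uniquely of the form $z_{\bc}b$ with $\bc\in\Par(\lambda)$ and $b\in\CB_{0}(\lambda)$. Since $\Psi_{\lambda}$ is an isomorphism of crystals, Theorem~\ref{thm:main} then yields $z_{\bc}b\in\CB_{e}^{-}(\lambda)$ if and only if $\kappa(\Psi_{\lambda}(z_{\bc}b))\sige e$, equivalently (by the identity) $\kappa(\Psi_{\lambda}(b))\sige e$, equivalently $b\in\CB_{e}^{-}(\lambda)\cap\CB_{0}(\lambda)$. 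Running over all $z_{\bc}b\in\CB_{e}^{-}(\lambda)$ gives exactly \eqref{eq:Be-1}.

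For the final (``Moreover'') assertion I would use a module-theoretic argument, which works uniformly for all $\bc\in\ol{\Par(\lambda)}$. Let $b\in\CB_{e}^{-}(\lambda)\cap\CB_{0}(\lambda)$; by \eqref{eq:demc} we may write $G(b)=Pv_{\lambda}$ with $P\in U_{q}^{-}$. Because $z_{\bc}$ is a $U_{q}'$-module homomorphism, $U_{q}^{-}\subset U_{q}'$, and $z_{\bc}v_{\lambda}=S_{\bc}^{-}v_{\lambda}$ with $S_{\bc}^{-}\in U_{q}^{-}$, we obtain
\[
z_{\bc}G(b)=P\,z_{\bc}v_{\lambda}=P\,S_{\bc}^{-}v_{\lambda}\in U_{q}^{-}v_{\lambda}=V_{e}^{-}(\lambda).
\]
As $z_{\bc}$ preserves $\CL(\lambda)$, this vector lies in $\CL(\lambda)\cap V_{e}^{-}(\lambda)$, and reducing modulo $q_{s}\CL(\lambda)$ its class lies in $\Span_{\BQ}\CB_{e}^{-}(\lambda)$ by compatibility of $V_{e}^{-}(\lambda)$ with the global basis. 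This class is precisely the crystal-theoretic image $z_{\bc}b$, which is a single element of $\CB(\lambda)$: for $\bc\in\Par(\lambda)$ by \eqref{eq:CBlam}, and for general $\bc$ by Remark~\ref{rem:par}, writing $b=Yu_{\lambda}$ for a monomial $Y$ in root operators and $z_{\bc}b=YS_{t_{\xi}}u^{\bc'}$ (using that $z_{\bc}$ commutes with the root operators) with $\xi\in\QJp{I\setminus J}$ and $\bc'\in\Par(\lambda)$, since both the Weyl-group action $S_{t_{\xi}}$ and the root operators preserve $\CB(\lambda)\cup\{\bzero\}$. A single basis element belonging to $\Span_{\BQ}\CB_{e}^{-}(\lambda)$ must lie in $\CB_{e}^{-}(\lambda)$; hence $z_{\bc}b\in\CB_{e}^{-}(\lambda)$.

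The main obstacle is the final-direction identity of the first paragraph. To prove it, write $b=Yu_{\lambda}$ with $Y$ a monomial in the Kashiwara operators; since $z_{\bc}$ commutes with these operators and $z_{\bc}u_{\lambda}=u^{\bc}$, one has $\Psi_{\lambda}(z_{\bc}b)=Y\eta^{C}$ and $\Psi_{\lambda}(b)=Y\eta_{e}$, where $C=\Theta^{-1}(\bc)$. Both $\eta^{C}$ and $\eta_{e}$ are in the normal form \cite[(7.1.1)]{INS} and satisfy $\kappa(\eta^{C})=e=\kappa(\eta_{e})$; the content of \cite[Lemma~7.1.4]{INS} is that, under a common monomial $Y$ of root operators, the terminal direction transforms identically for two such paths sharing the same terminal direction. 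This is the $\xi=0$ instance of the computation of $\kappa(XS_{t_{\xi}}\eta^{C})$ already carried out in the proof of \eqref{eq:grch1-1}, and may alternatively be read off from the $\kappa$-preserving isomorphism $C\cong\{\Theta(C)\}\otimes\BB^{\si}_{0}(\lambda)$ of Proposition~\ref{prop:SLS}\,(3). Establishing it carefully—tracking the terminal vertex of a SiLS path along an arbitrary sequence of root operators—is the one genuinely combinatorial point; once it is in hand, everything else is formal.
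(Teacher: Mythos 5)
Your treatment of \eqref{eq:Be-1} is essentially the paper's own proof: both arguments reduce everything, via $\Psi_{\lambda}$ and Theorem~\ref{thm:main}, to the identity $\kappa(\Psi_{\lambda}(z_{\bc}b))=\kappa(\Psi_{\lambda}(b))$, obtained by writing $b=Yu_{\lambda}$ and applying \cite[Lemma~7.1.4]{INS} to $Y\eta^{C}$ and $Y\eta_{e}$ (both $\eta^{C}$ and $\eta_{e}$ have final direction $e$); the paper runs the two inclusions separately while you package them through the parametrization \eqref{eq:CBlam}, but the content is identical. One caveat on a side remark of yours: reading the identity off the isomorphism $C\cong\{\Theta(C)\}\otimes\BB^{\si}_{0}(\lambda)$ of Proposition~\ref{prop:SLS}\,(3) is not available as stated, since that proposition says nothing about the isomorphism preserving $\kappa$ --- and that preservation is exactly the point at issue; your primary route through \cite[Lemma~7.1.4]{INS} is the correct one.

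For the ``Moreover'' assertion your route is genuinely different from the paper's, and mostly sound. The paper stays inside the path model: it writes $\Psi_{\lambda}(z_{\bc}b)=X S_{t_{\xi}}\eta^{C}$ with $\xi\in\QJp{I\setminus J}$ (Remark~\ref{rem:par}) and invokes \eqref{eq:grch1-1} to conclude membership in $\BB^{\si}_{\sige e}(\lambda)$. You instead argue on the module side --- $z_{\bc}G(b)=P\,S_{\bc}^{-}v_{\lambda}\in U_{q}^{-}v_{\lambda}=V_{e}^{-}(\lambda)$, then lattice and global-basis compatibility --- which is clean, avoids \eqref{eq:grch1-1} entirely, and in fact anticipates the computation \eqref{eq:X1} that the paper performs later in the proof of Theorem~\ref{thm:quotient}. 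There is, however, one gap: your final inference requires $z_{\bc}b$ to be an honest element of $\CB(\lambda)$, but the justification you give (stability of $\CB(\lambda)\cup\{\bzero\}$ under root operators and under $S_{t_{\xi}}$) only yields $z_{\bc}b=Y S_{t_{\xi}}u^{\bc'}\in\CB(\lambda)\cup\{\bzero\}$; the possibility $z_{\bc}b=\bzero$, i.e.\ $z_{\bc}b=0$ in $\CL(\lambda)/q_{s}\CL(\lambda)$ (which lies in $\Span_{\BQ}\CB_{e}^{-}(\lambda)$ vacuously), is not excluded, and for $\bc\in\ol{\Par(\lambda)}\setminus\Par(\lambda)$ nothing you cite rules it out. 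The repair is short: either cite \cite[Theorem~4.16\,(ii)]{BN}, which gives $z_{\bc}G(b)=G(b')$ for some $b'\in\CB(\lambda)$ whenever $b\in\CB_{0}(\lambda)$ (this is precisely what the paper invokes at the corresponding step of the proof of Theorem~\ref{thm:quotient}), or note that $\cl$ commutes with root operators, so that $Y\eta_{e}\ne\bzero$ forces $\cl(YS_{t_{\xi}}\eta^{C})=Y\cl(S_{t_{\xi}}\eta^{C})=Y\cl(\eta_{e})=\cl(Y\eta_{e})\ne\bzero$, whence $YS_{t_{\xi}}\eta^{C}\ne\bzero$. With that point filled in, your argument is complete.
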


\begin{proof}
First we prove the inclusion $\supset$. 
Let $b \in \CB_{e}^{-}(\lambda) \cap \CB_{0}(\lambda)$, and 
write it as $b = X u_{\lambda}$ for a monomial $X$ in Kashiwara operators. 
For $\bc \in \Par(\lambda)$, we have $z_{\bc}b=X z_{\bc}u_{\lambda}=Xu^{\bc}$. 
Set $\eta : = \Psi_{\lambda}(b)$ and $\eta' : = \Psi_{\lambda}(z_{\bc}b)$, 
where $\Psi_{\lambda}:\CB(\lambda) \stackrel{\sim}{\rightarrow} \BB^{\si}(\lambda)$ 
is the isomorphism of crystals. 
Then, we have $\eta = X \eta_{e}$ and $\eta' = X \Psi_{\lambda}(u^{\bc})=X\eta^{C}$, 
with $C:=\Theta^{-1}(\bc) \in \Conn(\BB^{\si}(\lambda))$. 
Therefore, we deduce from \cite[Lemma~7.1.4]{INS} that 
$\kappa(\eta) = \kappa(\eta')$. 
Also, since $b \in \CB_{e}^{-}(\lambda) \cap \CB_{0}(\lambda)$, it follows that 
$\kappa(\eta) \sige e$, and hence $\kappa(\eta') = \kappa(\eta) \sige e$. 
Thus we obtain $\eta' \in \BB^{\si}_{\sige e}(\lambda)$, which implies that 
$z_{\bc}b \in \CB_{e}^{-}(\lambda)$. 

Next we prove the opposite inclusion $\subset$. 
Let $b' \in \CB_{e}^{-}(\lambda)$, and write it as 
$b' = z_{\bc}b$ for some $\bc \in \Par(\lambda)$ and 
$b \in \CB_{0}(\lambda)$ (see \eqref{eq:CBlam}); 
we need to show that $b \in \CB_{e}^{-}(\lambda)$. 
Set $\eta : = \Psi_{\lambda}(b) \in \BB^{\si}(\lambda)$, and 
$\eta' : = \Psi_{\lambda}(b') \in \BB^{\si}(\lambda)$. 
Then, by entirely the same argument as above, we deduce that 
$\kappa(\eta) = \kappa(\eta') \sige e$. 
Thus we obtain $\eta \in \BB^{\si}_{\sige e}(\lambda)$, 
which implies that $b \in \CB_{e}^{-}(\lambda)$. 

For the second assertion, 
let $\bc = (\rho^{(i)})_{i \in I} \in \ol{\Par(\lambda)}$, and $b \in 
\CB_{e}^{-}(\lambda) \cap \CB_{0}(\lambda)$.
We write $b$ as $b=Xu_{\lambda}$ 
for a monomial $X$ in Kashiwara operators. 
Define $\xi = \sum_{i \in I} c_{i}\alpha_{i}^{\vee} \in Q^{\vee+}$, and 
$\bc'=(\varrho^{(i)})_{i \in I} \in \Par(\lambda)$ as in Remark~\ref{rem:par} 
(with $\bc = (\rho^{(i)})_{i \in I}$ above). Then, we have
\begin{equation*}
z_{\bc}b = X z_{\bc}u_{\lambda} = X S_{t_{\xi}}z_{\bc'}u_{\lambda} = 
X S_{t_{\xi}} u^{\bc'}. 
\end{equation*}
Now we set $\eta:=\Psi_{\lambda}(b) \in \BB^{\si}(\lambda)$, 
and $\psi:=\cl(\eta) \in \QLS(\lambda)$; note that 
$\eta = X \eta_{e}$, and hence $\psi = X \cl(\eta_{e})$. 
We see that
\begin{equation*}
\Psi_{\lambda}(z_{\bc}b) 
 = XS_{t_{\xi}}\Psi_{\lambda}(u^{\bc'})
 = XS_{t_{\xi}}\eta^{C}, \quad 
\text{with $C:=\Theta^{-1}(\bc') \in \Conn(\BB^{\si}(\lambda))$}.
\end{equation*}
Since $\xi \in Q_{I \setminus J}^{\vee+}$, it follows from \eqref{eq:grch1-1} that 
$\Psi_{\lambda}(z_{\bc}b) \in \BB^{\si}_{\sige e}(\lambda)$, which implies that 
$z_{\bc}b \in \CB^{-}_{e}(\lambda)$. This proves the lemma. 
\end{proof}

\begin{proof}[Proof of Theorem~\ref{thm:quotient}]
First, we prove that if we set
%
%
\begin{equation} \label{eq:GX1}
\CB := 
 \bigl\{z_{\bc}b \mid 
 \bc \in \ol{\Par(\lambda)} \setminus (\emptyset)_{i \in I},\,
 b \in \CB_{e}^{-}(\lambda) \cap \CB_{0}(\lambda) \bigr\} \subset \CB(\lambda),
\end{equation}
then we have
%
%
\begin{equation} \label{eq:GX2}
X_{e}^{-}(\lambda) =
\bigoplus_{b \in \CB} \BQ(q_{s}) G(b).
\end{equation}
Because $S_{\bc}^{-}v_{\lambda}=z_{\bc}v_{\lambda}$ 
for every $\bc \in \ol{\Par(\lambda)}$, 
we have 
\begin{align}
X_{e}^{-}(\lambda) 
 & = 
\sum_{
\begin{subarray}{c}
\bc \in \ol{\Par(\lambda)} \\[1.5mm]
\bc \ne (\emptyset)_{i \in I_0}
\end{subarray}
} U_{q}^{-} S_{\bc}^{-}v_{\lambda}
=
\sum_{
\begin{subarray}{c}
\bc \in \ol{\Par(\lambda)} \\[1.5mm]
\bc \ne (\emptyset)_{i \in I_0}
\end{subarray}
} U_{q}^{-} z_{\bc}v_{\lambda} \nonumber \\[3mm]
& =
\sum_{
\begin{subarray}{c}
\bc \in \ol{\Par(\lambda)} \\[1.5mm]
\bc \ne (\emptyset)_{i \in I_0}
\end{subarray}
} z_{\bc} (U_{q}^{-}v_{\lambda})
= 
\sum_{
\begin{subarray}{c}
\bc \in \ol{\Par(\lambda)} \\[1.5mm]
\bc \ne (\emptyset)_{i \in I_0}
\end{subarray}
} z_{\bc}(V_{e}^{-}(\lambda)). \label{eq:X1}
\end{align}
Let $\bc \in \ol{\Par(\lambda)} \setminus (\emptyset)_{i \in I}$, and 
$b \in \CB_{e}^{-}(\lambda) \cap \CB_{0}(\lambda)$. 
Then we deduce that $G(z_{\bc}b) = z_{\bc} G(b)$; 
indeed, since $b \in \CB_{0}(\lambda)$, 
we see by \cite[Theorem~4.16\,(ii)]{BN} that
$z_{\bc} G(b) = G(b')$ for some $b' \in \CB(\lambda)$. 
Here, 
\begin{equation*}
b' =G(b') + q_{s}\CL(\lambda)
 = z_{\bc} G(b) + q_{s}\CL(\lambda) 
 = z_{\bc}(G(b) + q_{s}\CL(\lambda)) 
 = z_{\bc}b, 
\end{equation*}
from which we get $G(z_{\bc}b) = z_{\bc} G(b)$, as desired. 
Since $G(b) \in V_{e}^{-}(\lambda)$, 
it follows from \eqref{eq:X1} that 
$G(z_{\bc}b) = z_{\bc} G(b) \in X_{e}^{-}(\lambda)$, 
and hence 
$X_{e}^{-}(\lambda) \supset 
\bigoplus_{b \in \CB} \BQ(q_{s}) G(b)$.
Now we show the opposite inclusion $\subset$ in \eqref{eq:GX2}. 
Since $\bigl\{G(b) \mid b \in \CB_{e}^{-}(\lambda)\bigr\}$
is a $\BQ(q_{s})$-basis of $V_{e}^{-}(\lambda)$, 
we see from \eqref{eq:X1} that
%
%
\begin{equation} \label{eq:Xspan}
X_{e}^{-}(\lambda) = \Span_{\BQ(q_{s})}
\bigl\{
 z_{\bc}G(b) \mid 
 \bc \in \ol{\Par(\lambda)} \setminus (\emptyset)_{i \in I},\,
 b \in \CB_{e}^{-}(\lambda)
\bigr\}.
\end{equation}
Let $\bc \in \ol{\Par(\lambda)} \setminus (\emptyset)_{i \in I}$, 
and $b \in \CB_{e}^{-}(\lambda)$. By Lemma~\ref{lem:Be-}, 
we can write the $b$ as $b = z_{\bc'}b'$ for some 
$\bc' \in \Par(\lambda)$ and $b' \in \CB_{e}^{-}(\lambda) \cap 
\CB_{0}(\lambda)$. Then we have $z_{\bc}b = z_{\bc} z_{\bc'}b'$. 
Because $z_{\bc}$ and $z_{\bc'}$ are defined by using 
Schur polynomials (see \eqref{eq:Schur}), 
their product $z_{\bc} z_{\bc'}$ can be expressed as:
\begin{equation*}
z_{\bc} z_{\bc'} = 
 \sum_{
   \begin{subarray}{c}
     \bc'' \in \ol{\Par(\lambda)} \\[1.5mm]
     |\bc''|=|\bc|+|\bc'|
   \end{subarray}
 } n_{\bc''} z_{\bc''}, \qquad 
n_{\bc''} \in \BZ_{\ge 0}; 
\end{equation*}
here we remark that $|\bc|+|\bc'| \ge 1$ since 
$\bc \ne (\emptyset)_{i \in I}$. Therefore, we deduce that
\begin{equation*}
z_{\bc}G(b) = z_{\bc}G(z_{\bc'}b') = 
 \sum_{
   \begin{subarray}{c}
     \bc'' \in \ol{\Par(\lambda)} \\[1.5mm]
     |\bc''|=|\bc|+|\bc'|
   \end{subarray}
 } n_{\bc''} G(z_{\bc''}b') \in 
\bigoplus_{b \in \CB} \BQ(q_{s}) G(b).
\end{equation*}
From this, together with \eqref{eq:Xspan}, we obtain
$X_{e}^{-}(\lambda) \subset 
\bigoplus_{b \in \CB} \BQ(q_{s}) G(b)$.
Combining these, we obtain \eqref{eq:GX2}, 
as desired; we write $\CB(X_{e}^{-}(\lambda))$ for the set $\CB$. 

Next, we prove that 
\begin{equation*}
\Psi_{\lambda}\bigl(\CB(X_{e}^{-}(\lambda))\bigr) = 
\BB^{\si}_{\sige e}(\lambda) \setminus 
\bigl\{\eta_{\psi} \mid \psi \in \QLS(\lambda)\bigr\}. 
\end{equation*}
For this purpose, 
it suffices to show that for each $\psi \in \QLS(\lambda)$, 
%
%
\begin{equation} \label{eq:X3}
\cl^{-1}(\psi) \cap 
\Psi_{\lambda}\bigl(\CB(X_{e}^{-}(\lambda))\bigr) = 
\Bigl(\cl^{-1}(\psi) \cap \BB^{\si}_{\sige e}(\lambda)\Bigr) 
 \setminus \bigl\{\eta_{\psi}\bigr\}.
\end{equation}
Let $\psi \in \QLS(\lambda)$, and 
write the $\eta_{\psi} \in \BB^{\si}_{0}(\lambda)$ as 
$\eta_{\psi}=X \eta_{e}$ for some monomial $X$ 
in root operators; recall from \eqref{eq:grch1-1} that
\begin{equation*}
\cl^{-1}(\psi) \cap \BB^{\si}_{\sige e}(\lambda) = 
\bigl\{X S_{t_{\zeta}}\eta^{C} \mid 
 C \in \Conn(\BB^{\si}(\lambda)),\,
 \zeta \in Q^{\vee+}_{I \setminus J} \bigr\}.
\end{equation*}
Let us show the inclusion $\supset$ in \eqref{eq:X3}. 
Let $\eta$ be an element in the set on the right-hand side of \eqref{eq:X3}, 
and write it as: $\eta = X S_{t_{\zeta}}\eta^{C}$, 
with $C \in \Conn(\BB^{\si}(\lambda))$ and $\zeta \in Q^{\vee+}_{I \setminus J}$. 
We write $\zeta$ as 
$\zeta=\sum_{i \in I} c_{i}\alpha_{i}^{\vee}$, 
$c_{i} \in \BZ_{\ge 0}$, $i \in I$. 
Then we define 
$\bc:=(c_{i})_{i \in I} + \Theta(C) \in \ol{\Par(\lambda)}$ 
as in \eqref{eq:par+}.
Here we claim that $\bc \ne (\emptyset)_{i \in I}$. 
Indeed, by the computation in \eqref{eq:wtXS}, 
we have 
\begin{equation*}
\wt(\eta) = 
 \wt (X S_{t_{\zeta}}\eta^{C}) = 
 \wt (\psi) + \bigl(\Degt(\psi) - |\bc|\bigr) \delta.
\end{equation*}
Since $\eta \ne \eta_{\psi}$ by our assumption, 
it follows from Remark~\ref{rem:wtXS} that 
$\wt ( \eta ) \ne \wt (\eta_{\psi}) = \wt(\psi)+\Degt(\psi)\delta$. 
Therefore, we deduce that $|\bc| \ne 0$, which implies that 
$\bc \ne (\emptyset)_{i \in I}$. 
Now, we set $b:=\Psi_{\lambda}^{-1}(\eta_{\psi}) \in 
\CB_{e}^{-}(\lambda) \cap \CB_{0}(\lambda)$; note that $b=Xu_{\lambda}$. 
Then we see by \eqref{eq:GX1} that $z_{\bc}b \in \CB(X_{e}^{-}(\lambda))$. 
Also, since 
$z_{\bc}b = z_{\bc} X u_{\lambda} = X z_{\bc}u_{\lambda} = 
X S_{t_{\zeta}}u^{\Theta(C)}$
by Remark~\ref{rem:par}, we have 
$\Psi_{\lambda}(z_{\bc}b) = X S_{t_{\zeta}}\eta^{C} = \eta$. 
Hence we conclude that $\eta \in \Psi_{\lambda}(\CB(X_{e}^{-}(\lambda))$. 
Thus we have shown the inclusion $\supset$. 

Let us show the opposite inclusion $\subset$. 
Since $\CB(X_{e}^{-}(\lambda)) \subset \CB_{e}^{-}(\lambda)$, 
it follows immediately from Theorem~\ref{thm:main} that 
\begin{equation*}
\cl^{-1}(\psi) \cap 
 \Psi_{\lambda}\bigl(\CB(X_{e}^{-}(\lambda))\bigr) \subset
\cl^{-1}(\psi) \cap \BB^{\si}_{\sige e}(\lambda). 
\end{equation*}
Therefore, it suffices to show that 
$\eta_{\psi} \not\in \Psi_{\lambda}\bigl(\CB(X_{e}^{-}(\lambda))\bigr)$. 
Suppose, for a contradiction, that 
there exists $b' \in \CB(X_{e}^{-}(\lambda))$ 
such that $\Psi_{\lambda}(b') = \eta_{\psi}$. 
By \eqref{eq:GX1}, we can write it as: $b'= z_{\bc}b$ 
for some $\bc \in \ol{\Par(\lambda)} \setminus (\emptyset)_{i \in I}$ and 
$b \in \CB_{e}^{-}(\lambda) \cap \CB_{0}(\lambda)$. 
We show that $\eta:=\Psi_{\lambda}(b) \in \cl^{-1}(\psi)$. 
Let us write $b$ as $b=Yu_{\lambda}$ 
for some monomial $Y$ in Kashiwara operators (note that $\eta=Y\eta_{e}$), 
and define $\zeta = \sum_{i \in I} c_{i}\alpha_{i}^{\vee} \in Q^{\vee+}$ and 
$\bc'=(\varrho^{(i)})_{i \in I} \in \Par(\lambda)$ in such a way that 
$\bc = (c_{i})_{i \in I} + \bc'$ (see Remark~\ref{rem:par}). 
Then, by \eqref{eq:zcu}, we have 
\begin{equation*}
b' = z_{\bc}b = z_{\bc}Yu_{\lambda} 
   = Yz_{\bc}u_{\lambda} = YS_{t_{\zeta}}u^{\bc'}.
\end{equation*}
Also, we see that 
\begin{equation} \label{eq:cep}
\eta_{\psi} = \Psi_{\lambda}(b') = 
  \Psi_{\lambda}(YS_{t_{\zeta}}u^{\bc'}) = Y S_{t_{\zeta}} \eta^{C}, 
\quad \text{with $C:=\Theta^{-1}(\bc') \in \Conn(\BB^{\si}(\lambda))$}, 
\end{equation}
and hence that
\begin{align*}
\psi 
 & = \cl(\eta_{\psi}) = \cl(Y S_{t_{\zeta}} \eta^{C}) 
   = Y \cl(S_{t_{\zeta}} \eta^{C}) = Y \cl(\eta_{e}) 
   \quad \text{(see Remark~\ref{rem:extp})} \\
 & = \cl (Y \eta_{e}) = \cl(\Psi_{\lambda}(Yu_{\lambda})) = \cl(\Psi_{\lambda}(b)).
\end{align*}
Thus, we obtain $\eta=\Psi_{\lambda}(b) \in \cl^{-1}(\psi)$, as desired.
Since $b \in \CB_{e}^{-}(\lambda)$ by our assumption, we have 
$\eta=\Psi_{\lambda}(b) \in \BB^{\si}_{\sige e}(\lambda)$.
Hence it follows from Remark~\ref{rem:wtXS} that 
$\wt (\eta) - \wt (\eta_{\psi}) \in \BZ_{\le 0} \delta$. 
On the other hand, by \eqref{eq:cep},we have 
\begin{equation*}
\wt (\eta_{\psi}) = 
\wt (Y S_{t_{\zeta}} \eta^{C}) = 
\wt (Y\eta_{e}) - |\bc|\delta = 
\wt (\eta) - |\bc|\delta,
\end{equation*}
and hence 
$\wt (\eta) - \wt (\eta_{\psi}) = |\bc|\delta \in \BZ_{\ge 0} \delta$. 
Combining these, we deduce that $|\bc|=0$, 
which implies that $\bc=(\emptyset)_{i \in I}$. 
However, this contradicts our assumption that 
$\bc \in \ol{\Par(\lambda)} \setminus (\emptyset)_{i \in I}$. 
Thus we have shown the inclusion $\subset$. 
This completes the proof of part (1) of Theorem~\ref{thm:quotient}. 

Finally, we prove part (2) of Theorem~\ref{thm:quotient}. 
Let $w \in W^{J}$. Because
\begin{equation*}
U_{w}^{-}(\lambda) \cong 
V_{w}^{-}(\lambda)/\bigl(V_{w}^{-}(\lambda) \cap X_{e}^{-}(\lambda)\bigr), 
\end{equation*}
we deduce that $V_{w}^{-}(\lambda) \cap X_{e}^{-}(\lambda)$ 
has a $\BC(q)$-basis $\bigl\{G(b) \mid 
b \in \CB_{w}^{-}(\lambda) \cap \CB(X_{e}^{-}(\lambda)) \bigr\}$. 
It follows immediately from part (1) that 
\begin{align*}
\Psi_{\lambda}(\CB_{w}^{-}(\lambda) \cap \CB(X_{e}^{-}(\lambda))
&= \BB_{\sige w}^{\si}(\lambda) \cap 
   \Bigl(\BB_{\sige e}^{\si}(\lambda) \setminus 
   \bigl\{\eta_{\psi} \mid \psi \in \QLS(\lambda)\bigr\}\Bigr) \\
& = \BB_{\sige w}^{\si}(\lambda) \setminus 
   \Bigl(\BB_{\sige w}^{\si}(\lambda) \cap 
   \bigl\{\eta_{\psi} \mid \psi \in \QLS(\lambda)\bigr\}\Bigr);
\end{align*}
note that $\CB_{w}^{-}(\lambda) \subset \CB_{e}^{-}(\lambda)$, 
since $w \in W^{J}$ and hence $w \sige e$ (see Lemma~\ref{lem:SiB2}).
Therefore, if we define $\CB(U_{w}^{-}(\lambda))$ to be 
the inverse image of the set 
\begin{equation*}
\BB_{\sige w}^{\si}(\lambda) \cap 
   \bigl\{\eta_{\psi} \mid \psi \in \QLS(\lambda)\bigr\}
\end{equation*}
under the isomorphism $\Psi_{\lambda}$, then 
the set $\bigl\{G(b) \mid b \in \CB(U_{w}^{-}(\lambda))\bigr\}$ 
is a $\BC(q)$-basis of $U_{w}^{-}(\lambda)$. 
Here, observe that 
\begin{equation*}
\BB_{\sige w}^{\si}(\lambda) \cap 
   \bigl\{\eta_{\psi} \mid \psi \in \QLS(\lambda)\bigr\} = 
   \bigl\{\eta_{\psi} \mid 
   \psi \in \QLS(\lambda) \text{ such that } 
   \kappa(\eta_{\psi}) \sige w \bigr\}.
\end{equation*}
Since $\kappa(\eta_{\psi})$ and $w$ are both contained in $W^{J}$, 
it follows from Lemma~\ref{lem:SiB2} that 
$\kappa(\eta_{\psi}) \sige w$ if and only if 
$\kappa(\eta_{\psi}) \ge w$ in the (ordinary) Bruhat order on $W^{J}$. 
Thus we have proved part (2). 
This completes the proof of Theorem~\ref{thm:quotient}. 
\end{proof}
%
%
{\small
 \setlength{\baselineskip}{15pt}

}

\end{document}